\newtheorem{theorem}{Theorem}[section]
\newtheorem*{theorem*}{Theorem}
\newtheorem{lemma}[theorem]{Lemma}
\newtheorem{proposition}[theorem]{Proposition}
\newtheorem{corollary}[theorem]{Corollary}
\newtheorem*{corollary*}{Corollary}
\newtheorem*{conjecture*}{Conjecture}
\newtheorem*{question*}{Question}
\newtheorem{claim}{Claim}
\newtheorem*{claim*}{Claim}
\newtheorem{maintheorem}{Theorem}
  \newenvironment{claimproof}[1]{\par\noindent\textit{Proof of the claim:}\space#1}{\hfill $\blacksquare$}
\newcommand{\boundellipse}[3]
{(#1) ellipse (#2 and #3)
}
\theoremstyle{definition}
\newtheorem{definition}[theorem]{Definition}
\newtheorem*{definition*}{Definition}
\newtheorem{example}[theorem]{Example}
\newtheorem{remark}[theorem]{Remark}
\newcommand{\suchthat}{\;\ifnum\currentgrouptype=16 \middle\fi|\;}
\newcommand\algqs{\operatorname{qs-alg}}
\DeclareMathOperator{\Aut}{\mathrm{Aut}}
\DeclareMathOperator{\N}{\mathbf{N}}
\DeclareMathOperator{\Z}{\mathbf{Z}}
\DeclareMathOperator{\R}{\mathbf{R}}
\DeclareMathOperator{\PGL}{\mathrm{PGL}}
\DeclareMathOperator{\SL}{\mathrm{SL}}
\DeclareMathOperator{\SU}{\mathrm{SU}}
\DeclareMathOperator{\SO}{\mathrm{SO}}
\DeclareMathOperator{\Spec}{\mathrm{Spec}}
\DeclareMathOperator{\Sym}{\mathrm{Sym}}
\DeclareMathOperator{\Gal}{\mathrm{Gal}}
\DeclareMathOperator{\Hom}{\mathrm{Hom}}
\DeclareMathOperator{\Aff}{\mathrm{Aff}}
\DeclareMathOperator{\Id}{\mathrm{Id}}
\DeclareMathOperator{\Alg}{\mathrm{alg}}
\DeclareMathOperator{\odd}{\mathrm{odd}}
\DeclareMathOperator{\even}{\mathrm{even}}
\DeclareMathOperator{\trace}{\mathrm{trace}}
\DeclareMathOperator{\Mor}{\mathrm{Mor}}
\DeclareMathOperator{\Ad}{\mathrm{Ad}}
\DeclareMathOperator{\Sub}{\mathbf{Sub}}
\DeclareMathOperator{\End}{\mathrm{End}}
\DeclareMathOperator{\Nrd}{\mathrm{Nrd}}
\DeclareMathOperator{\diag}{\mathrm{diag}}
\DeclareMathOperator{\uram}{\mathrm{ur}}
\DeclareMathOperator{\ram}{\mathrm{ram}}
\DeclareMathOperator{\opp}{\mathrm{op}}
\begin{document}
\title{Chabauty limits of algebraic groups acting on trees \\ \large The quasi-split case}
\author[1]{Thierry Stulemeijer\thanks{Postdoctoral fellow at MPIM Bonn}}

\affil[1]{\small{Max Planck Institute for Mathematics, 53111 Bonn, Germany}}

\date{January 31, 2018}

\maketitle

\begin{abstract}
Given a locally finite leafless tree $ T $, various algebraic groups over local fields might appear as closed subgroups of $ \Aut (T) $. We show that the set of closed cocompact subgroups of $ \Aut (T) $ that are isomorphic to a quasi-split simple algebraic group is a closed subset of the Chabauty space of $ \Aut (T) $. This is done via a study of the integral Bruhat--Tits model of $ \SL_2 $ and $ \SU_3^{L/K} $, that we carry on over arbitrary local fields, without any restriction on the (residue) characteristic. In particular, we show that in residue characteristic $ 2 $, the Tits index of simple algebraic subgroups of $ \Aut (T) $ is not always preserved under Chabauty limits.
\end{abstract}

\maketitle
\tableofcontents

\section{Introduction}
\begin{flushright}
\begin{minipage}[t]{0.41\linewidth}\itshape\small
Ta vague monte avec la rumeur d'un prodige\\ 
C'est ici ta limite. Arr\^ete-toi, te dis-je.\\
\hfill\upshape (Victor Hugo, \emph{L'ann\'ee terrible}, 1872)
\end{minipage}
\end{flushright}

According to well-known rigidity results of J.\ Tits (see \cite{Tits74}*{Theorem~5.8}, together with \cite{Tits84}*{Théorème~2} or \cite{W09}*{Theorem~27.6}), a Bruhat--Tits building of rank $ \geq 2 $ determines uniquely the simple algebraic group and the underlying ground field to which it is associated. In particular, two simply connected absolutely simple algebraic groups of relative rank $ \geq 2 $ over a local field have isomorphic Bruhat--Tits buildings if and only if they are isomorphic as locally compact groups. This contrasts drastically with the rank $ 1 $ case, where infinitely many pairwise non-isomorphic simple algebraic groups of relative rank $ 1 $ can have the same Bruhat--Tits tree. Therefore, given a locally finite leafless tree T, the set $ \Sub (\Aut (T)) $ of closed subgroups of the locally compact group $ \Aut (T) $ may contain infinitely many pairwise non-isomorphic algebraic groups. For example, the Bruhat--Tits tree of the split group $ \SL_2(K) $ is completely determined by the order of the residue field of $ K $, while the isomorphism type of $ \SL_2(K) $ depends on the isomorphism type of the local field $ K $. Since $ \Sub (\Aut (T)) $ carries a natural compact Hausdorff topology, namely the Chabauty topology, we are naturally led to the following question: \textit{what are the Chabauty limits of algebraic groups in $ \Sub (\Aut (T))$?} The goal of this paper is to initiate the study of that problem. In particular, we provide a complete solution in the case of quasi-split groups. 

In order to be more precise, for $ T $ a tree, let us define a \emph{topologically simple algebraic group acting on $ T $} to be a locally compact group isomorphic to $H(K)/Z$, where $K$ is a local field, $H$ is an absolutely simple, simply connected, algebraic group over $K$ of relative rank $ 1 $ whose Bruhat--Tits tree is isomorphic to $ T $, and $Z$ is the centre of $H(K)$.

The first thing to observe is that, given a topologically simple algebraic group $ G $ acting on $ T $, the action homomorphism $ G \to \Aut (T) $ is not canonical, but depends on some choices. There is however a natural way to resolve this issue of canonicity, explained in \cite{CR16}. Following that paper, we shall denote by $ \mathcal{S}_T $ the space of (topological) isomorphism classes of topologically simple closed subgroups of $ \Aut (T) $ acting $ 2 $-transitively on the set of ends. According to \cite{CR16}*{Theorem~1.2}, the space $ \mathcal{S}_T $ endowed with the quotient topology induced from the Chabauty space $ \Sub (\Aut (T)) $ is compact Hausdorff.

We can therefore reformulate the question mentioned above as follows. Let $ \mathcal{S}_T^{\Alg} $ be the set of isomorphism classes of topologically simple algebraic groups acting on $ T $. \textit{What are the accumulation points in $\mathcal{S}_T $ of the elements of $\mathcal{S}_T^{\Alg} $ ?} It seems reasonable to conjecture that $\mathcal{S}_T^{\Alg} $ is closed in $\mathcal{S}_T $. Our main theorem is a partial result in this direction.

\begin{theorem}\label{Thm:mainThm}
Let $ T $ be a locally finite leafless tree, and let $\mathcal{S}_T^{\algqs} $ be the set of isomorphism classes of topologically simple algebraic groups acting on $ T $ that are furthermore quasi-split. Then $\mathcal{S}_T^{\algqs} $ is closed in $\mathcal{S}_T $.
\end{theorem}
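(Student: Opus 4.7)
The plan is to exploit the classification of quasi-split absolutely simple simply connected algebraic groups of relative rank $1$ over a local field $K$: up to $K$-isomorphism, such a group is either $\SL_2$ (the split case) or the special unitary group $\SU_3^{L/K}$ attached to some separable quadratic extension $L/K$ (the non-split quasi-split case). Accordingly $\mathcal{S}_T^{\algqs}$ decomposes as the union of two natural subsets $\mathcal{S}_T^{\SL_2}$ and $\mathcal{S}_T^{\SU_3}$. Since $T$ is locally finite, its valency data bound the residue field order $q$ (and hence the residue characteristic $p$) of any local field underlying a group in $\mathcal{S}_T^{\algqs}$: for $\SL_2(K)$ acting on its $(q+1)$-regular tree, and for $\SU_3^{L/K}$ acting on its biregular tree, $q$ is read off the valencies. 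Consequently, given a convergent sequence in $\mathcal{S}_T^{\algqs}$, I may extract a subsequence along which the type, the residue field, and (in the $\SU_3$ case) the ramification behaviour of $L/K$ are all constant.

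On such a normalised sequence, I would invoke the integral Bruhat--Tits model of $\SL_2$ and $\SU_3^{L/K}$ developed earlier in the paper to embed each term canonically as a closed cocompact subgroup of $\Aut(T)$ (once a base vertex is fixed), with an explicit description in terms of filtrations on $\mathcal{O}_K$ and, in the $\SU_3$ case, $\mathcal{O}_L$. This reduces Chabauty convergence of the subgroups to an appropriate form of convergence of the underlying local-field data $(K_n)$ or $(L_n/K_n)$. The crucial compactness statement to prove is then that any sequence of local fields with fixed residue field $\mathbf{F}_q$ admits a subsequence converging to a local field in the strong sense of controlling the integral Bruhat--Tits model modulo each $\pi^N$: in mixed characteristic, sequences with absolute ramification index $e_n\to\infty$ ``degenerate'' to $\mathbf{F}_q((t))$, while sequences with bounded $e_n$ live in a genuinely compact parameter space (parametrised by Eisenstein polynomials with coefficients in a compact domain). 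The analogous statement for pairs $(L/K)$ is obtained by repeating the argument one level up.

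The main obstacle is the residue characteristic $2$ case. In this setting the integral structure of $\SU_3^{L/K}$ for a ramified quadratic $L/K$ is intrinsically more delicate than in the tame situation, which is precisely why the paper carries out its analysis of $\SU_3$ in full generality without assumption on the residue characteristic. Moreover, this is where genuine degeneration phenomena occur: a sequence of the form $\SU_3^{L_n/K_n}$ can Chabauty-converge to a subgroup of the form $\SL_2(K_\infty)$, so the Tits index jumps in the limit. Identifying these limit points inside $\Aut(T)$ and verifying that they really are $\SL_2$ or $\SU_3$ over some limiting local field, rather than escaping to a non-quasi-split or non-algebraic group, will be the technical heart of the proof, and is exactly the difficulty the detailed dyadic analysis of $\SU_3$ in the body of the paper is designed to overcome.
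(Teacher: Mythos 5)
Your proposal follows the same route as the paper: classify quasi-split rank-$1$ groups as $\SL_2$ or $\SU_3^{L/K}$, use integral Bruhat--Tits models to match Chabauty proximity in $\Aut(T)$ with Krasner-style proximity of the underlying local fields (or of quadratic pairs $(K,L)$), establish compactness of the parameter space with the positive-characteristic objects as limit points, and isolate the dyadic ramified $\SU_3$ case as the locus where the Tits index degenerates. The detail you flag as the technical heart but leave open is precisely what the paper supplies: the space of dyadic ramified pairs is compactified by adjoining the inseparable pair $(\mathbf{F}_{2^n}(\!(X)\!),\mathbf{F}_{2^n}(\!(\sqrt{X})\!))$, to which the group $\SL_2(\mathbf{F}_{2^n}(\!(X)\!))$ is attached, continuity across this point is controlled by the parameter $i_0$ measuring when the local ball looks $\SL_2$-like rather than $\SU_3$-like, and closedness of $\mathcal{S}_T^{\algqs}$ then follows not by proving that Chabauty convergence forces field-data convergence but simply because the parametrising map is a continuous injection from a compact source into the Hausdorff space $\mathcal{S}_T$.
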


As recalled in Section~\ref{SSec:Q S groups}, absolutely simple, simply connected, quasi-split algebraic groups over $K$ of relative rank $ 1 $ are of the form $ \SL_2(K) $ or $ \SU_{3}^{L/K}(K) $ (see Lemma~\ref{Lem:descriptionofqu-splitgroups}). So that in effect, the main goal of the paper is only to dispose of those two ``types'' of groups.

Since the Bruhat--Tits tree of $ \SL_2(K) $ or $ \SU_{3}^{L/K}(K) $ for $ L $ a ramified extension of $ K $ (respectively $ \SU_{3}^{L/K}(K) $ for $ L $ an unramified extension of $ K $) is isomorphic to the $ (p^{n}+1) $-regular tree (respectively the semiregular tree of bidegree ($ p^{3n}+1;p^{n}+1 $)), where $ p^{n} $ is the order of the residue field of $ K $, the space $\mathcal{S}_{T}^{\algqs} $ is empty unless $ T $ is one of those trees. 

It should also be noted that for some trees $ T $, every algebraic group having $ T $ as Bruhat--Tits tree is actually quasi-split. According to the classification tables in \cite{Tits77}*{4.2 and 4.3}, this is the case if and only if $ T $ is the regular tree of degree $ p+1 $ or the semiregular tree of bidegree ($ p^{3n}+1;p^{n}+1 $). Combining this observation with Theorem~\ref{Thm:mainThm}, we get the following corollary.

\begin{corollary}\label{Cor:complete result for special degrees}
Let $ p $ be a prime number, and let $ T $ be the $ (p+1) $-regular tree, or the $ (p^{3n}+1;p^{n}+1) $-semiregular tree. Then the set $\mathcal{S}_T^{\Alg} $ coincides with $ \mathcal{S}_T^{\algqs} $, so that it is closed in $\mathcal{S}_T $.
\end{corollary}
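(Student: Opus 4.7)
The plan is to reduce the corollary to Theorem~\ref{Thm:mainThm}. It suffices to show that, for the specific trees $T$ appearing in the statement, every topologically simple algebraic group acting on $T$ is already quasi-split, i.e.\ that $\mathcal{S}_T^{\Alg} = \mathcal{S}_T^{\algqs}$. Once this equality is in hand, closedness of $\mathcal{S}_T^{\Alg}$ in $\mathcal{S}_T$ is immediate, since it coincides with $\mathcal{S}_T^{\algqs}$, which is closed by the main theorem.

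To establish the equality I would consult the Tits classification of absolutely simple simply connected algebraic groups of relative rank $1$ over a local field, together with the explicit description of the associated Bruhat--Tits trees recorded in the tables of \cite{Tits77}*{4.2 and 4.3}. Beyond the quasi-split forms $\SL_2$ and $\SU_3^{L/K}$ already treated in Section~\ref{SSec:Q S groups}, the possible groups are non-quasi-split inner or outer forms of type $A_{n-1}$: the prototypical examples are $\SL_1(D)$ for $D$ a central division algebra over $K$ of index $\geq 2$, and certain outer forms of type $^2A_{n-1}$ with $n \geq 4$ of relative rank $1$. For each of these non-quasi-split forms, the valencies of the associated tree are read off from residue-field invariants of the anisotropic kernel (essentially the residue field of a maximal order in the division algebra). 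An inspection of the tables then shows that for $q=p$ prime, no non-quasi-split form yields the $(p+1)$-regular tree, and similarly that no non-quasi-split form produces the bidegree $(p^{3n}+1; p^n+1)$: in the regular case the minimal valency forced by a non-trivial anisotropic kernel is of the form $q^d + 1$ with $d \geq 2$, which cannot equal $p+1$ for $p$ prime; in the semiregular case the asymmetric ratio $p^{3n}$ versus $p^n$ is characteristic of $\SU_3^{L/K}$ with $L/K$ unramified and excludes all other forms.

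The main (and only) obstacle is thus the case-by-case verification from the classification tables: one must check row by row that the tree valencies attached to the non-quasi-split rank-one forms never match one of the two targeted shapes. This part is purely arithmetic and elementary once the tables are at hand. Having disposed of it, applying Theorem~\ref{Thm:mainThm} to $\mathcal{S}_T^{\algqs}$ concludes the proof.
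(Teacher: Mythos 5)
Your proposal is correct and takes essentially the same route as the paper: the discussion preceding the corollary reduces it to the observation that, by the classification tables of \cite{Tits77}*{4.2 and 4.3}, the $(p+1)$-regular tree and the $(p^{3n}+1;p^{n}+1)$-semiregular tree only arise from quasi-split forms, so $\mathcal{S}_T^{\Alg}=\mathcal{S}_T^{\algqs}$ and Theorem~\ref{Thm:mainThm} applies. One small slip: the inner forms of relative rank $1$ you mean to rule out are the groups $\SL_2(D)$ (giving a $(q^d+1)$-regular tree for $D$ of degree $d$), not $\SL_1(D)$, which is anisotropic and has no tree.
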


In fact, our method yields an explicit description of the topological space $\mathcal{S}_T^{\algqs} $. To ease the statement of the explicit form of the main theorem, let us introduce some terminology. Recall that a countable totally disconnected topological space $ X $ is classified by two invariants (see \cite{MS20}*{Théorème~1}). More precisely, let $ \hat{\N} $ be the one point compactification of $ \N $ (or in other words, a topological space homeomorphic to $ \lbrace 1, \frac{1}{2}, \frac{1}{3},\dots,0\rbrace \subset \R $). If $ X^{(k)} $ is the last non-empty Cantor-Bendixson derivative of $ X $, and if $ X^{(k)} $ has $ n $ connected components, then $ X $ is homeomorphic to $ \hat{\N}^{k}\times \lbrace 1,\dots,n\rbrace $. In the statement of the following theorems, we use the notation $ \overline{K} $ for the residue field of a local field $ K $, and for any group $ G $, we write $ G/Z $ as a shorthand for the group $ G $ modulo its centre (so that the same letter $ Z $ stands for the centre of various groups). We also make a slight abuse of notation: we represent a point in $ \mathcal{S}_{T} $, which is an isomorphism class, by a representative of that class. This abuse should not cause any confusion, and will simplify notations throughout the rest of the paper.


\begin{theorem}\label{Thm:explicit form of main theorem odd}
Let $ p $ be an odd prime number, and let $ T $ be the $ (p^{n}+1) $-regular tree. Consider the following subsets of $ \mathcal{S}_T^{\algqs} $: 
\begin{align*}
&\mathcal{S}_{\SL_2} = \lbrace \SL_2(K)/Z~\vert~ K \text{ a local field with } \overline{K}\cong \mathbf{F}_{p^{n}}\rbrace\\
&\mathcal{S}_{\SU_3}^{\ram} = \lbrace \SU_3^{L/K}(K)/Z~\vert~ K \text{ a local field with } \overline{K}\cong \mathbf{F}_{p^{n}} \text{ and } L/K \text{ $ ( $separable$ ) $ quadratic ramified}\rbrace 
\end{align*}

Then $ \mathcal{S}_T^{\algqs} = \mathcal{S}_{\SL_2}\sqcup \mathcal{S}_{\SU_3}^{\ram} $ is a countable set. Furthermore, $\mathcal{S}_{\SL_2}$ $ ( $respectively $\mathcal{S}_{\SU_3}^{\ram})$ is a clopen subset of $ \mathcal{S}_T^{\algqs} $ which is homeomorphic to $ \hat{\N} $, the accumulation point being $ \SL_2(\mathbf{F}_{p^{n}}(\!(X)\!))/Z $ $ ( $respectively $ \SU_3^{L_0/\mathbf{F}_{p^{n}}(\!(X)\!)}(\mathbf{F}_{p^{n}}(\!(X)\!))/Z $, where $ L_0 $ is a $ ( $separable$ ) $ quadratic ramified extension of $ \mathbf{F}_{p^{n}}(\!(X)\!) )$.
\end{theorem}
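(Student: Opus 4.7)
The plan is to combine the set-level classification of quasi-split simple rank-one groups (Lemma~\ref{Lem:descriptionofqu-splitgroups}) with an asymptotic analysis of the integral Bruhat--Tits models of $\SL_2$ and $\SU_3^{L/K}$ as the ramification of the ground field grows.

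First I would establish the set-theoretic decomposition $\mathcal{S}_T^{\algqs} = \mathcal{S}_{\SL_2} \sqcup \mathcal{S}_{\SU_3}^{\ram}$. Any quasi-split absolutely simple simply connected group of relative rank one is of the form $\SL_2$ or $\SU_3^{L/K}$, and its Bruhat--Tits tree is the $(p^{n}+1)$-regular tree exactly when $|\overline{K}| = p^{n}$ and, in the $\SU_3$ case, $L/K$ is ramified. Disjointness is immediate from the isomorphism type of the locally compact group. Countability of each stratum reduces to the classical fact that there are countably many isomorphism classes of local fields with residue field $\mathbf{F}_{p^{n}}$: finitely many finite extensions of $\mathbf{Q}_{p}$ of each given degree, together with the unique equicharacteristic field $\mathbf{F}_{p^{n}}(\!(X)\!)$.

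Second I would prove that $\mathcal{S}_{\SL_2}$ and $\mathcal{S}_{\SU_3}^{\ram}$ are each clopen in $\mathcal{S}_T^{\algqs}$. By Theorem~\ref{Thm:mainThm}, the ambient set is already closed in $\mathcal{S}_{T}$, so what remains is to show that a Chabauty-convergent sequence inside $\mathcal{S}_T^{\algqs}$ cannot switch Tits index. In odd residue characteristic the Tits type can be read off from finite local data at a vertex, such as the structure of a pro-$p$ Sylow of the vertex stabilizer and its action on the link or on the root groups at a fixed end. These data are preserved under the Chabauty limit, because the exceptional Tits-index collapses that arise in residue characteristic~$2$ (and which the introduction advertises as genuine phenomena) do not occur when $p$ is odd. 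Consequently each stratum is both open and closed in $\mathcal{S}_T^{\algqs}$.

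Third, I would show that each stratum is homeomorphic to $\hat{\N}$ with the indicated accumulation point. Every local field $K$ of characteristic zero with residue field $\mathbf{F}_{p^{n}}$ is an isolated point in its stratum: the isomorphism type of $K$ is determined by finitely many discrete invariants (absolute ramification index, residue degree, a generating Eisenstein polynomial modulo sufficient precision), and these can all be detected from a Chabauty neighborhood of $\SL_2(K)/Z$ via the finite-depth congruence filtration of a vertex stabilizer. Conversely, any neighborhood of the equicharacteristic point $\SL_2(\mathbf{F}_{p^{n}}(\!(X)\!))/Z$ contains $\SL_2(K)/Z$ for cofinitely many characteristic-zero $K$, because as $e(K/\mathbf{Q}_{p}) \to \infty$ the truncation of $\SL_2(\mathcal{O}_{K})$ at any fixed depth is a finite $\mathbf{F}_{p^{n}}$-group scheme that agrees with the corresponding truncation of $\SL_2(\mathbf{F}_{p^{n}}[\![X]\!])$. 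The same discussion applies verbatim to $\mathcal{S}_{\SU_3}^{\ram}$, with a choice of ramified quadratic $L_{0}/\mathbf{F}_{p^{n}}(\!(X)\!)$ playing the role of the sink.

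The main obstacle I anticipate is the uniform control of the Chabauty convergence in the last step: one must show that as $e(K/\mathbf{Q}_{p})$ grows, the image of $\SL_2(\mathcal{O}_{K})$ (respectively of $\SU_3^{L/K}(\mathcal{O}_{K})$) inside $\Aut(T)$ really converges to the image of the positive characteristic group, not only that the groups are ``close'' as abstract objects. This is exactly what the systematic treatment of the integral Bruhat--Tits model of $\SL_2$ and $\SU_3^{L/K}$ over arbitrary local fields, announced in the abstract, is designed to deliver, and it is the technical heart of the proof of Theorem~\ref{Thm:explicit form of main theorem odd}.
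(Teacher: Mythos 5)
Your proposal follows the same high-level outline as the paper (parameterise each stratum by a compact space of local fields or pairs, establish Chabauty continuity via the integral Bruhat--Tits model, and read off the homeomorphism type), and you correctly identify the technical heart. Two places are, however, under-specified or take a different route, and it is worth recording what the paper does instead.

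First, for clopenness you propose a \emph{direct} argument: extract the Tits index from finite local data at a vertex (the structure of the pro-$p$ Sylow, the link action, \ldots) and observe that this data is preserved under Chabauty limits when $p$ is odd. This can be made to work, but it requires identifying a genuinely Chabauty-continuous finite-level invariant that separates the two strata. Concretely, one would compare the reductive quotient of the closed fibre of the vertex stabiliser, which is $\SL_2$ over $\overline K$ in the split case and $\SO_3 \cong \PGL_2$ over $\overline K$ in the ramified $\SU_3$ case (see Remark~\ref{Rem:reductive quotient of the closed fibre}); in residue characteristic $2$ these coincide, which is exactly why the two-stratum picture collapses there. The paper instead argues \emph{indirectly}: each stratum is the continuous image of a compact parameter space ($\mathcal{K}_{p^n}$ resp.\ $\mathcal{L}^{\ram}_{p^n}$), hence compact and therefore closed in $\mathcal{S}_T^{\algqs}$; the two strata are disjoint and together exhaust $\mathcal{S}_T^{\algqs}$; hence each is clopen. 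No local invariant ever needs to be computed or compared.

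Second, you write that disjointness (and, implicitly, the injectivity of $K \mapsto \SL_2(K)/Z$ and $(K,L) \mapsto \SU_3^{L/K}(K)/Z$) is ``immediate from the isomorphism type of the locally compact group''. This is precisely where a rigidity theorem is needed, not an observation: the paper invokes Borel--Tits \cite{BoTi73}*{Corollaire~8.13} to upgrade an abstract isomorphism of topological groups to an algebraic isomorphism of the adjoint groups over an isomorphism of the ground fields, from which one recovers $K$ and (via the minimal splitting field of a quasi-split group) $L$. Without this, injectivity and disjointness are assertions. Once Borel--Tits is on the table, your outline matches the paper's proof: compactness of $\mathcal{K}_{p^n}$ and $\mathcal{L}^{\ram}_{p^n}$ (Theorem~\ref{Thm:fundamental approximation lemma}, Propositions~\ref{Prop:explicit description of D} and~\ref{Prop:explicit description of L ram}) plus continuity (Theorems~\ref{Thm:continuity from D to Chab(Aut(T))} and~\ref{Thm:continuity from Lram to Chab(Aut(T))}) plus injectivity gives that each parameterising map is a homeomorphism onto its image, and the stated homeomorphism type and accumulation points then fall out of the explicit description of the parameter spaces.
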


Let us summarise Theorem~\ref{Thm:explicit form of main theorem odd} informally. For $ p $ an odd prime and $ T $ the $ (p^n+1) $-regular tree, the set $ \mathcal{S}_T^{\algqs} $ inside $ \Sub (\Aut (T)) $ can be pictured as follows:
\begin{center}
\begin{tikzpicture}
\foreach \x in {1,1.153,1.3636,1.6666,2.1429,3,4,5,6,7,8,9,10,11,12.5,14,16,18,20,22,24,26,28,30}
    \node (\x) [circle, draw=red, fill = red, scale = 0.2] at (10/\x,0) {};
    \node (L1)[circle, draw=red, fill = red, scale = 0.4] at (0.3,0) {};
\foreach \x in {1,1.153,1.3636,1.6666,2.1429,3,4,5,6,7,8,9,10,11,12.5,14,16,18,20,22,24,26,28,30}
    \node (\x) [circle, draw=black, fill = black, scale = 0.2] at (10/\x,-1) {};
    \node (L2)[circle, draw=black, fill = black, scale = 0.4] at (0.3,-1) {};
\end{tikzpicture}
\end{center}
\begin{center}
\begin{tikzpicture}
 \node (1) [circle, draw=red, fill = red, scale = 0.2] at (0,0) {};
 \node (2) at (0.2,0) [right]{groups of type $\SL_2(K)$};
 \node (3) at (0.2,-0.2) [below right]{char$(K)=0$, $ \overline{K}\cong \mathbf{F}_{p^n} $};
 \node (4) [circle, draw=red, fill = red, scale = 0.4] at (7,0) {};
 \node (5) at (7.2,0) [right]{$\SL_2(\mathbf{F}_{p^n}(\!(X)\!))$};
 \node (6) [circle, draw=black, fill = black, scale = 0.2] at (0,-1.2) {};
 \node (7) at (0.2,-1.2) [right]{groups of type $\SU_3^{L/K}(K)$};
 \node (8) at (0.2,-1.4) [below right]{char$(K)=0$, $ \overline{K}\cong \mathbf{F}_{p^n} $, $ L/K $ ramified};
 \node (9) [circle, draw=black, fill = black, scale = 0.4] at (7,-1.2) {};
 \node (10) at (7.2,-1.2) [right]{$\SU_3^{L_0/\mathbf{F}_{p^n}(\!(X)\!)}(\mathbf{F}_{p^n}(\!(X)\!))$};
 \node (11) at (7.2,-1.4) [below right]{$L_0/\mathbf{F}_{p^n}(\!(X)\!)$ ramified};
 \node [align=center,text width=8cm] at (5.2,-2.5)
        {
            Illustration of Theorem~\ref{Thm:explicit form of main theorem odd}
        };
\end{tikzpicture}
\end{center}

The case of $\SU_3^{L/K}$ for $L/K$ an unramified extension presents a similar behaviour.
\begin{theorem}\label{Thm:explicit form of main theorem ur}
Let $ p $ be any prime number, and let $ T $ be the $ (p^{3n}+1;p^{n}+1) $-semiregular tree. Then $\mathcal{S}_T^{\algqs} $ is homeomorphic to $ \hat{\N} $. More precisely, the countable set
\begin{align*}
\mathcal{S}_T^{\algqs} = \lbrace \SU_3^{L/K}(K)/Z~\vert&~K \text{ a local field with } \overline{K}\cong \mathbf{F}_{p^{n}}\\ &\text{ and } L \text{ $ ( $separable$ ) $ quadratic unramified}\rbrace
\end{align*}
has a unique accumulation point, namely $ \SU_3^{L/\mathbf{F}_{p^{n}}(\!(X)\!)}(\mathbf{F}_{p^{n}}(\!(X)\!))/Z $, where $ L $ is the $ ( $separable$ ) $ quadratic unramified extension of $ \mathbf{F}_{p^{n}}(\!(X)\!) $.
\end{theorem}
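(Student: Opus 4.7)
The plan is to mirror the strategy of Theorem~\ref{Thm:explicit form of main theorem odd}, replacing $\SL_2$ and ramified $\SU_3$ by unramified $\SU_3^{L/K}$. First, by inspection of the Tits classification tables in \cite{Tits77}, a quasi-split absolutely simple, simply connected algebraic group of relative rank one has Bruhat--Tits tree isomorphic to the $(p^{3n}+1;p^{n}+1)$-semiregular tree if and only if it is of the form $\SU_3^{L/K}(K)/Z$ with $L/K$ a separable quadratic unramified extension of a local field $K$ with $\overline{K}\cong \mathbf{F}_{p^{n}}$. Since for each such $K$ there is a unique quadratic unramified extension $L/K$ (characterised by $\overline{L}\cong \mathbf{F}_{p^{2n}}$), the set $\mathcal{S}_T^{\algqs}$ is in bijection with the set of isomorphism classes of local fields $K$ with $\overline{K}\cong \mathbf{F}_{p^{n}}$. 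This set is countable: it consists of the single equal-characteristic field $F:=\mathbf{F}_{p^{n}}(\!(X)\!)$ together with the (countably many) finite extensions of $\mathbf{Q}_p$ of residue degree $n$, enumerable for instance via Eisenstein polynomials over the degree-$n$ unramified extension of $\mathbf{Q}_p$.

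Second, I would show that every characteristic-zero point of $\mathcal{S}_T^{\algqs}$ is isolated. Since $\mathcal{S}_T^{\algqs}$ is already closed in $\mathcal{S}_T$ by Theorem~\ref{Thm:mainThm}, it suffices to rule out accumulation of pairwise distinct quasi-split points at a characteristic-zero point. This isolation should follow from a local analysis of the integral Bruhat--Tits model of $\SU_3^{L/K}$ for $L/K$ unramified: the isomorphism type of the compact open stabiliser $\SU_3^{L/K}(\mathcal{O}_K)\subset \Aut(T)$, together with its natural lattice of congruence subgroups, should recover the isomorphism type of $K$ when $K$ has characteristic zero, and any Chabauty-convergent sequence would eventually match this compact-subgroup data.

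Third, I would construct a sequence $(K_i)$ of characteristic-zero local fields with $\overline{K_i}\cong \mathbf{F}_{p^{n}}$ such that $\SU_3^{L_i/K_i}(K_i)/Z \to \SU_3^{L_0/F}(F)/Z$ in $\mathcal{S}_T$. A natural choice is $K_i = \mathbf{Q}_{p^{n}}(\pi_i)$, where $\mathbf{Q}_{p^{n}}$ is the unramified extension of $\mathbf{Q}_p$ of degree $n$ and $\pi_i$ is a uniformiser whose Eisenstein polynomial over $\mathbf{Q}_{p^{n}}$ has increasing degree, so that the truncated rings $\mathcal{O}_{K_i}/\pi_i^{m}\mathcal{O}_{K_i}$ can be matched with $\mathcal{O}_F/X^{m}\mathcal{O}_F$ for larger and larger $m$ as $i\to\infty$. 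Combined with the explicit description of the integral Bruhat--Tits model developed earlier in the paper, this matching transfers to a Chabauty convergence of the compact open subgroups $\SU_3^{L_i/K_i}(\mathcal{O}_{K_i})\subset \Aut(T)$, which together with the $2$-transitivity on ends upgrades to convergence of the full groups modulo centre. The main obstacle is exactly this last step: controlling the integral models across a varying base local field, and promoting Chabauty convergence of the compact open vertex stabilisers to Chabauty convergence of the whole groups $\SU_3^{L/K}(K)/Z$ inside $\Aut(T)$.
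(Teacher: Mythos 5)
Your proposal correctly identifies the technical heart of the matter (matching truncated rings $\mathcal{O}_{K_i}/\mathfrak{m}_{K_i}^m \cong \mathcal{O}_F/\mathfrak{m}_F^m$, transporting this to Chabauty proximity of vertex stabilisers via integral models, and upgrading to convergence of the full groups using $2$-transitivity on ends), and your sequence $K_i = \mathbf{Q}_{p^n}(\pi_i)$ with Eisenstein polynomials of increasing degree is indeed the right construction, as quantified by the paper's Theorem~\ref{Thm:fundamental approximation lemma}. However, there is a genuine logical problem: you invoke Theorem~\ref{Thm:mainThm} to assert that $\mathcal{S}_T^{\algqs}$ is closed in $\mathcal{S}_T$, but Theorem~\ref{Thm:mainThm} is precisely what Theorem~\ref{Thm:explicit form of main theorem ur} (together with its siblings) is intended to establish, so this is circular. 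The paper avoids this by an orthogonal route that makes closedness automatic: it first proves that the arithmetic space $\mathcal{L}^{\uram}_{p^n}$ of unramified pairs (equivalently $\mathcal{K}_{p^n}$, via the uniqueness of unramified quadratic extensions, see Proposition~\ref{Prop:explicit description of L ur}) is compact and homeomorphic to $\hat{\N}$, then shows that $(K,L)\mapsto \hat{G}_{(K,L)}$ is a \emph{continuous injective map from a compact space to a Hausdorff space}, hence a homeomorphism onto its image. Compactness of the source is what yields closedness of the image for free, so no prior appeal to Theorem~\ref{Thm:mainThm} is needed.

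Beyond the circularity, your second step (isolation of characteristic-zero points) also proceeds differently and less precisely than the paper. You propose to recover the isomorphism type of $K$ from the lattice of congruence subgroups of $\SU_3^{L/K}(\mathcal{O}_K)$, but the paper does not need any such reconstruction of $K$ from a single compact group; instead it separates two tasks. The injectivity of the map $\mathcal{L}^{\uram}_T\to\mathcal{S}_T$ is established by Borel--Tits rigidity (\cite{BoTi73}*{Corollaire~8.13}): an abstract isomorphism $\hat{G}_{(K_1,L_1)}\cong\hat{G}_{(K_2,L_2)}$ forces an isomorphism of fields $K_1\cong K_2$ and, via quasi-split rigidity and the minimal splitting extension (\cite{BrTi2}*{4.1.2}), an isomorphism $L_1\cong L_2$. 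The isolation of characteristic-zero points is then an \emph{arithmetic} statement inherited from $\mathcal{K}_{p^n}\cong\hat{\N}$ (Proposition~\ref{Prop:explicit description of D}, whose proof uses Krasner's lemma and the ultrametric triangle inequality), transported across the homeomorphism — no separate Chabauty-side argument is needed. So while your proposal traces the same technical machinery (integral models, Hensel surjectivity, vertex-stabiliser convergence), you should restructure the logic around compactness-plus-injectivity-plus-continuity rather than attempting direct isolation arguments, and you should replace the congruence-lattice heuristic with the Borel--Tits injectivity argument.
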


Let us summarise Theorem~\ref{Thm:explicit form of main theorem ur} informally. For $ p $ any prime and $ T $ the $ (p^{3n}+1;p^{n}+1) $-semiregular tree, the set $ \mathcal{S}_T^{\algqs} $ inside $ \Sub (\Aut (T)) $ can be pictured as follows:
\begin{center}
\begin{tikzpicture}
\foreach \x in {1,1.153,1.3636,1.6666,2.1429,3,4,5,6,7,8,9,10,11,12.5,14,16,18,20,22,24,26,28,30}
    \node (\x) [circle, draw=black, fill = black, scale = 0.2] at (10/\x,0) {};
    \node (L1)[circle, draw=black, fill = black, scale = 0.4] at (0.3,0) {};
\end{tikzpicture}
\end{center}

\begin{center}
\begin{tikzpicture}
 \node (1) [circle, draw=black, fill = black, scale = 0.2] at (0,-1.2) {};
 \node (2) at (0.2,-1.2) [right]{groups of type $\SU_3^{L/K}(K)$};
 \node (3) at (0.2,-1.4) [below right]{char$(K)=0$, $ \overline{K}\cong \mathbf{F}_{p^n} $, $ L/K $ unramified};
 \node (9) [circle, draw=black, fill = black, scale = 0.4] at (7,-1.2) {};
 \node (10) at (7.2,-1.2) [right]{$\SU_3^{\mathbf{F}_{p^{2n}}(\!(X)\!)/\mathbf{F}_{p^n}(\!(X)\!)}(\mathbf{F}_{p^n}(\!(X)\!))$};
  
   \node [align=center,text width=8cm] at (5.4,-2.5)
        {
            Illustration of Theorem~\ref{Thm:explicit form of main theorem ur}
        };
\end{tikzpicture}
\end{center}
 
The only remaining case when $\mathcal{S}_T^{\algqs} $ is not empty is the case of the regular tree of degree $ (2^{n}+1) $. In this case, the topological space $\mathcal{S}_T^{\algqs} $ cannot be cut into two copies of $ \hat{\N} $. Indeed, it exhibits a much richer structure.

\begin{theorem}\label{Thm:explicit form of main theorem rameven}
Let $ T $ be the $ (2^{n}+1) $-regular tree. Then $\mathcal{S}_T^{\algqs} $ is homeomorphic to $ \hat{\N}^{2} $. More precisely, 
\begin{align*}
 &\mathcal{S}_T^{\algqs} = \lbrace \SL_2(K)/Z~\vert~ K \text{ a local field with } \overline{K}\cong \mathbf{F}_{2^{n}}\rbrace \\
& \cup \lbrace \SU_3^{L/K}(K)/Z~\vert~ K \text{ a local field with } \overline{K}\cong \mathbf{F}_{2^{n}} \text{ and } L \text{ separable quadratic ramified}\rbrace
\end{align*}
is a countable set. The first Cantor-Bendixson derivative of $ \mathcal{S}_T^{\algqs} $ is
\begin{equation*}
\lbrace \SU_3^{L/\mathbf{F}_{2^{n}}(\!(X)\!)}(\mathbf{F}_{2^{n}}(\!(X)\!))/Z~\vert~L \text{ is separable 
quadratic ramified}\rbrace \cup \lbrace \SL_2(\mathbf{F}_{2^{n}}(\!(X)\!))/Z \rbrace
\end{equation*}
while its second Cantor-Bendixson derivative contains the single element $ \SL_2(\mathbf{F}_{2^{n}}(\!(X)\!))/Z $. Also, the subset $ \lbrace \SL_2(K)/Z~\vert~ K \text{ a local field with } \overline{K}\cong \mathbf{F}_{2^{n}}\rbrace $ is closed in $ \mathcal{S}_T^{\algqs} $, homeomorphic to $ \hat{\N} $ and with accumulation point $ \SL_2(\mathbf{F}_{2^{n}}(\!(X)\!))/Z $.
\end{theorem}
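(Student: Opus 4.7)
The plan is to combine the main theorem (Theorem~\ref{Thm:mainThm}), so as to secure compactness of $\mathcal{S}_{T}^{\algqs}$, with the Mazurkiewicz--Sierpi\'nski classification of countable compact Hausdorff totally disconnected spaces recalled in the introduction. Since such a space is determined up to homeomorphism by its Cantor--Bendixson rank together with the number of connected components of its last non-empty derivative, it suffices to (i) enumerate the underlying set, (ii) identify the isolated points, and (iii) compute the successive Cantor--Bendixson derivatives until a single point remains. Countability is clear because there are only countably many isomorphism classes of local fields with residue field $\mathbf{F}_{2^{n}}$ (finite extensions of $\mathbf{Q}_{2}$ containing $\mathbf{Q}_{2}(\zeta)$ for a suitable root of unity, together with the single equi-characteristic field $\mathbf{F}_{2^{n}}(\!(X)\!)$) and, for each such $K$, only countably many separable quadratic ramified extensions $L/K$.

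I would then handle the $\SL_{2}$-stratum first. The analysis of the integral Bruhat--Tits model of $\SL_{2}$ carried out in the body of the paper is insensitive to the residue characteristic, so the exact argument used in Theorem~\ref{Thm:explicit form of main theorem odd} shows that the assignment $K\mapsto \SL_{2}(K)/Z$ realises $\mathcal{S}_{\SL_{2}}=\lbrace \SL_{2}(K)/Z \rbrace$ as a copy of $\hat{\N}$ with unique accumulation point $\SL_{2}(\mathbf{F}_{2^{n}}(\!(X)\!))/Z$. To conclude that this copy of $\hat{\N}$ is \emph{closed} in $\mathcal{S}_{T}^{\algqs}$, one has to rule out the possibility that a Chabauty limit of $\SL_{2}$'s is of type $\SU_{3}^{L/K}$; this is done by observing, via the explicit description of vertex stabilisers and their Levi decompositions in terms of the residue field, that the \emph{direction} of the residue-characteristic-$2$ degeneration goes from $\SU_{3}$ towards $\SL_{2}$ and not the other way round.

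The next and technically most demanding step is the description of $\mathcal{S}_{\SU_{3}}^{\ram}$. For $K$ of characteristic $0$, the group $\SU_{3}^{L/K}(K)/Z$ is isolated in $\mathcal{S}_{T}^{\algqs}$ by a rigidity argument on the integral model analogous to the one used in the odd case. Each char-$p$ group $\SU_{3}^{L_{0}/\mathbf{F}_{2^{n}}(\!(X)\!)}(\mathbf{F}_{2^{n}}(\!(X)\!))/Z$ is the accumulation point of a natural sequence of lifts to characteristic $0$, exactly as in Theorem~\ref{Thm:explicit form of main theorem odd}; note that there are countably many such $L_{0}$ in characteristic $2$ (unlike in odd residue characteristic where the ramified quadratic extension of $\mathbf{F}_{p^{n}}(\!(X)\!)$ is essentially unique up to isomorphism), which is already a hint that the space should be larger than a single copy of $\hat{\N}$. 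The hard part of the proof, and what I expect to be the main obstacle, is to produce \emph{additional} sequences of char-$0$ groups $\SU_{3}^{L/K}(K)/Z$ whose Chabauty limit is the $\SL_{2}$-point $\SL_{2}(\mathbf{F}_{2^{n}}(\!(X)\!))/Z$. This is the Tits-index-not-preserved phenomenon advertised in the abstract: in residue characteristic $2$, by letting the anti-Hermitian form defining $\SU_{3}$ degenerate in a controlled way (e.g.\ letting the discriminant of the quadratic extension go to zero relative to the uniformiser of $K$), the integral Bruhat--Tits model of $\SU_{3}$ converges, as a closed cocompact subgroup of $\Aut(T)$, to the integral Bruhat--Tits model of $\SL_{2}$ over $\mathbf{F}_{2^{n}}(\!(X)\!)$.

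Assembling the previous steps yields the Cantor--Bendixson picture. The set of isolated points of $\mathcal{S}_{T}^{\algqs}$ is exactly the set of characteristic-$0$ groups. The first derivative $(\mathcal{S}_{T}^{\algqs})^{(1)}$ consists of $\SL_{2}(\mathbf{F}_{2^{n}}(\!(X)\!))/Z$ together with the countably many points $\SU_{3}^{L_{0}/\mathbf{F}_{2^{n}}(\!(X)\!)}(\mathbf{F}_{2^{n}}(\!(X)\!))/Z$. Inside $(\mathcal{S}_{T}^{\algqs})^{(1)}$, each $\SU_{3}$-point is again isolated (no sequence of $\SU_{3}^{L_{0}/\mathbf{F}_{2^{n}}(\!(X)\!)}$'s with varying $L_{0}$ can converge to another such point, since they all sit over the same base field), whereas $\SL_{2}(\mathbf{F}_{2^{n}}(\!(X)\!))/Z$ is the Chabauty limit of the $\SU_{3}^{L_{0}/\mathbf{F}_{2^{n}}(\!(X)\!)}$'s as $L_{0}$ varies (by the same residue-characteristic-$2$ degeneration mechanism). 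Consequently $(\mathcal{S}_{T}^{\algqs})^{(2)}=\lbrace \SL_{2}(\mathbf{F}_{2^{n}}(\!(X)\!))/Z \rbrace$, which by Mazurkiewicz--Sierpi\'nski forces $\mathcal{S}_{T}^{\algqs}\cong \hat{\N}^{2}$, and the remaining assertions of the theorem follow immediately from this structural description.
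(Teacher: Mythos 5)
Your plan correctly identifies the characteristic-$2$ degeneration mechanism ($\SU_3$ Chabauty-converging to $\SL_2$ as the discriminant becomes relatively small), and the eventual Cantor--Bendixson picture you describe is the right one. However, the argument as sketched has two genuine gaps and one likely circularity that would have to be repaired.

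First, invoking Theorem~\ref{Thm:mainThm} to "secure compactness of $\mathcal{S}_T^{\algqs}$" is circular: in the paper there is no independent proof of Theorem~\ref{Thm:mainThm}, which is deduced from Theorems~\ref{Thm:explicit form of main theorem odd}, \ref{Thm:explicit form of main theorem ur} and~\ref{Thm:explicit form of main theorem rameven} themselves. The paper instead obtains compactness directly by exhibiting $\mathcal{S}_T^{\algqs}$ as the union of two continuous images of compact \emph{arithmetic} moduli spaces: $\mathcal{K}_{2^n}$ (Proposition~\ref{Prop:explicit description of D}) and $\mathcal{L}^{\ram}_{2^n}$ (Proposition~\ref{Prop:explicit description of L rameven}), the latter including the \emph{inseparable} pair $\bigl(\mathbf{F}_{2^n}(\!(X)\!),\mathbf{F}_{2^n}(\!(\sqrt{X})\!)\bigr)$ whose associated group $\SL_2(\mathbf{F}_{2^n}(\!(\sqrt{X})\!))\cong\SL_2(\mathbf{F}_{2^n}(\!(X)\!))$ is precisely the gluing point. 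Your proposal never identifies this inseparable pair as the object realizing the $\SU_3\to\SL_2$ collapse, and it is essential to the topology of the union.

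Second, your argument that the $\SL_2$-stratum is closed --- "the direction of the degeneration goes from $\SU_3$ towards $\SL_2$" --- is a heuristic, not a proof; one needs to rule out that a sequence $\SL_2(K_i)/Z$ can converge to some $\SU_3^{L/K}(K)/Z$, and nothing in the informal phrasing does that. The paper gets closedness for free because $\mathcal{K}_{2^n}\to\mathcal{S}_T$ is an injective continuous map from a compact Hausdorff space, hence a homeomorphism onto its (automatically closed) image. Third, your justification that each $\SU_3^{L_0/\mathbf{F}_{2^n}(\!(X)\!)}$ is isolated in the first derivative --- "since they all sit over the same base field" --- is simply wrong as an argument: having the same base field does not prevent convergence. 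The correct reason, used in the paper via Lemma~\ref{Lem:further results on mathcal L} and Corollary~\ref{Cor:variation on fundamental lemma}, is that the parameter $i_0$ (the conductor of the ramified extension $L/K$) takes discrete values and that for each value of $i_0$ there are only finitely many extensions up to isomorphism, so the only accumulation point of the separable char-$2$ pairs inside $(\mathcal{L}^{\ram}_{2^n})^{(1)}$ is the inseparable pair, where $i_0=\infty$.
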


We also draw a picture illustrating Theorem~\ref{Thm:explicit form of main theorem rameven}. Let $ T $ be the $ (2^n+1) $-regular tree. The set $ \mathcal{S}_T^{\algqs} $ inside $ \Sub (\Aut (T)) $ can be pictured as follows:
\begin{center}
\begin{tikzpicture}
\foreach \x in {2.1703,3,4.8,7,9,11,14,18,22,26,30}
\foreach \y in {2.1703,3,4.8,7,9,11,14,18,22,26,30}
    \node (\x,\y) [circle, draw=black, fill = black, scale = 0.2] at (-10/\x,10/\y) {};
\foreach \y in {2.1703,3,4.8,7,9,11,14,18,22,26,30}
\node (L\y)[circle, draw=black, fill = black, scale = 0.4] at (-0.3,10/\y) {};
\foreach \x in {2.1703,3,4.8,7,9,11,14,18,22,26,30}
   \node (L\x)[circle, draw=black, fill = black, scale = 0.4] at (-10/\x,0.3) {};
\foreach \x in {1.6666,2.1429,3,4,5,6,7,8,9,10,11,12.5,14,16,18,20,22,24,26,28,30}
\node (\x) [circle, draw=red, fill = red, scale = 0.2] at (10/\x -0.5,0.27) {};
\node (LL)[circle, draw=yellow, fill = yellow, scale = 0.6] at (-0.25,0.25) {};
\end{tikzpicture}
\end{center}
\begin{center}
\begin{tikzpicture}
 \node (1) [circle, draw=red, fill = red, scale = 0.2] at (0,0) {};
 \node (2) at (0.2,0) [right]{groups of type $\SL_2(K)$};
 \node (3) at (0.2,-0.2) [below right]{char$(K)=0$, $ \overline{K}\cong \mathbf{F}_{2^n} $};
 \node (4) [circle, draw=yellow, fill = yellow, scale = 0.6] at (7,0) {};
 \node (5) at (7.2,0) [right]{$\SL_2(\mathbf{F}_{2^n}(\!(X)\!))$};
 \node (6) [circle, draw=black, fill = black, scale = 0.2] at (0,-1.2) {};
 \node (7) at (0.2,-1.2) [right]{groups of type $\SU_3^{L/K}(K)$};
 \node (8) at (0.2,-1.4) [below right]{char$(K)=0$, $ \overline{K}\cong \mathbf{F}_{2^n} $, $ L/K $ ramified};
 \node (9) [circle, draw=black, fill = black, scale = 0.4] at (7,-1.2) {};
 \node (10) at (7.2,-1.2) [right]{groups of type $\SU_3^{L/\mathbf{F}_{2^n}(\!(X)\!)}(\mathbf{F}_{2^n}(\!(X)\!))$};
 \node (11) at (7.2,-1.4) [below right]{$L/\mathbf{F}_{2^n}(\!(X)\!)$ separable ramified};
   \node [align=center,text width=8cm] at (6.2,-2.5)
        {
            Illustration of Theorem~\ref{Thm:explicit form of main theorem rameven}
        };
\end{tikzpicture}
\end{center}

It is also important to note that in the statement of Theorem~\ref{Thm:explicit form of main theorem odd}, Theorem~\ref{Thm:explicit form of main theorem ur} and Theorem~\ref{Thm:explicit form of main theorem rameven}, we do not describe precisely $ \mathcal{S}_T^{\algqs} $ as a set. Indeed, we do not give the criterion allowing one to know when two equivalence classes are the same, or in other words when two given groups appearing in those theorems are topologically isomorphic. A precise criterion could be easily stated using the work of J.\ Tits on abstract homomorphism of algebraic groups (this is discussed in more details in the proof of those theorems). For example, let $ L $ and $ L'$ be the two quadratic ramified extensions of $ \mathbf{F}_{p^n}(\!(T)\!) $ for $ p $ an odd prime. Then the pairs of fields ($ \mathbf{F}_{p^n}(\!(T)\!) $, $ L $) and ($ \mathbf{F}_{p^n}(\!(T)\!) $, $ L' $) are isomorphic (in the sense of Definition~\ref{Def:space L ramified}, see also Remark~\ref{Rem:one accumulation point ram}). This explains why in Theorem~\ref{Thm:explicit form of main theorem odd}, the accumulation point $\SU_3^{L_0/\mathbf{F}_{p^n}(\!(T)\!)}(\mathbf{F}_{p^n}(\!(T)\!))$ is represented by any (of the two) quadratic ramified extension $ L_0 $ of $ \mathbf{F}_{p^n}(\!(T)\!) $.
  
As one can see from Theorem~\ref{Thm:explicit form of main theorem rameven}, we face a more complex situation in residue characteristic $ 2 $. Indeed, this theorem implies that the split group $ SL_2 (\mathbf{F}_{2^{n}} (\!(X)\!))/Z  $ is a limit of unitary groups, thereby illustrating the fact that the Tits index need not be preserved under Chabauty limits in residue characteristic $ 2 $. In other words, the map associating to an isomorphism class in $ \mathcal{S}_T^{\Alg} $ its Tits index is not continuous. The specific features of Chabauty limits in residue characteristic $ 2 $ highlight the complexity of the aforementioned conjecture, which will be addressed in full generality in a forthcoming paper, but with different methods.
%

Despite the fact that $ \mathcal{S}_T^{\algqs} $ depends very much on $ T $, the strategy to prove our results is the same for all $ T $ and for all algebraic groups under consideration (i.e. $ \SL_2 $ or $ \SU_3 $). Let us outline it in the $ \SL_2 $ case (our notational conventions for local fields are spelled out at the beginning of Section~\ref{Sec:Def of alg. grps}). 
\begin{enumerate}
\item\label{Itm:first} In Definition~\ref{Def:buildingassociated to a data} and Section~\ref{SSec:B-T Tree SL_2(D)}, we recall the definition of the Bruhat--Tits tree: 
\begin{equation*}
\mathcal{I} = \SL_2(K)\times \mathbf{R}/\sim
\end{equation*}
\item In Lemma~\ref{Lem:B_0(r) is really the ball of radius r}, we observe that after renormalising the valuation so that $ \omega (\pi_K) = 1 $, the ball around $ 0 $ of radius $ r $ in $ \mathcal{I} $ is: 
\begin{equation*}
B_0(r) = \lbrace [(g,x)]\in \mathcal{I}~\vert~g\in \SL_2(\mathcal{O}_{K}) , x\in [-\omega (\pi_{K}^{r}), \omega (\pi_{K}^{r})]\subset \R \rbrace
\end{equation*}
\item\label{Itm:last} In Definition~\ref{Def:local tree of radius r SL2}, we define a local version (around $ 0 $ and of radius $ r $) of the Bruhat--Tits tree:
\begin{align*}
\mathcal{I}^{0,r} = \SL_2(\mathcal{O}_{K}/\mathfrak{m}_{K}^{r}) \times [-\omega (\pi_{K}^{r}),\omega (\pi_{K}^{r})]/\sim_{0,r}
\end{align*}
and we show in Theorem~\ref{Thm:localdescriptionoftheball} that the homomorphism $ \SL_2(\mathcal{O}_{K})
\to \SL_2(\mathcal{O}_{K}/\mathfrak{m}_{K}^{r}) $ induces an $ (\SL_2(\mathcal{O}_{K})\to 
\SL_2(\mathcal{O}_{K}/\mathfrak{m}_{K}^{r}))$-equivariant bijection $ B_{0}(r)\to \mathcal{I}^{0,r} 
$.
\item Following an idea dating back to M.\ Krasner (see \cite{D84} for references, this idea is also used in e.g. \cite{Ka86}), we define a metric $ d $ on the space $ \mathcal{K} $ of (isomorphism classes of) local fields by declaring that for $ r\in \N $ and $ K_1,K_2 \in \mathcal{K} $, $ d(K_1;K_2)\leq \frac{1}{2^{r}} $ if and only if $ \mathcal{O}_{K_{1}}/\mathfrak{m}_{K_1}^{r}\cong \mathcal{O}_{K_2}/\mathfrak{m}_{K_2}^{r} $ (see Lemma~\ref{Lem:Non-archim. metric space}). We observe in Proposition~\ref{Prop:explicit description of D} that the space $ \mathcal{K}_{p^{n}} $ of (isomorphism classes of) local fields having residue field $ \mathbf{F}_{p^{n}} $ is homeomorphic to $ \hat{\N} $.
\item Points \ref{Itm:first} to \ref{Itm:last} imply that if $ K_1 $ and $ K_2 $ are close to each other in $ \mathcal{K}_{p^{n}} $, then $ \SL_2(\mathcal{O}_{K_1}) $ and $ \SL_2(\mathcal{O}_{K_2}) $ are close to each other in the Chabauty space of $ \Aut (T_{p^{n}+1}) $ (where $ T_{p^{n}+1} $ is the ($ p^{n}+1 $)-regular tree). Indeed, up to isomorphism, they act in the same way on a large ball centred at $ 0 $. This is the key step in the proof of Theorem~\ref{Thm:continuity from D to Chab(Aut(T))}.
\item We are then able to conclude effortlessly, using a rigidity argument, that the map $ \mathcal{K}_{p^{n}}\to\mathcal{S}_{T_{p^{n}+1}}^{\Alg}\colon K\mapsto \SL_2(K)/Z $ is a homeomorphism onto its image.
\end{enumerate}

In light of this outline, it seems natural to consider Theorem~\ref{Thm:localdescriptionoftheball} (together with its variants for other types of groups) as the central result of this paper.

A key tool to implement our strategy is the existence of good functors from $ \mathcal{O}_{K} $-algebras (such as $ \mathcal{O}_{K}/\mathfrak{m}_{K}^{r} $) to groups (like $ \SL_2(\mathcal{O}_{K}/\mathfrak{m}_{K}^{r}) $). The integral model provided by Bruhat--Tits theory plays the role of this good functor. In the $ \SL_2 $ case, this is just the algebraic group $ \SL_2 $ considered over $ \mathcal{O}_{K} $. But a description of the integral model is not always so straightforward, and an important feature of this article is an explicit computation of Bruhat--Tits models for $ \SU_3^{L/K} $, especially in the more delicate case when the residue characteristic is $ 2 $ and $ L $ is ramified.
%

The complexity of the integral model of $ \SU_3^{L/K} $ when the residue characteristic is $ 2 $ and $ L $ is ramified also explains why we get a different behaviour for regular trees of degree $ 2^n+1 $ in Theorem~\ref{Thm:explicit form of main theorem rameven}. As often in the theory of algebraic groups, the characteristic $ 2 $ case is more involved to work out (and in our situation, it is again because of the presence of orthogonal groups in characteristic $ 2 $ lurking in the background, see Remark~\ref{Rem:reductive quotient of the closed fibre}). However, one should always make the effort of including this case, if only to avoid the wrath of J.\ Tits (see for example the introduction of \cite{KMRT98}).

It also appears that studying convergence of groups isomorphic to $ \SL_2(D)/Z $ (where $ D $ is a finite dimensional central division algebra over a local field $ K $) can be done in parallel to the $ \SL_2(K) $ case. Hence we decided to treat this case as well in this paper. We stress that this is only an opportunistic choice, and that the other cases should be settled by first considering similar questions in arbitrary rank for quasi-split groups, and then by applying a descent method.

Nevertheless, thanks to this treatment, we get the following results as well.
\begin{theorem}
Let $ T $ be a locally finite leafless tree, and let $\mathcal{S}_T^{\SL_2(D)} $ be the set of isomorphism classes of topologically simple algebraic groups acting on $ T $ that are furthermore isomorphic to $ \SL_2(D)/Z $ for some central division algebra $ D $. Then $\mathcal{S}_T^{\SL_2(D)} $ is closed in $\mathcal{S}_T $.
\end{theorem}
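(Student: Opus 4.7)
The plan is to replicate, mutatis mutandis, the six-step strategy outlined for the $\SL_2(K)$ case, with division algebras playing the role of local fields throughout. First, I would fix the tree $T$ and observe that if $T$ is not $(q^d+1)$-regular for some prime power $q$ and integer $d\geq 1$ then $\mathcal{S}_T^{\SL_2(D)}$ is empty; otherwise any $D$ contributing to $\mathcal{S}_T^{\SL_2(D)}$ is a finite dimensional central division algebra over a local field $K$ with $|\overline{K}|=q^{d/\delta}$ and $\dim_K D = \delta^2$ for some divisor $\delta$ of $d$, since the Bruhat--Tits tree of $\SL_2(D)$ is the $(|\overline{D}|+1)$-regular tree where $\overline{D}$ is the (finite by Wedderburn) residue division algebra. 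There are only finitely many such pairs $(d/\delta,\delta)$, so it suffices to treat each independently and take the finite union.

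Next, I would set up the ``Krasner--metric'' on the set of isomorphism classes of pairs $(K,D)$ with prescribed residual data: declare $d((K_1,D_1),(K_2,D_2))\leq 2^{-r}$ exactly when there is a ring isomorphism $\mathcal{O}_{D_1}/\mathfrak{m}_{D_1}^{r}\cong \mathcal{O}_{D_2}/\mathfrak{m}_{D_2}^{r}$ restricting to an isomorphism of the centres $\mathcal{O}_{K_1}/\mathfrak{m}_{K_1}^{r'}\cong \mathcal{O}_{K_2}/\mathfrak{m}_{K_2}^{r'}$ (with $r'$ the corresponding radius in the centre). One then shows, exactly as in Proposition~\ref{Prop:explicit description of D}, that this is an ultrametric and that the corresponding space $\mathcal{D}_{q,\delta}$ is a countable compact totally disconnected space, accumulating at the unique equicharacteristic example. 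The integral Bruhat--Tits model of $\SL_2$ over $\mathcal{O}_D$ is again just $\SL_2$ viewed over the (non-commutative) ring $\mathcal{O}_D$, so balls in the tree around the base vertex identify, via the analogues of Lemma~\ref{Lem:B_0(r) is really the ball of radius r} and Theorem~\ref{Thm:localdescriptionoftheball}, with quotients by $\SL_2(\mathcal{O}_D/\mathfrak{m}_D^r)$; crucially, this quotient depends only on the class of $(K,D)$ in $\mathcal{D}_{q,\delta}$ modulo $2^{-r}$.

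Combining these two points yields the continuity statement that the map $\mathcal{D}_{q,\delta}\to \Sub(\Aut(T))$ sending $(K,D)\mapsto \SL_2(\mathcal{O}_D)$ is continuous, because isomorphic actions on arbitrarily large balls force Chabauty proximity. Passing to the full group $\SL_2(D)/Z$ requires only that $\SL_2(\mathcal{O}_D)$ is a maximal compact subgroup of a topologically simple $2$-transitive subgroup of $\Aut(T)$, which then determines $\SL_2(D)/Z$ uniquely inside $\mathcal{S}_T$; this is where the rigidity argument of \cite{CR16}, together with the Hausdorffness of $\mathcal{S}_T$, lets us upgrade continuity to a homeomorphism onto the image and to conclude that the image is closed.

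The main obstacle, as in the quasi-split case, is the book-keeping needed to make the local model theorem work: one must verify that the bijection $B_0(r)\to \mathcal{I}^{0,r}$ transports the Chabauty proximity of stabilisers faithfully, and that the isomorphism of truncated rings $\mathcal{O}_{D_1}/\mathfrak{m}_{D_1}^r\cong \mathcal{O}_{D_2}/\mathfrak{m}_{D_2}^r$ really induces an isomorphism of local pieces of the tree that is equivariant on both sides. Once Theorem~\ref{Thm:localdescriptionoftheball} is granted in the $\SL_2(D)$ setting, the rest of the argument is formal and parallels the quasi-split $\SL_2(K)$ case verbatim; no new input from the delicate residue-characteristic $2$ analysis is needed, since $D$ being a division algebra forces the integral model to remain smooth and reductive.
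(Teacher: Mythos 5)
Your plan matches the paper's Section~4 essentially step for step: the Krasner ultrametric on isomorphism classes of division algebras, compactness of the residual stratum, the local model theorem identifying balls of radius $rd$ with data over $\mathcal{O}_D/\mathfrak{m}_D^{rd}$, and the rigidity-based upgrade from vertex-stabiliser convergence to whole-group convergence. Two small inaccuracies are worth flagging. First, within a fixed stratum $\mathcal{D}_{q,\delta}$ there are $\varphi(\delta)$ equicharacteristic accumulation points (one for each Hasse invariant in $(\Z/\delta\Z)^{\times}$), not a unique one; this does not affect the compactness you need, but your description of the homeomorphism type is off for $\delta \geq 3$. Second, the step promoting Chabauty convergence of the stabilisers $\SL_2(\mathcal{O}_{D_i})$ to convergence of the full groups $\SL_2(D_i)/Z$ in the paper invokes \cite{CS15} and \cite{Pink98} in addition to \cite{CR16}: one first extracts a topologically simple subsequential limit $H$ with a compact open subgroup isomorphic to $\SL_2(\mathcal{O}_D)$, then concludes that $H$ is algebraic, and finally that $H\cong\SL_2(D)/Z$. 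Your phrasing that a maximal compact subgroup \emph{determines} the ambient simple group is the right intuition but compresses this chain of nontrivial results.
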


Hence, for the reasons explained before Corollary~\ref{Cor:complete result for special degrees} and according to the tables in \cite{Tits77}*{4.2 and 4.3}, we obtain the following strengthening of Corollary~\ref{Cor:complete result for special degrees}.
\begin{corollary}\label{Cor:complete result for special degrees strengthening}
Let $ p $ be a prime number, and let $ T $ be the $ (p^{n}+1) $-regular tree where $ n $ is not divisible by $ 3 $, or the $ (p^{3n}+1;p^{n}+1) $-semiregular tree. Then the set $\mathcal{S}_T^{\Alg} $ coincides with $ \mathcal{S}_T^{\algqs}\cup \mathcal{S}_{T}^{\SL_2(D)} $, so that it is closed in $\mathcal{S}_T $.
\end{corollary}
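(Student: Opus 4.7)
The plan is to split the statement into a topological part (closedness) and a combinatorial part (set equality). For the topological part, $\mathcal{S}_T^{\algqs}\cup\mathcal{S}_T^{\SL_2(D)}$ is a union of two closed subsets of the Hausdorff space $\mathcal{S}_T$, one closed by Theorem~\ref{Thm:mainThm} and the other by the theorem that immediately precedes this corollary, hence itself closed. Therefore it suffices to establish the set-theoretic identity
\[
\mathcal{S}_T^{\Alg} \;=\; \mathcal{S}_T^{\algqs}\cup\mathcal{S}_T^{\SL_2(D)}
\]
for the two families of trees $T$ at issue.

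For the combinatorial part I would consult the classification tables \cite{Tits77}*{4.2 and 4.3}, which enumerate every Tits index of an absolutely simple, simply connected algebraic group of relative rank~$1$ over a local field together with its anisotropic kernel. A case-by-case reading shows that such a group belongs to one of three families: (i) the inner type-$A$ forms $\SL_2(D)/Z$ for $D$ a central division $K$-algebra of dimension $d^{2}$, whose Bruhat--Tits tree is the $(q^{d}+1)$-regular tree (with $q=\vert\overline{K}\vert$); (ii) the quasi-split outer forms $\SU_3^{L/K}(K)/Z$ for $L/K$ separable quadratic, whose tree is $(q+1)$-regular or $(q^{3}+1;q+1)$-semiregular according as $L/K$ is ramified or unramified; and (iii) the non-quasi-split outer forms of type $^{2}A_{5}$, arising from an isotropic hermitian form on a rank-$3$ module over a quaternion division algebra over $L$ equipped with an involution of the second kind, whose Bruhat--Tits tree is, by inspection of the same tables, the $(q^{3}+1)$-regular tree.

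The desired identity then follows from a short numerical check. For $T$ the $(p^{3n}+1;p^{n}+1)$-semiregular tree, the only family producing a semiregular tree is the unramified sub-case of~(ii), which lies in $\mathcal{S}_T^{\algqs}$; no group from~(i) or~(iii) qualifies because those trees are regular. For $T$ the $(p^{n}+1)$-regular tree with $3\nmid n$, a group from family~(iii) would force $p^{n}+1=q^{3}+1$ for some prime power $q$, hence $3\mid n$, contradicting the hypothesis. In both cases every element of $\mathcal{S}_T^{\Alg}$ is therefore either quasi-split or of the form $\SL_2(D)/Z$, yielding the equality.

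The only genuine effort lies in family~(iii): isolating it correctly in \cite{Tits77} and reading off the degree of its Bruhat--Tits tree. This is precisely the piece of the classification already invoked (for a different purpose) in the remark preceding Corollary~\ref{Cor:complete result for special degrees}, so no new ingredient is required beyond \cite{Tits77}.
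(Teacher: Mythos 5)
Your overall plan is the right one, and it matches what the paper intends: closedness follows because a union of two closed subsets of the Hausdorff space $\mathcal{S}_T$ is closed, and the set identity $\mathcal{S}_T^{\Alg}=\mathcal{S}_T^{\algqs}\cup\mathcal{S}_T^{\SL_2(D)}$ is read off from Tits' classification of local indices. However, your description of the third family contains a genuine error. The outer forms of type $^{2}A_{5}$ you invoke --- built from a hermitian form of rank $3$ over a quaternion \emph{division} algebra over $L$ carrying an involution of the second kind --- do not exist over local fields. Indeed, a central simple $L$-algebra $A$ admits a $K$-linear involution of the second kind if and only if $\mathrm{cor}_{L/K}(A)$ is split (Albert's theorem); and for $L/K$ a quadratic extension of non-archimedean local fields, the corestriction $\mathrm{cor}_{L/K}\colon\mathrm{Br}(L)\to\mathrm{Br}(K)$ corresponds to the \emph{identity} map under the Hasse invariant isomorphisms $\mathrm{inv}_L,\mathrm{inv}_K$, hence is injective. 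So the only central simple $L$-algebra with such an involution is the split one, and your family (iii) is empty. More generally, no rank-one outer form of type $^{2}A_{n}$ with $n>2$ built on a non-split division algebra over $L$ arises over local fields.

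The hypothesis $3\nmid n$ is nonetheless not spurious: Tits' local tables do contain a further rank-one index, distinct from $\SL_2(D)$ and from quasi-split $\SU_3$, whose Bruhat--Tits tree is $(q^3+1)$-regular. It is the entry $^{2}A_{3,1}^{(1)}$ --- i.e., $\SU_4(L/K,h)$ with anisotropic kernel $\SL_1(Q)$, equivalently a non-quasi-split rank-one form of $\Spin_6$ --- and this is the family that must be excluded by the arithmetic condition on $n$. Once this entry is substituted for the nonexistent $^{2}A_5$ forms, your elimination argument goes through verbatim, since it only uses the fact that the extra entry has tree of degree $q^3+1$; but as written, the verification rests on an identification of the ``problematic'' index that is mathematically impossible over a local field, which is the sort of slip the tables are specifically there to prevent.
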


Again, just as for the quasi-split case, we are actually able to describe explicitly the topological space $ \mathcal{S}_{T}^{\SL_2(D)} $ and all the convergences in this space. In the following theorem, $ \overline{D} $ denotes the residue field of a finite dimensional central division algebra $ D $ over $ K $. We also make the same abuse of notations than in the previous theorems (in particular, the same letter $Z$ denotes the centre of various groups).
\begin{theorem}\label{Thm:explicit form for SL_2(D)}
Let $ T $ be the $ (p^{n}+1) $-regular tree.
\begin{enumerate}
\item The topological space $ \mathcal{S}_{T}^{\SL_2(D)} $ is homeomorphic to $ \hat{\mathbf{N}}\times \lbrace 1,\dots,\lceil \frac{n+1}{2}\rceil \rbrace $. The first Cantor-Bendixson derivative of $ \mathcal{S}_{T}^{\SL_2(D)} $ is the set $$ \lbrace \SL_2(D)/Z~\vert~\overline{D}\cong \mathbf{F}_{p^n} \text{ and } D \text{ is of characteristic } p\rbrace \subset \mathcal{S}_T, $$
which contains $ \lceil \frac{n+1}{2}\rceil $ elements.
\item For $ i\in \N $, let $ D_i $ $ ( $respectively $ D) $ be a finite dimensional central division algebra over $ K_i $ $ ( $respectively $ K) $ having residue field of cardinality $ p^n $. Let $ d_i $ $ ( $respectively $ d) $ be the degree of $ D_i $ $ ( $respectively $ D) $, so that $ \vert \overline{K}_i\vert^{d_i} = p^n = \vert \overline{K}\vert^{d} $, where $ \overline{K}_i $ $ ( $respectively $ \overline{K} )$ denotes the residue field of $ K_i $ $ ( $respectively $ K) $. Let $ r_i $ $ ( $respectively $ r) $ be the Hasse invariant of $ D_i $ $ ( $respectively $ D) $, as in Definition~\ref{Def:Hasse invariant}. If $ (\SL_2(D_i)/Z)_{i\in \N} $ converges to $ \SL_2(D)/Z $ in the Chabauty space \textbf{$ \Sub $}$ (\Aut (T)) $, then for all $ i $ large enough, $ r_i = \pm r $ and $ d_i = d $, so that $ \vert \overline{K}_i\vert = \vert \overline{K}\vert $ as well.
\end{enumerate}
\end{theorem}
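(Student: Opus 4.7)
The plan is to transpose, step by step, the six-stage strategy that the introduction outlines for $\SL_2(K)$ to the case of $\SL_2(D)$, using as integral model the group scheme $\SL_2$ over the unique maximal order $\mathcal{O}_D \subset D$. First I would revisit the explicit construction of the Bruhat--Tits tree of $\SL_2(D)/Z$, in which the role of the uniformiser $\pi_K$ is played by a uniformiser $\pi_D$ of $\mathcal{O}_D$ with $\pi_D^d$ a unit multiple of $\pi_K$, and $d$ the degree of $D$; the residue skew-field $\overline{D}$ is commutative of cardinality $|\overline{K}|^d$, so the tree is $(p^n+1)$-regular precisely when $|\overline{D}|=p^n$, forcing $d\mid n$. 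I would then mimic Definition~\ref{Def:local tree of radius r SL2} and Theorem~\ref{Thm:localdescriptionoftheball} to construct a local tree $\mathcal{I}^{0,r}$ that depends only on the truncated order $\mathcal{O}_D/\mathfrak{m}_D^{s(r)}$ for a suitable function $s(r)$, and prove the analogous equivariant bijection with the ball of radius $r$. The main technical obstacle is here: $\mathcal{O}_D$ is non-commutative, so I must check that Bruhat decompositions, commutation relations and the identification of edges around a vertex still descend cleanly to truncations; this is essentially a careful linear algebra exercise over $\mathcal{O}_D/\mathfrak{m}_D^{s(r)}$, using that this truncated order is free as a module over $\mathcal{O}_K/\mathfrak{m}_K^{?}$.

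Next I would introduce, in the spirit of the Krasner metric on $\mathcal{K}$, a compatible non-archimedean distance on the space $\mathcal{D}$ of isomorphism classes of pairs $(K,D)$, declaring $(K_1,D_1)$ and $(K_2,D_2)$ to be at distance $\le 2^{-r}$ whenever there is an isomorphism of truncated orders $\mathcal{O}_{D_1}/\mathfrak{m}_{D_1}^r\cong \mathcal{O}_{D_2}/\mathfrak{m}_{D_2}^r$ inducing a compatible isomorphism on the base local rings. I would then give an explicit description of the subspace $\mathcal{D}_{p^n}\subset \mathcal{D}$ of pairs with $|\overline{D}|=p^n$ by stratifying by the degree $d\mid n$: each stratum $\mathcal{D}_{p^n}^{d}$ is parametrised by a field $K\in\mathcal{K}_{p^{n/d}}$ together with a Hasse invariant $r\in(\Z/d\Z)^\times$, with the identification $D\sim D^{\mathrm{op}}$ (which corresponds to $r\leftrightarrow -r$) reflecting the isomorphism $\SL_2(D)\cong\SL_2(D^{\mathrm{op}})$ given by inverse-transpose. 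Combining Proposition~\ref{Prop:explicit description of D} with this parametrisation, $\mathcal{D}_{p^n}$ becomes a finite disjoint union of copies of $\hat{\N}$, one per pair $(d,\{r,-r\})$, with accumulation point the characteristic-$p$ field $\mathbf{F}_{p^{n/d}}(\!(X)\!)$ equipped with the corresponding division algebra. A routine count using $\sum_{d\mid n}\varphi(d)=n$ and separating $d\in\{1,2\}$ from $d\ge 3$ yields exactly $\lceil (n+1)/2\rceil$ strata, which will match the structure $\hat{\N}\times\{1,\dots,\lceil(n+1)/2\rceil\}$ claimed in part~(1).

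Having set this up, the same argument as for $\SL_2(K)$ shows that the map $\mathcal{D}_{p^n}\to \mathcal{S}_T^{\SL_2(D)}:(K,D)\mapsto \SL_2(D)/Z$ is continuous: close pairs give $\SL_2(\mathcal{O}_D)$-actions that agree on arbitrarily large balls of $T$ up to conjugation, hence that are Chabauty-close. A rigidity argument (the same kind that concludes the $\SL_2$ case in step~(6) of the outline) then upgrades this to a homeomorphism onto its image, which together with the Cantor--Bendixson analysis of the fibres gives part~(1). The elements of the first derived set are exactly the accumulation points of each stratum, i.e.\ the $\lceil (n+1)/2\rceil$ groups $\SL_2(D)/Z$ with $D$ of characteristic $p$ and $\overline{D}\cong\mathbf{F}_{p^n}$.

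For part~(2) I would argue by extraction: if $\SL_2(D_i)/Z\to\SL_2(D)/Z$ in the Chabauty topology, then by the local-ball description the truncations $\mathcal{O}_{D_i}/\mathfrak{m}_{D_i}^{s}$ must, for every fixed $s$ and all $i$ large enough, be isomorphic to $\mathcal{O}_{D}/\mathfrak{m}_{D}^{s}$ (possibly after passing to the opposite algebra, to absorb the $D\sim D^{\mathrm{op}}$ ambiguity). Taking $s$ large enough so that the truncated order determines both the degree and the Hasse invariant (which is already visible on $\mathcal{O}_D/\mathfrak{m}_D^{d+1}$), one concludes $d_i=d$ and $r_i\equiv\pm r\pmod{d}$ for all $i\gg 0$, and hence $|\overline{K}_i|=p^{n/d}=|\overline{K}|$. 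The expected hard part is precisely the bookkeeping needed to show that the Hasse invariant, rather than just the degree, survives the passage to a Chabauty limit up to the sign ambiguity; this is where the explicit description of the truncated orders built in the first step pays off.
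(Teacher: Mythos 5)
Your plan reproduces, in outline, the paper's own strategy for $\SL_2(D)$: local Bruhat--Tits model controlled by a smooth integral model and Hensel's lemma, Krasner-type metric on $\mathcal{D}$ and the explicit description of $\mathcal{D}_{p^n}$ (Proposition~\ref{Prop:explicit description of D}), continuity of $D\mapsto\SL_2(D)/Z$ (Theorem~\ref{Thm:continuity from D to Chab(Aut(T))}), Borel--Tits rigidity to identify fibres with the $\opp$-classes, compactness of $\mathcal{D}_T$ to upgrade to a homeomorphism, and the same $\lceil\frac{n+1}{2}\rceil$ count of characteristic-$p$ accumulation points. Three minor points of care: the integral model cannot literally be ``$\SL_2$ over the non-commutative ring $\mathcal{O}_D$'' --- it is defined as a closed $\mathcal{O}_K$-subscheme of $M_2(\mathcal{O}_D)\cong\mathbf{A}_{\mathcal{O}_K}^{4d^2}$ cut out by $\Nrd=1$ (Appendix~\ref{App:integral models over D}); the real technical crux of the local-model step is not the commutation relations you mention but the surjectivity $P_0\to P_0^{0,rd}$, whose proof rests on the smoothness of $\underline{\SL}_{2,D}$ over $\mathcal{O}_K$, a nontrivial fact imported from Bruhat--Tits (Theorem~\ref{Thm:smmoothness for SL_2(D)}); and in part~(2) you cannot deduce isomorphism of the truncated orders from Chabauty convergence ``by the local-ball description'' alone (that gives only the \emph{forward} continuity of $\mathcal{D}_T\to\mathcal{S}_T$) --- what makes the extraction go through is the homeomorphism you established in part~(1), i.e.\ injectivity from rigidity plus compactness of $\mathcal{D}_T/\sim_{\opp}$, exactly as in the paper.
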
 

We conclude this introduction by mentioning the recent work of M.\ de la Salle and R.\ Tessera \cite{dlST15}, who used independently closely related ideas in their study of the space of Bruhat--Tits buildings of type $ \tilde{A}_n $ (with $ n > 2 $) endowed with the Gromov--Hausdorff topology.

\subsection*{Acknowledgements}
I gratefully thank an anonymous commenter to a question on the website MathOverflow (see \cite{St16}) for explaining how to extract from \cite{EGA4} the form of Hensel's Lemma we needed. I reproduced his comments for the proof of Theorem~\ref{Thm:genhensel}. I am also grateful to M.\ de la Salle and R.\ Tessera for their comments on this work, which led me to include the $ \SL_2(D) $ case. Furthermore, I warmly thank P.-E.\ Caprace and N.\ Radu for introducing me to this interesting topic, and for their enthusiasm about this work. The former also gave the slick argument to deduce Chabauty convergence of the whole group from Chabauty convergence of vertex stabilisers, while the latter also made the initial breakthrough by computing Chabauty limits in the $ \SL_2 $ case. I am also indebted to an anonymous referee, whose comments prompted many improvements in my exposition.

\section{Definitions of the algebraic groups under consideration}\label{Sec:Def of alg. grps}
For the rest of the paper, $ K $ will denote a local field (all our local fields are assumed to be non-archimedean), and $ D $ will denote a finite dimensional central simple division algebra over $ K $. Let us spell out our notational conventions for the objects associated with $ K $ (respectively $ D $): the ring of integers is denoted $ \mathcal{O}_{K} $ (respectively $ \mathcal{O}_{D} $), its maximal ideal by $ \mathfrak{m}_{K} $ (respectively $ \mathfrak{m}_{D} $), a uniformiser by $ \pi_K $ (respectively $ \pi_D $) and $ \overline{K} $ (respectively $ \overline{D} $) denotes the residue field. The valuation of $ K $ (respectively $ D $), and also its unique extension to any finite extension of $ K $, is denoted by $ \omega $. We use the notation $ \mathbf{Q}_{p^{n}} $ for the unique (up to isomorphism) unramified extension of $ \mathbf{Q}_{p} $ of degree $ n $.

Also, in order to avoid the repetition of long lists of adjectives, in this section, by an algebraic group, we mean an absolutely simple, simply connected algebraic group over an arbitrary field $ k $ (in this paper, we only work in the case when $ k $ is a local field, but nevertheless, we prefer to state Lemma~\ref{Lem:descriptionofqu-splitgroups} over an arbitrary base field).

\subsection{Quasi-split groups of relative rank \texorpdfstring{$ 1 $}{1}}\label{SSec:Q S groups}
As mentioned in the introduction, the Bruhat--Tits building of an algebraic group $ G $ over a local field is a tree if and only if $ G $ is of relative rank $ 1 $. Instead of giving the general definition of quasi-split algebraic groups, and then specialising to those that are of relative rank $ 1 $, we take a practical approach and give an explicit description of those groups, the result being that they are all of the form $ \SL_{2} $ or $ \SU_3 $ (and this is the case over any field). We begin by recalling the definition of $ \SU_3 $. It is customary to choose a presentation of $ \SU_3 $ using the transposition along the anti-diagonal, that we denote $ ^{S}(.) $ so that explicitly, if $ g $ is a $ 3 $-by-$ 3 $ matrix , $ (^{S}g)_{-j,-i}=g_{ij} $, for $ i,j\in \lbrace -1,0,1\rbrace $.

\begin{definition}\label{Def:SU(3,f)}
Let $ k $ be a field, let $ l $ be a separable quadratic extension of $ k $, and let $ \sigma $ be the nontrivial element of $ \Aut (l/k) $, whose action by conjugation on $ l $ is denoted $ x \mapsto \bar{x} $. We define
\begin{equation*}
\SU_3^{l/k}(k) = \lbrace g\in \SL_{3}(l) ~\vert~ ^{S}\bar{g}g = \Id \rbrace
\end{equation*}
We denote $ \SU_3^{l/k} $ (or simply $ \SU_3 $ when the pair of field $ (k,l) $ is arbitrary or understood from the context) the corresponding algebraic group over $ k $. Note that the equations $ \det (g) -1 $ and $ ^{S}\bar{g}g - \Id $ (together with the embedding $ l\hookrightarrow M_2(k) $) realise $ \SU_3^{l/k} $ as a closed subspace of the affine space $ \mathbf{A}_{k}^{n} $, where $ n = 4\times 3^{2} $. Using this, it is readily seen that $ \SU_3 $ is an algebraic group over $ k $.
\end{definition}
\begin{remark}\label{Rem:thehermitianform}
The group $ \SU_3 $ defined above is the special unitary group with respect to the following hermitian form of $ l^{3} $:
\begin{equation*}
((x_{-1},x_{0},x_{1}),(y_{-1},y_{0},y_{1}))\mapsto \overline{x}_{-1}y_{1}+\overline{x}_{0}y_{0}+\overline{x}_{1}y_{-1}
\end{equation*}
The advantage of taking this peculiar hermitian form is that the associated involution preserves the group of upper triangular matrices. As Lemma~\ref{Lem:descriptionofqu-splitgroups} shows, up to isomorphism, there is only one ``type" of non-split, quasi-split algebraic group of relative rank $ 1 $ (and this is the case over any base field). Hence, choosing the above hermitian form is in fact not restrictive.
\end{remark}

We can now describe quasi-split algebraic groups of relative rank $ 1 $ (recall that by the convention of this section, all our algebraic groups are absolutely simple, simply connected, algebraic groups over a field $ k $).
\begin{lemma}\label{Lem:descriptionofqu-splitgroups}
Let $ k $ be a field and let $ G $ be a quasi-split algebraic group of relative rank $ 1 $ over $ k $. Then $ G $ is one of the following groups:
\begin{enumerate}
\item $ \SL_{2} $ over $ k $.
\item $ \SU_3^{l/k} $, where $ l $ is as in Definition~\ref{Def:SU(3,f)}.
\end{enumerate}
\end{lemma}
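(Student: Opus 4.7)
The plan is to invoke the Tits classification of quasi-split absolutely simple simply connected algebraic groups in terms of their $*$-action on the Dynkin diagram. Let $k^s$ be a separable closure of $k$ and let $\Gamma = \Gal(k^s/k)$. Since $G$ is absolutely simple and quasi-split, one may fix a Borel subgroup $B$ defined over $k$ containing a maximal torus $T$ which is also defined over $k$, and $\Gamma$ acts on the Dynkin diagram $\Dyn(G)$ of $G_{k^s}$ through the so-called $*$-action. A standard fact of Bruhat--Tits theory then identifies the relative rank of $G$ with the number of $\Gamma$-orbits on the set of vertices of $\Dyn(G)$.

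First I would enumerate the Dynkin diagrams of simply connected absolutely simple groups admitting a diagram automorphism that acts transitively on the set of vertices. Non-simply-laced diagrams ($B_n$, $C_n$, $F_4$, $G_2$) carry no non-trivial diagram automorphism, so transitivity forces a single vertex, i.e.\ type $A_1$. For simply-laced diagrams, inspection of the cases ($A_n$, $D_n$, $E_6$, $E_7$, $E_8$ and the triality of $D_4$) shows that the only other possibility is type $A_2$, where the non-trivial involution swaps the two simple roots. Thus either $\Dyn(G) = A_1$ with trivial $*$-action, or $\Dyn(G) = A_2$ with a non-trivial $*$-action, which factors through a surjection $\Gamma \twoheadrightarrow \Z/2\Z$; the fixed field of the kernel is a separable quadratic extension $l$ of $k$.

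In the $A_1$ case, $G$ is a simply connected quasi-split absolutely simple group of type $A_1$, which is $\SL_2$ over $k$ (the only such form, since $H^1(k, \mathrm{PGL}_2)$ classifies non-trivial forms by Brauer classes of rank $1$, which would not be simply connected of type $A_1$ with split torus; alternatively, any quasi-split $A_1$ group contains a split maximal torus, hence is split). In the $A_2$ case, $G$ is an outer form of $\SL_3$ whose cocycle in $H^1(\Gamma, \Aut(\SL_3 \otimes k^s))$ factors through the outer part and corresponds precisely to the extension $l/k$; that is, $G$ is the twist of $\SL_3$ by the non-trivial Galois action induced by $\Gal(l/k)$ composed with the diagram involution. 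This twist is, by construction, the special unitary group of a non-degenerate hermitian form of rank $3$ over $l/k$ with maximal Witt index (the latter being forced by quasi-splitness). Since all non-degenerate hermitian forms of rank $3$ over $l/k$ with maximal Witt index are equivalent (they admit a hyperbolic basis plus one anisotropic vector that is absorbed into the hyperbolic part), any such group is isomorphic over $k$ to $\SU_3^{l/k}$ as defined in Definition~\ref{Def:SU(3,f)}, where the particular hermitian form chosen is precisely of maximal Witt index.

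The main obstacle I expect is the last isomorphism statement, namely identifying the abstract quasi-split $A_2$-form with the concrete $\SU_3^{l/k}$ of Definition~\ref{Def:SU(3,f)}. For this, rather than grinding through Galois cohomology, I would argue concretely: a quasi-split outer form of type $A_2$ is by definition the stabiliser of a suitable hermitian form, and two hermitian forms of rank $3$ over $l/k$ with the same Witt index are congruent (this is the standard classification of hermitian forms over a separable quadratic extension). Choosing the anti-diagonal hermitian form of Remark~\ref{Rem:thehermitianform} as a representative of the maximal Witt index class then identifies $G$ with the group described in Definition~\ref{Def:SU(3,f)}, completing the proof.
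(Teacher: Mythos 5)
Your proposal is correct and follows essentially the same strategy as the paper: appeal to Tits' classification of quasi-split groups by Dynkin diagram plus $*$-action, use that relative rank equals the number of $*$-orbits, and then identify the two possibilities with $\SL_2$ and $\SU_3^{l/k}$. The only difference is cosmetic: where the paper simply reads off the possibilities from Table II of \cite{Tits66}, you re-derive them by directly enumerating Dynkin diagrams with a vertex-transitive automorphism, and you supply more detail for the identification of the quasi-split $A_2$-form with $\SU_3^{l/k}$ via the classification of hermitian forms of maximal Witt index. (One minor remark: the parenthetical about $H^1(k,\PGL_2)$ in the $A_1$ case is garbled as written — $H^1(k,\PGL_2)$ does classify inner forms of $\SL_2$, namely the groups $\SL_1(D)$ for $D$ a quaternion algebra, which are simply connected of type $A_1$; the relevant point is that these are quasi-split if and only if $D$ is split. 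Your alternative argument — quasi-split inner form implies split — is the clean way to say it.)
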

\begin{proof}
If $ G $ is quasi-split, then by definition, its anisotropic kernel is trivial. Hence, by \cite{Tits66}*{2.7.1, Theorem~2}, $ G $ is entirely determined (up to $ k $-isomorphism) by its Dynkin diagram together with the $ \ast $-action on it (or in other words, $ G $ is determined by its index). Also note that the number of orbit under this $ \ast $-action is the relative rank, so that according to \cite{Tits66}*{Table II}, the only possibilities for the index are 
\begin{center}
\begin{tikzpicture}
 \node (1) [circle, draw=black, fill = black, scale = 0.2] at (1,0) {};
 \node (2) [circle, draw=black, fill = black, scale = 0.2] at (1,-1) {};
\draw \boundellipse{1,-0.5}{0.15}{0.7};
\draw [bend right=90,-] (1) to (2);
 \node (text) at (2,-0.5) {or};
 \node (3) [circle, draw=black, fill = black, scale = 0.2] at (3,-0.5) {}; 
 \node (4) [circle, draw=black, scale = 1] at (3,-0.5) {}; 
\end{tikzpicture}
\end{center}
The first index is the index of the quasi-split group $ \SU_3^{l/k} $, where $ l $ is any separable quadratic extension of $ k $, while the second index is the index of the split group $ \SL_2 $.
\end{proof}
%

\subsection{Definition of the algebraic group \texorpdfstring{$\SL_2(D)$}{SL2(D)}}
As outlined in the introduction, treating the case of the group $ \SL_2(D) $ (where $ D $ is a finite dimensional central division algebra) is very close to treating the case of $ \SL_2(K) $, so that we decided to include this case as well. Let us recall the definition of the group $ \SL_2(D) $.
\begin{definition}\label{Def:SL_2(D)}
Let $ D $ be a finite dimensional central division algebra over $ K $, and consider $ D^2 $ as a right $ D $-vector space. We define the group $ \SL_2(D) = \lbrace u\in \End_{D}(D^2)~\vert~\Nrd (u) = 1 \rbrace $, where $ \Nrd (u) $ stands for the reduced norm of $ u $ (we recall the definition of the reduced norm in Definition~\ref{Def:reduced norm}).
\end{definition}

Let us stress again that the case of main interest is the case of quasi-split groups, and that $ \SL_2(D) $ is quasi-split if and only if $ D=K $. We advice the reader to consider only this case in a first reading, and to encourage this attitude, the facts needed when $ D\neq K $ are relegated to Appendix~\ref{App:division algebra theory} and Appendix~\ref{App:integral models over D}.

When $ D=K $, the group $ \SL_2(K) $ is the group of rational points of a closed subspace $ \SL_2 $ of the affine space $ \mathbf{A}_{K}^{4} $ defined by the polynomial equation $ \det \!-1 $. It is then straightforward to check that $ \SL_2 $ is indeed an algebraic group over $ K $.

For arbitrary $ D $, it is well-known that $ \SL_2(D) $ can be seen as the group of rational point of an algebraic group over $ K $. We recall in Appendix~\ref{App:division algebra theory} the standard facts about division algebras, and we also discuss in Appendix~\ref{App:integral models over D} the representation of $ \SL_2(D) $ as an algebraic group over $ K $.

\section{The Bruhat--Tits tree of a group}\label{Sec:Rappel sur l'arbre de B-T}
In their fundamental paper \cite{BrTi1}, F.\ Bruhat and J.\ Tits show how to construct an affine building given a group $ G $ with a valued root datum, see especially \cite{BrTi1}*{7.4.1 and 7.4.2}. In rank one, this affine building is a tree, and their construction only uses a collection $ \lbrace (P_x)_{x\in \R}, N \rbrace $ of subgroups of $ G $  together with a homomorphism $ \nu \colon N\to \Aff (\R ) $, which are carefully constructed using the valued root datum on $ G $. Since this construction is the central object of this paper, we begin by recalling it. We use the same notations than in loc. cit., except for $ \hat{P}_x $ that we denote $ P_x $ instead.

\begin{definition}[{\cite{BrTi1}*{7.4.1 and 7.4.2}}]\label{Def:buildingassociated to a data}
Let $ G $ be a group with a valued root datum of rank one, and let $ \lbrace (P_x)_{x\in \R }, N \rbrace $ be a collection of subgroups of $ G $ and $ \nu \colon N\to \Aff (\R ) $ be a homomorphism. Assume that they are obtained from the valued root datum as prescribed in \cite{BrTi1}*{§6 and §7}. Define an equivalence relation on $ G\times \R $ as follows: $ (g,x)\sim (h,y) $ if and only if there exists $ n\in N $ such that $ y=\nu (n)(x) $ and $ g^{-1}hn\in P_{x} $. The Bruhat--Tits tree of $ G $ is $ \mathcal{I} = G\times \R /\sim $. We write $ [(g,x)] $ for the equivalence class of $ (g,x) $ in $ \mathcal{I} $. The group $ G $ acts on $ \mathcal{I} $ by multiplication on the first component.
\end{definition}

In this paper, we take a practical approach bypassing the valued root datum. In each case that we need it, we construct the Bruhat--Tits tree by giving directly the groups $ (P_x)_{x\in \R}, N $ and the homomorphism $ \nu \colon N\to \Aff (\R ) $. We can fortunately easily ensure that the given groups (together with the homomorphism $ \nu $) are indeed obtained from a valued root datum as prescribed in \cite{BrTi1}*{§6 and §7} thanks to the explicit computations made in \cite{BrTi1}*{§10}.

\begin{remark}\label{Rem:metric on I and stabiliser}
For $ g\in G $, the map $ f_g\colon \R \to \mathcal{I}\colon x\mapsto g.[(\Id ,x)] $ is injective, by the discussion in \cite{BrTi1}, below Definition~7.4.2. An apartment of $ \mathcal{I} $ is a subset of the form $ f_g(\R ) $ for some $ g\in G $, and we can endow $ \mathcal{I} $ with a metric which gives the usual metric on $ \R $ when restricted to any apartment. The action of $ G $ on its Bruhat--Tits tree preserves such a metric. Furthermore, in view of \cite{BrTi1}*{Proposition~7.4.4}, $ P_x $ is in fact the stabiliser of $ [(\Id ,x)]\in \mathcal{I} $.
\end{remark}
\begin{remark}\label{Rem:equivalent def with for all n}
Note that in Definition~\ref{Def:buildingassociated to a data}, it is equivalent to say that $ (g,x)\sim (h,y) $ if and only if for all $ \tilde{n}\in N $ such that $ \nu (\tilde{n})(x)=y $, we have $ g^{-1}h\tilde{n}\in P_{x} $. Indeed, if there exists $ n\in N $ such that $ \nu (n)(x) = y $ and $ g^{-1}hn\in P_{x} $, let $ \tilde{n} $ be any element of $ N $ such that $ \nu (\tilde{n})(x) = y $. Then $ g^{-1}h\tilde{n} = g^{-1}hnn^{-1}\tilde{n} $. But $ n^{-1}\tilde{n} $ stabilises $ [(\Id ,x)] $, and hence belongs to $ P_x $ by Remark~\ref{Rem:metric on I and stabiliser}. Thus, $ g^{-1}hnn^{-1}\tilde{n} $ belongs to $ P_{x} $ as well, as wanted.
\end{remark}

We end this section by recalling two facts about Bruhat--Tits trees that will be needed later on.
\begin{lemma}\label{Lem:integralityofn}
Let $ g,h\in P_{0} $, and let $ x,y\in \R $. If $ (g,x) \sim (h,y) $, there exists $ n\in N\cap P_{0} $ such that $ \nu (n)(x) = y $
\end{lemma}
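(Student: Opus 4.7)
The strategy is to first use the isometric action of $G$ on $\mathcal{I}$ to force $y=\pm x$, and then to construct $n$ explicitly in each subcase.

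The group $G$ acts on the Bruhat--Tits tree $\mathcal{I}$ by isometries (Remark~\ref{Rem:metric on I and stabiliser}), and both $g$ and $h$ fix the base point $[(\Id,0)]$ since they lie in $P_{0}$. Consequently,
\begin{equation*}
d_{\mathcal{I}}([(\Id,0)],[(g,x)])=d_{\mathcal{I}}(g^{-1}\cdot [(\Id,0)],[(\Id,x)])=d_{\mathcal{I}}([(\Id,0)],[(\Id,x)])=|x|,
\end{equation*}
and similarly $d_{\mathcal{I}}([(\Id,0)],[(h,y)])=|y|$; here I use that the metric on $\mathcal{I}$ restricts to the usual metric on the apartment $f_{\Id}(\R)$. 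Since $[(g,x)]=[(h,y)]$, the two distances coincide, so $|x|=|y|$ and hence $y=x$ or $y=-x$.

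If $y=x$, the identity element lies in $N\cap P_{0}$ and trivially satisfies $\nu(\Id)(x)=x=y$. If instead $y=-x$ (so $x\neq 0$), my plan is to take $n$ to be any preimage under $\nu$ of the reflection $t\mapsto -t$ of $\R$. Such a preimage exists because $[(\Id,0)]$ is a (special) vertex of the rank-one apartment and the affine Weyl group $\nu(N)$ contains the reflection at every vertex—a fact that can be verified directly from the explicit constructions of the valued root data in the $\SL_{2}$, $\SU_{3}$ and $\SL_{2}(D)$ cases, following \cite{BrTi1}*{§10}. Any such $n$ satisfies $\nu(n)(0)=0$, so that $n\cdot [(\Id,0)]=[(\Id,\nu(n)(0))]=[(\Id,0)]$ by the definition of $\sim$ and the injectivity of $f_{\Id}$ (see Remark~\ref{Rem:metric on I and stabiliser}). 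Therefore $n\in N\cap P_{0}$ and $\nu(n)(x)=-x=y$, as required.

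The main (minor) obstacle is justifying that the reflection $t\mapsto -t$ belongs to $\nu(N)$. This is built into the standard Bruhat--Tits machinery once $[(\Id,0)]$ is fixed as a vertex of the apartment, but it should be confirmed case by case using the explicit description of $N$ in each of the groups under consideration later in the paper.
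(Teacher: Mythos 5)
Your proof is correct and follows essentially the same approach as the paper: both use the isometric action of $G$ on $\mathcal{I}$ together with $g,h\in P_0$ to deduce $\vert x\vert=\vert y\vert$, and both then invoke the existence of an $n\in N\cap P_0$ realising the reflection $t\mapsto -t$ (which the paper attributes to $\nu(N\cap P_0)$ being the spherical Weyl group, and verifies explicitly in each of the $\SL_2(D)$ and $\SU_3$ cases considered). Your extra remark that any $n\in N$ with $\nu(n)(0)=0$ automatically lies in $P_0$ is the same mechanism, just spelled out.
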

\begin{proof}
Recall that $ P_0 $ is the stabiliser of $ [(\Id ,0)]\in \mathcal{I} $ in $ G $ (see Remark~\ref{Rem:metric on I and stabiliser}). Since $ G $ acts by isometries on $ \mathcal{I} $, and since $ g,h\in P_{0} $, we have
\begin{align*}
\vert x\vert &= d_{\mathcal{I}}([(\Id,x)];[(\Id ,0)]) = d_{\mathcal{I}}([(g,x)];[(\Id ,0)])\\
\vert y\vert &= d_{\mathcal{I}}([(\Id,y)];[(\Id ,0)]) = d_{\mathcal{I}}([(h,y)];[(\Id ,0)])
\end{align*}
where $ d_{\mathcal{I}} $ denotes the distance in the metric space $ \mathcal{I} $ (see Remark~\ref{Rem:metric on I and stabiliser}). But if $ (g,x)\sim (h,y) $, we have in particular $ d_{\mathcal{I}}([(g,x)];[(\Id ,0)]) = d_{\mathcal{I}}([(h,y)];[(\Id ,0)]) $, and hence $ \vert x\vert = \vert y\vert $. Thus, the existence of $ n\in N\cap P_{0} $ such that $ \nu (n)(x) = y $ follows from the fact that $ \nu (N\cap P_0) $ is the (spherical) Weyl group of $ G $.
\end{proof}
\begin{remark}
Again, we only use this proposition when $ G $ is a group of the form $ \SL_2(D) $ or $ \SU_3^{L/K} $. In each case, we will see explicitly that there exists an element $ n $ in $ N\cap P_0 $ such that $ \nu (n)\colon \R\to \R\colon x\mapsto -x $.
\end{remark}

\begin{lemma}\label{Lem:B-T tree from a based point}
Let $ G $ be a group having a valued root datum of rank one and let $ \mathcal{I} $ be the corresponding Bruhat--Tits tree. Then $ \mathcal{I} = \lbrace [(g,x)]\in \mathcal{I}~\vert~g\in P_0 \rbrace $
\end{lemma}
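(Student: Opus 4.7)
The plan is to exploit the tree structure of $\mathcal{I}$ together with the definition of apartments. Write $P := [(g,x)] \in \mathcal{I}$ for an arbitrary point, and set $v_0 := [(\Id, 0)]$, whose stabiliser in $G$ is $P_0$ by Remark~\ref{Rem:metric on I and stabiliser}. The goal is to rewrite $P$ in the form $[(p,y)]$ with $p \in P_0$.

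First, I would locate an apartment $A'$ of $\mathcal{I}$ containing both $v_0$ and $P$. Since $\mathcal{I}$ is a rank-one Bruhat--Tits building, hence a tree, any two of its points lie in a common bi-infinite geodesic, i.e., a common apartment; this is classical and ultimately follows from \cite{BrTi1}. Recalling from Remark~\ref{Rem:metric on I and stabiliser} that apartments of $\mathcal{I}$ are exactly the subsets of the form $f_h(\R)$ for $h \in G$, I can write $A' = h \cdot A_0$ for some $h \in G$, where $A_0 := f_{\Id}(\R) = \{[(\Id, y)] : y \in \R\}$ is the standard apartment.

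The second step extracts an element of $P_0$ from $h$ by using the constraint $v_0 \in A'$. Since $v_0 = h \cdot [(\Id, z)] = [(h, z)]$ for some $z \in \R$, unfolding Definition~\ref{Def:buildingassociated to a data} gives an $n \in N$ with $\nu(n)(0) = z$ and $hn \in P_0$. Setting $p := hn \in P_0$, we have $h = pn^{-1}$. Finally, $P \in A' = hA_0$ gives $P = [(h,w)] = [(pn^{-1}, w)]$ for some $w \in \R$; applying Definition~\ref{Def:buildingassociated to a data} with the element $m = n^{-1} \in N$ (so that $(pn^{-1})^{-1} p m = nm = \Id \in P_w$ and $\nu(m)(w) = \nu(n)^{-1}(w)$) yields $[(pn^{-1}, w)] = [(p, \nu(n)^{-1}(w))]$, which is in the desired form with $y := \nu(n)^{-1}(w)$.

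The substantive input is really the first step: that any two points of $\mathcal{I}$ lie in a common apartment, which rests on $\mathcal{I}$ being a tree in the rank one case and is proved in \cite{BrTi1}. Everything else is a mechanical unfolding of the equivalence relation $\sim$, using only that $\nu : N \to \Aff(\R)$ is a homomorphism and that each $P_x$ is a subgroup containing the identity. No use of Lemma~\ref{Lem:integralityofn} is needed for this statement.
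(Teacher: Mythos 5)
Your proof is correct, and it takes a slightly different route from the paper's. The paper invokes strong transitivity of $G$ on $\mathcal{I}$ (\cite{BrTi1}*{Corollaire~7.4.9}) in one line to produce $h \in P_0$ with $h.[(g,x)] = [(\Id, y)]$, whence $[(g,x)] = [(h^{-1},y)]$ with $h^{-1} \in P_0$. You instead cite only the weaker fact that two points of $\mathcal{I}$ share an apartment, and do the rest by hand: unfolding $v_0 = [(h,z)]$ via Definition~\ref{Def:buildingassociated to a data} to extract $n \in N$ with $p := hn \in P_0$, and then pushing $n^{-1}$ across the equivalence relation. The calculation checks out: with $m = n^{-1}$ one gets $(pn^{-1})^{-1}pm = nm = \Id \in P_w$, and $\nu$ being a homomorphism gives $\nu(n^{-1}) = \nu(n)^{-1}$. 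In effect, your steps 2--3 are a direct proof that every apartment through $v_0$ is a $P_0$-translate of $A_0$, which is precisely the fragment of strong transitivity the paper leans on. The trade-off is that your argument is more self-contained (it uses only the apartment axiom plus the explicit description of $\sim$), whereas the paper's is shorter but cites a larger theorem as a black box. One small caveat: for a general valued root datum of rank one, $\mathcal{I}$ is an $\R$-tree rather than a simplicial tree, but your argument only uses the common-apartment property and is unaffected by this distinction.
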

\begin{proof}
Let $ [(g,x)]\in \mathcal{I} $. Since $ G $ acts strongly transitively on $ \mathcal{I} $ (\cite{BrTi1}*{Corollaire~7.4.9}), there exists $ h\in P_{0} $ such that $ h.[(g,x)] = [(\Id,y)] $, for some $ y\in \R $. Hence, $ [(g,x)] = [(h^{-1},y)] $ and $ h^{-1}\in P_0 $, as wanted.
\end{proof}

\section{Convergence of groups of type \texorpdfstring{$\SL_2(D)$}{SL2(D)}}\label{Sec:SL_2(D)}
We recall the reader that our notational conventions for local fields and their finite dimensional central division algebras have been spelled out at the beginning of Section~\ref{Sec:Def of alg. grps}.

\subsection{Construction of the Bruhat--Tits tree}\label{SSec:B-T Tree SL_2(D)}
The aim of this section is to give a streamlined definition of the Bruhat--Tits tree associated with $ \SL_{2}(D) $, together with the action on it. As outlined in the introduction, our definition of the Bruhat--Tits tree follows \cite{BrTi1}*{§7}.

In order to be as efficient as possible, we only describe concretely the objects needed, and give unmotivated definitions. Our description is easily obtained from the explicit description given in \cite{BrTi1}*{§10}, and we give in Appendix~\ref{App:A} more details about the connection with \cite{BrTi1}.

Recall from Definition~\ref{Def:buildingassociated to a data} that the Bruhat--Tits tree $ \mathcal{I} $ of $ \SL_2(D) $ should be isomorphic to $ \SL_2(D)\times \mathbf{R}/\sim $. For $ x\in \mathbf{R} $, we define a group $ P_{x}\leq \SL_2(D) $ which will eventually turn out to be the stabiliser of $ [(\Id,x)]\in \mathcal{I} $ (see Remark~\ref{Rem:metric on I and stabiliser}).
\begin{definition}\label{Def:valuation of a matrix}
Let $ D $ be any valued division algebra with valuation $ \omega $, and let $ g $ be a $ n\times n $ matrix with coefficients in $ D $. Given a $ n\times n $ matrix $ m $ with coefficients in $ \mathbf{R} $, we say that $ g $ has a valuation greater than $ m $ if $ \omega (g_{ij}) \geq m_{ij} $ (for all $ i,j \in \lbrace 1,\dots,n\rbrace $), and we denote it by $ \omega (g) \geq m $.
\end{definition}
\begin{definition}\label{Def:stabiliserforSL2}
For $ x\in \R $, we define
\begin{equation*}
P_{x} = \lbrace g\in \SL_{2}(D)~\vert~\omega (g)\geq \begin{psmallmatrix}
0 & -x \\ 
x & 0
\end{psmallmatrix}\rbrace
\end{equation*}
\end{definition}
\begin{definition}\label{Def:sbgrp N for SL2}
Consider the following subsets
\begin{enumerate}[$ \bullet $]
\item $ T = \lbrace \begin{psmallmatrix}
x & 0 \\ 
0 & x^{-1} 
\end{psmallmatrix}~\vert~ x\in D^{\times}\rbrace < \SL_2 (D) $
\item $ M =\lbrace \begin{psmallmatrix}
0 & -x \\ 
x^{-1} & 0 
\end{psmallmatrix}~\vert~ x\in D^{\times}\rbrace \subset \SL_2 (D) $
\end{enumerate}
and let $ N = T\sqcup M $.
\end{definition}

\begin{definition}\label{Def:affineactionforNSL2}
Let $ \nu\colon N\to \Aff (\R) $ be defined as follows: for $ m = \begin{psmallmatrix}
0 & -x \\ 
x^{-1} & 0
\end{psmallmatrix}\in M $, $ \nu (m) $ is the reflection through $ -\omega (x) $, while for $ t=\begin{psmallmatrix}
x & 0 \\ 
0 & x^{-1}
\end{psmallmatrix}\in T $, $ \nu (t) $ is the translation by $ -2\omega (x) $.
\end{definition}

Then the Bruhat--Tits tree $ \mathcal{I} $ of $ \SL_2(D) $ is the one obtained by applying Definition~\ref{Def:buildingassociated to a data} to the collection of subgroups $ \lbrace (P_x)_{x\in \R}, N \rbrace $ appearing in Definition~\ref{Def:stabiliserforSL2} and Definition~\ref{Def:sbgrp N for SL2}, together with the homomorphism $ \nu \colon N\to \Aff (\R) $ of Definition~\ref{Def:affineactionforNSL2}. We discuss in Appendix~\ref{App:A} why our groups $ P_x $ and $ N $ coincide with the groups $ \hat{P}_{x} $ and $ N $ appearing in the definition of the Bruhat--Tits building in \cite{BrTi1}*{7.4.1 and 7.4.2}. We also check in Appendix~\ref{App:A} that $ \nu \colon N\to \Aff(\R) $ coincides with \cite{BrTi1}. Hence, the given data is indeed obtained from a valued root datum of rank one on $ G $, so that the above construction does indeed give rise to the Bruhat--Tits tree of $ \SL_2(D) $.
\begin{remark}\label{Rem:dependence on D of the tree}
Note that the construction of the Bruhat--Tits tree of $ \SL_2(D) $ depends on $ D $. When needed, we keep track of this dependence by adding the subscript $ D $ to the objects involved. This gives rise to the notations $ (P_x)_D $, $ T_D $, $ M_D $, $ N_D $, $ \nu_D $ and $ \mathcal{I}_D $.
\end{remark}
\begin{remark}\label{Rem:regularity of the B-T tree SL2}
The Bruhat--Tits tree of $ \SL_2(D) $ is actually the regular tree of degree $ \vert \overline{D}\vert+1 $. Indeed, this follows from the fact that our definition of $ \mathcal{I} $ agrees with the one given in \cite{BrTi1}*{7.4.1 and 7.4.2}, and from the tables in \cite{Tits77}*{4.2 and 4.3}.
\end{remark}

\subsection{Local model of the Bruhat--Tits tree}
We now aim to give a local description of balls of the Bruhat--Tits tree, together with the group action on it. Recall that the ball of radius $ 1 $ around $ [(\Id ,0)]\in \mathcal{I} $ (together with the action of $ P_{0} $ on it), is in some sense encoded in $ P_{0} $ considered over the residue field, i.e. over $ \mathcal{O}_{K}/\mathfrak{m}_{K} $ (or more precisely in the reductive quotient of this group, see \cite{BrTi2}*{Théorème~4.6.33} for a precise meaning). It is then natural to think that more generally, the ball of radius $ r $ around $ [(\Id ,0)]\in \mathcal{I} $ (together with the action of $ P_{0} $ on it) is encoded in $ P_{0} $ considered over the ring $ \mathcal{O}_{K}/\mathfrak{m}_{K}^{r} $. We give all the definitions in this section, and we then prove in the next section that those definitions behave as expected (see Theorem~\ref{Thm:localdescriptionoftheball}).

Our techniques only allow to describe balls or radius $ rd $, where $ d $ is the degree of $ D $ over $ K $ (note that $ d=1 $ when $ D=K $). We just mimic the definition of the Bruhat--Tits tree, except that the coefficients of all groups under consideration are now taken in the ring $ \mathcal{O}_{D}/\mathfrak{m}_{D}^{rd} $. All groups defined in this section are adorned by the superscript $ 0,rd $ to reflect the fact that they are local version around $ 0 $ of radius $ rd $.

Let us introduce some notations for the following definition of local stabilisers. In the rest of the paper, $ d $ denotes the degree of $ D $ over its centre $ K $. The valuation $ \omega $ on $ D $ induces a well-defined map on $ \mathcal{O}_{D}/\mathfrak{m}_{D}^{rd} $, that we still denote $ \omega $. Finally, recall that $ \pi_D $ denotes a uniformiser of $ D $.

\begin{definition}\label{Def:localdataSL}
Let $ r\in \N $ and $ x\in [-\omega (\pi_{D}^{rd}),\omega (\pi_{D}^{rd})] $. We set
$$ P_{x}^{0,rd} = \lbrace g\in \SL_{2}(\mathcal{O}_{D}/\mathfrak{m}_{D}^{rd})~\vert~ \omega (g)\geq \begin{psmallmatrix}
0 & -x \\
x & 0
\end{psmallmatrix} \rbrace. $$
\end{definition}
\begin{remark}
See Definition~\ref{Def:SL_2(O_D/m^r)} for the definition of $ \SL_{2}(\mathcal{O}_{D}/\mathfrak{m}_{D}^{rd}) $. When $ D=K $, we obtain the group $ \SL_2(\mathcal{O}_{K}/\mathfrak{m}_{K}^{r}) $ in its usual meaning, i.e. the group of $ 2\times 2 $ matrices with coefficients in $ \mathcal{O}_{K}/\mathfrak{m}_{K}^{r} $ having determinant $ 1 $. We remark that Definition~\ref{Def:localdataSL} parallels Definition~\ref{Def:stabiliserforSL2}.
\end{remark}

We also need the local version of the subgroup $ N $.
\begin{definition}
We define
\begin{enumerate}[$ \bullet $]
\item $ H^{0,rd} = \lbrace \begin{psmallmatrix}
x & 0 \\
0 & x^{-1}
\end{psmallmatrix}\in \SL_{2}(\mathcal{O}_{D}/\mathfrak{m}_{D}^{rd})~\vert~ \omega (x) = 0 \rbrace $
\item $ M^{0,rd} = \lbrace \begin{psmallmatrix}
0 & -x \\
x^{-1} & 0
\end{psmallmatrix}\in \SL_{2}(\mathcal{O}_{D}/\mathfrak{m}_{D}^{rd})~\vert~ \omega (x) = 0 \rbrace $
\end{enumerate}
And we set $ N^{0,rd} = H^{0,rd}\sqcup M^{0,rd} $
\end{definition}

We can also easily define an action of $ N^{0,rd} $ by affine isometries on $ \R $.
\begin{definition}
We let $ H^{0,rd} $ act trivially on $ \R $, and we let all elements of $ M^{0,rd} $ act as a reflection through $ 0\in \R $. This gives an affine action of $ N^{0,rd} $ on $ \R $, and we denote again the resulting map $ N^{0,rd}\to \Aff (\R) $ by $ \nu $.
\end{definition}

We are now able to give a definition of the ball of radius $ rd $ around $ [(\Id ,0)]\in \mathcal{I} $ which only depends on the ring $ \mathcal{O}_D/\mathfrak{m}_D^{rd} $, and not on the whole division algebra $ D $.
\begin{definition}\label{Def:local tree of radius r SL2}
Let $ r\in \N $. We define an $ rd $-local equivalence on $ P_{0}^{0,rd}\times [-\omega (\pi_D^{rd}), \omega (\pi_D^{rd})] $ as follows. For $ g,h\in P_{0}^{0,rd} $ and $ x,y\in [-\omega (\pi_D^{rd}), \omega (\pi_D^{rd})] $
\begin{equation*}
(g,x)\sim_{0,rd}(h,y) \Leftrightarrow \textrm{ there exists } n\in N^{0,rd} \textrm{ such that } \nu (n)(x)=y \textrm{ and } g^{-1}hn \in P_{x}^{0,rd}
\end{equation*}
The resulting space $ \mathcal{I}^{0,rd} = P_{0}^{0,rd}\times [-\omega (\pi_D^{rd}), \omega (\pi_D^{rd})]/\sim_{0,rd} $ is called the local Bruhat--Tits tree of radius $ rd $ around $ 0 $, and $ [(g,x)]^{0,rd} $ stands for the equivalence class of $ (g,x) $ in $ \mathcal{I}^{0,rd} $. The group $ P_{0}^{0,rd} $ acts on $ \mathcal{I}^{0,rd} $ by multiplication on the first component.
\end{definition}

\begin{remark}\label{Rem:dependence on D of the local tree}
Note that the construction of the local Bruhat--Tits tree of $ \SL_2(D) $ depends on $ D $. When needed, we keep track of this dependence by adding the subscript $ D $ to the objects involved. This gives rise to the notations $ (P_x^{0,rd})_D $, $ H^{0,rd}_D $, $ M^{0,rd}_D $, $ N^{0,rd}_D $ and $ \mathcal{I}^{0,rd}_D $.
\end{remark}
\begin{remark}\label{Rem:equivalent def with for all n for local model}
Note that as for Definition~\ref{Def:buildingassociated to a data}, it is equivalent to say that $ (g,x)\sim_{0,rd} (h,y) $ if and only if for all $ \tilde{n}\in N^{0,rd} $ such that $ \nu (\tilde{n})(x)=y $, we have $ g^{-1}h\tilde{n}\in P_{x}^{0,rd} $. Indeed, if there exists $ n\in N^{0,rd} $ such that $ \nu (n)(x) = y $ and $ g^{-1}hn\in P_{x}^{0,rd} $, let $ \tilde{n} $ be any element of $ N^{0,rd} $ such that $ \nu (\tilde{n})(x) = y $. We have $ g^{-1}h\tilde{n} = g^{-1}hnn^{-1}\tilde{n} $, and a case-by-case analysis shows that $ n^{-1}\tilde{n}\in P_{x}^{0,rd} $. Hence $ g^{-1}hnn^{-1}\tilde{n} $ belongs to $ P_{x}^{0,rd} $ as well, as wanted.
\end{remark}

\subsection{Integral model}
We have just defined the space $ \mathcal{I}^{0,rd} $, (recall that throughout this section, $ d $ is the degree of $ D $). In order to show that it encodes the ball of radius $ rd $ together with the action of $ P_0 $ on it (as will be done in Theorem~\ref{Thm:localdescriptionoftheball}), we need to prove that the projection $ \mathcal{O}_{D}\to \mathcal{O}_{D}/\mathfrak{m}_{D}^{rd} $ induces a \emph{surjective} homomorphism $ P_{0}\to P_{0}^{0,rd} $.

We solve this problem by defining a smooth $ \mathcal{O}_{K} $-scheme $ \underline{\SL}_{2,D} $ such that $ \underline{\SL}_{2,D}(\mathcal{O}_{K}) \cong P_{0} $ and $ \underline{\SL}_{2,D}(\mathcal{O}_{K}/\mathfrak{m}_{K}^{r})\cong P_{0}^{0,rd} $. Then the desired surjectivity follows by an application of Hensel's lemma for smooth schemes (that we recall in Theorem~\ref{Thm:genhensel}).

This smooth $ \mathcal{O}_{K} $-scheme is in fact the Bruhat--Tits integral model $ \hat{\mathfrak{G}}_{\varphi} $ associated with a standard valuation $ \varphi $ (see \cite{BrTi2}*{4.6.26}), and in this case, it is just the straightforward model one would consider.

\begin{definition}\label{Def:integral model for SL_2}
When $ D=K $, the integral model $ \underline{\SL}_{2,D} $ is the group $ \SL_{2} $ considered over $ \mathcal{O}_{K} $. Concretely, this is the $ \mathcal{O}_{K} $-scheme which is the spectrum of the $ \mathcal{O}_{K} $-algebra $ \mathcal{O}_{K}[\underline{\SL}_{2}] = \mathcal{O}_{K}[X_{11},X_{12},X_{21},X_{22}]/(X_{11}X_{22}-X_{12}X_{21}-1) $. In the case of a central division algebra of degree $ d>1 $ over $ K $, the integral model $ \underline{\SL}_{2,D} $ over $ \mathcal{O}_{K} $ is defined in the appendix (see Definition~\ref{Def:integral model for SL_2(D)}).
\end{definition}

\begin{theorem}\label{Thm:smoothness of SL2}
$ \underline{\SL}_{2,D} $ is a smooth $ \mathcal{O}_{K} $-scheme.
\end{theorem}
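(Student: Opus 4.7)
The plan is to verify smoothness by a direct application of the Jacobian criterion in each of the two cases of Definition~\ref{Def:integral model for SL_2}. For $D\neq K$, one could alternatively invoke the general smoothness statement for Bruhat--Tits integral models attached to standard valuations in \cite{BrTi2}*{§4.6}, but a concrete Jacobian computation seems more in keeping with the paper's spirit.

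For $D = K$ the check is immediate: the scheme $\underline{\SL}_{2,K}$ is cut out of $\mathbf{A}^{4}_{\mathcal{O}_K}$ by the single polynomial $f = X_{11}X_{22}-X_{12}X_{21}-1$, and the row of partial derivatives $\nabla f = (X_{22},-X_{21},-X_{12},X_{11})$ cannot vanish at any geometric point of $\underline{\SL}_{2,K}$ --- otherwise $f=0$ would read $0=1$. The Jacobian criterion therefore yields smoothness over $\mathcal{O}_K$ of relative dimension $3$.

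For general $D$, I would build on the explicit description of $\underline{\SL}_{2,D}$ to be given in Appendix~\ref{App:integral models over D}, which realises it as a closed subscheme of $\mathbf{A}^{4d^{2}}_{\mathcal{O}_K}$ cut out by the single equation $\Nrd - 1 = 0$, where the variables run over an $\mathcal{O}_K$-basis of $\End_{\mathcal{O}_D}(\mathcal{O}_D^{2})$ (a free $\mathcal{O}_K$-module of rank $4d^{2}$). Since $\underline{\SL}_{2,D}$ is an affine group scheme of finite type over the DVR $\mathcal{O}_K$, smoothness reduces to verifying (i) flatness, (ii) smoothness of the generic fiber, and (iii) smoothness of the special fiber at the identity section --- translation by the group law then extends smoothness over the entire special fiber. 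Flatness is clear because $\Nrd - 1$ has non-zero reduction modulo $\pi_K$, and (ii) holds automatically as the generic fiber is the semisimple $K$-group $\SL_2(D)$.

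The hard point is (iii): the residue ring $\mathcal{O}_D \otimes_{\mathcal{O}_K} \overline{K} = \mathcal{O}_D/\pi_D^{d}\mathcal{O}_D$ is non-reduced for $d > 1$, so some care is needed when computing $d(\Nrd)|_{\Id}$. The standard identification gives $d(\Nrd)|_{\Id} = \mathrm{Trd}$, the reduced trace on $M_2(\mathcal{O}_D)$, and the problem reduces to showing that the induced $\overline{K}$-linear map $M_2(\mathcal{O}_D/\pi_D^{d}\mathcal{O}_D) \to \overline{K}$ is surjective. For this, I would exhibit an explicit matrix --- for instance a diagonal matrix whose top-left entry is an unramified unit of $\mathcal{O}_D$ --- whose reduced trace is a unit of $\mathcal{O}_K$. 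Once this non-vanishing of $d(\Nrd)|_{\Id}$ is secured, the tangent space at the identity has the expected dimension $4d^{2}-1$ and smoothness of $\underline{\SL}_{2,D}$ over $\mathcal{O}_K$ follows.
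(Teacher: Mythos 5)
Your proposal is correct, but it takes a genuinely different route from the paper's. For $D = K$ the paper invokes the infinitesimal lifting criterion, citing \cite{stacks-project}*{Tag 02H6}, rather than a Jacobian computation; the two are equivalent for finite-type schemes over a Noetherian base, so your direct check is a valid and arguably more concrete substitute. For general $D$ the paper does not argue from first principles at all: Theorem~\ref{Thm:smmoothness for SL_2(D)} identifies $\underline{\SL}_{2,D}$ with the Bruhat--Tits group scheme $\textswab{G}_{1,\varphi}$ and then cites \cite{BrTi84}*{5.5} for its smoothness. Your ``flat plus smooth fibres'' strategy replaces that citation with a self-contained computation, which is longer but spares the reader the Bruhat--Tits machinery. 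Two small points deserve to be spelled out if you write this up. First, the flatness claim amounts to $\pi_K$ being a nonzerodivisor on $\mathcal{O}_K[X_i]/(\Nrd - 1)$: since $\mathcal{O}_K[X_i]$ is a domain and the reduction of $\Nrd - 1$ modulo $\pi_K$ is a nonzero polynomial (its constant term is $-1$), any relation $\pi_K g = h(\Nrd-1)$ forces $\pi_K \mid h$ and one can cancel. Second, ``a diagonal matrix whose top-left entry is an unramified unit'' does not automatically have unit reduced trace (take $p \mid d$ and the identity matrix); what you really need is that $\mathrm{Tr}_{E/K}\colon \mathcal{O}_E\to\mathcal{O}_K$ is surjective because $E/K$ is unramified, which produces $a\in\mathcal{O}_E$ with $\mathrm{Trd}(\mathrm{diag}(a,0)) = \mathrm{Tr}_{E/K}(a)$ a unit of $\mathcal{O}_K$. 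With those clarifications the argument is sound, and the final translation step is unproblematic since the residue field $\overline{K}$ is perfect.
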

\begin{proof}
When $ D=K $, smoothness of $ \underline{\SL}_{2,D} $ over $ \mathcal{O}_{K} $ (and in fact of the algebraic group $ \SL_n $ over any ring) is easily checked using the infinitesimal lifting criterion (see \cite{stacks-project}*{Tag 02H6}). The case of an arbitrary $ D $ is relegated to the appendix (see Theorem~\ref{Thm:smmoothness for SL_2(D)}).
\end{proof}
We now spell out what the group $ \underline{\SL}_{2,D}(\mathcal{O}_{K}/\mathfrak{m}_{K}^{r}) $ is, along with the homomorphism $ p_{rd}\colon P_{0}\to P_{0}^{0,rd} $.

\begin{lemma}\label{Lem:description of the restriction P_0 to P_0^0,rSL2}
$ \underline{\SL}_{2,D}(\mathcal{O}_{K}) \cong P_{0} $ and $ \underline{\SL}_{2,D}(\mathcal{O}_{K}/\mathfrak{m}_{K}^{r}) \cong P_{0}^{0,rd} $. Following the identifications
\begin{center}
\begin{tikzpicture}[->]
 \node (1) at (0,0) {$ \underline{\SL}_{2,D}(\mathcal{O}_{K}) $};
 \node (2) at (2.4,0) {$ \SL_{2}(\mathcal{O}_{D}) = P_{0} $};
 \node (5) at (1,0) {$ \cong $};
 \node (3) at (-0.4,-1.2) {$ \underline{\SL}_{2,D}(\mathcal{O}_{K}/\mathfrak{m}_{K}^{r}) $};
 \node (4) at (3, -1.2) {$ \SL_{2}(\mathcal{O}_{D}/\mathfrak{m}_{D}^{rd}) = P_{0}^{0,rd} $};
 \node (6) at (1,-1.2) {$ \cong $};
  
 \draw[->] (0,-0.3) to (0,-0.9);
 \draw[->] (1.9,-0.3) to (1.9,-0.9);
\end{tikzpicture}
\end{center}
the homomorphism $ p_{rd}\colon P_{0}\to P_{0}^{0,rd} $ is the one induced by the projection of the coefficients $ \mathcal{O}_{D}\to \mathcal{O}_{D}/\mathfrak{m}_{D}^{rd} $.
\end{lemma}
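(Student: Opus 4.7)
The plan is to verify the two isomorphisms and the commutativity of the diagram by unpacking the relevant definitions, splitting the argument according to whether $D = K$ (so $d = 1$) or $D$ is a proper central division algebra of degree $d > 1$. The split case will be essentially tautological from the definitions, while the non-split case is deferred to Appendix~\ref{App:integral models over D}.

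For the split case $D = K$, the integral model $\underline{\SL}_{2,D}$ is simply $\SL_2$ considered over $\mathcal{O}_K$ (Definition~\ref{Def:integral model for SL_2}), so that for any $\mathcal{O}_K$-algebra $R$ the set $\underline{\SL}_{2,D}(R)$ identifies functorially with $\SL_2(R)$, the group of $2 \times 2$ matrices of determinant $1$ with coefficients in $R$. Specialising to $R = \mathcal{O}_K$ and $R = \mathcal{O}_K/\mathfrak{m}_K^r$ yields the identifications $\underline{\SL}_{2,D}(\mathcal{O}_K) \cong \SL_2(\mathcal{O}_K)$ and $\underline{\SL}_{2,D}(\mathcal{O}_K/\mathfrak{m}_K^r) \cong \SL_2(\mathcal{O}_K/\mathfrak{m}_K^r)$. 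On the other hand, setting $x = 0$ in Definition~\ref{Def:stabiliserforSL2} and Definition~\ref{Def:localdataSL}, the matrix inequality $\omega(g) \geq 0$ is precisely the condition that all entries of $g$ be integral, which gives $P_0 = \SL_2(\mathcal{O}_K)$ and $P_0^{0,r} = \SL_2(\mathcal{O}_K/\mathfrak{m}_K^r)$. The commutativity of the diagram is then a formality: the left vertical arrow is, by functoriality, the one induced by the quotient map $\mathcal{O}_K \to \mathcal{O}_K/\mathfrak{m}_K^r$, which under the matrix-group identification is exactly entrywise reduction, i.e.\ the map $p_{r}$.

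For the case where $D$ is a proper central division algebra of degree $d > 1$, my plan is to invoke the construction of $\underline{\SL}_{2,D}$ given in Definition~\ref{Def:integral model for SL_2(D)} and to verify the analogous identifications in the appendix, where the necessary bookkeeping is carried out. Concretely, one needs to check there that the $\mathcal{O}_K$-points and the $(\mathcal{O}_K/\mathfrak{m}_K^r)$-points of $\underline{\SL}_{2,D}$ are naturally identified with $\SL_2(\mathcal{O}_D)$ and $\SL_2(\mathcal{O}_D/\mathfrak{m}_D^{rd})$ respectively, and that the reduction-of-the-base map corresponds on matrices to entrywise reduction $\mathcal{O}_D \to \mathcal{O}_D/\mathfrak{m}_D^{rd}$.

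The expected obstacle in the non-split case, and essentially the only non-formal point, is the tracking of the indices $r$ versus $rd$: reducing modulo $\mathfrak{m}_K^r$ on the scheme side must correspond to reducing matrix coefficients modulo $\mathfrak{m}_D^{rd}$. This is governed by the identity $\mathfrak{m}_K \mathcal{O}_D = \mathfrak{m}_D^d$ valid for a central division algebra of degree $d$, which explains the shift by a factor of $d$ and is why the local model was defined with the superscript $rd$ in the first place. Once this identity is woven into the description of $\underline{\SL}_{2,D}$, the identifications and commutativity follow from the same formal pattern as in the split case.
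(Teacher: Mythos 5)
Your proposal matches the paper's own proof: for $D=K$ both you and the paper observe the identifications are tautological from the definition of the scheme $\underline{\SL}_2$ (via its functor of points), and for $d>1$ both defer to the appendix (Lemma~\ref{Lem:rational points for SL_2(D)}, which rests on the decomposition of $\mathcal{O}_D/\mathfrak{m}_D^{rd}$ in Lemma~\ref{Lem:analysing O_D}, i.e.\ exactly the identity $\mathfrak{m}_K\mathcal{O}_D=\mathfrak{m}_D^d$ you flag). The only cosmetic difference is that you spell out $P_0=\SL_2(\mathcal{O}_K)$ from the $x=0$ specialisation of Definition~\ref{Def:stabiliserforSL2}, a step the paper leaves implicit.
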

\begin{proof}
When $ D=K $, by definition, $ \underline{\SL}_2(\mathcal{O}_{K}) = \Mor_{\mathcal{O}_{K}}(\mathcal{O}_{K}[\underline{\SL}_{2}],\mathcal{O}_{K}) $, which is clearly isomorphic to $ \SL_{2}(\mathcal{O}_{K}) $. Furthermore, $ \underline{\SL}_{2,D}(\mathcal{O}_{K}/\mathfrak{m}_{K}^{r}) = \Mor_{\mathcal{O}_{K}}(\mathcal{O}_{K}[\underline{\SL}_{2,D}],\mathcal{O}_{K}/\mathfrak{m}_{K}^{r}) $, which is clearly isomorphic to $ \SL_{2}(\mathcal{O}_{K}/\mathfrak{m}_{K}^{r}) $, as wanted. The general case is treated in the appendix (see Lemma~\ref{Lem:rational points for SL_2(D)}).
\end{proof}

The fact that $ \underline{\SL}_{2,D} $ is a smooth scheme over $ \mathcal{O}_{K} $ allows us to deduce the surjectivity of $ P_{0}\to P_{0}^{0,rd} $. For this, we use a well-known generalised version of Hensel's lemma for smooth schemes, that we now recall.
\begin{theorem}[Hensel's lemma for smooth schemes]\label{Thm:genhensel}
Let $ X $ be a smooth $ \mathcal{O}_{K} $-scheme, and let $ r\in \N $. The map $ X(\mathcal{O}_{K})\to X(\mathcal{O}_{K}/\mathfrak{m}_{K}^{r}) $ is surjective.
\end{theorem}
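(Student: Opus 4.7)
The plan is to lift by induction across successive square-zero thickenings, and then assemble the compatible family using the completeness of $\mathcal{O}_K$ as a local field.

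For the one-step lifting, I would observe that for every integer $n \geq 1$, the surjection $\mathcal{O}_K/\mathfrak{m}_K^{n+1} \twoheadrightarrow \mathcal{O}_K/\mathfrak{m}_K^n$ has kernel $\mathfrak{m}_K^n/\mathfrak{m}_K^{n+1}$, which is a square-zero ideal of $\mathcal{O}_K/\mathfrak{m}_K^{n+1}$ (since $2n \geq n+1$). Smoothness of $X$ over $\mathcal{O}_K$ is equivalent, via the infinitesimal lifting criterion, to formal smoothness: for every square-zero extension of $\mathcal{O}_K$-algebras $A' \twoheadrightarrow A$, the induced map $X(A') \to X(A)$ is surjective. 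Applying this to the thickenings above yields that $X(\mathcal{O}_K/\mathfrak{m}_K^{n+1}) \to X(\mathcal{O}_K/\mathfrak{m}_K^n)$ is surjective for every $n \geq 1$.

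Starting from the given point $x_r \in X(\mathcal{O}_K/\mathfrak{m}_K^r)$, I would inductively construct a compatible family $(x_n)_{n \geq r}$ with $x_n \in X(\mathcal{O}_K/\mathfrak{m}_K^n)$ and $x_{n+1}$ mapping to $x_n$. This produces an element of the inverse limit $\varprojlim_{n} X(\mathcal{O}_K/\mathfrak{m}_K^n)$. It remains to show that this element comes from a genuine point of $X(\mathcal{O}_K)$. When $X = \operatorname{Spec}(A)$ is affine, this is immediate from the adjunction between morphisms and ring maps together with the completeness identification $\mathcal{O}_K = \varprojlim_n \mathcal{O}_K/\mathfrak{m}_K^n$, giving
\[
X(\mathcal{O}_K) = \Hom_{\mathcal{O}_K}(A, \mathcal{O}_K) = \varprojlim_n \Hom_{\mathcal{O}_K}(A, \mathcal{O}_K/\mathfrak{m}_K^n) = \varprojlim_n X(\mathcal{O}_K/\mathfrak{m}_K^n).
\]

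The main obstacle is handling the case when $X$ is not affine: in general, a compatible system of morphisms $\operatorname{Spec}(\mathcal{O}_K/\mathfrak{m}_K^n) \to X$ need not a priori assemble into a morphism from $\operatorname{Spec}(\mathcal{O}_K)$. The key observation to overcome this is that all the points $x_n$ reduce to the same element $\bar{x} \in X(\overline{K})$, which corresponds to a single closed point $p$ of the image. Since $\operatorname{Spec}(\mathcal{O}_K/\mathfrak{m}_K^n)$ is a local scheme whose unique closed point lands in any open neighborhood of $p$, each $x_n$ factors through any chosen affine open neighborhood $U = \operatorname{Spec}(A)$ of $p$. This reduces the general statement to the affine case already settled, and completeness of $\mathcal{O}_K$ then furnishes the desired $\mathcal{O}_K$-point of $U \subset X$ lifting $x_r$.
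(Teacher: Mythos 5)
Your proof is correct, but it follows a genuinely different route from the paper's. The paper invokes the EGA machinery of Henselian couples: it cites \cite{EGA4}*{Théorème~18.5.17} for the case $r = 1$, observes that being a Henselian couple only depends on the associated reduced schemes (so $(\Spec\mathcal{O}_K,\Spec\mathcal{O}_K/\mathfrak{m}_K^r)$ is a Henselian couple for all $r$), and then argues that the proof of 18.5.17 carries over verbatim. This relies only on the Henselian property of $\mathcal{O}_K$ and delegates the technical work to EGA. Your argument instead builds the lift from scratch: it iterates the square-zero infinitesimal lifting criterion one stage at a time to construct an element of $\varprojlim_n X(\mathcal{O}_K/\mathfrak{m}_K^n)$, and then uses the stronger fact that $\mathcal{O}_K$ is complete to algebraize this formal point. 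This is self-contained and elementary, and the reduction to the affine case in the last paragraph is correctly handled — since $\Spec(\mathcal{O}_K/\mathfrak{m}_K^{n})$ has a single underlying point, each $x_n$ factors through any affine open neighbourhood $U$ of the image $p$ of $\bar x$, so there is no gluing obstruction. The trade-off is that the paper's approach gets the result for arbitrary Henselian local rings (of which complete rings are a special case), while your approach uses completeness in an essential way via the identification $\mathcal{O}_K = \varprojlim_n \mathcal{O}_K/\mathfrak{m}_K^n$; for local fields both hypotheses hold, so both proofs give the theorem as stated.
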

\begin{proof}
For $ r=1 $, this is \cite{EGA4}*{Théorème~18.5.17}. For $ r>1 $, note that as remarked below \cite{EGA4}*{Définition~18.5.5}, $ (S,S_{0}) $ is a Henselian couple if and only if $ (S_{red},(S_{0})_{red}) $ is so. We deduce that $ (\Spec \mathcal{O}_{K},\Spec \mathcal{O}_{K}/\mathfrak{m}_{K}^{r}) $ is a Henselian couple. 
Thus the proof of Théorème 18.5.17 applies verbatim to our situation, upon making one change: replace the reference to $ 18.5.11(b) $ to a reference to $ 18.5.4(b) $ (taking $ S=\Spec \mathcal{O}_{K} $ and $ S_{0}=\Spec \mathcal{O}_{K}/\mathfrak{m}_{K}^{r} $ in the notations of $ 18.5.4 $).
\end{proof}

\begin{corollary}\label{Cor:surjectivityforSL2}
The homomorphism $ p_{rd}\colon P_{0}\to P_{0}^{0,rd} $ is surjective, for all $ r\in \N $.
\end{corollary}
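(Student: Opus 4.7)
The proof is a direct assembly of the ingredients already gathered just before the statement. The plan is to translate the assertion about the group homomorphism $p_{rd}$ into a statement about rational points of a scheme, and then quote Hensel's lemma for smooth schemes.

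More precisely, I would first invoke Lemma~\ref{Lem:description of the restriction P_0 to P_0^0,rSL2}, which identifies $P_0$ with $\underline{\SL}_{2,D}(\mathcal{O}_K)$ and $P_0^{0,rd}$ with $\underline{\SL}_{2,D}(\mathcal{O}_K/\mathfrak{m}_K^r)$, and moreover identifies the homomorphism $p_{rd}\colon P_0 \to P_0^{0,rd}$ (induced by the reduction $\mathcal{O}_D \to \mathcal{O}_D/\mathfrak{m}_D^{rd}$) with the canonical reduction map
\[
\underline{\SL}_{2,D}(\mathcal{O}_K) \longrightarrow \underline{\SL}_{2,D}(\mathcal{O}_K/\mathfrak{m}_K^r).
\]

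The next step is to observe that $\underline{\SL}_{2,D}$ is a smooth $\mathcal{O}_K$-scheme by Theorem~\ref{Thm:smoothness of SL2}. Hence Theorem~\ref{Thm:genhensel} (Hensel's lemma for smooth schemes, applied with $X = \underline{\SL}_{2,D}$) immediately gives surjectivity of the reduction map above for every $r \in \N$. Transporting this through the identifications of Lemma~\ref{Lem:description of the restriction P_0 to P_0^0,rSL2} yields surjectivity of $p_{rd}$, which is exactly the claim.

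There is essentially no obstacle: all the substance lies in the preceding work, namely in constructing the integral model $\underline{\SL}_{2,D}$, in showing that its $\mathcal{O}_K$-points and $\mathcal{O}_K/\mathfrak{m}_K^r$-points recover $P_0$ and $P_0^{0,rd}$ respectively, and in the smoothness statement of Theorem~\ref{Thm:smoothness of SL2} (the latter being straightforward for $D=K$ via the infinitesimal lifting criterion and handled in the appendix for general $D$). Once these are in place, the corollary is a one-line deduction from Hensel's lemma.
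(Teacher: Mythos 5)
Your proof is correct and follows exactly the same route as the paper: identify $p_{rd}$ with the reduction map $\underline{\SL}_{2,D}(\mathcal{O}_K)\to\underline{\SL}_{2,D}(\mathcal{O}_K/\mathfrak{m}_K^r)$ via Lemma~\ref{Lem:description of the restriction P_0 to P_0^0,rSL2}, then invoke smoothness (Theorem~\ref{Thm:smoothness of SL2}) and Hensel's lemma for smooth schemes (Theorem~\ref{Thm:genhensel}). Nothing to add.
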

\begin{proof}
This is a direct consequence of the commutative square involving $ P_{0}\to P_{0}^{0,rd} $ given in Lemma~\ref{Lem:description of the restriction P_0 to P_0^0,rSL2}, together with the fact that the integral model is smooth by Theorem~\ref{Thm:smoothness of SL2}, so that Theorem~\ref{Thm:genhensel} applies to the left hand side of the diagram.
\end{proof}

Along with the surjectivity of the restriction map $ p_{rd}\colon P_{0}\to P_{0}^{0,rd} $, one of the key result in our local description of the ball of radius $ rd $ is that $ p_{rd} $ is also somehow injective enough. This result can be seen as a natural generalisation of \cite{BrTi2}*{Corollaire~4.6.8}. 
\begin{lemma}\label{Lem:injectivityofpr}
Let $ r\in \N $ and let $ x\in [-\omega (\pi_{D}^{rd}),\omega (\pi_{D}^{rd})] $. Then $ p_{rd}^{-1}(P_{x}^{0,rd}) \subset P_{x} $.
\end{lemma}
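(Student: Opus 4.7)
The plan is to chase definitions and apply the ultrametric triangle inequality. Let $g \in p_{rd}^{-1}(P_x^{0,rd})$. Since $g$ lies in the domain $P_0$ of $p_{rd}$, every entry $g_{ij}$ already belongs to $\mathcal{O}_D$, which immediately takes care of the diagonal conditions $\omega(g_{11}),\omega(g_{22}) \geq 0$ imposed by membership in $P_x$. It remains to verify the two off-diagonal inequalities $\omega(g_{12}) \geq -x$ and $\omega(g_{21}) \geq x$.

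Exactly one of these is automatic from $g \in P_0$: if $x \leq 0$, then $\omega(g_{21}) \geq 0 \geq x$ holds at once, and symmetrically $\omega(g_{12}) \geq 0 \geq -x$ whenever $x \geq 0$. So the only substantive case is to show $\omega(g_{21}) \geq x$ when $0 \leq x \leq \omega(\pi_D^{rd})$, the argument for $\omega(g_{12}) \geq -x$ when $-\omega(\pi_D^{rd}) \leq x \leq 0$ being entirely symmetric.

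For this case, the crux is that $|x|$ is bounded by $\omega(\pi_D^{rd})$, which is precisely the threshold below which valuations on $\mathcal{O}_D$ are faithfully recorded by reduction modulo $\mathfrak{m}_D^{rd}$. By hypothesis the induced valuation of $p_{rd}(g_{21})$ in $\mathcal{O}_D/\mathfrak{m}_D^{rd}$ is at least $x$, meaning that one can write $g_{21} = a + b$ with $a \in \mathcal{O}_D$ satisfying $\omega(a) \geq x$ and $b \in \mathfrak{m}_D^{rd}$, whence $\omega(b) \geq \omega(\pi_D^{rd}) \geq x$. The non-archimedean triangle inequality then gives $\omega(g_{21}) \geq \min(\omega(a),\omega(b)) \geq x$, as required.

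I do not anticipate any genuine obstacle here: the lemma is essentially a direct unpacking of the definition of the induced valuation on $\mathcal{O}_D/\mathfrak{m}_D^{rd}$, and the specific choice of radius $rd$ for the local model is exactly what ensures the reduction map still carries enough information to detect membership in $P_x$ for every admissible $x$. One can view the statement as the natural generalisation of \cite{BrTi2}*{Corollaire~4.6.8} (which handles the case $x = 0$) to arbitrary $x \in [-\omega(\pi_D^{rd}), \omega(\pi_D^{rd})]$.
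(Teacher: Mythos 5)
Your proof is correct and takes essentially the same approach as the paper's, which simply notes that membership in $p_{rd}^{-1}(P_x^{0,rd})$ forces the off-diagonal valuations to be large enough and invokes Definition~\ref{Def:stabiliserforSL2}. You have merely spelled out the ultrametric decomposition $g_{21}=a+b$ that the paper leaves implicit.
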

\begin{proof}
Belonging to $ p_{rd}^{-1}(P_x^{0,rd}) $ implies that the valuation of the off diagonal entries are big enough. Hence, the result follows directly from Definition~\ref{Def:stabiliserforSL2}.
\end{proof}

We finally arrive at our main result: the ball of radius $ rd $ together with the action of $ \SL_{2}(\mathcal{O}_{D}) $ is encoded in $ P_{0}^{0,rd} $. We first need an adequate description of the ball of radius $ rd $ around $0$ in $ \mathcal{I} $.

\begin{lemma}\label{Lem:B_0(r) is really the ball of radius r}
Renormalise the distance on $ \R $ so that $ d_{\R}(0;\omega(\pi_D)) = 1 $, and put the metric $ d_{\mathcal{I}} $ on $ \mathcal{I} $ arising from the distance $ d_{\R} $ $ ( $see Remark~\ref{Rem:metric on I and stabiliser}$ ) $. Let $ B_0(rd) = \lbrace p\in \mathcal{I}~\vert~d_{\mathcal{I}}([(\Id ,0)];p)\leq rd\rbrace $ be the ball of radius $ rd $ around $ 0 $ in $ \mathcal{I} $. Let $ \tilde{B}_0(rd) = \lbrace [(g,x)]\in \mathcal{I} ~\vert~ g\in P_{0}, x\in [-\omega (\pi_D^{rd}), \omega (\pi_D^{rd})]\subset \R \rbrace $. Then $ B_0(rd) = \tilde{B}_0(rd) $.
\end{lemma}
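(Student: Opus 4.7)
The plan is to reduce both inclusions to a computation of distance inside a single apartment, using that $G$ acts by isometries with $P_0$ stabilising the base vertex $[(\Id,0)]$, and that the apartment $f_{\Id}(\R)$ is isometric to $\R$ (with the renormalised metric).

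First I would set up the two basic ingredients. By Remark~\ref{Rem:metric on I and stabiliser}, the stabiliser in $G$ of $[(\Id,0)]$ is exactly $P_0$, and the action of $G$ on $\mathcal{I}$ is by isometries; moreover, the restriction of $d_{\mathcal{I}}$ to the apartment $f_{\Id}(\R)$ agrees with $d_{\R}$. Because of the renormalisation $d_{\R}(0;\omega(\pi_D)) = 1$, the interval $[-\omega(\pi_D^{rd}),\omega(\pi_D^{rd})]\subset \R$ is precisely the set of $x\in\R$ with $d_{\R}(0;x)\leq rd$.

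For the inclusion $\tilde{B}_0(rd)\subset B_0(rd)$, I would take $[(g,x)]\in \tilde{B}_0(rd)$ with $g\in P_0$ and $x\in[-\omega(\pi_D^{rd}),\omega(\pi_D^{rd})]$. Since $g\in P_0$ fixes $[(\Id,0)]$ and acts by isometry,
\[
 d_{\mathcal{I}}([(\Id,0)];[(g,x)]) = d_{\mathcal{I}}(g^{-1}[(\Id,0)];[(\Id,x)]) = d_{\mathcal{I}}([(\Id,0)];[(\Id,x)]) = d_{\R}(0;x)\leq rd,
\]
so $[(g,x)]\in B_0(rd)$.

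For the reverse inclusion $B_0(rd)\subset \tilde{B}_0(rd)$, I would invoke Lemma~\ref{Lem:B-T tree from a based point} to write any $p\in B_0(rd)$ as $p=[(g,x)]$ with $g\in P_0$ and $x\in\R$. The same isometry argument as above gives $d_{\mathcal{I}}([(\Id,0)];p) = d_{\R}(0;x)$, so the assumption $p\in B_0(rd)$ forces $d_{\R}(0;x)\leq rd$, hence $x\in[-\omega(\pi_D^{rd}),\omega(\pi_D^{rd})]$, and $p\in \tilde{B}_0(rd)$. There is no real obstacle; the only subtlety is the bookkeeping around the renormalisation, which is what makes $\omega(\pi_D^{rd})$ correspond to the metric radius $rd$.
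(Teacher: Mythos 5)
Your proof is correct and follows essentially the same route as the paper: both directions reduce to the fact that $G$ acts by isometries with $P_0 = \Stab([(\Id,0)])$, that $d_{\mathcal{I}}$ restricted to the standard apartment is $d_{\R}$, and for the reverse inclusion both invoke Lemma~\ref{Lem:B-T tree from a based point} to write an arbitrary point of $\mathcal{I}$ as $[(g,x)]$ with $g\in P_0$.
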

\begin{proof}
If $ [(g,x)]\in \tilde{B}_0(rd) $, then $ d_\R(0;x)\leq rd $ by our normalisation of the distance on $ \R $. So we get $ rd\geq d_\R(0;x) = d_{\mathcal{I}}([(\Id ,0)];[(\Id ,x)]) = d_{\mathcal{I}}([(\Id ,0)];[(g ,x)]) $, where the last equality follows from the fact that $ G $ acts by isometries on $ \mathcal{I} $. Conversely, assume $ d_{\mathcal{I}}([(\Id ,0)];[(g,x)])\leq rd $. By Lemma~\ref{Lem:B-T tree from a based point}, there exist $ h\in P_0 $ and $ y\in \R $ such that $ [(g,x)] = [(h,y)] $. But $ d_{\mathcal{I}}([(\Id ,0)];[(g,x)]) = d_{\mathcal{I}}([(\Id ,0)];[(h ,y)]) = d_{\mathcal{I}}([(\Id ,0)];[(\Id ,y)]) = d_\R(0;y) $. Hence $ [(h,y)]\in \tilde{B}_0(rd) $, as wanted.
\end{proof}
\begin{remark}
The distance $ d_{\mathcal{I}} $ that we introduced in Lemma~\ref{Lem:B_0(r) is really the ball of radius r} is also the combinatorial distance on the tree. Indeed, looking at when $ P_y $ is inside $ P_x $ for $ x,y\in \R $, we see that $ [(\Id ,x)] $ is a vertex of $ \mathcal{I} $ if and only if $ x\in \omega(\pi_D)\Z $ (note that this argument uses the fact that a simple algebraic group acts on a tree without edge inversion).
\end{remark}

\begin{theorem}\label{Thm:localdescriptionoftheball}
Let $ r\in \N $. The map $ B_{0}(rd)\to \mathcal{I}^{0,rd}\colon [(g,x)]\mapsto [(p_{rd}(g),x)]^{0,rd} $ is a $ (p_{rd}\colon P_{0}\to P_{0}^{0,rd}) $-equivariant bijection.
\end{theorem}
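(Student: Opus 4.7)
The plan is to establish in turn that the map is well-defined, equivariant, injective, and surjective, using essentially three ingredients developed in the preceding sections: Lemma~\ref{Lem:B_0(r) is really the ball of radius r} for the concrete shape of $B_0(rd)$, Corollary~\ref{Cor:surjectivityforSL2} (a consequence of Hensel's lemma for the smooth integral model $\underline{\SL}_{2,D}$) for surjectivity, and Lemma~\ref{Lem:injectivityofpr} for injectivity. A recurring observation is that every element of $N \cap P_0$ has entries of valuation exactly $0$: for $t = \begin{psmallmatrix} x & 0 \\ 0 & x^{-1} \end{psmallmatrix} \in T$ or $m = \begin{psmallmatrix} 0 & -x \\ x^{-1} & 0 \end{psmallmatrix} \in M$, the requirement $\omega \geq 0$ on all entries forces $\omega(x) = 0$. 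Such elements therefore project under $p_{rd}$ into $N^{0,rd}$, and their images under the global $\nu$ of Definition~\ref{Def:affineactionforNSL2} coincide with the images under the local $\nu$, since in both cases these are determined by valuations of the entries, which are all $0$.

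For well-definedness, let $[(g,x)] = [(h,y)]$ in $B_0(rd)$, and choose representatives with $g,h \in P_0$ and $x,y \in [-\omega(\pi_D^{rd}), \omega(\pi_D^{rd})]$ thanks to Lemma~\ref{Lem:B_0(r) is really the ball of radius r}. Lemma~\ref{Lem:integralityofn} supplies $n \in N \cap P_0$ with $\nu(n)(x) = y$, and Remark~\ref{Rem:equivalent def with for all n} forces $g^{-1}hn \in P_x$. Applying $p_{rd}$ then yields $p_{rd}(n) \in N^{0,rd}$, $\nu(p_{rd}(n))(x) = y$, and $p_{rd}(g)^{-1} p_{rd}(h) p_{rd}(n) \in P_x^{0,rd}$, whence $(p_{rd}(g),x) \sim_{0,rd} (p_{rd}(h),y)$. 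Equivariance is immediate because both actions are by left multiplication on the first component and $p_{rd}$ is a homomorphism.

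For injectivity, suppose $[(p_{rd}(g),x)]^{0,rd} = [(p_{rd}(h),y)]^{0,rd}$ and pick $\tilde n \in N^{0,rd}$ witnessing this equivalence. The entries of $\tilde n$ are units in $\mathcal{O}_D/\mathfrak{m}_D^{rd}$, hence lift to units in $\mathcal{O}_D$, producing $n \in N \cap P_0$ with $p_{rd}(n) = \tilde n$ and, by the recurring observation above, $\nu(n)(x) = y$. The hypothesis then reads $p_{rd}(g^{-1}hn) \in P_x^{0,rd}$, and Lemma~\ref{Lem:injectivityofpr} upgrades this to $g^{-1}hn \in P_x$, so $(g,x) \sim (h,y)$ in $\mathcal{I}$. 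Surjectivity is the one step that calls for nontrivial input: given $[(g',x)]^{0,rd}$ with $g' \in P_0^{0,rd}$ and $x \in [-\omega(\pi_D^{rd}), \omega(\pi_D^{rd})]$, Corollary~\ref{Cor:surjectivityforSL2} provides $g \in P_0$ with $p_{rd}(g) = g'$, so that $[(g,x)] \in B_0(rd)$ is a preimage of the given class.

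The main obstacle in this whole chain is therefore really the surjectivity of $p_{rd}$, which has already been dispatched by Corollary~\ref{Cor:surjectivityforSL2} using the smoothness of $\underline{\SL}_{2,D}$ together with Hensel's lemma (Theorem~\ref{Thm:genhensel}); combined with Lemma~\ref{Lem:injectivityofpr}, the theorem reduces to a careful bookkeeping that matches the two equivalence relations defining $\mathcal{I}$ and $\mathcal{I}^{0,rd}$, with the single delicate point being the compatibility of $\nu$ with $p_{rd}$ on $N \cap P_0$.
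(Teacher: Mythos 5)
Your proposal is correct and follows essentially the same route as the paper: well-definedness via Lemma~\ref{Lem:integralityofn} and the global form of Remark~\ref{Rem:equivalent def with for all n}, injectivity via lifting the local Weyl representative and Lemma~\ref{Lem:injectivityofpr}, and surjectivity via Corollary~\ref{Cor:surjectivityforSL2}. The only cosmetic difference is in the injectivity step, where you lift an arbitrary witness $\tilde n \in N^{0,rd}$ by observing its entries are units, whereas the paper invokes Remark~\ref{Rem:equivalent def with for all n for local model} to reduce to $\tilde n \in \{\Id, \begin{psmallmatrix}0&1\\-1&0\end{psmallmatrix}\}$ which have obvious lifts; both work, and your observation that $\nu$ and $p_{rd}$ are compatible on $N \cap P_0$ is exactly the implicit point the paper's shorter proof relies on.
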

\begin{proof}
The map is well-defined by Lemma~\ref{Lem:integralityofn}.
\begin{itemize}
\item Injectivity: let $ [(g,x)], [(h,y)] \in B_0(rd) $ be such that they have the same image in $ \mathcal{I}^{0,rd} $. By Remark~\ref{Rem:equivalent def with for all n for local model}, it means that for all $ \tilde{n}\in N^{0,rd} $ such that $ \nu (\tilde{n})(x) = y $, $ p_{rd}(g)^{-1}p_{rd}(h)\tilde{n}\in P_{x}^{0,rd} $. So, we can assume that $ \tilde{n} $ is either equal to $ \Id $, or is of the form $ \begin{psmallmatrix} 
0 & 1 \\
-1 & 0 
\end{psmallmatrix} $. Hence, there exists $ n\in N $ such that $ p_{rd}(n) = \tilde{n} $. But $ \nu (n)(x) = y $, and $ g^{-1}hn\in p_{rd}^{-1}(P_{x}^{0,rd}) \subset P_{x} $ by Lemma~\ref{Lem:injectivityofpr}. Hence, $ [(g,x)] = [(h,y)] $, as wanted.
\item Surjectivity: follows directly from the surjectivity of $ p_{rd}\colon P_{0}\to P_{0}^{0,rd} $ (Corollary~\ref{Cor:surjectivityforSL2}).
\item Equivariance: $ h.[(g,x)] = [(hg,x)] \mapsto [(p_{rd}(hg),x)]^{0,rd} = p_{rd}(h).[(p_{rd}(g),x)]^{0,rd} $. \qedhere
\end{itemize}
\end{proof}

\begin{remark}
Theorem~\ref{Thm:localdescriptionoftheball} shows that the ball of radius $rd$ in the Bruhat--Tits tree of $\SL_2(D)$ only depends on $ \mathcal{O}_D/\mathfrak{m}_D^{rd} $. This result alone could be obtained more easily using the description of Bruhat--Tits buildings as (admissible) lattices, see \cite{AN02}. Using precisely this strategy, the fact that the ball of radius $rd$ in the Bruhat--Tits tree of $\SL_2(D)$ only depends on $ \mathcal{O}_D/\mathfrak{m}_D^{rd} $ is obtained by M.\ de la Salle and R.\ Tessera in \cite{dlST15}*{Corollary~2.2}. However, for our purpose, we need to control the local action, and in particular we crucially rely on Corollary~\ref{Cor:surjectivityforSL2} to study Chabauty convergences (see the proof of Theorem~\ref{Thm:continuity from D to Chab(Aut(T))}). Hence, we believe that an approach relying on lattices would not spare the need for a smooth integral model.
\end{remark}

\subsection{Arithmetic convergence}
\begin{definition}\label{Def:the space D}
Let $ \mathcal{K} $ be the set of local fields up to isomorphism. Let $ \mathcal{D} $ be the set of finite dimensional division algebras $ D $ over their centre $ Z(D) $ and such that $ Z(D) $ is a local field. We also consider this set up to isomorphism. We set $ \mathcal{K}_{p^n} = \lbrace K\in \mathcal{K}~\vert~\vert \overline{K}\vert =p^n \rbrace $ and $ \mathcal{D}_{p^n} = \lbrace D\in \mathcal{D}~\vert~\vert \overline{D}\vert =p^n \rbrace $.
\end{definition}

Note that $ \mathcal{K} $ can naturally be seen as a subset of $ \mathcal{D} $, so that $ \mathcal{K}_{p^n} \subset \mathcal{D}_{p^n} $. Following an idea dating back to Krasner (see \cite{D84} for references, this idea is also used in e.g. \cite{Ka86}), we define a metric on the space $ \mathcal{D} $.
\begin{definition}
Let $ D_1, D_2 \in \mathcal{D} $. We say that $ D_1 $ is $ r $-close to $ D_2 $ if and only if there exists an isomorphism $ \mathcal{O}_{D_{1}}/\mathfrak{m}_{D_1}^{r}\cong \mathcal{O}_{D_{2}}/\mathfrak{m}_{D_2}^{r} $.
\end{definition}

\begin{remark}\label{Rem:transitivityofdistance}
Note that being $ r $-close is an equivalence relation, and that if $ r\geq l $ and $ D_1 $ is $ r $-close to $ D_2 $, then $ D_1$ is $ l $-close to $ D_2 $.
\end{remark}

Observe that this notion of closeness induces a non-archimedean metric on $ \mathcal{D} $. Let
\begin{equation*}
d\colon \mathcal{D}\times \mathcal{D}\to \mathbf{R}_{\geq 0}\colon d(D_{1};D_{2})= \inf \lbrace \frac{1}{2^{r}} ~\vert~ D_{1} \textrm{ is } r \textrm{-close to }D_{2} \rbrace
\end{equation*}

\begin{lemma}\label{Lem:Non-archim. metric space}
$ d(\cdot~{;}~\cdot ) $ is a non-archimedean metric on $ \mathcal{D} $. 
\end{lemma}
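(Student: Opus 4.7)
The plan is to check in turn the four axioms of a non-archimedean metric: symmetry, reflexivity ($d(D,D)=0$), the strong triangle inequality, and separation ($d(D_1,D_2)=0 \Rightarrow D_1\cong D_2$). The first two are essentially definitional. Symmetry follows because $r$-closeness is symmetric: an isomorphism $\mathcal{O}_{D_1}/\mathfrak{m}_{D_1}^r\cong \mathcal{O}_{D_2}/\mathfrak{m}_{D_2}^r$ has an inverse. Reflexivity is immediate since any $D$ is trivially $r$-close to itself for every $r$, so the infimum is zero.

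For the strong triangle inequality $d(D_1;D_3)\leq \max(d(D_1;D_2),d(D_2;D_3))$, I would argue as follows. Fix $\varepsilon>0$ and pick $r_1,r_2\in \N$ with $\tfrac{1}{2^{r_i}} < d_i + \varepsilon$ (where $d_1=d(D_1;D_2)$, $d_2=d(D_2;D_3)$) such that $D_1$ is $r_1$-close to $D_2$ and $D_2$ is $r_2$-close to $D_3$. Set $r=\min(r_1,r_2)$. By Remark~\ref{Rem:transitivityofdistance}, being $r_i$-close implies being $r$-close; since being $r$-close is the existence of an isomorphism of rings, transitivity of ring isomorphism gives that $D_1$ is $r$-close to $D_3$. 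Hence $d(D_1;D_3)\leq \tfrac{1}{2^r} = \max(\tfrac{1}{2^{r_1}},\tfrac{1}{2^{r_2}}) \leq \max(d_1,d_2)+\varepsilon$, and letting $\varepsilon \to 0$ yields the desired inequality.

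The main obstacle is the separation axiom: if $d(D_1;D_2)=0$, then for each $r\in\N$ there exists an isomorphism $\varphi_r\colon \mathcal{O}_{D_1}/\mathfrak{m}_{D_1}^r \to \mathcal{O}_{D_2}/\mathfrak{m}_{D_2}^r$. I need to promote this to an isomorphism $\mathcal{O}_{D_1}\cong \mathcal{O}_{D_2}$, and then conclude that $D_1\cong D_2$ by passing to skew fields of fractions. The key observation enabling this is that each ring $\mathcal{O}_{D_i}/\mathfrak{m}_{D_i}^r$ is \emph{finite} (it is a finite module over the finite residue ring $\overline{D_i}$). The plan is to apply a König's lemma / compactness argument: consider the tree whose nodes at level $r$ are the isomorphisms $\mathcal{O}_{D_1}/\mathfrak{m}_{D_1}^r \to \mathcal{O}_{D_2}/\mathfrak{m}_{D_2}^r$, with an edge from a level-$(r+1)$ isomorphism to its reduction modulo $\mathfrak{m}^r$. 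Each level is finite and non-empty by hypothesis, so the tree is infinite and finitely branching; König's lemma yields a compatible system $(\psi_r)_{r\in\N}$. Taking the inverse limit gives an isomorphism of the completions $\hat{\mathcal{O}}_{D_1}\cong \hat{\mathcal{O}}_{D_2}$, i.e. $\mathcal{O}_{D_1}\cong \mathcal{O}_{D_2}$ since $\mathcal{O}_{D_i}$ is complete in the $\mathfrak{m}_{D_i}$-adic topology. Localising (or passing to quotient skew-fields of fractions), this yields $D_1\cong D_2$, as required.
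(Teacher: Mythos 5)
Your proof is correct and follows the same overall route as the paper: the ultrametric inequality comes from the fact that $r$-closeness is an equivalence relation (Remark~\ref{Rem:transitivityofdistance}), and separation comes from passing from isomorphisms of all the finite quotients to an isomorphism of $\mathcal{O}_{D_1}$ with $\mathcal{O}_{D_2}$ and then to their skew fields of fractions. The one place where you go beyond the paper is worth flagging: the paper simply asserts that $d(D_1;D_2)=0$ forces $\mathcal{O}_{D_1}\cong\mathcal{O}_{D_2}$, while you correctly note that the hypothesis gives an isomorphism at each finite level with no compatibility built in, and you supply the missing ingredient — the finiteness of each ring $\mathcal{O}_{D_i}/\mathfrak{m}_{D_i}^r$ together with a K\"onig's-lemma argument (which works because any ring isomorphism at level $r+1$ preserves the Jacobson radical and hence reduces to one at level $r$) — to extract a compatible system and take an inverse limit, using completeness of $\mathcal{O}_{D_i}$. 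This is a genuine gap in the paper's terse proof that your argument fills cleanly.
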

\begin{proof}
If $ d(D_1;D_2) = 0 $, then $ \mathcal{O}_{D_1} $ and $ \mathcal{O}_{D_2} $ are isomorphic. 
Hence, their field of fraction are isomorphic, so that $ D_1=D_2 $ in $ \mathcal{D} $, as wanted. The fact that this distance is non-archimedean is a consequence of Remark~\ref{Rem:transitivityofdistance}.
\end{proof}

A crucial fact about the space $ \mathcal{D}_{p^{n}} $ (for a fixed prime power $ p^{n} $, as in Definition~\ref{Def:the space D}) is that it is a compact space. This is one of the key observation to prove that $\mathcal{S}_T^{\SL_2(D)} $ is closed in $\mathcal{S}_T $. In fact, it is even possible to give an explicit description of the metric space $ \mathcal{D}_{p^{n}} $. The corner stone in this description is Theorem~\ref{Thm:fundamental approximation lemma} which is certainly well known to experts (this is for example used implicitly in \cite{Ka86}). While working on this paper, we learnt that it had also been obtained and used independently in \cite{dlST15}*{Lemma~1.3}. Given its importance, we decide nevertheless to include our own proof.

\begin{theorem}\label{Thm:fundamental approximation lemma}
Let $ K $ be a totally ramified extension of degree $ k $ of $ \mathbf{Q}_{p^{n}} $. The distance between $ K $ and $ \mathbf{F}_{p^{n}}(\!(X)\!) $ is $ \frac{1}{2^{k}} $. More explicitly, let $ \lbrace a_{x}\rbrace_{x\in \mathbf{F}_{p^{n}}}\subset \mathcal{O}_K\cap \mathbf{Q}_{p^{n}} $ be a set of representative of $ \overline{K} $. Then the bijection 
\begin{align*}
\varphi_{\pi_{K}}\colon \mathcal{O}_{K}&\to \mathbf{F}_{p^{n}}[\![X]\!] \\
\sum \limits_{i=0}^{\infty}a_{x_{i}}\pi_{K}^{i}&\mapsto \sum \limits_{i=0}^{\infty}x_{i}X^{i}
\end{align*}
$ ( $which depends on a choice of uniformiser of $ K) $ induces an isomorphism of rings
\begin{align*}
\overline{\varphi}_{\pi_{K}}\colon \mathcal{O}_{K}/\mathfrak{m}_{K}^{k}\to \mathbf{F}_{p^{n}}[\![X]\!]/(X^{k})
\end{align*}
 \end{theorem}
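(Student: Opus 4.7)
The plan is to establish the claimed ring isomorphism and then deduce the distance statement from it, with the lower bound coming from a characteristic argument.

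First I would set up the structure of $\mathcal{O}_K$. Since $K/\mathbf{Q}_{p^n}$ is totally ramified of degree $k$, standard local field theory gives $\mathcal{O}_K = \mathbf{Z}_{p^n}[\pi_K]$, and every element of $\mathcal{O}_K$ admits a unique expansion $\sum_{i=0}^{\infty} a_{x_i} \pi_K^i$ with $a_{x_i}$ in the chosen set of representatives of $\overline{K} \cong \mathbf{F}_{p^n}$ sitting inside $\mathbf{Z}_{p^n}$. This makes $\varphi_{\pi_K}$ a well-defined bijection of sets. Furthermore, $\pi_K$ is a root of an Eisenstein polynomial $\pi_K^k + c_{k-1}\pi_K^{k-1} + \cdots + c_0 = 0$ with $c_i \in p\mathbf{Z}_{p^n}$ for $i<k$ and $c_0 = p u$ with $u \in \mathbf{Z}_{p^n}^\times$. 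In particular $\mathfrak{m}_K^k = (\pi_K^k) = (p)$ as ideals of $\mathcal{O}_K$.

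Next I would reduce everything modulo $\mathfrak{m}_K^k = (p)$. The Eisenstein relation becomes $\pi_K^k \equiv 0$, so $\mathcal{O}_K/(p) \cong \mathbf{F}_{p^n}[\pi_K]/(\pi_K^k)$ as $\mathbf{F}_{p^n}$-algebras, where the copy of $\mathbf{F}_{p^n}$ is identified through the reduction $\mathbf{Z}_{p^n} \to \mathbf{F}_{p^n}$ sending $a_x \mapsto x$. On the right-hand side, $\mathbf{F}_{p^n}[\![X]\!]/(X^k) = \mathbf{F}_{p^n}[X]/(X^k)$. Now the induced map $\overline{\varphi}_{\pi_K}$ sends $a_x \mapsto x$ on the coefficient field and $\pi_K \mapsto X$, so it is visibly the canonical $\mathbf{F}_{p^n}$-algebra isomorphism $\mathbf{F}_{p^n}[\pi_K]/(\pi_K^k) \to \mathbf{F}_{p^n}[X]/(X^k)$. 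This proves the second assertion and shows that $K$ is $k$-close to $\mathbf{F}_{p^n}(\!(X)\!)$, giving $d(K, \mathbf{F}_{p^n}(\!(X)\!)) \leq 1/2^k$.

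To obtain the reverse inequality, I would show that $K$ is \emph{not} $(k{+}1)$-close to $\mathbf{F}_{p^n}(\!(X)\!)$ by a characteristic argument. In $\mathcal{O}_K/\mathfrak{m}_K^{k+1}$ the element $p = u^{-1}\pi_K^k$ is nonzero (it has valuation exactly $k < k+1$), so the ring has characteristic divisible by $p^2$, and in fact equal to $p^2$ since $p^2$ has valuation $2k \geq k+1$. On the other hand $\mathbf{F}_{p^n}[\![X]\!]/(X^{k+1})$ has characteristic $p$. Two rings of different characteristic cannot be isomorphic, so the largest $r$ for which the two quotients agree is exactly $r=k$, and the infimum in the definition of $d$ is attained at $1/2^k$.

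The only real subtlety is step two, where one must check that $\overline{\varphi}_{\pi_K}$ is a ring homomorphism and not merely a bijection of sets; the cleanest way is to avoid manipulating power series directly and instead identify both sides with the finite $\mathbf{F}_{p^n}$-algebra $\mathbf{F}_{p^n}[Y]/(Y^k)$ as above, which collapses the argument to checking that $\pi_K \mapsto X$ and that the representatives $a_x$ reduce correctly mod $p$, both of which follow immediately from the Teichmüller-type uniqueness of the expansion in $\mathbf{Z}_{p^n}$.
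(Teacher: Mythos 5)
Your proof is correct, and the route you take to the isomorphism is genuinely different from the paper's. The paper verifies the ring-homomorphism property of $\overline{\varphi}_{\pi_K}$ directly: since each $a_x$ reduces to $x$ modulo $\mathfrak{m}_K$, the representatives satisfy $a_x + a_y - a_{x+y} \in (p)$ and $a_x a_y - a_{xy} \in (p)$, and since $(p) = \mathfrak{m}_K^k$ by total ramification, the bijection $\varphi_{\pi_K}$ respects addition and multiplication up to an error in $\mathfrak{m}_K^k$. This keeps the argument elementary and self-contained. You instead invoke the monogenicity $\mathcal{O}_K = \mathbf{Z}_{p^n}[\pi_K]$ with an Eisenstein minimal polynomial, reduce the presentation modulo $p$ to get $\mathcal{O}_K/(p) \cong \mathbf{F}_{p^n}[\pi_K]/(\pi_K^k)$, and then identify $\overline{\varphi}_{\pi_K}$ with the canonical $\mathbf{F}_{p^n}$-algebra isomorphism to $\mathbf{F}_{p^n}[X]/(X^k)$. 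This is a cleaner structural derivation — the ring-map property is automatic once both sides are presented the same way — at the cost of citing the structure theorem for totally ramified extensions, which the paper avoids. One small point worth being explicit about in your write-up: you still need to check that the map induced by the set-theoretic bijection $\varphi_{\pi_K}$ agrees with the canonical algebra isomorphism; this is indeed immediate (both send $\sum_{i<k} a_{x_i}\pi_K^i$ to $\sum_{i<k} x_i X^i$, using that $\{a_x\}$ is a set of representatives), but the sentence "so it is visibly the canonical isomorphism" is doing the work that the paper's displayed congruences do, and it deserves the one-line justification. For the lower bound, your characteristic computation ($p^2$ versus $p$) is the same observation as the paper's, which just notes that $p = \sum_{i=1}^p 1$ is nonzero in $\mathcal{O}_K/\mathfrak{m}_K^{k+1}$ but zero in $\mathbf{F}_{p^n}[\![X]\!]/(X^{k+1})$; you merely pin down the exact characteristic, which is a harmless refinement.
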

\begin{proof}
Let $ \lbrace a_{x}\rbrace_{x\in \mathbf{F}_{p^{n}}}\subset \mathcal{O}_K $ be a set of representative of $ \overline{K} $. Since $ \mathbf{Q}_{p^{n}}\leq K $ is totally ramified, we can and do choose the $ a_x $'s so that they all lie in $ \mathbf{Q}_{p^{n}} $. Now, we have $ a_{x}+a_{y} - a_{x+y} \in (p) $ and $ a_{x}a_{y} - a_{xy} \in (p) $. Furthermore, since $ K $ is totally ramified, $ (p) = \mathfrak{m}_{K}^{k} $. Hence, this implies that the map $ \varphi_{\pi_K} $ (which is always a bijection, by the general theory of local fields) is a homomorphism modulo $ \mathfrak{m}_{K}^{k} $ and $ (X^{k}) $.

To conclude that $ K $ and $ \mathbf{F}_{p^{n}}(\!(X)\!) $ are at distance $ \frac{1}{2^{k}} $, it suffices to observe that $ \mathcal{O}_{K}/\mathfrak{m}_{K}^{k+1} $ is not isomorphic to $ \mathbf{F}_{p^{n}}[\![X]\!]/(X^{k+1}) $. But this is clear, since $ p\notin \mathfrak{m}_{K}^{k+1} $, hence $ \sum \limits_{i=1}^{p}1 \neq 0 $ in $ \mathcal{O}_{K}/\mathfrak{m}_{K}^{k+1} $.
\end{proof}

We need to transpose the situation of Theorem~\ref{Thm:fundamental approximation lemma} to division algebras.
\begin{lemma}\label{Lem:division algebras are accumulation points}
Let $ D_1\in \mathcal{D} $ be the cyclic algebra $ (E_1/K_1,\sigma^{r_1},\pi_{K_1}) $ $ ( $respectively $ D_2\in \mathcal{D} $ be the cyclic algebra $ (E_2/K_2,\sigma^{r_2},\pi_{K_2})) $. Assume that $ D_1 $ and $ D_2 $ are of the same degree $ d $ over their respective centre, that $ r_1 = r_2=r\in (\Z /d\Z)^{\times} $, and that $ K_1 $ is $ e $-close to $ K_2 $. Then $ D_1 $ is $ ed $-close to $ D_2 $
\end{lemma}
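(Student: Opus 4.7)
The starting observation is that, since $D_i/K_i$ is totally ramified of degree $d$ (as $D_i$ has degree $d$ over its centre and $E_i/K_i$ is unramified), one has $\mathfrak{m}_{D_i}^{ed} = \pi_{D_i}^{ed}\mathcal{O}_{D_i} = \pi_{K_i}^e\mathcal{O}_{D_i}$. Combined with the standard presentation of the maximal order of a cyclic algebra,
$$\mathcal{O}_{D_i} \;=\; \bigoplus_{k=0}^{d-1}\mathcal{O}_{E_i}\,\pi_{D_i}^{k},\qquad \pi_{D_i}^{d}=\pi_{K_i},\qquad \pi_{D_i}\,a=\sigma^{r}(a)\,\pi_{D_i}\ \text{ for }a\in\mathcal{O}_{E_i},$$
this yields
$$\mathcal{O}_{D_i}/\mathfrak{m}_{D_i}^{ed}\;\cong\;\bigoplus_{k=0}^{d-1}\bigl(\mathcal{O}_{E_i}/\mathfrak{m}_{K_i}^{e}\mathcal{O}_{E_i}\bigr)\bar{\pi}_{D_i}^{\,k},$$
with multiplication determined by $\bar\pi_{D_i}^{\,d}=\overline{\pi_{K_i}}$ and $\bar\pi_{D_i}\,\bar a=\overline{\sigma^{r}(a)}\,\bar\pi_{D_i}$. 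Hence producing the desired ring isomorphism reduces to producing a ring isomorphism $\tilde\varphi:\mathcal{O}_{E_1}/\mathfrak{m}_{K_1}^{e}\mathcal{O}_{E_1}\to\mathcal{O}_{E_2}/\mathfrak{m}_{K_2}^{e}\mathcal{O}_{E_2}$ extending the hypothesised $\varphi:\mathcal{O}_{K_1}/\mathfrak{m}_{K_1}^{e}\to\mathcal{O}_{K_2}/\mathfrak{m}_{K_2}^{e}$, intertwining $\sigma^{r}$ and sending $\overline{\pi_{K_1}}$ to $\overline{\pi_{K_2}}$.

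\textbf{Extending $\varphi$ to the unramified extension.} Since $E_i/K_i$ is unramified of degree $d$, I can present $\mathcal{O}_{E_i}=\mathcal{O}_{K_i}[X]/(h_i(X))$ with $h_i$ a monic lift of the (common, via the residue-field iso induced by $\varphi$) irreducible degree-$d$ polynomial defining the residue field extension. Choosing $h_2$ to be any lift to $\mathcal{O}_{K_2}$ of $\varphi(h_1\bmod\mathfrak{m}_{K_1}^{e})$ yields an extension $\tilde\varphi$ of $\varphi$. Because $\mathcal{O}_{E_i}/\mathfrak{m}_{K_i}^{e}\mathcal{O}_{E_i}$ is finite étale over $\mathcal{O}_{K_i}/\mathfrak{m}_{K_i}^{e}$ (so that its automorphism group over the base is, by invariance of the étale site under nilpotent thickening, the cyclic group $\mathrm{Gal}(\overline{E_i}/\overline{K_i})$ of order $d$ generated by the reduction of $\sigma$), one can post-compose $\tilde\varphi$ by a suitable power of $\sigma$ to ensure $\tilde\varphi\circ\sigma=\sigma\circ\tilde\varphi$, and hence $\tilde\varphi\circ\sigma^{r}=\sigma^{r}\circ\tilde\varphi$.

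\textbf{Matching the uniformisers — the main obstacle.} A priori, $\tilde\varphi(\overline{\pi_{K_1}})$ is only some uniformiser of $\mathcal{O}_{K_2}/\mathfrak{m}_{K_2}^{e}$, not necessarily $\overline{\pi_{K_2}}$. Lift it to an honest uniformiser $\tilde\pi_{K_2}=u\,\pi_{K_2}$ with $u\in\mathcal{O}_{K_2}^{\times}$. Using the classical surjectivity of the unit-norm map $N_{E_2/K_2}:\mathcal{O}_{E_2}^{\times}\twoheadrightarrow\mathcal{O}_{K_2}^{\times}$ for unramified extensions, write $u=N_{E_2/K_2}(\beta)$ with $\beta\in\mathcal{O}_{E_2}^{\times}$. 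A direct computation, using that $\sigma^{r}$ generates $\mathrm{Gal}(E_2/K_2)$ (because $r\in(\Z/d\Z)^{\times}$), shows that $\pi_{D_2}\mapsto\beta\,\pi_{D_2}$ (identity on $E_2$) defines a $K_2$-algebra isomorphism
$$(E_2/K_2,\sigma^{r},\pi_{K_2})\;\xrightarrow{\sim}\;(E_2/K_2,\sigma^{r},\tilde\pi_{K_2})$$
which preserves maximal orders since $\beta$ is a unit; in particular it induces an iso between the corresponding quotients modulo $\mathfrak{m}_{D}^{ed}$. Replacing the cyclic presentation of $D_2$ accordingly, I may assume $\tilde\varphi(\overline{\pi_{K_1}})=\overline{\pi_{K_2}}$.

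\textbf{Conclusion.} Under these reductions, the formula $\sum_k a_k\bar\pi_{D_1}^{\,k}\mapsto\sum_k \tilde\varphi(a_k)\bar\pi_{D_2}^{\,k}$ manifestly respects the relations $\bar\pi_{D_i}^{\,d}=\overline{\pi_{K_i}}$ and $\bar\pi_{D_i}\bar a=\overline{\sigma^{r}(a)}\,\bar\pi_{D_i}$, so it defines a ring isomorphism $\mathcal{O}_{D_1}/\mathfrak{m}_{D_1}^{ed}\cong\mathcal{O}_{D_2}/\mathfrak{m}_{D_2}^{ed}$, exhibiting the $ed$-closeness of $D_1$ and $D_2$. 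The only non-routine step is the uniformiser matching, which is handled by the norm-surjectivity trick; the Frobenius-equivariance of $\tilde\varphi$ is a bookkeeping issue settled by étale descent from the residue field.
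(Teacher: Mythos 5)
Your proposal is correct and follows essentially the same approach as the paper: both reduce $\mathcal{O}_{D_i}/\mathfrak{m}_{D_i}^{ed}$ to the cyclic algebra of $\mathcal{O}_{E_i}/\mathfrak{m}_{E_i}^{e}$ over $\mathcal{O}_{K_i}/\mathfrak{m}_{K_i}^{e}$ and then normalize the uniformiser, using in the end that all uniformisers of $K_2$ give isomorphic cyclic algebras because $N_{E_2/K_2}$ is surjective on units for unramified extensions (the paper phrases this as ``the isomorphism type of $D_i$ does not depend on the choice of $\pi_{K_i}$'' and re-chooses the presentation, while you keep the presentation and compose with the explicit algebra automorphism $\pi_{D_2}\mapsto\beta\pi_{D_2}$). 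One small remark: the post-composition of $\tilde\varphi$ by a power of $\sigma$ is in fact unnecessary, since any ring extension of $\varphi$ to $\mathcal{O}_{E_1}/\mathfrak{m}_{K_1}^e\mathcal{O}_{E_1}$ is automatically Frobenius-equivariant by the very \'etale-lifting argument you invoke (both $\sigma_1$ and $\tilde\varphi^{-1}\sigma_2\tilde\varphi$ reduce to the $q$-power Frobenius on $\overline{E}_1$, hence coincide).
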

\begin{proof}
Note that the isomorphism type of $ D_i $ does not depend on the choice of the uniformiser $ \pi_{K_i} $, so that we can and do assume that the given isomorphism $ \mathcal{O}_{K_1}/\mathfrak{m}_{K_1}^{e}\cong \mathcal{O}_{K_2}/\mathfrak{m}_{K_2}^{e} $ maps $ \pi_{K_1} $ to $ \pi_{K_2} $. The (non-commutative) ring $ \mathcal{O}_{D_i}/\mathfrak{m}_{D_i}^{ed} $ is actually isomorphic to the cyclic algebra $ ((\mathcal{O}_{E_i}/\mathfrak{m}_{E_i}^{e})/(\mathcal{O}_{K_i}/\mathfrak{m}_{K_i}^{e}),\sigma^r, \pi_{K_i}) $, so that the result follows.
\end{proof}

It is then quite straightforward to work out the homeomorphism type of $ \mathcal{D}_{p^{n}} $.
As in the introduction, let $ \hat{\N} $ denote the one point compactification of $ \N $.
\begin{proposition}\label{Prop:explicit description of D}
Let $ p $ be a prime number. Then $ \mathcal{D}_{p^{n}} $ is homeomorphic to $ \hat{\N}\times \lbrace 1,2,\dots ,n\rbrace $, and $ \mathcal{K}_{p^n}\subset \mathcal{D}_{p^{n}} $ is a clopen subset homeomorphic to $ \hat{\N} $.
\end{proposition}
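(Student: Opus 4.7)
The plan is to decompose $ \mathcal{D}_{p^{n}} $ along invariants of central simple algebras, parametrise each piece by its centre, and then feed in Lemma~\ref{Lem:division algebras are accumulation points} together with a separate description of $ \mathcal{K}_{p^{m}} $. By the classification of central simple algebras over a local field, every $ D \in \mathcal{D}_{p^{n}} $ is isomorphic to a cyclic algebra $ (E/K,\sigma^{r},\pi_{K}) $, where $ K = Z(D) $ has residue field $ \mathbf{F}_{p^{n/d}} $ (so $ d \mid n $), $ E/K $ is the unramified extension of degree $ d $, and $ r \in (\Z/d\Z)^{\times} $ is the Hasse invariant. This produces a bijective decomposition $ \mathcal{D}_{p^{n}} = \bigsqcup_{d \mid n}\bigsqcup_{r \in (\Z/d\Z)^{\times}}\mathcal{D}_{p^{n}}^{d,r} $ into $ \sum_{d \mid n}\phi(d) = n $ strata, each parametrised by a map $ \psi_{d,r}\colon \mathcal{K}_{p^{n/d}}\to \mathcal{D}_{p^{n}}^{d,r}\colon K\mapsto (E/K,\sigma^{r},\pi_{K}) $.

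I would first establish the auxiliary fact that $ \mathcal{K}_{p^{m}} $ is homeomorphic to $ \hat{\N} $ with unique accumulation point $ \mathbf{F}_{p^{m}}(\!(X)\!) $. The set is countable: $ \mathbf{F}_{p^{m}}(\!(X)\!) $ is the only characteristic-$ p $ element, and the characteristic-$ 0 $ elements are totally ramified extensions of $ \mathbf{Q}_{p^{m}} $, of which there are only finitely many of each degree. Theorem~\ref{Thm:fundamental approximation lemma} exhibits $ \mathbf{F}_{p^{m}}(\!(X)\!) $ as an accumulation point (degree-$ k $ extensions lie at distance $ 2^{-k} $). A characteristic-$ 0 $ field $ K $ of ramification index $ k $ is isolated: for $ r > k $ the image of $ p $ in $ \mathcal{O}_{K}/\mathfrak{m}_{K}^{r} $ is a nonzero element whose factorisation simultaneously detects the characteristic and the ramification index, and for $ r $ large enough the quotient determines an Eisenstein polynomial for $ \pi_{K} $ to sufficiently high order that Krasner's lemma forces $ K' \cong K $ for any $ K' $ that is $ r $-close to $ K $. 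Sequential compactness of $ \mathcal{K}_{p^{m}} $ then follows by case analysis: a sequence either repeats a fixed field infinitely often or has ramification indices tending to infinity, in which case Theorem~\ref{Thm:fundamental approximation lemma} forces convergence to $ \mathbf{F}_{p^{m}}(\!(X)\!) $. A countable compact Hausdorff space with a unique accumulation point is homeomorphic to $ \hat{\N} $.

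With this in hand, Lemma~\ref{Lem:division algebras are accumulation points} gives continuity of $ \psi_{d,r} $ ($ e $-close centres yield $ ed $-close algebras), and bijectivity comes from the classification. So $ \psi_{d,r} $ is a continuous bijection from the compact space $ \mathcal{K}_{p^{n/d}}\cong \hat{\N} $ onto a subspace of the Hausdorff metric space $ \mathcal{D} $, hence a homeomorphism. Each stratum is then compact, hence closed in $ \mathcal{D}_{p^{n}} $; since $ \mathcal{D}_{p^{n}} $ is the disjoint union of finitely many closed strata, each stratum is also open, i.e.\ clopen. Taking the coproduct of these $ n $ homeomorphisms gives $ \mathcal{D}_{p^{n}}\cong \hat{\N}\times \lbrace 1,\dots,n\rbrace $. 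Finally, the subset $ \mathcal{K}_{p^{n}} $ coincides with the stratum $ (d,r) = (1,1) $ (degree-$ 1 $ division algebras are exactly the fields themselves), so it is clopen and homeomorphic to $ \hat{\N} $.

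The principal technical obstacle is the isolation of an arbitrary characteristic-$ 0 $ element of $ \mathcal{K}_{p^{m}} $, which relies on Krasner-type control over the recovery of Eisenstein polynomials from the finite quotients $ \mathcal{O}_{K}/\mathfrak{m}_{K}^{r} $. Once this is handled, everything else is a routine assembly built on Lemma~\ref{Lem:division algebras are accumulation points}, Theorem~\ref{Thm:fundamental approximation lemma}, and the classification of central simple algebras over local fields.
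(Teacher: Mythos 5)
Your proof is correct, and it reorganises the paper's argument in a way worth flagging. Both rest on the same arithmetic input — the classification of central simple algebras over local fields, Theorem~\ref{Thm:fundamental approximation lemma}, Krasner finiteness, and Lemma~\ref{Lem:division algebras are accumulation points} — but the assembly differs. The paper works directly inside $\mathcal{D}_{p^n}$: it shows that characteristic-$0$ elements are isolated and characteristic-$p$ ones are accumulation points, counts the latter as $\sum_{d\mid n}\varphi(d)=n$, and then invokes the Mazurkiewicz--Sierpi\'nski classification of countable compact Hausdorff spaces to conclude. You instead make explicit the stratification $\mathcal{D}_{p^n}=\bigsqcup_{d\mid n}\bigsqcup_{r\in(\Z/d\Z)^\times}\mathcal{D}_{p^n}^{d,r}$ that the paper uses only implicitly at the counting step, prove separately that $\mathcal{K}_{p^m}\cong\hat{\N}$, and transport this through each parametrisation $\psi_{d,r}$ by the compact-to-Hausdorff criterion; the coproduct of the resulting homeomorphisms yields $\hat{\N}\times\{1,\dots,n\}$ without appeal to the Mazurkiewicz--Sierpi\'nski theorem, compactness of $\mathcal{D}_{p^n}$ is manifest as a finite union of compact strata rather than being implicit, and the clopenness of $\mathcal{K}_{p^n}$ is immediate. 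One caveat concerns your isolation step for a characteristic-$0$ field $K$: the claim that $\mathcal{O}_{K}/\mathfrak{m}_{K}^{r}$ pins down an Eisenstein polynomial for $\pi_{K}$ tightly enough for Krasner's lemma to force $K'\cong K$ is correct but requires more care than your sketch admits (one must recover the image of the maximal unramified subring inside the finite quotient and align uniformisers before Krasner can be applied). The paper reaches isolation more economically via the ultrametric three-point argument in its proof (distances across different ramification degrees are computed against $\mathbf{F}_{p^m}(\!(X)\!)$ using Theorem~\ref{Thm:fundamental approximation lemma}) combined with Krasner finiteness per degree, pushing the delicate Deligne-type step entirely into the nondegeneracy of the metric (Lemma~\ref{Lem:Non-archim. metric space}), on which both routes ultimately rely.
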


\begin{proof} \setcounter{claim}{0}
$  $
\begin{claim}\label{Claim:1'}
Let $ K $ be a local field. If $ \vert \overline{K}\vert = p^{n} $, then $ K $ is a totally ramified extension of $ \mathbf{Q}_{p^{n}} $, or it is isomorphic to $ \mathbf{F}_{p^{n}}(\!(X)\!) $.
\end{claim}

\begin{claimproof}
By the classification of local fields, $ K $ is either a finite extension of $ \mathbf{Q}_{p} $, or isomorphic to $ \mathbf{F}_{p^{n}}(\!(X)\!) $ for some prime power $ p^n $. Since $ \overline{\mathbf{F}_{p^{n}}(\!(X)\!)} = \mathbf{F}_{p^{n}} $, the latter case is clear. For the first case, $ \overline{K} = \mathbf{F}_{p^{n}} $ if and only if the maximal unramified subextension of $ K $ is $ \mathbf{Q}_{p^{n}} $.
\end{claimproof}

\medskip

\begin{claim}\label{Claim:2'}
Let $ K_k $ and $ K_l $ be totally ramified extension of $ \mathbf{Q}_{p^{n}} $ such that $ [K_k:\mathbf{Q}_{p^{n}}] = k < [K_l:\mathbf{Q}_{p^{n}}] = l $. Then $ d(K_k;K_l) = \frac{1}{2^{k}} $.
\end{claim}

\begin{claimproof}
We observed in Lemma~\ref{Lem:Non-archim. metric space} that $ \mathcal{D} $ is a non-archimedean metric space, and hence every triangle is isosceles. Thus, the distance between $ K_k $ and $ K_l $ is either $ \frac{1}{2^{k}} $ or $ \frac{1}{2^{l}} $ (taking in each case $ \mathbf{F}_{p^n}(\!(T)\!) $ as a comparison point, and using Theorem~\ref{Thm:fundamental approximation lemma}). But in the latter case, since being $ l $-close is an equivalence relation, we would conclude that $ K_k $ is $ l $-close to $ \mathbf{F}_{p^{n}}(\!(X)\!) $, which would contradict Theorem~\ref{Thm:fundamental approximation lemma}.
\end{claimproof}

\medskip

\begin{claim}\label{Claim:3'}
There are only finitely many totally ramified extension of degree $ \leq k $ of a local field of characteristic $ 0 $.
\end{claim}

\begin{claimproof}
This is just a well-known corollary of the so called Krasner's Lemma. A proof of Claim~\ref{Claim:3'} can be found in \cite{Lang94}*{Chapter~II, §5, Proposition~14}.
\end{claimproof}

\medskip

\begin{claim}\label{Claim:4'}
Let $ D\in \mathcal{D}_{p^{n}} $. If $ D $ is of characteristic $ 0 $, it is isolated in $ \mathcal{D}_{p^{n}} $.
\end{claim}

\begin{claimproof}
$ D $ is isomorphic to the cyclic algebra $ (E/K,\sigma^{r},\pi_K) $ (see Definition~\ref{Def:cyclic algebra}), where $ [E:K]=d $ divides $ n $, $ r\in (\mathbf{Z}/d\mathbf{Z})^{\times} $ and $ \vert \overline{K}\vert = p^{\frac{n}{d}} $. Let $ D_1 = (E_1/K_1,\sigma^{r_1},\pi_{K_1}) $ (respectively $ D_2 = (E_2/K_2,\sigma^{r_2},\pi_{K_2}) $) be of degree $ d_1 $ (respectively $ d_2 $). Using the explicit description of cyclic algebras, it is easily seen that if $ D_1 $ is $ 2 $-close to $ D_2 $, then $ d_1 = d_2 $, $ \vert \overline{K_1}\vert = \vert \overline{K_2}\vert $ and $ r_1=r_2 $. 
Furthermore, if $ D_1 $ is $ ed_1 $-close to $ D_2 $ for some $ e\in \N $, then $ K_1 $ is $ e $-close to $ K_2 $, since $ \mathcal{O}_{K_i}/(\pi_{K_i}^{e}) $ is the centre of $ \mathcal{O}_{D_i}/(\pi_{D_i}^{ed_i}) $. Hence, the result follows from Claim~\ref{Claim:2'} and Claim~\ref{Claim:3'}.
\end{claimproof}

\medskip

\begin{claim}\label{Claim:5'} 
$ \mathcal{D}_{p^{n}} $ is a countable space.
\end{claim}

\begin{claimproof}
By Claim~\ref{Claim:3'} and the classification of division algebras over local fields, there are only countably many division algebras of characteristic $ 0 $ in $ \mathcal{D}_{p^{n}} $. Furthermore, the number of division algebras of characteristic $ p $ in $ \mathcal{D}_{p^{n}} $ is finite.
\end{claimproof}

\medskip

We are now able to deduce the homeomorphism type of $ \mathcal{D}_{p^{n}} $: division algebras of characteristic~$ 0 $ are isolated by Claim~\ref{Claim:4'}, and every division algebra of positive characteristic is an accumulation point in $ \mathcal{D}_{p^n} $ by Theorem~\ref{Thm:fundamental approximation lemma} and Lemma~\ref{Lem:division algebras are accumulation points}. Hence, by \cite{MS20}*{Théorème~1}, $ \mathcal{D}_{p^n} $ is homeomorphic to $ x $ disjoint copies of $ \hat{\N} $, where $ x $ is the number of division algebras of positive characteristic in $ \mathcal{D}_{p^n} $, i.e. $ x = \sum_{d\vert n}\vert (\mathbf{Z}/d\mathbf{Z})^{\times}\vert = n $. The subspace $ \mathcal{K}_{p^n}\subset \mathcal{D}_{p^n} $ consists of division algebras of degree $ 1 $, and hence accounts for one copy of $ \hat{\N} $.
\end{proof}

\subsection{Continuity from division algebras to subgroups of \texorpdfstring{$ \Aut (T) $}{Aut(T)}}
In this section, we start to vary the division algebra $ D $, and look at the variation it produces on the Bruhat--Tits tree of $ \SL_2(D) $. Recall that we introduced a notation to keep track of the dependence on $ D $ of many of the definitions we made in this section (see Remark~\ref{Rem:dependence on D of the tree} and Remark~\ref{Rem:dependence on D of the local tree}).

\begin{proposition}\label{Prop:aritmimpliesgeomproximityforSL_2(D)}
Let $ D_1 $ and $ D_2 $ be two elements in $ \mathcal{D} $, with respective degree $ d_1 $ and $ d_2 $. Assume that $ D_1 $ is $ rd_1 $-close to $ D_2 $, with $ rd_1 \geq 2 $. Then $ d_1 = d_2 = d $, $ (P_{0}^{0,rd})_{D_1}\cong (P_{0}^{0,rd})_{D_2} $ and $ \mathcal{I}_{D_1}^{0,rd}$ is equivariantly in bijection with $ \mathcal{I}_{D_2}^{0,rd} $.
\end{proposition}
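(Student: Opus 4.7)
The strategy is to show that the entire local data $\{(P_x^{0,rd})_{x}, N^{0,rd}, \nu, \sim_{0,rd}\}$ used to build $\mathcal{I}^{0,rd}$ is a functor of the ring $\mathcal{O}_D/\mathfrak{m}_D^{rd}$, so that any ring isomorphism between $\mathcal{O}_{D_1}/\mathfrak{m}_{D_1}^{rd}$ and $\mathcal{O}_{D_2}/\mathfrak{m}_{D_2}^{rd}$ transports everything and yields the desired equivariant bijection.

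The first step is to pin down $d_1 = d_2$. Since $rd_1 \geq 2$, Remark~\ref{Rem:transitivityofdistance} guarantees that $D_1$ and $D_2$ are also $2$-close. The argument used in Claim~\ref{Claim:4'} of the proof of Proposition~\ref{Prop:explicit description of D} (recovering $d$ from $\mathcal{O}_D/\mathfrak{m}_D^2$, e.g.\ by comparing the cardinality of the intrinsic centre to that of the residue quotient $\overline{D}$) yields $d_1 = d_2 =: d$; in particular $rd_1 = rd_2 =: rd$.

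Next, I would fix a ring isomorphism $\phi: \mathcal{O}_{D_1}/\mathfrak{m}_{D_1}^{rd} \to \mathcal{O}_{D_2}/\mathfrak{m}_{D_2}^{rd}$. Since the powers of the maximal ideal are intrinsic (as the iterated Jacobson radical), $\phi$ maps $\mathfrak{m}_{D_1}^k/\mathfrak{m}_{D_1}^{rd}$ onto $\mathfrak{m}_{D_2}^k/\mathfrak{m}_{D_2}^{rd}$ for every $0\leq k \leq rd$; in particular it preserves the truncated valuation $\omega$ of Definition~\ref{Def:valuation of a matrix} entry-wise and sends uniformizers to uniformizers. Applying $\phi$ entry-wise to $2 \times 2$ matrices, and invoking the fact that the reduced norm (Definition~\ref{Def:reduced norm}, see Appendix~\ref{App:division algebra theory}) is intrinsic to the ring, one obtains an induced group isomorphism $\Phi: \SL_2(\mathcal{O}_{D_1}/\mathfrak{m}_{D_1}^{rd}) \to \SL_2(\mathcal{O}_{D_2}/\mathfrak{m}_{D_2}^{rd})$. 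The inequality $\omega(g) \geq \begin{psmallmatrix} 0 & -x \\ x & 0 \end{psmallmatrix}$ defining $P_x^{0,rd}$ in Definition~\ref{Def:localdataSL} is preserved by $\Phi$, so $\Phi$ restricts to isomorphisms $(P_x^{0,rd})_{D_1} \to (P_x^{0,rd})_{D_2}$ for every admissible $x$; the same applies to $H^{0,rd}$ and $M^{0,rd}$, and the homomorphism $\nu: N^{0,rd} \to \Aff(\R)$ is transported because it depends only on the truncated valuation.

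Finally, since the equivalence relation $\sim_{0,rd}$ in Definition~\ref{Def:local tree of radius r SL2} is expressed purely in terms of $(P_x^{0,rd})_x$, $N^{0,rd}$ and $\nu$, all of which are transported by $\Phi$, the rule $[(g,x)]^{0,rd} \mapsto [(\Phi(g),x)]^{0,rd}$ descends to a well-defined bijection $\mathcal{I}_{D_1}^{0,rd} \to \mathcal{I}_{D_2}^{0,rd}$, equivariant with respect to $\Phi|_{(P_0^{0,rd})_{D_1}}: (P_0^{0,rd})_{D_1} \to (P_0^{0,rd})_{D_2}$. The main obstacle is really the opening step — extracting the degree $d$ from the structure of $\mathcal{O}_D/\mathfrak{m}_D^2$, which is exactly where the hypothesis $rd_1 \geq 2$ is needed; once $d_1 = d_2$ is secured, the remainder of the proof is a mechanical transport-of-structure argument.
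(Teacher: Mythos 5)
Your strategy is the same as the paper's — transport of structure via the ring isomorphism — and your treatment of $d_1 = d_2$ via $2$-closeness is correct (and, in fact, a bit more careful than the paper, which states this in the proposition but does not argue it in the proof). However, there is one genuine gap.

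You assert that since $\phi$ preserves the filtration $\{\mathfrak{m}_{D_i}^k/\mathfrak{m}_{D_i}^{rd}\}_k$, it "preserves the truncated valuation $\omega$ entry-wise," and you then take the bijection $[(g,x)]^{0,rd}\mapsto [(\Phi(g),x)]^{0,rd}$ with the second coordinate unchanged. This is not correct in general: if $x$ has $\omega$-value $k\omega(\pi_{D_1})$, then $\phi(x)$ has $\omega$-value $k\omega(\pi_{D_2})$, so the valuation is only preserved up to the scale factor $\omega(\pi_{D_2})/\omega(\pi_{D_1})$. The paper does not fix a normalization of $\omega$, so there is no reason for this ratio to be $1$; consequently the intervals $[-\omega(\pi_{D_1}^{rd}),\omega(\pi_{D_1}^{rd})]$ and $[-\omega(\pi_{D_2}^{rd}),\omega(\pi_{D_2}^{rd})]$ may differ, and your map need not land in the right space, nor does $\Phi$ carry $(P_x^{0,rd})_{D_1}$ into $(P_x^{0,rd})_{D_2}$. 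The paper handles this by introducing the linear map $f\colon\R\to\R\colon x\mapsto x\,\omega(\pi_{D_2})/\omega(\pi_{D_1})$, proving $\Phi$ restricts to $(P_x^{0,rd})_{D_1}\cong (P_{f(x)}^{0,rd})_{D_2}$, checking $f(n.x)=\Phi(n).f(x)$, and taking the bijection $[(g,x)]^{0,rd}\mapsto [(\Phi(g),f(x))]^{0,rd}$. Your argument becomes correct exactly when $f=\Id$, for instance if one normalizes $\omega(\pi_K)=1$ for every local field (then $\omega(\pi_{D_i})=1/d_i$ and $d_1=d_2$ forces the ratio to be $1$); but the paper does not impose such a normalization, so the rescaling map is required.
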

\begin{proof}
The isomorphism $ \mathcal{O}_{D_1}/\mathfrak{m}_{D_1}^{rd}\cong \mathcal{O}_{D_2}/\mathfrak{m}_{D_2}^{rd} $ induces a group isomorphism $ \varphi \colon (P_{0}^{0,rd})_{D_1} = \SL_{2}(\mathcal{O}_{D_1}/\mathfrak{m}_{D_1}^{rd})\cong \SL_{2}(\mathcal{O}_{D_2}/\mathfrak{m}_{D_2}^{rd}) = (P_{0}^{0,rd})_{D_2} $.
Define a linear map $ f\colon \mathbf{R}\to \mathbf{R}\colon x\mapsto x\frac{\omega (\pi_{D_2})}{\omega (\pi_{D_1})} $. It is clear that for all $ x\in [-\omega (\pi_{D_1}^{rd}), \omega (\pi_{D_1}^{rd})] $, $ \varphi $ restricts to an isomorphism $ (P_{x}^{0,rd})_{D_{1}}\cong (P_{f(x)}^{0,rd})_{D_{2}} $. Furthermore, 
\begin{align*}
\varphi (T^{0,rd})_{D_1}&= (T^{0,rd})_{D_2}\\
\varphi (M^{0,rd})_{D_1}&= (M^{0,rd})_{D_2}
\end{align*}
and for all $ n\in N^{0,rd} $, $ f(n.x) = \varphi (n).f(x) $. Hence, the map $ \mathcal{I}_{D_1}^{0,rd}\to \mathcal{I}_{D_2}^{0,rd}\colon [(g,x)]^{0,rd}\mapsto [(\varphi (g), f(x))]^{0,rd} $ is a $ \varphi $-equivariant bijection.
\end{proof}

We can finally go back to our original problem, which is to study convergence of algebraic groups in the Chabauty space of $ \Aut (T) $. We first discuss the homomorphism $ \SL_2(D) \to \Aut (\mathcal{I}_D) $.
\begin{proposition}\label{Prop:Embedding G(K) in Aut(T)}
Let $ \mathcal{I} = \mathcal{I}_D $ be the Bruhat--Tits tree of $ \SL_2(D) $. The homomorphism given by the action of $ \SL_2(D) $ on its Bruhat--Tits building~$ \hat{}~\colon \SL_2(D)\to \Aut (\mathcal{I}) $ is continuous with closed image, and the kernel is equal to the centre of $ \SL_2(D) $. 
\end{proposition}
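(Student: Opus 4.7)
The proposition bundles three independent claims (continuity, closedness of the image, identification of the kernel), which I would treat in that order. The only genuinely delicate step is the closedness of the image.

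\emph{Continuity.} Since $\mathcal{I}$ is locally finite, $\Aut(\mathcal{I})$ carries the permutation topology and admits a basis of identity neighbourhoods given by pointwise stabilisers of finite sets of vertices. It therefore suffices to show that for every vertex $v$, the preimage of $\Stab_{\Aut(\mathcal{I})}(v)$ under $\hat{}$ is open in $\SL_2(D)$. For $v=[(\Id,0)]$ this preimage is exactly $P_0=\SL_2(\mathcal{O}_D)$ by Remark~\ref{Rem:metric on I and stabiliser} and Lemma~\ref{Lem:description of the restriction P_0 to P_0^0,rSL2}, and $\SL_2(\mathcal{O}_D)$ is open in $\SL_2(D)$ for the local field topology. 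Lemma~\ref{Lem:B-T tree from a based point} together with strong transitivity identifies every other vertex stabiliser with a conjugate $gP_0g^{-1}$, which is again open.

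\emph{Kernel.} Denote the kernel by $\mathcal{Z}$. Any element of $\mathcal{Z}$ fixes each point of the standard apartment, so $\mathcal{Z}\subset\bigcap_{x\in\R}P_x$. Inspecting Definition~\ref{Def:stabiliserforSL2} and letting $x\to\pm\infty$ forces the off-diagonal entries to have arbitrarily large valuation, so every element of $\mathcal{Z}$ is diagonal of the form $\diag(a,a^{-1})$ with $\omega(a)=0$. Since $\mathcal{Z}$ is normal in $\SL_2(D)$, conjugating such an element by $\begin{psmallmatrix}1 & s \\ 0 & 1\end{psmallmatrix}$ with arbitrary $s\in D$ must again land in $\mathcal{Z}$, which is diagonal; computing the $(1,2)$-entry yields $as-sa^{-1}=0$ for every $s\in D$, whence $a\in Z(D)$ and $a^2=1$. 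Conversely, scalars $\pm\Id\in\SL_2(D)$ lie in the centre and act trivially on $\mathcal{I}$: taking $n=\pm\Id\in T\subset N$ in Definition~\ref{Def:buildingassociated to a data}, one has $\nu(n)(x)=x$ and $g^{-1}(\pm g)n\in P_x$, so $[(\pm g,x)]=[(g,x)]$. Thus $\mathcal{Z}=Z(\SL_2(D))$.

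\emph{Closed image.} Write $H=\hat{\SL_2(D)}\leq\Aut(\mathcal{I})$ and let $h\in\overline{H}$. Since $\Aut(\mathcal{I})$ is metrisable, pick $g_n\in\SL_2(D)$ with $\hat{g}_n\to h$. Convergence in the permutation topology, applied to the base vertex $v_0=[(\Id,0)]$, gives $g_n\cdot v_0=h\cdot v_0$ for all $n$ large enough. The action of $\SL_2(D)$ preserves vertex types (visible on the standard apartment through the translations appearing in Definition~\ref{Def:affineactionforNSL2}, which all lie in $2\omega(\pi_D)\mathbf{Z}$), so each $\hat{g}_n$, and therefore $h$, is type-preserving. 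Strong transitivity (as in the proof of Lemma~\ref{Lem:B-T tree from a based point}) on each type then furnishes $g\in\SL_2(D)$ with $g\cdot v_0=h\cdot v_0$, so that $g^{-1}g_n\in P_0$ eventually. As $P_0=\SL_2(\mathcal{O}_D)$ is compact, a subsequence converges to some $p\in P_0$; by continuity $\widehat{g^{-1}g_n}\to\hat{p}$, hence $h=\widehat{gp}\in H$.

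The main obstacle is the closed image step: the argument hinges on combining the locally finite tree (ensuring that vertex stabilisers in $\Aut(\mathcal{I})$ are compact open and that convergence is eventually constant on any fixed vertex) with the compactness of $P_0$ and the type-preserving strong transitivity of $\SL_2(D)$ on $\mathcal{I}$.
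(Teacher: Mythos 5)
Your continuity argument coincides with the paper's, and your closed-image argument is a correct, self-contained rendering of what the paper delegates to the cited Burger--Mozes lemma (compactness of the open subgroup $P_0$ combined with eventual agreement on a base vertex). Both of those parts are fine.

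The kernel computation contains a genuine error that matters precisely when $D\neq K$. You pass from ``$\mathcal{Z}$ consists of diagonal matrices with unit entries'' to ``$\mathcal{Z}$ consists of matrices of the form $\diag(a,a^{-1})$.'' That shape is forced only when $D=K$, where $\Nrd=\det$ and $\det\diag(a,b)=ab$. For a central division algebra $D$ of degree $d>1$, a diagonal element $\diag(a,b)\in\SL_2(D)$ satisfies $\Nrd_D(a)\Nrd_D(b)=1$, which does not imply $b=a^{-1}$. Carrying out your conjugation-by-unipotent computation with the correct form $\diag(a,b)$, the $(1,2)$-entry of the conjugate is $-as+sb$, giving $as=sb$ for all $s\in D$; taking $s=1$ yields $a=b$, and then $a$ is central in $D$, i.e.\ $a\in K$. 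Hence $\mathcal{Z}=\{a\Id\ :\ a\in K,\ a^{2d}=1\}=\mu_{2d}(K)\cdot\Id$, which is the full centre. Your derivation instead produces $a^{2}=1$, i.e.\ $\mathcal{Z}\subset\{\pm\Id\}$, which is too small whenever $\mu_{2d}(K)\supsetneq\{\pm1\}$: for instance $K=\mathbf{Q}_5$ and $D$ the quaternion division algebra ($d=2$) give $\mu_4(\mathbf{Q}_5)$ of order four, so $i\Id\in\SL_2(D)$ is central and acts trivially on $\mathcal{I}$ yet is missed by your argument. Relatedly, your converse only verifies that $\pm\Id$ act trivially, whereas you need all of $\mu_{2d}(K)\cdot\Id$ (this is immediate: any central element is diagonal with unit entries, hence lies in every $P_x$). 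The fix is simply to drop the spurious normalization $\diag(a,a^{-1})$ and run the normality argument with a general diagonal $\diag(a,b)$, as the paper does.
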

\begin{proof}
In each case, the group $ P_x $ is really the stabiliser of $ [(\Id ,x)]\in \mathcal{I} $ (see Remark~\ref{Rem:metric on I and stabiliser}). Since a basic identity neighbourhood in $ \Aut (\mathcal{I}) $ is given by intersecting finitely many vertices stabilisers, the continuity follows. The fact that the image is closed follows from the general argument in \cite{BM96}*{Lemma~5.3}. Finally, the kernel can also be seen directly from the explicit description of $ P_{x} $. Indeed, if $ g $ is in the intersection $ \bigcap \limits_{x\in \R} P_x $, then $ g $ is diagonal. But also, the conjugation action of $ g $ on root groups needs to be trivial, so that $ g $ is in the centre of $ \SL_2(D) $. Conversely, the centre of $ \SL_2(D) $ clearly acts trivially on $ \mathcal{I} $, which concludes the proof.
\end{proof}

The convergence is then a more or less direct consequence of Theorem~\ref{Thm:localdescriptionoftheball}.
\begin{theorem}\label{Thm:continuity from D to Chab(Aut(T))}
Let $ (D_i)_{i\in \mathbf{N}} $ be a sequence in $ \mathcal{D} $ which converges to $ D $, and let $G_i = \SL_2(D_i) $ $ ( $respectively $ G = \SL_2(D)) $. For $ N $ big enough and for all $ i\geq N $, there exist isomorphisms $ \mathcal{I}_{D_i} \cong \mathcal{I}_D $ such that the induced embeddings~ $ \hat{G_i}\hookrightarrow \Aut (\mathcal{I}_D) $ make $ (\hat{G_i})_{i\geq N} $ converge to $ \hat{G} $ in the Chabauty topology of $ \Aut (\mathcal{I}_D) $.
\end{theorem}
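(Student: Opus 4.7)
The plan is to construct the tree isomorphisms $\mathcal{I}_{D_i}\cong \mathcal{I}_D$ by extending the equivariant ball bijections furnished by Theorem~\ref{Thm:localdescriptionoftheball}, and then to deduce Chabauty convergence via the compact open vertex stabilizers.

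Since $D_i\to D$ in $\mathcal{D}$, for $i\geq N$ the division algebra $D_i$ is $2$-close to $D$, so Proposition~\ref{Prop:aritmimpliesgeomproximityforSL_2(D)} forces a common degree $d$; by Remark~\ref{Rem:regularity of the B-T tree SL2}, the trees $\mathcal{I}_{D_i}$ and $\mathcal{I}_D$ are then abstractly isomorphic. Pick a sequence $r_i\to \infty$ such that $D_i$ is $r_i d$-close to $D$. Combining Proposition~\ref{Prop:aritmimpliesgeomproximityforSL_2(D)} with Theorem~\ref{Thm:localdescriptionoftheball} produces, for each $i\geq N$, a bijection $B_0(r_i d)\subset \mathcal{I}_{D_i}\to B_0(r_i d)\subset \mathcal{I}_D$ that is compatible with the reduction maps onto $(P_0^{0,r_i d})_{D_i}\cong (P_0^{0,r_i d})_D$. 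Extend each such partial isomorphism to a full tree isomorphism $\alpha_i\colon \mathcal{I}_{D_i}\to \mathcal{I}_D$ using regularity, and set $H:=\Aut(\mathcal{I}_D)$; through $\alpha_i$ each $\hat{G}_i$ becomes a closed subgroup of $H$.

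Next, I would establish Chabauty convergence of the base vertex stabilizers $(\hat{P}_0)_{D_i}\to \hat{P}_0$ in $H$. A basis of Chabauty neighborhoods of a compact open subgroup $U\leq H$ is given by the sets of closed subgroups of $H$ whose image in $\Sym(B_0(s))$ coincides with that of $U$, as $s$ ranges over $\N$. By construction of $\alpha_i$, the subgroups $(\hat{P}_0)_{D_i}$ and $\hat{P}_0$ have identical images in $\Sym(B_0(s))$ as soon as $r_i d\geq s$, so this convergence is essentially built into the tree identifications.

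Finally, I would invoke the observation attributed to P.-E.~Caprace in the acknowledgements: Chabauty convergence of vertex stabilizers in a cocompactly acting sequence of closed subgroups of $\Aut(T)$ forces Chabauty convergence of the whole groups. Concretely, any $g\in \hat{G}$ lies in a double coset of the form $\hat{P}_0\cdot t^{k}\cdot \hat{P}_0$ for a fixed translation $t\in \hat{G}$; an approximation of $t$ in $\hat{G}_i$ is produced by applying the local-model correspondence at a basepoint on the axis of $t$, and combining this with $(\hat{P}_0)_{D_i}\to \hat{P}_0$ yields approximations of $g$. The main obstacle is precisely this last step: while vertex-stabilizer convergence is near-tautological from Theorem~\ref{Thm:localdescriptionoftheball}, promoting it to convergence of the full non-compact groups requires a careful Caprace-style argument that simultaneously approximates every element of $\hat{G}$ and excludes spurious Chabauty limits escaping to infinity, which is controlled by the cocompactness of $\hat{G}\leq H$ together with the compatibility of local models at different basepoints.
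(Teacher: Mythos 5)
Your construction of the tree isomorphisms and your treatment of vertex-stabiliser convergence agree with the paper, up to one caveat: the agreement of images in $\Sym(B_0(s))$ is \emph{not} tautological — it is precisely the content of Theorem~\ref{Thm:localdescriptionoftheball}, whose proof leans on the surjectivity $p_{rd}\colon P_0\to P_0^{0,rd}$ (Corollary~\ref{Cor:surjectivityforSL2}), which in turn requires the smoothness of the integral model plus Hensel's lemma. The paper organises the vertex-stabiliser convergence via \cite{CR16}*{Lemma~2.1}, verifying both implications by using the Hensel surjectivity twice, so it does take genuine work.

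Where you diverge, and where there is a genuine gap, is the final step. You misidentify what ``Caprace's slick argument'' is: it is \emph{not} a Cartan-decomposition argument but a rigidity argument. The paper observes that by \cite{CR16}*{Theorem~1.2} the space $\mathcal{S}_T$ is compact Hausdorff, so $(\hat{G}_i)$ subconverges to some topologically simple closed subgroup $H\leq\Aut(\mathcal{I}_D)$; since the vertex stabilisers converge, $H$ contains a compact open subgroup isomorphic to $(\hat{P}_0)_D$, hence by \cite{CS15}*{Corollary~1.3} the limit $H$ is algebraic, and then by \cite{Pink98}*{Corollary~0.3} it is isomorphic to $G$. Every subsequence subconverges to $\hat{G}$, so the whole sequence converges. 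This avoids any need to approximate individual non-compact elements. Your proposed Cartan-decomposition approach is conceivable but incomplete as stated: Theorem~\ref{Thm:localdescriptionoftheball} only controls the local model at the basepoint $0$, whereas approximating a translation $t$ requires compatibility of the tree identifications along the axis of $t$, and your verification that no ``spurious limits escape to infinity'' (i.e.\ the Chabauty inclusion $\lim\hat{G}_i\subseteq\hat{G}$) would need an additional compactness argument on translation lengths and a careful passage to subsequences of the Cartan factors. None of this is carried out, and none of it is in the paper — the rigidity argument is exactly what lets the authors sidestep it.
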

\begin{remark}
The convergence depends on a choice of specific isomorphisms $ \mathcal{I}_{D_i}\cong \mathcal{I}_D $, or in other words it depends on choosing how $ \hat{G}_{i} $ sits in $ \Aut (\mathcal{I}_D) $. This dependence is not problematic since for two isomorphic closed subgroups $ H,H' $ of $ \Aut (\mathcal{I}_D) $ both acting $ 2 $-transitively on $ \partial \mathcal{I}_D $, there exists $ g $ in the fixator of $ e_0 $ such that $ gHg^{-1} = H' $, where $ e_0 $ is any edge of $ \mathcal{I}_D $ (see \cite{Rad15}*{Proposition~A.1}, and recall also that $ H $ acts transitively on the edges of $ \mathcal{I}_D $). Hence, for other choices of embeddings, the sequence converges to a conjugate of $ \hat{G} $ in $ \Aut (\mathcal{I}_D) $. Recall also that we introduced the space $ \mathcal{S}_T $ in the introduction precisely to avoid this dependence.
\end{remark}
The main step of the proof is to establish that the sequence of stabilisers $ ((\hat{P}_{0})_{D_i})_{i\geq N} $ converges to the stabiliser $ (\hat{P}_{0})_{D} $ in $ \Aut ( \mathcal{I}_D) $. From there, we can conclude that $ (\hat{G}_i)_{i\geq N} $ converges to $ \hat{G} $ from general theory.

\begin{proof}
The Bruhat--Tits tree $ \mathcal{I}_{D_i} $ is the regular tree of degree $ p^n +1 $ if and only if $ D_i $ belongs to $ \mathcal{D}_{p^n} $. Hence there exists $ N $ such that for all $ i\geq N $, $  \mathcal{I}_{D_i}\cong  \mathcal{I}_D $.

Passing to a subsequence, we can assume that $ D_i $ is ($ di $)-close to $ D $, where $ d $ is the degree of $ D $ over its centre. Hence, for $ i\geq 2 $, $ D_i $ is also of degree $ d $ over its centre. We now define an explicit isomorphism $ f_i\colon \mathcal{I}_{D_i}\to \mathcal{I}_{D} $ (for $i\geq 2$) as follows: let $ \mathcal{I}_{D_i}^{0,di}\cong \mathcal{I}_{D}^{0,di} $ be the isomorphism given by Proposition~\ref{Prop:aritmimpliesgeomproximityforSL_2(D)}. By Theorem~\ref{Thm:localdescriptionoftheball}, this gives an isomorphism on balls of radius $ di $: $ \mathcal{I}_{D_i}\supset B_0(di)\cong B_0(di)\subset \mathcal{I}_{D} $ (recall that by Lemma~\ref{Lem:B_0(r) is really the ball of radius r}, $ B_0(di) $ is really the ball of radius $ di $ on the tree $ \mathcal{I}_{D} $). As $ \mathcal{I}_{D_i} $ is a regular tree of the same degree than $ \mathcal{I}_{D} $, we can extend this isomorphism of balls to an isomorphism $ f_{i}\colon \mathcal{I}_{D_i}\to \mathcal{I}_{D} $ (this extension is of course not unique, but we choose one such). By means of $ f_i $, we get an embedding $ \hat{G}_i\hookrightarrow \Aut ( \mathcal{I}_D) $.

We claim that $ ((\hat{P}_{0})_{D_i})_{i\in \N} $ converges to $ (\hat{P}_{0})_{D} $. According to \cite{CR16}*{Lemma~2.1}, there are two things to verify.
\begin{enumerate}
\item Let $ (\hat{h}_i) $ be a sequence such that $ \hat{h}_i\in (\hat{P}_{0})_{D_i} $, and assume that $ \hat{h}_{i} $ converges to $ \hat{h} $ in $ \Aut ( \mathcal{I}_D) $. We have to show that $ \hat{h}\in (\hat{P}_{0})_{D} $. For all $ i $, let $ h_i\in (P_0)_{D_i} $ be an inverse image of $ \hat{h}_i $ under $ \hat{}~\colon G_i\to \Aut ( \mathcal{I}_D) $. Let $ \bar{h}_{i} = p_{di}(h_i)\in (P_{0}^{0,di})_{D_i} $. Let $ \varphi_{di}\colon (P_{0}^{0,di})_{D_i}\cong (P_{0}^{0,di})_{D} $ be the isomorphism given in Proposition~\ref{Prop:aritmimpliesgeomproximityforSL_2(D)}. By Corollary~\ref{Cor:surjectivityforSL2}, there exists $ \tilde{h}_{i}\in (P_{0})_{D} $ which is an inverse image of $ \varphi_{di}(\bar{h}_i) $ under $ p_{di}\colon (P_{0})_{D}\to (P_{0}^{0,di})_{D} $. Now, because all the identifications were equivariant, the action of $ \tilde{h}_{i} $ on the ball of radius $ di $ around $ 0 $ is the same than the action of $ \hat{h}_i $ on this ball. Hence, $ (\hat{\tilde{h}}_{i}) $ converges to $ \hat{h} $ as well. But $ (\hat{P}_{0})_{D} $ is a closed subgroup of $ \Aut ( \mathcal{I}_D) $ (by Proposition~\ref{Prop:Embedding G(K) in Aut(T)}), hence $ \hat{h}\in (\hat{P}_{0})_{D} $, as wanted.
\item Conversely, given an element $ \hat{h}\in (\hat{P}_{0})_{D} $, we have to find a sequence $ (\hat{h}_i) $ of elements in $ (\hat{P}_{0})_{D_i} $ such that $ (\hat{h}_{i}) $ converges to $ \hat{h} $ in $ \Aut ( \mathcal{I}_D) $. It suffices to follow the path of identifications in reverse: let $ h $ be an inverse image of $ \hat{h} $ under $ \hat{}~\colon G\to \Aut ( \mathcal{I}_D) $. Let $ \bar{h}_{i} = p_{di}(h)\in (P_{0}^{0,di})_{D} $, and let $ \varphi_{di}\colon (P_{0}^{0,di})_{D}\cong (P_{0}^{0,di})_{D_i} $ be the isomorphism given in Proposition~\ref{Prop:aritmimpliesgeomproximityforSL_2(D)}. For all $ i $, let $ h_i $ be an inverse image of $ \varphi_{di} (\bar{h}_i) $ under $ p_{di}~\colon (P_{0})_{D_i}\to (P_{0}^{0,di})_{D_i} $, which exists by Corollary~\ref{Cor:surjectivityforSL2}. Now, because all the identifications were equivariant, the action of $ h_{i} $ on the ball of radius $ di $ around $ 0 $ is the same than the action of $ h $ on this ball. Hence, $ (\hat{h}_{i}) $ converges to $ \hat{h} $, as wanted.
\end{enumerate}
Finally, from the convergence of $ ((\hat{P}_{0})_{D_i})_{i\geq N} $ to $ (\hat{P}_{0})_{D} $, we can formally deduce the convergence of $ (\hat{G}_i)_{i\geq N} $ to $ \hat{G} $. Indeed, $ (\hat{G}_i)_{i\geq N} $ subconverges to a topologically simple group $ H $, by \cite{CR16}*{Theorem~1.2}. But since $ ((\hat{P}_{0})_{D_i})_{i\geq N} $ converges to $ (\hat{P}_{0})_{D} $, $ H $ has an open compact subgroup isomorphic to $ (\hat{P}_{0})_{D} $. Hence, by \cite{CS15}*{Corollary~1.3}, $ H $ is algebraic. And hence, by \cite{Pink98}*{Corollary~0.3}, $ H \cong G $. Since by the same argument, any subsequence of $ (\hat{G}_i)_{i\geq N} $ subconverges to $ \hat{G} $, we conclude that $ (\hat{G}_i)_{i\geq N} $ converges to $ \hat{G} $. 
\end{proof}

We then deduce the proof of the main theorem announced in the introduction for groups of type $ \SL_2(D) $. To shorten the notations, we set $ G_D = \SL_2(D) $ in the following proof.
\begin{proof}[Proof of Theorem~\ref{Thm:explicit form for SL_2(D)}]
Let $ T $ be a regular tree and let $ \mathcal{D}_{T} = \lbrace D\in \mathcal{D}~\vert~ $the Bruhat--Tits tree of $ G_{D} $ is isomorphic to $ T\rbrace $. By Remark~\ref{Rem:regularity of the B-T tree SL2} and Proposition~\ref{Prop:explicit description of D}, $ \mathcal{D}_{T} $ is a compact space. Now, by Theorem~\ref{Thm:continuity from D to Chab(Aut(T))}, the map $ \mathcal{D}_T\to\mathcal{S}_T\colon D\mapsto \hat{G}_{D} $ is continuous. Let $ D_1 $ and $ D_2 $ be central division algebras over $ K_1 $ and $ K_2 $ respectively, with respective degree $ d_1,d_2 $ and Hasse invariant $ r_1,r_2 $ (as defined in Definition~\ref{Def:Hasse invariant}). We claim that $ \hat{G}_{D_1} = \hat{G}_{D_2} $ if and only if $ K_1\cong K_2 $, $ d_1=d_2 $ and $ r_1 = \pm r_2 $. Indeed, if $ \hat{G}_{D_1} $ is abstractly isomorphic to $ \hat{G}_{D_2} $, then by \cite{BoTi73}*{Corollaire~8.13}, the corresponding adjoint algebraic groups $ \Ad \mathbf{G}_1 $ and $ \Ad \mathbf{G}_2 $ are algebraically isomorphic over an isomorphism of fields $ K_1\cong K_2 $. Now, according to \cite{KMRT98}*{Remark 26.11}, this is only possible if $ D_1\cong D_2 $ or $ D_1\cong D_2^{\opp} $, which is equivalent to the conditions we gave.

To summarise, let $ \mathcal{D}_T/\sim_{\opp} $ be the space $ \mathcal{D}_T $ modulo the equivalence relation $ D_1\sim_{\opp}D_2 $ if and only if $ D_1\cong D_2 $ or $ D_1\cong D_2^{\opp} $. We proved that $ \mathcal{D}_T/\sim_{\opp}\to\mathcal{S}_T\colon D\mapsto \hat{G}_{D} $ is an injective continuous map whose source is a compact space, hence it is a homeomorphism onto its image. Now, the explicit description given in Theorem~\ref{Thm:explicit form for SL_2(D)} follows from Remark~\ref{Rem:regularity of the B-T tree SL2} and Proposition~\ref{Prop:explicit description of D}.

To be able to conclude that for $ T $ the $ (p^n+1) $-regular tree, $ \mathcal{S}_T^{\SL_2(D)} $ is homeomorphic to $ \hat{\N}\times \lbrace 1,\dots,\lceil \frac{n+1}{2}\rceil\rbrace $, one has to count the number of division algebras in $ \mathcal{D}_T/\sim_{\opp} $ of characteristic $ p $. But there is only one such division algebra in $ \mathcal{D}_T/\sim_{\opp} $ of degree $ 1 $ over its centre, one such division algebra in $ \mathcal{D}_T/\sim_{\opp} $ of degree $ 2 $ over its centre if $ 2 $ divides $ n $, and for all $ 3\leq d $ dividing $ n $, there are $ \frac{\varphi (d)}{2} $ such division algebras in $ \mathcal{D}_T/\sim_{\opp} $ of degree $ d $ over their centre (where $ \varphi $ denotes Euler's totient function). Hence, if $ n $ is even (respectively odd), we have $ 2 + \sum_{d\vert n, d\geq 3}\frac{\varphi (d)}{2} $ (respectively $ 1 + \sum_{d\vert n, d\geq 3}\frac{\varphi (d)}{2} $) division algebras of characteristic $ p $ in $ \mathcal{D}_T/\sim_{\opp} $. Using that $ \sum_{d\vert n}\varphi (d) = n $, we readily get the conclusion.
\end{proof}

\section{Convergence of groups of type \texorpdfstring{$ \SU_3^{L/K} $}{SU3L/K}, \texorpdfstring{$ L $}{L} unramified}\label{Sec:SU3L/K unramified}
We keep our notations for local fields (see Section~\ref{Sec:Def of alg. grps}). Furthermore, throughout this section, $ L $ is an unramified quadratic extension of the base local field $ K $. Note that such an extension is automatically separable. Also note that $ \pi_K $ is equally well a uniformiser of $ L $. We carry out the same program than in Section~\ref{Sec:SL_2(D)}, replacing all occurrences of $ \SL_2(D) $ by $ \SU_3^{L/K} $. The comments made all along Section~\ref{Sec:SL_2(D)} also apply here, but we do not repeat them to not lengthen too much the paper.

\subsection{Construction of the Bruhat--Tits tree}
In the following definition of point stabilisers, we again use the notation introduced in Definition~\ref{Def:valuation of a matrix}.
\begin{definition}\label{Def:pointstabiliserforSU unramified}
For $ x\in \R $, we define $ P_{x} = \lbrace g\in \SU_{3}^{L/K}(K)~\vert~ \omega (g)\geq \begin{psmallmatrix}
0 & -\frac{x}{2} & -x \\ 
\frac{x}{2} & 0 & -\frac{x}{2} \\
x & \frac{x}{2}& 0
\end{psmallmatrix}\rbrace $
\end{definition}

\begin{definition}\label{Def:sbgrp N SU unramified}
Consider the following subsets
\begin{enumerate}[$ \bullet $]
\item $ T = \lbrace \begin{psmallmatrix}
x & 0 & 0 \\ 
0 & x^{-1}\bar{x} & 0 \\
0 & 0 & \bar{x}^{-1}
\end{psmallmatrix}~\vert~x\in L^{\times}\rbrace < \SU_3^{L/K} (K) $
\item $ M =\lbrace \begin{psmallmatrix}
0 & 0 & x \\ 
0 & -x^{-1}\bar{x} & 0 \\
\bar{x}^{-1} & 0 & 0
\end{psmallmatrix}~\vert~x\in L^{\times}\rbrace \subset \SU_3^{L/K} (K) $
\end{enumerate}
and let $ N = T\sqcup M $.
\end{definition}

\begin{definition}\label{Def:affineactionforN unramified}
Let $ \nu\colon N\to \Aff (\R) $ be defined as follows: for $ m = \begin{psmallmatrix}
0 & 0 & x \\ 
0 & -x^{-1}\bar{x} & 0 \\
\bar{x}^{-1} & 0 & 0
\end{psmallmatrix}\in M $, $ \nu (m) $ is the reflection through $ -\omega (x) $, while for $ t=\begin{psmallmatrix}
x & 0 & 0 \\ 
0 & x^{-1}\bar{x} & 0 \\
0 & 0 & \bar{x}^{-1}
\end{psmallmatrix}\in T $, $ \nu (t) $ is the translation by $ -2\omega (x) $.
\end{definition}

Then the Bruhat--Tits tree $ \mathcal{I} $ of $ \SU_3^{L/K} $ (recall that in this section, $ L $ is unramified) is the one obtained by applying Definition~\ref{Def:buildingassociated to a data} to the collection of subgroups $ \lbrace (P_x)_{x\in \R}, N \rbrace $ appearing in Definition~\ref{Def:pointstabiliserforSU unramified} and Definition~\ref{Def:sbgrp N SU unramified}, together with the homomorphism $ \nu \colon N\to \Aff (\R) $ of Definition~\ref{Def:affineactionforN unramified}. We show in Appendix~\ref{App:A} that our definitions agree with \cite{BrTi1}*{7.4.1~and~7.4.2}, so that the given data is indeed obtained from a valued root datum of rank one on $ G $

\begin{remark}\label{Rem:dependence on D of the tree ur}
Note that the construction of the Bruhat--Tits tree of $ \SU_3^{L/K} $ depends on the pair $ (K,L) $. When needed, we keep track of this dependence by adding the subscript $ (K,L) $ to the objects involved. This gives rise to the notations $ (P_x)_{(K,L)} $, $ T_{(K,L)} $, $ M_{(K,L)} $, $ N_{(K,L)} $, $ \nu_{(K,L)} $ and $ \mathcal{I}_{(K,L)} $.
\end{remark}
\begin{remark}\label{Rem:regularity of the B-T tree SU unramified}
The Bruhat--Tits tree of $ \SU_3^{L/K} $ is actually the $ (\vert \overline{K}\vert^{3}+1;\vert \overline{K}\vert+1) $-semiregular tree. Indeed, this follows from the fact that our definition of $ \mathcal{I} $ agrees with the one given in \cite{BrTi1}*{7.4.1 and 7.4.2}, and from the tables in \cite{Tits77}*{4.2 and 4.3}.
\end{remark}

\subsection{Local model of the Bruhat--Tits tree}
We now proceed to define a local model for the Bruhat--Tits tree of $ SU_3^{L/K} $ when $ L $ is unramified. The same remarks as in the $ SL_2(D) $ case apply, so that we go quickly through the definitions. Note that the valuation $ \omega $ (respectively the Galois conjugation $ x\mapsto\bar{x} $) on $ L $ induces a well-defined map on $ \mathcal{O}_L/\mathfrak{m}_L^r $ that we still denote $ \omega $ (respectively $ x\mapsto \bar{x} $).
\begin{definition}\label{Def:localdataSU(3)ur}
Let $ r\in \N $ and $ x\in [-\omega (\pi_{L}^{r}),\omega (\pi_{L}^{r})] $. We set $$ P_{x}^{0,r} = \lbrace g\in \SL_{3}(\mathcal{O}_{L}/\mathfrak{m}_{L}^{r})~\vert~ ^{S}\bar{g}g= \Id, \omega (g)\geq \begin{psmallmatrix}
0 & -\frac{x}{2} & -x \\
\frac{x}{2} & 0 & -\frac{x}{2} \\
x & \frac{x}{2} & 0
\end{psmallmatrix} \rbrace. $$
\end{definition}

\begin{definition}
\begin{enumerate}[$ \bullet $]
We define
\item $ H^{0,r} = \lbrace \begin{psmallmatrix}
x & 0 & 0 \\
0 & x^{-1}\bar{x} & 0 \\
0 &0 & \bar{x}^{-1}
\end{psmallmatrix}\in \SL_{3}(\mathcal{O}_{L}/\mathfrak{m}_{L}^{r})~\vert~ \omega (x) = 0 \rbrace $
\item $ M^{0,r} = \lbrace \begin{psmallmatrix}
0 & 0 & x \\
0 & -x^{-1}\bar{x} & 0 \\
\bar{x}^{-1} &0 & 0
\end{psmallmatrix}\in \SL_{3}(\mathcal{O}_{L}/\mathfrak{m}_{L}^{r})~\vert~ \omega (x) = 0 \rbrace $
\end{enumerate}
And we set $ N^{0,r} = H^{0,r}\sqcup M^{0,r} $.
\end{definition}

\begin{definition}
We let $ H^{0,r} $ act trivially on $ \R $, and we let all elements of $ M^{0,r} $ act as a reflection through $ 0\in \R $. This gives an affine action of $ N^{0,r} $ on $ \R $, and we denote again the resulting map $ N^{0,r}\to \Aff (\R) $ by $ \nu $.
\end{definition}

We are now able to give a definition of the ball of radius $ r $ around $ [(\Id ,0)]\in \mathcal{I} $ which only depends on the ring $ \mathcal{O}_L/\mathfrak{m}_L^{r} $, and not on the whole field $ L $.
\begin{definition}\label{Def:local tree of radius r unramified}
Let $ r\in \N $. We define an $ r $-local equivalence on $ P_{0}^{0,r}\times [-\omega (\pi_L^{r}), \omega (\pi_L^{r})] $ as follows. For $ g,h\in P_{0}^{0,r} $ and $ x,y\in [-\omega (\pi_L^{r}), \omega (\pi_L^{r})] $
\begin{equation*}
(g,x)\sim_{0,r}(h,y) \Leftrightarrow \textrm{ there exists } n\in N^{0,r} \textrm{ such that } \nu (n)(x)=y \textrm{ and } g^{-1}hn \in P_{x}^{0,r}
\end{equation*}
The resulting space $ \mathcal{I}^{0,r} = P_{0}^{0,r}\times [-\omega (\pi_L^{r}), \omega (\pi_L^{r})]/\sim_{0,r} $ is called the local Bruhat--Tits tree of radius $ r $ around $ 0 $, and $ [(g,x)]^{0,r} $ stands for the equivalence class of $ (g,x) $ in $ \mathcal{I}^{0,r} $. The group $ P_{0}^{0,r} $ acts on $ \mathcal{I}^{0,r} $ by multiplication on the first component.
\end{definition}

\begin{remark}\label{Rem:dependence on D of the local tree ur}
Note that the construction of the local Bruhat--Tits tree of $ \SU_3^{L/K} $ depends on the pair $ (K,L) $ (which is assumed to be unramified in this section). When needed, we keep track of this dependence by adding the subscript $ (K,L) $ to the objects involved. This gives rise to the notations $ (P_x^{0,r})_{(K,L)} $, $ H^{0,r}_{(K,L)} $, $ M^{0,r}_{(K,L)} $, $ N^{0,r}_{(K,L)} $ and $ \mathcal{I}^{0,r}_{(K,L)} $.
\end{remark}
\begin{remark}\label{Rem:equivalent def with for all n for local model unramified}
Also, Remark~\ref{Rem:equivalent def with for all n for local model} holds equally well in this case, with exactly the same proof, upon replacing all $ d $'s by $ 1 $.
\end{remark}

\subsection{Integral model}
We now proceed to define an integral model, and compare its rational points with our local model for the Bruhat--Tits tree of $ SU_3^{L/K} $ when $ L $ is unramified. Again, the same remarks as in the $ SL_2(D) $ case apply, so that we go quickly through the definitions.

\begin{definition}\label{Def:integral model for ur SU}
Let $ \underline{\SU}_{3}^{L/K} $ be the group $ \SU_{3} $ considered over $ \mathcal{O}_{K} $. We omit the superscript $ L/K $ when it is understood from the context.
\end{definition}

Concretely, $ \underline{\SU}_{3} $ is the $ \mathcal{O}_{K} $-scheme associated with the $ \mathcal{O}_{K} $-algebra $ \mathcal{O}_{K}[\underline{\SU}_{3}] = \mathcal{O}_{K}[X_{ij}^{kl}]/I $ ($ i,j\in \lbrace 1,2,3\rbrace $, $ k,l\in \lbrace 1,2\rbrace $), where $ I $ is the ideal generated by the following equations 
\begin{flalign*}
& \text{For all }i,j\in \lbrace 1,2,3\rbrace,~
\begin{cases}
X_{ij}^{12} = -\beta X_{ij}^{21}\\
X_{ij}^{22} = X_{ij}^{11}+\alpha X_{ij}^{21}
\end{cases} \\
& \sum\limits_{\sigma \in \Sym (3)}[ (-1)^{sgn(\sigma)}\prod\limits_{i=1}^3X_{i\sigma (i)} ]- 1\\
& \begin{psmallmatrix}
\overline{X}_{33} & \overline{X}_{23} & \overline{X}_{13} \\
\overline{X}_{32} & \overline{X}_{22} & \overline{X}_{12} \\
\overline{X}_{31} & \overline{X}_{21} & \overline{X}_{11} 
\end{psmallmatrix}\begin{psmallmatrix}
X_{11} & X_{12} & X_{13} \\
X_{21} & X_{22} & X_{23} \\
X_{31} & X_{32} & X_{33} 
\end{psmallmatrix} - \begin{psmallmatrix}
1 & 0 & 0 \\
0 &1 & 0 \\
0 & 0 & 1
\end{psmallmatrix}
\end{flalign*}
Here $ \alpha $ and $ \beta $ are elements of $ \mathcal{O}_K $ such that $ L\cong K[T]/(T^2-\alpha T+\beta) $, so that the first equations encode the ring embedding $ \mathcal{O}_{L}\hookrightarrow M_{2}(\mathcal{O}_{K}) $. In the other equations, $ X_{ij} $ stands for the $ 2\times 2 $ matrix $ \begin{psmallmatrix}
X_{ij}^{11} & X_{ij}^{12} \\
X_{ij}^{21} & X_{ij}^{22} 
\end{psmallmatrix} $. Also, for a $ 2\times 2 $ matrix $ M = \begin{psmallmatrix}
M^{11} & M^{12} \\
M^{21} & M^{22} 
\end{psmallmatrix} $, we denote $ \overline{M} = \begin{psmallmatrix}
M^{22} & -M^{12} \\
-M^{21} & M^{11} 
\end{psmallmatrix} $ (this operation reflects the conjugation on $ \mathcal{O}_{L} $). Finally note that a $ 1 $ (respectively a $ 0 $) in the above equations denotes the $ 2\times 2 $ identity matrix (respectively the $ 2\times 2 $ zero matrix), i.e. it corresponds to the $ 1\in L $ (respectively $ 0\in L $).

\begin{theorem}\label{Thm:smoothness of SU3 unramified}
$ \underline{\SU}_{3}^{L/K} $ is a smooth $ \mathcal{O}_{K} $-scheme.
\end{theorem}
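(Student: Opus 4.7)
The plan is to prove smoothness by base changing to $\mathcal{O}_L$ and invoking faithfully flat descent. Since $L/K$ is unramified, $\mathcal{O}_L$ is a free $\mathcal{O}_K$-module of rank $2$, hence faithfully flat over $\mathcal{O}_K$. As smoothness is fpqc local on the base, it suffices to show that $\underline{\SU}_3^{L/K} \otimes_{\mathcal{O}_K} \mathcal{O}_L$ is smooth over $\mathcal{O}_L$. I would argue that it is in fact isomorphic to $\underline{\SL}_3$ over $\mathcal{O}_L$, which is known to be smooth (the infinitesimal-lifting argument from Theorem~\ref{Thm:smoothness of SL2} applies verbatim).

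To establish the isomorphism $\underline{\SU}_3^{L/K} \otimes_{\mathcal{O}_K} \mathcal{O}_L \cong \underline{\SL}_3$, I would first observe that writing $\mathcal{O}_L = \mathcal{O}_K[T]/(T^2 - \alpha T + \beta)$, the hypothesis that $L/K$ is unramified is equivalent to the reduction of $T^2 - \alpha T + \beta$ being separable over $\overline{K}$, which implies that $T^2 - \alpha T + \beta$ factors in $\mathcal{O}_L[T]$ as $(T-\theta)(T-\sigma\theta)$ with coprime factors. By the Chinese Remainder Theorem this yields $\mathcal{O}_L \otimes_{\mathcal{O}_K} \mathcal{O}_L \cong \mathcal{O}_L \times \mathcal{O}_L$, under which the Galois involution acts on the first tensor factor by the swap of the two components. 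Tensoring further over $\mathcal{O}_L$ with any $\mathcal{O}_L$-algebra $A$, one gets $\mathcal{O}_L \otimes_{\mathcal{O}_K} A \cong A \times A$ with conjugation given by the swap. Then an $A$-point of $\underline{\SU}_3^{L/K} \otimes_{\mathcal{O}_K} \mathcal{O}_L$ is a matrix $g \in \SL_3(A \times A) = \SL_3(A) \times \SL_3(A)$, say $g = (g_1, g_2)$, satisfying $^S\bar g \cdot g = \mathrm{Id}$; this translates into the two equations $({}^S g_2) g_1 = \mathrm{Id}$ and $({}^S g_1) g_2 = \mathrm{Id}$, each of which is equivalent to the single relation $g_2 = ({}^S g_1)^{-1}$. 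Hence the functor of $A$-points collapses to $g_1 \in \SL_3(A)$, giving a functorial isomorphism with $\underline{\SL}_3$.

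The one step that requires some care, rather than being truly hard, is to verify that the explicit scheme described in Definition~\ref{Def:integral model for ur SU} really does represent the functor $A \mapsto \{g \in \SL_3(\mathcal{O}_L \otimes_{\mathcal{O}_K} A) : {}^S\bar g g = \mathrm{Id}\}$. The first batch of equations $X_{ij}^{12} = -\beta X_{ij}^{21}$ and $X_{ij}^{22} = X_{ij}^{11} + \alpha X_{ij}^{21}$ encode precisely that each $2\times 2$ block $X_{ij}$ lies in the image of the regular representation $\mathcal{O}_L \hookrightarrow M_2(\mathcal{O}_K)$; the determinant equation then expresses $\det(g) = 1$, and the final matrix equation encodes $^S\bar g g = \mathrm{Id}$ upon recognising that $\overline{M}$ corresponds to the conjugation involution of $\mathcal{O}_L$ viewed inside $M_2(\mathcal{O}_K)$. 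Once this verification is made, the chain of isomorphisms above gives $\underline{\SU}_3^{L/K} \otimes_{\mathcal{O}_K} \mathcal{O}_L \cong \underline{\SL}_3$, and the desired smoothness over $\mathcal{O}_K$ follows from faithfully flat descent (\textit{e.g.}~\cite{EGA4}*{Proposition~17.7.1}).
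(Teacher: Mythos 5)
Your proof is correct and follows essentially the same route as the paper: base change to $\mathcal{O}_L$, identify the result with $\SL_3$ via the splitting $\mathcal{O}_L\otimes_{\mathcal{O}_K}\mathcal{O}_L \cong \mathcal{O}_L\times\mathcal{O}_L$ with the Galois conjugation corresponding to the swap, and conclude by faithfully flat descent. The paper states the splitting directly (citing étaleness plus Hensel, or a hand computation) where you derive it from the CRT applied to the factorisation of $T^2-\alpha T+\beta$ over $\mathcal{O}_L$, and you add a worthwhile sanity check that the explicit defining equations of Definition~\ref{Def:integral model for ur SU} indeed represent the expected functor — a step the paper leaves implicit — but the mathematical content is the same.
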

\begin{proof}
We prove that the base change of $ \underline{\SU}_{3}^{L/K} $ to $ \mathcal{O}_L $ is isomorphic to $ \SL_3 $ (as an algebraic group over $ \mathcal{O}_L $). Since $ \SL_3 $ is smooth over any base ring, the result is then a consequence of faithfully flat descent.

Let $ ~\bar{}~\colon \mathcal{O}_L\to \mathcal{O}_L $ be the Galois conjugation. Note that since $ \mathcal{O}_L $ is unramified over $ \mathcal{O}_K $, the map $ \psi \colon \mathcal{O}_L\otimes_{\mathcal{O}_K}\mathcal{O}_L\to \mathcal{O}_L\times \mathcal{O}_L\colon x\otimes y\mapsto (xy,x\bar{y}) $ is an isomorphism of $ \mathcal{O}_L $-algebras. 
Indeed, this follows from the fact that $ L $ is unramified, so that $ \mathcal{O}_K\to \mathcal{O}_L $ is etale, and since local fields are henselian, the corresponding morphism of schemes is a Galois covering (see \cite{BLR90}*{6.2, Example~B} for more details). But in this simple case, one can also just check by hand that $ \psi $ is an isomorphism. This implies that for any $ \mathcal{O}_L $-algebra $ R $, the map $ \varphi \colon R\otimes_{\mathcal{O}_K}\mathcal{O}_L\to R\times R\colon r\otimes x\mapsto (rx,r\bar{x}) $ is an isomorphism of $ \mathcal{O}_L $-algebras. 

Furthermore, let $ \tau \colon R\times R\to  R\times R\colon (r,r')\mapsto (r',r) $ be the natural involution on $ R\times R $. Then for any $ r\otimes x\in R\otimes_{\mathcal{O}_K}\mathcal{O}_L $, $ \varphi (r\otimes \bar{x}) = \tau (\varphi (r\otimes x)) $. It follows that for any $ \mathcal{O}_L $-algebra $ R $, we have
\begin{align*}
(\underline{\SU}_{3}^{L/K})_{\mathcal{O}_L}(R) &= \lbrace g\otimes x\in \SL_3(R\otimes_{\mathcal{O}_K}\mathcal{O}_L)~\vert ~^{S}(g\otimes \bar{x})(g\otimes x) = \Id \rbrace\\
&\cong \lbrace (g,h)\in \SL_3(R\times R)~\vert ~(^{S}hg,~^{S}gh) = (\Id ,\Id) \rbrace\\
&\cong \SL_3(R)
\end{align*}
Since the above isomorphisms are natural in $R$, this indeed shows that $ (\underline{\SU}_{3}^{L/K})_{ \mathcal{O}_L}\cong \SL_3 $ as group schemes over $ \mathcal{O}_L $, which concludes the proof.
\end{proof}
We now compare the rational points of the integral model with our local model.
\begin{lemma}\label{Lem:description of the restriction P_0 to P_0^0,r unramified}
$ \underline{\SU}_3(\mathcal{O}_{K}) \cong P_{0} $ and $ \underline{\SU}_3(\mathcal{O}_{K}/\mathfrak{m}_{K}^{r}) \cong P_{0}^{0,r} $. Following the identifications
\begin{center}
\begin{tikzpicture}[->]
 \node (1) at (0,0) {$ \underline{\SU}_3(\mathcal{O}_{K}) $};
 \node (2) at (2.3,0) {$ P_{0}\leq \SL_{3}(\mathcal{O}_{L}) $};
 \node (5) at (1,0) {$ \cong $};
 \node (3) at (-0.3,-1.2) {$ \underline{\SU}_3(\mathcal{O}_{K}/\mathfrak{m}_{K}^{r}) $};
 \node (4) at (2.8, -1.2) {$ P_{0}^{0,r}\leq \SL_3(\mathcal{O}_{L}/\mathfrak{m}_{L}^{ r}) $};
 \node (6) at (1,-1.2) {$ \cong $};
 
 \draw[->] (0,-0.3) to (0,-0.9);
 \draw[->] (1.4,-0.3) to (1.4,-0.9);
\end{tikzpicture}
\end{center}
the homomorphism $ p_{r}\colon P_{0}\to P_{0}^{0,r} $ is the one induced by the projection of the coefficients $ \mathcal{O}_{L}\to \mathcal{O}_{L}/\mathfrak{m}_{L}^{r} $.
\end{lemma}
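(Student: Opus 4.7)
The plan is to unwind the definition of $\underline{\SU}_3$ given just before Theorem~\ref{Thm:smoothness of SU3 unramified} and match it directly against Definition~\ref{Def:pointstabiliserforSU unramified} and Definition~\ref{Def:localdataSU(3)ur}. By construction, $\underline{\SU}_3(R) = \Mor_{\mathcal{O}_K}(\mathcal{O}_K[\underline{\SU}_3], R)$ consists of tuples $(X_{ij}^{kl})$ in $R$ satisfying the three families of relations generating the ideal $I$. First I would observe that the relations $X_{ij}^{12} = -\beta X_{ij}^{21}$ and $X_{ij}^{22} = X_{ij}^{11}+\alpha X_{ij}^{21}$ are precisely the defining equations for the image of the embedding $\mathcal{O}_L \hookrightarrow M_2(\mathcal{O}_K)$ coming from the presentation $L \cong K[T]/(T^2-\alpha T+\beta)$. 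Hence specifying the $X_{ij}$'s is the same as specifying a matrix in $M_3(R \otimes_{\mathcal{O}_K} \mathcal{O}_L)$. The determinant relation then forces this matrix to lie in $\SL_3(R\otimes_{\mathcal{O}_K}\mathcal{O}_L)$, and the last matrix equation encodes the hermitian condition $^S\bar{g}g = \Id$.

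Next I would specialise to $R = \mathcal{O}_K$, which gives $R \otimes_{\mathcal{O}_K} \mathcal{O}_L = \mathcal{O}_L$, so that
\[
\underline{\SU}_3(\mathcal{O}_K) = \{g \in \SL_3(\mathcal{O}_L) \mid {}^S\bar{g}g = \Id\} = \SU_3^{L/K}(K)\cap \SL_3(\mathcal{O}_L).
\]
On the other hand, Definition~\ref{Def:pointstabiliserforSU unramified} with $x=0$ yields $P_0 = \{g\in \SU_3^{L/K}(K)\mid \omega(g)\geq 0\}$, which is exactly the same set. This gives the first isomorphism.

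For the second isomorphism I would use that $L/K$ is unramified, so $\pi_K$ is a uniformiser of $L$ and $\mathfrak{m}_K^r \mathcal{O}_L = \mathfrak{m}_L^r$. Consequently
\[
\mathcal{O}_L/\mathfrak{m}_L^r \;\cong\; \mathcal{O}_L \otimes_{\mathcal{O}_K} (\mathcal{O}_K/\mathfrak{m}_K^r),
\]
and the embedding $\mathcal{O}_L \hookrightarrow M_2(\mathcal{O}_K)$ descends to an embedding $\mathcal{O}_L/\mathfrak{m}_L^r \hookrightarrow M_2(\mathcal{O}_K/\mathfrak{m}_K^r)$ via the same polynomial presentation (since $\alpha,\beta\in \mathcal{O}_K$). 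Specialising $R = \mathcal{O}_K/\mathfrak{m}_K^r$ in the discussion above therefore identifies $\underline{\SU}_3(\mathcal{O}_K/\mathfrak{m}_K^r)$ with $\{g\in \SL_3(\mathcal{O}_L/\mathfrak{m}_L^r)\mid {}^S\bar{g}g = \Id\}$, which is precisely $P_0^{0,r}$ by Definition~\ref{Def:localdataSU(3)ur} with $x=0$.

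Finally, the compatibility with the restriction map $p_r$ is automatic: both vertical arrows in the diagram correspond to the functor $\Mor_{\mathcal{O}_K}(\mathcal{O}_K[\underline{\SU}_3], -)$ applied to the quotient $\mathcal{O}_K \twoheadrightarrow \mathcal{O}_K/\mathfrak{m}_K^r$, and under the identifications constructed above this is just the termwise reduction of matrix entries from $\mathcal{O}_L$ to $\mathcal{O}_L/\mathfrak{m}_L^r$. No step is genuinely difficult; the only minor point requiring care is checking that $\mathfrak{m}_K^r\mathcal{O}_L = \mathfrak{m}_L^r$ (which uses unramifiedness of $L/K$), since otherwise the reduction modulo $\mathfrak{m}_K^r$ on the integral model side would not correspond to reduction modulo $\mathfrak{m}_L^r$ on the matrix side.
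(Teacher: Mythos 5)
Your argument is correct and follows essentially the same route as the paper's proof: both unwind $\underline{\SU}_3(R)$ as $\{g\in\SL_3(R\otimes_{\mathcal{O}_K}\mathcal{O}_L)\mid {}^S\bar g g=\Id\}$ (the paper phrases this via the $\mathcal{O}_K$-module decomposition $\mathcal{O}_L\cong\mathcal{O}_K\oplus t\mathcal{O}_K$, you via the base-change tensor product, which is the same thing), and both specialise to $R=\mathcal{O}_K$ and $R=\mathcal{O}_K/\mathfrak{m}_K^r$, with the unramifiedness hypothesis used exactly where you flag it, namely to identify $\mathcal{O}_L\otimes_{\mathcal{O}_K}(\mathcal{O}_K/\mathfrak{m}_K^r)$ with $\mathcal{O}_L/\mathfrak{m}_L^r$. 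The compatibility with $p_r$ by functoriality is also the implicit content of the paper's diagram.
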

\begin{proof}
Let $ t $ be any element in $ \mathcal{O}_L^{\times}\setminus \mathcal{O}_K $. Using the fact that $ \mathcal{O}_{L}\cong \mathcal{O}_{K}\oplus t.\mathcal{O}_{K} $, one can check that $ \Mor_{\mathcal{O}_{K}}(\mathcal{O}_{K}[\underline{\SU}_{3}],\mathcal{O}_{K}) \cong \lbrace g\in \SL_{3}(\mathcal{O}_{L})~ \vert ~ ^{S}\bar{g}g=\Id \rbrace $, as wanted.

Furthermore, since $ L $ is unramified, $ \mathcal{O}_{L}/\mathfrak{m}_{L}^{r}\cong \mathcal{O}_{K}/\mathfrak{m}_{K}^{r}\oplus t.\mathcal{O}_{K}/\mathfrak{m}_{K}^{r} $, and one can check that $ \Mor_{\mathcal{O}_{K}}(\mathcal{O}_{K}[\underline{\SU}_{3}],\mathcal{O}_{K}/\mathfrak{m}_{K}^{r}) \cong \lbrace g\in \SL_{3}(\mathcal{O}_{L}/\mathfrak{m}_{L}^{r})~ \vert ~ ^{S}\bar{g}g=\Id \rbrace $, as wanted.
\end{proof}

And we can then deduce the surjectivity of the map $ p_r $.
\begin{corollary}\label{Cor:surjectivityofpr unramified}
The map $ p_{r}\colon P_{0}\to P_{0}^{0,r} $ is surjective, for all $ r\in \N $.
\end{corollary}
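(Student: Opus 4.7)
The plan is to mimic verbatim the strategy used to establish Corollary~\ref{Cor:surjectivityforSL2} in the $\SL_2(D)$ case, since all the necessary ingredients have just been set up in the preceding subsections. Concretely, I would invoke the commutative square of Lemma~\ref{Lem:description of the restriction P_0 to P_0^0,r unramified}, which identifies $p_r\colon P_0\to P_0^{0,r}$ with the reduction map $\underline{\SU}_3(\mathcal{O}_K)\to \underline{\SU}_3(\mathcal{O}_K/\mathfrak{m}_K^r)$ induced by $\mathcal{O}_K\to \mathcal{O}_K/\mathfrak{m}_K^r$. Thus the surjectivity of $p_r$ is reduced to the surjectivity of this reduction map on $\mathcal{O}_K$-points of the integral model.

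Next, I would invoke Theorem~\ref{Thm:smoothness of SU3 unramified}, which asserts that $\underline{\SU}_3^{L/K}$ is smooth over $\mathcal{O}_K$. Applied to the smooth $\mathcal{O}_K$-scheme $X=\underline{\SU}_3^{L/K}$, Hensel's lemma for smooth schemes in the form of Theorem~\ref{Thm:genhensel} then yields surjectivity of $\underline{\SU}_3(\mathcal{O}_K)\to \underline{\SU}_3(\mathcal{O}_K/\mathfrak{m}_K^r)$ for every $r\in \N$, which via the identifications of Lemma~\ref{Lem:description of the restriction P_0 to P_0^0,r unramified} is precisely the claimed surjectivity of $p_r$.

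Since all the hard work (constructing the integral model, proving smoothness via faithfully flat descent from $\mathcal{O}_L$ where the base-changed group becomes $\SL_3$, and setting up the identification of rational points with $P_0$ and $P_0^{0,r}$) has already been done in Definition~\ref{Def:integral model for ur SU}, Theorem~\ref{Thm:smoothness of SU3 unramified}, and Lemma~\ref{Lem:description of the restriction P_0 to P_0^0,r unramified}, the proof at this stage is genuinely a one-line assembly, with no real obstacle remaining. The main conceptual difficulty has already been absorbed into Theorem~\ref{Thm:smoothness of SU3 unramified} (the unramified descent argument) and Theorem~\ref{Thm:genhensel} (the generalised Hensel lemma for $r>1$), so the corollary itself is just a formal application.
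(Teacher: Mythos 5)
Your proof is correct and is exactly the paper's argument: reduce via the commutative square of Lemma~\ref{Lem:description of the restriction P_0 to P_0^0,r unramified}, then apply Theorem~\ref{Thm:genhensel} to the smooth $\mathcal{O}_K$-scheme $\underline{\SU}_3^{L/K}$ furnished by Theorem~\ref{Thm:smoothness of SU3 unramified}. Nothing to add.
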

\begin{proof}
This is a direct consequence of the commutative square involving $ P_{0}\to P_{0}^{0,r} $ given in Lemma~\ref{Lem:description of the restriction P_0 to P_0^0,r unramified}, together with the fact that the integral model is smooth by Theorem~\ref{Thm:smoothness of SU3 unramified}, so that Theorem~\ref{Thm:genhensel} applies to the left hand side of the diagram.
\end{proof}

We also need a kind of injectivity result:
\begin{lemma}\label{Lem:injectivityofpr unramified}
Let $ r\in \N $ and $ x\in [-\omega (\pi_{L}^{r}),\omega (\pi_{L}^{r})] $. Then $ p_{r}^{-1}(P_{x}^{0,r}) \subset P_{x} $.
\end{lemma}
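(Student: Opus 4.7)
The plan is to imitate the proof of the analogous Lemma~\ref{Lem:injectivityofpr} in the $ \SL_2(D) $ case. Given $ g\in P_0 $ with $ p_r(g)\in P_x^{0,r} $, I must verify the two requirements defining membership in $ P_x $: namely that $ g\in \SU_3^{L/K}(K) $ (which is free, since $ g\in P_0\subset \SU_3^{L/K}(K) $), and that $ \omega(g_{ij})\geq m_{ij} $ for all $ i,j $, where $ m $ is the matrix in Definition~\ref{Def:pointstabiliserforSU unramified}.

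First I would treat the entries $ (i,j) $ such that $ m_{ij}\leq 0 $: the condition $ g\in P_0 $ already forces $ \omega(g_{ij})\geq 0\geq m_{ij} $, so nothing more is needed. Up to replacing $ x $ by $ -x $ (which just swaps the upper- and lower-triangular roles by the symmetry of the matrix $ m $), I can assume $ x\geq 0 $, so this handles all entries on or above the anti-diagonal.

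For the remaining entries, $ m_{ij}\geq 0 $ and the hypothesis $ p_r(g)\in P_x^{0,r} $ reads $ \omega(p_r(g_{ij}))\geq m_{ij} $ in $ \mathcal{O}_L/\mathfrak{m}_L^r $. Lifting back to $ \mathcal{O}_L $, this forces either $ \omega(g_{ij})\geq m_{ij} $ directly, or else $ g_{ij}\in \mathfrak{m}_L^r $, i.e. $ \omega(g_{ij})\geq \omega(\pi_L^r) $. Since $ x\in[-\omega(\pi_L^r),\omega(\pi_L^r)] $, the entries of $ m $ satisfy $ |m_{ij}|\leq \omega(\pi_L^r) $, so the second alternative also yields $ \omega(g_{ij})\geq m_{ij} $. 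Combining the two cases gives the required valuation bound for every entry, hence $ g\in P_x $.

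There is no substantive obstacle: the argument is a direct unravelling of the valuation definitions, exactly as in Lemma~\ref{Lem:injectivityofpr}, with the only new ingredient being that the bound $ |x|\leq \omega(\pi_L^r) $ dominates every entry of the matrix $ m $ (here $ |m_{ij}|\leq |x| $), which is the reason the truncation at level $ r $ loses no information relevant to $ P_x $.
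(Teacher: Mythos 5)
Your proof is correct and follows essentially the same route as the paper's terse argument: the key point in both is that $\lvert m_{ij}\rvert\le\omega(\pi_L^r)$, so reducing modulo $\mathfrak{m}_L^r$ loses no information relevant to the lower bounds of Definition~\ref{Def:pointstabiliserforSU unramified}, while the easy entries are absorbed by $g\in P_0$. One small slip: for $x\ge 0$ the entries with $m_{ij}\le 0$ are those on or above the \emph{main} diagonal, not the anti-diagonal, but this does not affect the substance of the argument.
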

\begin{proof}
Belonging to $ p_{r}^{-1}(P_x^{0,r}) $ implies that the valuation of the off diagonal entries are big enough. Hence, the result follows directly from Definition~\ref{Def:pointstabiliserforSU unramified}.
\end{proof}

We finally arrive at the result corresponding to Theorem~\ref{Thm:localdescriptionoftheball}: the ball of radius $ r $ together with the action of $ \SU_{3}^{L/K}(K) $ is encoded in $ P_{0}^{0,r} $. We first need an adequate description of the ball of radius $ r $ around $0$ in $ \mathcal{I} $.

\begin{lemma}\label{Lem:B_0(r) is really the ball of radius r unramified}
Renormalise the distance on $ \R $ so that $ d_{\R}(0;\omega(\pi_L)) = 1 $, and put the metric $ d_{\mathcal{I}} $ on $ \mathcal{I} $ arising from the distance $ d_{\R} $ $ ( $see Remark~\ref{Rem:metric on I and stabiliser}$ ) $. Let $ B_0(r) = \lbrace p\in \mathcal{I}~\vert~d_{\mathcal{I}}([(\Id ,0)];p)\leq r\rbrace $ be the ball of radius $ r $ around $ 0 $ in $ \mathcal{I} $. Let $ \tilde{B}_0(r) = \lbrace [(g,x)]\in \mathcal{I} ~\vert~ g\in P_{0}, x\in [-\omega (\pi_L^{r}), \omega (\pi_L^{r})]\subset \R \rbrace $. Then $ B_0(r) = \tilde{B}_0(r) $.
\end{lemma}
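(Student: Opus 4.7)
The plan is to proceed exactly as in the proof of Lemma~\ref{Lem:B_0(r) is really the ball of radius r} for the $\SL_2(D)$ case, since the two crucial ingredients available there, namely that $G$ acts by isometries on $\mathcal{I}$ (Remark~\ref{Rem:metric on I and stabiliser}) and that every point of $\mathcal{I}$ can be represented by a pair whose first coordinate lies in $P_0$ (Lemma~\ref{Lem:B-T tree from a based point}), are proved in complete generality in Section~\ref{Sec:Rappel sur l'arbre de B-T}. Hence they apply equally well to $G = \SU_3^{L/K}(K)$.

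For the inclusion $\tilde{B}_0(r)\subset B_0(r)$: pick $[(g,x)]\in \tilde{B}_0(r)$, so that $g\in P_0$ and $x\in [-\omega(\pi_L^r),\omega(\pi_L^r)]$. By our normalisation of $d_\R$, $|x|\le r$. Since $P_0$ is the stabiliser of $[(\Id,0)]$ and $G$ acts by isometries,
\[
d_{\mathcal{I}}([(\Id,0)];[(g,x)]) = d_{\mathcal{I}}(g.[(\Id,0)];g.[(\Id,x)]) = d_{\mathcal{I}}([(\Id,0)];[(\Id,x)]) = d_\R(0;x)\le r,
\]
using that the map $\R\to \mathcal{I}\colon x\mapsto [(\Id,x)]$ is an isometric embedding of the standard apartment.

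For the converse inclusion $B_0(r)\subset \tilde{B}_0(r)$: given $[(g,x)]$ with $d_{\mathcal{I}}([(\Id,0)];[(g,x)])\le r$, Lemma~\ref{Lem:B-T tree from a based point} produces $h\in P_0$ and $y\in \R$ with $[(g,x)] = [(h,y)]$. Repeating the same isometry computation gives $d_{\mathcal{I}}([(\Id,0)];[(h,y)]) = d_\R(0;y)$, so $|y|\le r$, which precisely means $y\in [-\omega(\pi_L^r),\omega(\pi_L^r)]$; therefore $[(h,y)]\in \tilde{B}_0(r)$, as required.

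There is essentially no obstacle here: the argument is a verbatim transposition of the one used in the $\SL_2(D)$ case, with $\omega(\pi_D)$ replaced by $\omega(\pi_L)$ and the factor $d$ (the degree of $D$) replaced by $1$, because in the present unramified $\SU_3^{L/K}$ setting the normalisation is done directly with respect to $\pi_L$. No new feature of $\SU_3$ enters the argument; everything flows from the abstract properties of the Bruhat--Tits tree recalled in Section~\ref{Sec:Rappel sur l'arbre de B-T}.
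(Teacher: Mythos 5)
Your proof is correct and matches the paper's approach exactly: the paper states that the proof of Lemma~\ref{Lem:B_0(r) is really the ball of radius r unramified} is word for word the same as that of Lemma~\ref{Lem:B_0(r) is really the ball of radius r}, replacing $d$ by $1$, and your argument reproduces precisely that transposition, using Remark~\ref{Rem:metric on I and stabiliser} and Lemma~\ref{Lem:B-T tree from a based point}.
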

\begin{proof}
The proof is word for word the same than the proof of Lemma~\ref{Lem:B_0(r) is really the ball of radius r}, upon replacing all $d$'s by $1$'s.
\end{proof}
\begin{remark}
The distance $ d_{\mathcal{I}} $ that we introduced in Lemma~\ref{Lem:B_0(r) is really the ball of radius r unramified} is also the combinatorial distance on the tree. Indeed, looking at when $ P_y $ is inside $ P_x $ for $ x,y\in \R $, we see that $ [(\Id ,x)] $ is a vertex of $ \mathcal{I} $ if and only if $ x\in \omega(\pi_L)\Z $. Furthermore, $ x $ is a vertex of degree $ \vert \overline{K}\vert^3+1 $ if and only if $ x\in 2.\omega(\pi_L)\Z $.
\end{remark}

\begin{theorem}\label{Thm:localdescriptionoftheball ur}
Let $ r\in \N $. The map $ B_{0}(r)\to \mathcal{I}^{0,r}\colon [(g,x)]\mapsto [(p_{r}(g),x)]^{0,r} $ is a $ (p_{r}\colon P_{0}\to P_{0}^{0,r}) $-equivariant bijection.
\end{theorem}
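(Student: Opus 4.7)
The plan is to mimic closely the proof of Theorem~\ref{Thm:localdescriptionoftheball} for $\SL_2(D)$, since all the necessary ingredients have been set up in direct parallel for $\SU_3^{L/K}$ with $L/K$ unramified. First, I would check that the proposed map $B_0(r) \to \mathcal{I}^{0,r}\colon [(g,x)] \mapsto [(p_r(g),x)]^{0,r}$ is well-defined. By Lemma~\ref{Lem:B_0(r) is really the ball of radius r unramified}, any point of $B_0(r)$ can be represented by a pair $(g,x)$ with $g \in P_0$ and $x \in [-\omega(\pi_L^r), \omega(\pi_L^r)]$. If $(g,x) \sim (h,y)$ for two such representatives, Lemma~\ref{Lem:integralityofn} produces an element $n \in N \cap P_0$ with $\nu(n)(x)=y$, and an inspection of $N$ (Definition~\ref{Def:sbgrp N SU unramified}) shows that $p_r(n) \in N^{0,r}$; hence $(p_r(g), x) \sim_{0,r} (p_r(h), y)$, so the map is well-defined.

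For injectivity, suppose $[(g,x)]$ and $[(h,y)]$ in $B_0(r)$ have the same image. By Remark~\ref{Rem:equivalent def with for all n for local model unramified}, for every $\tilde{n} \in N^{0,r}$ with $\nu(\tilde{n})(x)=y$, we have $p_r(g)^{-1}p_r(h)\tilde{n} \in P_x^{0,r}$. It suffices to take $\tilde{n}$ to be either $\Id$ or $\begin{psmallmatrix} 0 & 0 & 1 \\ 0 & -1 & 0 \\ 1 & 0 & 0 \end{psmallmatrix}$, both of which admit an obvious lift $n \in N$ with $p_r(n) = \tilde{n}$ and the same action on $\R$. Then $g^{-1}hn \in p_r^{-1}(P_x^{0,r}) \subset P_x$ by Lemma~\ref{Lem:injectivityofpr unramified}, so $(g,x) \sim (h,y)$ in $\mathcal{I}$. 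Surjectivity is immediate from Corollary~\ref{Cor:surjectivityofpr unramified}: any $\bar g \in P_0^{0,r}$ lifts to some $g \in P_0$, and any $x \in [-\omega(\pi_L^r), \omega(\pi_L^r)]$ is in the allowed range on both sides.

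Finally, equivariance is a one-line verification: for $h \in P_0$,
\begin{equation*}
h.[(g,x)] = [(hg,x)] \mapsto [(p_r(hg),x)]^{0,r} = p_r(h).[(p_r(g),x)]^{0,r},
\end{equation*}
which uses only the definitions of the two actions. I do not anticipate any genuine obstacle here: the proof is purely bookkeeping, as every nontrivial ingredient (the surjectivity furnished by smoothness and Hensel, the injectivity of the preimage statement coming from the valuation bounds on off-diagonal entries, the metric identification of $B_0(r)$, and the alternative characterization of the local equivalence relation) has already been established in the preceding lemmas. The only point that requires any care is the choice of representatives $\tilde n$ in the injectivity step, where one must be sure that the chosen elements of $N^{0,r}$ actually lift to $N$ with matching $\nu$-action; this is transparent from Definition~\ref{Def:sbgrp N SU unramified} and Definition~\ref{Def:affineactionforN unramified}.
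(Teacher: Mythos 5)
Your proof is correct and follows essentially the same route as the paper's: well-definedness via Lemma~\ref{Lem:integralityofn}, injectivity via Remark~\ref{Rem:equivalent def with for all n for local model unramified} together with Lemma~\ref{Lem:injectivityofpr unramified}, surjectivity from Corollary~\ref{Cor:surjectivityofpr unramified}, and the one-line equivariance check. The paper's version is terser on the well-definedness step (it simply cites Lemma~\ref{Lem:integralityofn}), whereas you spell out the intermediate verification that $p_r$ sends $N\cap P_0$ into $N^{0,r}$ with matching $\nu$-action; that is a harmless and arguably welcome amplification of the same argument.
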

\begin{proof}
The map is well-defined by Lemma~\ref{Lem:integralityofn}.
\begin{itemize}
\item Injectivity: let $ [(g,x)], [(h,y)] \in B_0(r) $ be such that they have the same image in $ \mathcal{I}^{0,r} $. By Remark~\ref{Rem:equivalent def with for all n for local model unramified}, it means that for all $ \tilde{n}\in N^{0,r} $ such that $ \nu (\tilde{n})(x) = y $, $ p_{r}(g)^{-1}p_{r}(h)\tilde{n}\in P_{x}^{0,r} $. So, we can assume that $ \tilde{n} $ is either equal to $ \Id $, or is of the form $ \begin{psmallmatrix} 
0&0 & 1 \\
0&-1 & 0 \\
1&0&0
\end{psmallmatrix} $. Hence, there exists $ n\in N $ such that $ p_{r}(n) = \tilde{n} $. But $ \nu (n)(x) = y $, and $ g^{-1}hn\in p_{r}^{-1}(P_{x}^{0,r}) \subset P_{x} $ by Lemma~\ref{Lem:injectivityofpr unramified}. Hence, $ [(g,x)] = [(h,y)] $, as wanted.
\item Surjectivity: follows directly from the surjectivity of $ p_{r}\colon P_{0}\to P_{0}^{0,r} $ (Corollary~\ref{Cor:surjectivityofpr unramified}).
\item Equivariance: $ h.[(g,x)] = [(hg,x)] \mapsto [(p_{r}(hg),x)]^{0,r} = p_{r}(h).[(p_{r}(g),x)]^{0,r} $. \qedhere
\end{itemize}
\end{proof}

\subsection{Arithmetic convergence}
\begin{definition}\label{Def:space L unramified}
Consider the set of unramified pairs $(K,L)$ where $K$ is a local field and $L$ is an unramified (separable) quadratic extension of $K$. We say that two unramified pairs $ (K_{1},L_{1}) $ and $ (K_{2},L_{2}) $ are isomorphic if there exists a conjugation equivariant isomorphism between $ L_1 $ and $ L_2 $, and we let $ \mathcal{L}^{\uram} $ be the set of unramified pairs, up to isomorphism. 
For each prime $ p $, let us also define $ \mathcal{L}^{\uram}_{p^{n}} = \lbrace (K,L) \in \mathcal{L}^{\uram}~\vert~ \vert \overline{K}\vert = p^{n}\rbrace $.
\end{definition}

As in Section~\ref{Sec:SL_2(D)}, we define a metric on the space $ \mathcal{L}^{\uram} $. For $L\in \mathcal{L}^{\uram} $ and $ r\in \N $, the Galois conjugation induces an automorphism of $ \mathcal{O}_{L}/\mathfrak{m}_{L}^{r} $ that we still call the conjugation.
\begin{definition}
Let $ (K_{1},L_{1}) $ and $ (K_{2},L_{2}) $ be in $ \mathcal{L}^{\uram} $. We say that $ (K_{1},L_{1}) $ is $ r $-close to $ (K_2,L_2) $ if and only if there exists a conjugation equivariant isomorphism $ \mathcal{O}_{L_{1}}/\mathfrak{m}_{L_1}^{r}\to \mathcal{O}_{L_{2}}/\mathfrak{m}_{L_2}^{r} $.
\end{definition}
\begin{remark}\label{Rem:induced proximity on K ur}
A conjugation equivariant isomorphism $ \mathcal{O}_{L_{1}}/\mathfrak{m}_{L_1}^{r}\to \mathcal{O}_{L_{2}}/\mathfrak{m}_{L_2}^{r} $ always induces an isomorphism $ \mathcal{O}_{K_{1}}/\mathfrak{m}_{K_1}^{r}\to \mathcal{O}_{K_{2}}/\mathfrak{m}_{L_2}^{r} $, since $ \mathcal{O}_{K_{i}}/\mathfrak{m}_{K_i}^{r} $ is the invariant subring of $ \mathcal{O}_{L_{i}}/\mathfrak{m}_{L_i}^{r} $.
\end{remark}
\begin{remark}\label{Rem:transitivityofdistance  unramified}
Note that being $ r $-close is an equivalence relation, and that if $ r\geq l $ and $ (K_1,L_1) $ is $ r $-close to $ (K_2,L_2) $, then $ (K_1,L_1) $ is $ l $-close to $ (K_2,L_2) $.
\end{remark}

Again, this notion of closeness induces a non-archimedean metric on $ \mathcal{L}^{\uram} $. Let
\begin{equation*}
d\colon \mathcal{L}^{\uram}\times \mathcal{L}^{\uram}\to \mathbf{R}_{\geq 0}\colon d((K_1,L_1);(K_2,L_2))= \inf \lbrace \frac{1}{2^{r}} ~\vert~ (K_1,L_1) \textrm{ is } r \textrm{-close to }(K_2,L_2) \rbrace
\end{equation*}

\begin{lemma}\label{Lem:Non-archim. metric space ur}
$ d(\cdot~;~\cdot ) $ is a non-archimedean metric on $ \mathcal{L}^{\uram} $. 
\end{lemma}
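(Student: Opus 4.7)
The plan is to mimic the proof of Lemma~\ref{Lem:Non-archim. metric space}, adapting it so as to carry along the conjugation structure. The three things to verify are definiteness, symmetry, and the strong (non-archimedean) triangle inequality. Symmetry is immediate since an isomorphism has an inverse, and $d((K,L);(K,L)) = 0$ by taking the identity isomorphism at every level $r$. The non-archimedean inequality
\[
d((K_1,L_1);(K_3,L_3)) \leq \max\bigl(d((K_1,L_1);(K_2,L_2)),\, d((K_2,L_2);(K_3,L_3))\bigr)
\]
is a formal consequence of Remark~\ref{Rem:transitivityofdistance  unramified}: if the two right-hand distances are both at most $\tfrac{1}{2^r}$, one composes the given conjugation-equivariant isomorphisms (composition of conjugation-equivariant maps being again conjugation-equivariant) to conclude that $(K_1,L_1)$ is $r$-close to $(K_3,L_3)$.

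The only nontrivial point is the remaining implication of definiteness: if $d((K_1,L_1);(K_2,L_2)) = 0$, then $(K_1,L_1)$ and $(K_2,L_2)$ are isomorphic as unramified pairs. By assumption, for every $r \in \N$ the set $A_r$ of conjugation-equivariant ring isomorphisms $\mathcal{O}_{L_1}/\mathfrak{m}_{L_1}^{r} \to \mathcal{O}_{L_2}/\mathfrak{m}_{L_2}^{r}$ is nonempty. Since both source and target are finite rings, $A_r$ is finite, and the natural reductions $A_{r+1} \to A_r$ turn $(A_r)_{r \in \N}$ into an inverse system of nonempty finite sets. By the standard compactness argument (inverse limits of nonempty finite sets are nonempty), there exists a compatible family $(\varphi_r)_{r \in \N}$. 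The rings $\mathcal{O}_{L_i}$ are already $\mathfrak{m}_{L_i}$-adically complete, so this family assembles to a conjugation-equivariant ring isomorphism $\mathcal{O}_{L_1} \to \mathcal{O}_{L_2}$. Passing to fraction fields then yields a conjugation-equivariant isomorphism $L_1 \cong L_2$, which by definition means $(K_1,L_1) = (K_2,L_2)$ in $\mathcal{L}^{\uram}$.

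The main (mild) obstacle is keeping track of conjugation equivariance throughout the limit argument; this is handled by noting that each $A_r$ is still finite and that the transition maps respect conjugation, so the inverse-limit compactness argument applies verbatim to the conjugation-equivariant setting.
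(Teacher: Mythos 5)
Your proof is correct and follows the same route as the paper's (very terse) argument; the only difference is that you supply the standard inverse-limit compactness argument — that the sets $A_r$ of conjugation-equivariant isomorphisms form an inverse system of nonempty finite sets, hence admit a compatible family assembling to an isomorphism $\mathcal{O}_{L_1}\cong\mathcal{O}_{L_2}$ — which the paper simply asserts when it says that distance zero implies the rings of integers are equivariantly isomorphic. The one implicit point you rely on, that a ring isomorphism $\mathcal{O}_{L_1}/\mathfrak{m}_{L_1}^{r+1}\to\mathcal{O}_{L_2}/\mathfrak{m}_{L_2}^{r+1}$ descends to level $r$, is fine since both quotients are local rings and the map must carry the unique maximal ideal to the unique maximal ideal, hence $\mathfrak{m}^r$ to $\mathfrak{m}^r$.
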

\begin{proof}
If $ d((K_1,L_1);(K_2,L_2)) = 0 $, then $ \mathcal{O}_{L_1} $ and $ \mathcal{O}_{L_2} $ are equivariantly isomorphic. 
Hence, the pairs of field of fraction are isomorphic in $ \mathcal{L}^{\uram} $, as wanted. The fact that this distance is non-archimedean is a consequence of Remark~\ref{Rem:transitivityofdistance unramified}.
\end{proof}

Actually, by the uniqueness of unramified quadratic extension over a given local field, $ \mathcal{L}^{\uram} $ is isometric to the space $ \mathcal{K} $ introduced in Definition~\ref{Def:the space D}.
\begin{proposition}\label{Prop:explicit description of L ur}
The map $ \mathcal{L}^{\uram}\to \mathcal{K}\colon (K,L)\to K $ is an isometry, which maps $ \mathcal{L}^{\uram}_{p^n} $ to $ \mathcal{K}_{p^n} $. Hence, $ \mathcal{L}^{\uram}_{p^n} $ is homeomorphic to $ \hat{\N} $, the accumulation point being $ (\mathbf{F}_{p^{n}}(\!(X)\!),\mathbf{F}_{p^{2n}}(\!(X)\!)) $.
\end{proposition}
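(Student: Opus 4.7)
The plan is to prove the proposition in three steps: establish that the map is a bijection using uniqueness of the unramified quadratic extension, verify that it is distance-preserving, and then transfer the topological description of $\mathcal{K}_{p^n}$ from Proposition~\ref{Prop:explicit description of D}.

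For the bijection: the map is well-defined on isomorphism classes because any conjugation-equivariant isomorphism $L_1 \to L_2$ restricts to an isomorphism between the fixed subrings $K_1$ and $K_2$. It is surjective since every local field $K$ admits an unramified quadratic extension $L$, yielding $(K,L)\in\mathcal{L}^{\uram}$; and injectivity follows from the uniqueness (up to $K$-isomorphism) of this extension, which guarantees that any isomorphism $K_1\to K_2$ lifts to an isomorphism $L_1\to L_2$.

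One direction of the isometry, namely $d(K_1,K_2)\leq d((K_1,L_1),(K_2,L_2))$, is immediate from Remark~\ref{Rem:induced proximity on K ur}. The main obstacle is the reverse inequality. Given a ring isomorphism $\varphi\colon \mathcal{O}_{K_1}/\mathfrak{m}_{K_1}^r \to \mathcal{O}_{K_2}/\mathfrak{m}_{K_2}^r$, the plan is to extend it to a conjugation-equivariant isomorphism between the rings $\mathcal{O}_{L_i}/\mathfrak{m}_{L_i}^r$. Since $L_i/K_i$ is unramified, $\mathfrak{m}_{L_i}=\pi_{K_i}\mathcal{O}_{L_i}$ and $\mathcal{O}_{L_i}=\mathcal{O}_{K_i}[X]/(F_i(X))$ for some monic degree-$2$ polynomial $F_i\in\mathcal{O}_{K_i}[X]$ whose reduction modulo $\mathfrak{m}_{K_i}$ is the (separable) irreducible polynomial defining $\overline{L_i}$ over $\overline{K_i}$. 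Consequently $\mathcal{O}_{L_i}/\mathfrak{m}_{L_i}^r \cong (\mathcal{O}_{K_i}/\mathfrak{m}_{K_i}^r)[X]/(\bar F_i(X))$. Applying $\varphi$ coefficientwise to $\bar F_1$ produces a monic degree-$2$ polynomial $\varphi(\bar F_1)\in(\mathcal{O}_{K_2}/\mathfrak{m}_{K_2}^r)[X]$, whose reduction modulo $\mathfrak{m}_{K_2}$ remains irreducible over $\overline{K_2}$. Any lift to $\mathcal{O}_{K_2}[X]$ then defines (up to $K_2$-isomorphism) the unique unramified quadratic extension $L_2$, giving a ring isomorphism $(\mathcal{O}_{K_2}/\mathfrak{m}_{K_2}^r)[X]/(\varphi(\bar F_1)) \cong \mathcal{O}_{L_2}/\mathfrak{m}_{L_2}^r$. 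Composing produces the sought ring isomorphism $\mathcal{O}_{L_1}/\mathfrak{m}_{L_1}^r \cong \mathcal{O}_{L_2}/\mathfrak{m}_{L_2}^r$ extending $\varphi$. To deduce conjugation equivariance, I would argue that on each side the Galois conjugation is the \emph{unique} non-trivial $(\mathcal{O}_{K_i}/\mathfrak{m}_{K_i}^r)$-algebra automorphism of $\mathcal{O}_{L_i}/\mathfrak{m}_{L_i}^r$: such an automorphism must send the image of $X$ to a root of $\bar F_i$ in $\mathcal{O}_{L_i}/\mathfrak{m}_{L_i}^r$, and there are exactly two such roots (namely the image of $X$ and its Galois conjugate), distinct because they reduce to the two distinct roots of the separable irreducible $\bar F_i\bmod\mathfrak{m}$ in $\overline{L_i}$. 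Any ring isomorphism extending $\varphi$ therefore conjugates one non-trivial automorphism to the other, i.e.\ intertwines the conjugations.

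Having shown $\mathcal{L}^{\uram}\to\mathcal{K}\colon (K,L)\mapsto K$ is an isometry, the map restricts to a bijection $\mathcal{L}^{\uram}_{p^n}\to \mathcal{K}_{p^n}$ because the residue field is preserved. By Proposition~\ref{Prop:explicit description of D}, $\mathcal{K}_{p^n}$ is homeomorphic to $\hat{\N}$ with unique accumulation point $\mathbf{F}_{p^n}(\!(X)\!)$; its preimage in $\mathcal{L}^{\uram}_{p^n}$ is the pair $(\mathbf{F}_{p^n}(\!(X)\!),\mathbf{F}_{p^{2n}}(\!(X)\!))$, since the unramified quadratic extension of $\mathbf{F}_{p^n}(\!(X)\!)$ is $\mathbf{F}_{p^{2n}}(\!(X)\!)$.
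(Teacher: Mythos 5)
Your argument is correct and takes essentially the same route as the paper: both reduce the reverse inequality to presenting $\mathcal{O}_{L_i}/\mathfrak{m}_{L_i}^r$ as $(\mathcal{O}_{K_i}/\mathfrak{m}_{K_i}^r)[T]/(\text{monic quadratic with separable irreducible reduction})$ and matching the two presentations across $\varphi$. The only (cosmetic) difference is that the paper obtains conjugation-equivariance by construction — it lifts the coefficients $\alpha,\beta$ of a fixed polynomial over $\mathbf{F}_{p^n}$ compatibly with $\varphi$, so the extended map $T\mapsto T$ automatically intertwines $T\mapsto\alpha_i-T$ — whereas you deduce it a posteriori from the uniqueness of the non-trivial $(\mathcal{O}_{K_i}/\mathfrak{m}_{K_i}^r)$-algebra automorphism; both are sound.
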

\begin{proof}
The given map is indeed a bijection, by the uniqueness of unramified quadratic extensions. We prove that it is an isometry. Let $ (K_1,L_1),(K_2,L_2)\in \mathcal{L}^{\uram} $. By Remark~\ref{Rem:induced proximity on K ur}, if $ d((K_1,L_1);(K_2,L_2))\leq \varepsilon  $ (for some $ \varepsilon \in \R_{>0}) $), then $ d(K_1;K_2)\leq \varepsilon $. 

To prove the converse, assume that $ K_1 $ and $ K_2 $ are $ r $-close with $ r\geq 1 $ (note that there is nothing to prove for $ r=0 $). Then $ \overline{K_1}\cong \overline{K_2} \cong \mathbf{F}_{p^n} $. Let $ T^2-\alpha T+\beta $ be an irreducible separable quadratic polynomial in $ \mathbf{F}_{p^n}[T] $. For $i=1,2$, let $ \alpha_i $ (respectively $ \beta_i $) be a lift of $ \alpha $ (respectively $ \beta $) under $ \mathcal{O}_{K_i}/\mathfrak{m}_{K_i}^r\to \overline{K_i} $. Then $ \mathcal{O}_{L_i}/\mathfrak{m}_{L_i}^{r}\cong (\mathcal{O}_{K_i}/\mathfrak{m}_{K_i}^{r})[T]/(T^2-\alpha_iT+\beta_i) $ (for $ i = 1, 2 $). 

Since this is true for any lifts $\alpha_i$ and $\beta_i$, we can assume that $ \alpha_1 $ (respectively $\beta_1$) maps to $ \alpha_2 $ (respectively $ \beta_2 $) under the given isomorphism $ f\colon \mathcal{O}_{K_1}/\mathfrak{m}_{K_1}^r\cong \mathcal{O}_{K_2}/\mathfrak{m}_{K_2}^r $, so that $ f $ extends to a conjugation equivariant isomorphism of rings $ \mathcal{O}_{L_1}/\mathfrak{m}_{L_1}^{r}\cong \mathcal{O}_{L_2}/\mathfrak{m}_{L_2}^{r} $. We conclude that $ \mathcal{L}^{\uram} $ is homeomorphic to $ \hat{\N} $ in view of Proposition~\ref{Prop:explicit description of D}.
\end{proof}

\subsection{Continuity from unramified pairs to subgroups of \texorpdfstring{$ \Aut (T) $}{Aut(T)}}
In this section, we start to vary the unramified pair $ (K,L) $, and look at the variation it produces on the Bruhat--Tits tree of $ \SU_3^{L/K} $. Recall that we introduced a notation to keep track of the dependence on $ (K,L) $ of many of the definitions we made in this section (see Remark~\ref{Rem:dependence on D of the tree ur} and Remark~\ref{Rem:dependence on D of the local tree ur}).

\begin{proposition}\label{Prop:aritmimpliesgeomproximityforSU3 ur}
Let $ (K_1,L_1) $ and $ (K_2,L_2) $ be two elements in $ \mathcal{L}^{\uram} $. Assume that $ (K_1,L_1) $ is $ r $-close to $ (K_2,L_2) $, for some $ r \in \N $. Then, $ (P_{0}^{0,r})_{(K_1,L_1)}\cong (P_{0}^{0,r})_{(K_2,L_2)} $, and $ \mathcal{I}_{(K_1,L_1)}^{0,r}$ is equivariantly in bijection with $ \mathcal{I}_{(K_2,L_2)}^{0,r} $.
\end{proposition}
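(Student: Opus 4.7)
The proof should follow closely the argument already given for the analogous statement in the $\SL_2(D)$ setting (Proposition~\ref{Prop:aritmimpliesgeomproximityforSL_2(D)}). The hypothesis provides a conjugation-equivariant ring isomorphism $\psi\colon \mathcal{O}_{L_1}/\mathfrak{m}_{L_1}^{r}\to \mathcal{O}_{L_2}/\mathfrak{m}_{L_2}^{r}$, and the entire proof is a matter of transporting structure through $\psi$.

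First, I would apply $\psi$ entry-wise to obtain a group isomorphism $\SL_3(\mathcal{O}_{L_1}/\mathfrak{m}_{L_1}^{r})\cong \SL_3(\mathcal{O}_{L_2}/\mathfrak{m}_{L_2}^{r})$. Because $\psi$ commutes with the Galois conjugation $x\mapsto \bar x$, the defining unitary relation $^{S}\bar{g}g=\Id$ of Definition~\ref{Def:localdataSU(3)ur} is preserved, so this restricts to an isomorphism $\varphi\colon (P_{0}^{0,r})_{(K_1,L_1)}\to (P_{0}^{0,r})_{(K_2,L_2)}$.

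Next, since in the unramified situation a uniformiser of $K_i$ is also a uniformiser of $L_i$, define the linear map $f\colon \R\to \R\colon x\mapsto x\,\tfrac{\omega(\pi_{L_2})}{\omega(\pi_{L_1})}$ (which is the identity under a common normalisation, but it is convenient to treat it generally). A ring isomorphism between $\mathcal{O}_{L_1}/\mathfrak{m}_{L_1}^{r}$ and $\mathcal{O}_{L_2}/\mathfrak{m}_{L_2}^{r}$ automatically matches the valuation filtrations, so for every $x\in [-\omega(\pi_{L_1}^{r}),\omega(\pi_{L_1}^{r})]$ the isomorphism $\varphi$ restricts to an isomorphism $(P_{x}^{0,r})_{(K_1,L_1)}\cong (P_{f(x)}^{0,r})_{(K_2,L_2)}$; this just translates the coefficient bounds in Definition~\ref{Def:localdataSU(3)ur}.

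Finally, by direct inspection of the definitions of $H^{0,r}$ and $M^{0,r}$, one checks $\varphi(H^{0,r}_{(K_1,L_1)}) = H^{0,r}_{(K_2,L_2)}$ and $\varphi(M^{0,r}_{(K_1,L_1)})=M^{0,r}_{(K_2,L_2)}$, and that $f(\nu(n).x)=\nu(\varphi(n)).f(x)$ for every $n\in N^{0,r}_{(K_1,L_1)}$ (reflections through $0$ are sent to reflections through $0$, and the trivial action is preserved). From this one reads off that the rule $[(g,x)]^{0,r}\mapsto [(\varphi(g),f(x))]^{0,r}$ is well-defined on equivalence classes, and it is a $\varphi$-equivariant bijection $\mathcal{I}^{0,r}_{(K_1,L_1)}\to \mathcal{I}^{0,r}_{(K_2,L_2)}$, as required. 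There is no genuine obstacle here: the entire content of the proposition is that the local data defining $\mathcal{I}^{0,r}$ only involve the ring $\mathcal{O}_{L}/\mathfrak{m}_{L}^{r}$ together with its Galois involution, so the conjugation-equivariance built into the notion of $r$-closeness for unramified pairs is precisely what is needed.
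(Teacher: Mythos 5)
Your argument matches the paper's proof essentially verbatim: the conjugation-equivariant isomorphism $\mathcal{O}_{L_1}/\mathfrak{m}_{L_1}^{r}\cong \mathcal{O}_{L_2}/\mathfrak{m}_{L_2}^{r}$ is applied entry-wise to give $\varphi$ on $P_0^{0,r}$, the rescaling map $f$ is defined in the same way, and the compatibility of $\varphi$ and $f$ with the filtered subgroups $P_x^{0,r}$, $H^{0,r}$, $M^{0,r}$ and the $\nu$-action yields the $\varphi$-equivariant bijection of local trees. No comparison needed.
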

\begin{proof}
The isomorphism $ \mathcal{O}_{L_1}/\mathfrak{m}_{L_1}^{r}\cong \mathcal{O}_{L_2}/\mathfrak{m}_{L_2}^{r} $ induces a group isomorphism $ \varphi $
\begin{center}
\begin{tikzpicture}[->]
 \node (1) at (0,0) {$ \SL_{3}(\mathcal{O}_{L_1}/\mathfrak{m}_{L_1}^{r}) $};
 \node (2) at (3,0) {$ \SL_{3}(\mathcal{O}_{L_2}/\mathfrak{m}_{L_2}^{r}) $};
 \node (7) at (1.5,0) {$ \cong$};
 \node (5) at (-0.5,-0.4) {$ \vee $};
 \node (3) at (-0.2,-0.9) {$ (P_{0}^{0,r})_{(K_1,L_1)} $};
 \node (4) at (2.8, -0.9) {$ (P_{0}^{0,r})_{(K_2,L_2)} $};
 \node (6) at (2.5,-0.4) {$ \vee $};

\path [every node/.style={font=\sffamily\small}]
 (3) edge node [below]{$ \varphi $} (4);


\end{tikzpicture}
\end{center}
Define a linear map $ f\colon \mathbf{R}\to \mathbf{R}\colon x\mapsto x\frac{\omega (\pi_{L_2})}{\omega (\pi_{L_1})} $. It is clear that for all $ x\in [-\omega (\pi_{L_1}^{r}), \omega (\pi_{L_1}^{r})] $, $ \varphi $ restricts to an isomorphism $ (P_{x}^{0,r})_{(K_1,L_1)}\cong (P_{f(x)}^{0,r})_{(K_2,L_2)} $. Furthermore, 
\begin{align*}
\varphi (T^{0,r})_{(K_1,L_1)}&= (T^{0,r})_{(K_2,L_2)}\\
\varphi (M^{0,r})_{(K_1,L_1)}&= (M^{0,r})_{(K_2,L_2)}
\end{align*}
and for all $ n\in N^{0,r} $, $ f(n.x) = \varphi (n).f(x) $. Hence, the map $ \mathcal{I}_{(K_1,L_1)}^{0,r}\to \mathcal{I}_{(K_1,L_1)}^{0,r}\colon [(g,x)]^{0,r}\mapsto [(\varphi (g), f(x))]^{0,r} $ is a $ \varphi $-equivariant bijection.
\end{proof}

We again discuss the homomorphism $ \SU_3^{L/K}(K) \to \Aut (\mathcal{I}_{(K,L)}) $.
\begin{proposition}\label{Prop:Embedding G(K) in Aut(T) ur}
Let $ \mathcal{I} = \mathcal{I}_{(K,L)} $ be the Bruhat--Tits tree of $ \SU_3^{L/K}(K) $. The homomorphism~ $ \hat{}~\colon \SU_3^{L/K}(K)\to \Aut (\mathcal{I}) $ is continuous with closed image, and the kernel is equal to the centre of $ \SU_3^{L/K}(K) $. 
\end{proposition}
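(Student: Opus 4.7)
The plan is to adapt the proof of Proposition~\ref{Prop:Embedding G(K) in Aut(T)} essentially verbatim, since the structure of the argument depends only on the fact that the point stabilisers $P_x$ are open subgroups defined by explicit valuation conditions, together with some standard general facts about the action of a simple algebraic group on its Bruhat--Tits building.

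For continuity, I would invoke Remark~\ref{Rem:metric on I and stabiliser}, which identifies $P_x$ with the stabiliser of $[(\Id,x)]\in \mathcal{I}$. From Definition~\ref{Def:pointstabiliserforSU unramified}, $P_x$ is cut out in $\SU_3^{L/K}(K)$ by finitely many open valuation inequalities on the matrix coefficients, hence is an open subgroup. Since a basic identity neighbourhood of $\Aut(\mathcal{I})$ is obtained by intersecting finitely many pointwise vertex stabilisers, its preimage in $\SU_3^{L/K}(K)$ under the action homomorphism is a finite intersection of groups of the form $P_x$ with $x\in \omega(\pi_L)\Z$, hence open. The fact that the image is closed will be deduced from the general argument of \cite{BM96}*{Lemma~5.3}, which applies unchanged.

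The kernel description is the only part requiring a small check. I would first observe that if $g$ lies in the kernel, then $g\in \bigcap_{x\in \R}P_x$, and as $\vert x\vert \to \infty$ the valuation conditions of Definition~\ref{Def:pointstabiliserforSU unramified} force every off-diagonal entry of $g$ to have arbitrarily large valuation, and hence to vanish. Thus $g$ is diagonal, and writing an arbitrary diagonal element of $\SU_3^{L/K}(K)$ as $\diag(a, a^{-1}\bar a, \bar a^{-1})$ for some $a\in L^\times$ with $\omega(a)=0$, the extra constraint that $g$ fixes every apartment of $\mathcal{I}$, equivalently that every conjugate $hgh^{-1}$ still lies in $\bigcap_{x}P_x$, rules out the non-scalar diagonal matrices by looking at the induced conjugation action on a root group. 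This leaves only scalars, which obviously act trivially on $\mathcal{I}$, so the kernel coincides with the centre of $\SU_3^{L/K}(K)$.

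I expect no real obstacle: the continuity and closed-image parts are formal, and the only mildly technical step is the final conjugation argument identifying the kernel with the centre, which amounts to a short matrix computation completely analogous to the one implicit in the $\SL_2(D)$ case.
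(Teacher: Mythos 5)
Your proposal is correct and follows exactly the paper's approach: the paper's own proof of this proposition consists of the single sentence that it is ``word for word the same'' as the proof of Proposition~\ref{Prop:Embedding G(K) in Aut(T)} with $\SL_2(D)$ replaced by $\SU_3^{L/K}(K)$, and your outline (continuity from $P_x$ being open stabilisers, closed image from \cite{BM96}*{Lemma~5.3}, kernel computed by showing elements of $\bigcap_x P_x$ are diagonal and then using the conjugation action on root groups) is precisely that transferred argument. The only cosmetic difference is that you spell out a bit more of the diagonalisation and conjugation check, which the paper leaves implicit.
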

\begin{proof}
The proof is word for word the same as the proof of Proposition~\ref{Prop:Embedding G(K) in Aut(T)}, upon replacing $ \SL_2(D) $ by $ \SU_3^{L/K}(K) $.
\end{proof}

The convergence is then a more or less direct consequence of Theorem~\ref{Thm:localdescriptionoftheball ur}.
\begin{theorem}\label{Thm:continuity from Lur to Chab(Aut(T))}
Let $ ((K_i,L_i))_{i\in \mathbf{N}} $ be a sequence in $ \mathcal{L}^{\uram} $ which converges to $ (K,L) $, and let $G_i = \SU_3^{L_i/K_i}(K_i) $ $ ( $respectively $ G = \SU_3^{L/K}(K)) $. For $ N $ big enough and for all $ i\geq N $, there exist isomorphisms $ \mathcal{I}_{(K_i,L_i)} \cong \mathcal{I}_{(K,L)} $ such that the induced embeddings~ $ \hat{G_i}\hookrightarrow \Aut (\mathcal{I}_{(K,L)}) $ make $ (\hat{G_i})_{i\geq N} $ converge to $ \hat{G} $ in the Chabauty topology of $ \Aut (\mathcal{I}_{(K,L)}) $.
\end{theorem}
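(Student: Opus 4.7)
The plan is to mimic verbatim the argument used in the proof of Theorem~\ref{Thm:continuity from D to Chab(Aut(T))}, replacing the inputs pertinent to $\SL_2(D)$ by their $\SU_3^{L/K}$ unramified counterparts, namely Proposition~\ref{Prop:aritmimpliesgeomproximityforSU3 ur}, Theorem~\ref{Thm:localdescriptionoftheball ur}, Corollary~\ref{Cor:surjectivityofpr unramified} and Proposition~\ref{Prop:Embedding G(K) in Aut(T) ur}. First, by Remark~\ref{Rem:regularity of the B-T tree SU unramified}, the tree $\mathcal{I}_{(K_i,L_i)}$ only depends on $\vert \overline{K_i}\vert$, so for $i\geq N$ large enough it is isomorphic to the $(\vert \overline{K}\vert^3+1;\vert \overline{K}\vert+1)$-semiregular tree $\mathcal{I}_{(K,L)}$. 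Passing to a subsequence, we may assume $(K_i,L_i)$ is $i$-close to $(K,L)$. Proposition~\ref{Prop:aritmimpliesgeomproximityforSU3 ur} then provides an equivariant bijection $\mathcal{I}_{(K_i,L_i)}^{0,i}\cong \mathcal{I}_{(K,L)}^{0,i}$, which via Theorem~\ref{Thm:localdescriptionoftheball ur} applied on both sides translates into an isomorphism of balls of radius $i$: $B_0(i)\subset \mathcal{I}_{(K_i,L_i)}\to B_0(i)\subset \mathcal{I}_{(K,L)}$. Because the two ambient trees are isomorphic semiregular trees, this ball isomorphism extends (non-uniquely; we pick one) to a tree isomorphism $f_i\colon \mathcal{I}_{(K_i,L_i)}\to \mathcal{I}_{(K,L)}$, through which we obtain the required embedding $\hat{G}_i\hookrightarrow \Aut(\mathcal{I}_{(K,L)})$.

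The next and main step is to establish that the vertex stabilisers $((\hat{P}_0)_{(K_i,L_i)})_{i\geq N}$ converge to $(\hat{P}_0)_{(K,L)}$ in the Chabauty space of $\Aut(\mathcal{I}_{(K,L)})$, applying the criterion of \cite{CR16}*{Lemma~2.1}. The two directions are verified by the same diagram chase as in the $\SL_2(D)$ case. In one direction, given $\hat{h}_i\in (\hat{P}_0)_{(K_i,L_i)}$ converging to some $\hat{h}\in \Aut(\mathcal{I}_{(K,L)})$, one lifts to $h_i\in (P_0)_{(K_i,L_i)}$, projects to $\bar{h}_i = p_i(h_i)\in (P_0^{0,i})_{(K_i,L_i)}$, transfers via the isomorphism $\varphi_i$ of Proposition~\ref{Prop:aritmimpliesgeomproximityforSU3 ur}, and then, thanks to the surjectivity provided by Corollary~\ref{Cor:surjectivityofpr unramified}, picks $\tilde{h}_i\in (P_0)_{(K,L)}$ with $p_i(\tilde{h}_i)=\varphi_i(\bar{h}_i)$. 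The equivariance built into all identifications guarantees that $\tilde{h}_i$ acts on $B_0(i)\subset \mathcal{I}_{(K,L)}$ exactly as $\hat{h}_i$ does after identification, so $\hat{\tilde{h}}_i\to \hat{h}$, and since $(\hat{P}_0)_{(K,L)}$ is closed in $\Aut(\mathcal{I}_{(K,L)})$ by Proposition~\ref{Prop:Embedding G(K) in Aut(T) ur}, we get $\hat{h}\in (\hat{P}_0)_{(K,L)}$. The reverse direction is established by traversing the same chain of identifications backwards, starting from $\hat{h}\in (\hat{P}_0)_{(K,L)}$ and producing approximants in $(\hat{P}_0)_{(K_i,L_i)}$.

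From the Chabauty convergence of the open compact stabilisers, the convergence of the full groups $(\hat{G}_i)_{i\geq N}$ to $\hat{G}$ follows formally: by \cite{CR16}*{Theorem~1.2}, every subsequence of $(\hat{G}_i)$ subconverges to a topologically simple closed subgroup $H\leq \Aut(\mathcal{I}_{(K,L)})$, which must contain an open compact subgroup isomorphic to $(\hat{P}_0)_{(K,L)}$; \cite{CS15}*{Corollary~1.3} then forces $H$ to be algebraic, and \cite{Pink98}*{Corollary~0.3} pins down $H\cong \hat{G}$. The main technical obstacle, as in the $\SL_2(D)$ proof, is the simultaneous bookkeeping in the diagram chase: one must be certain that the isomorphism on balls produced by combining Proposition~\ref{Prop:aritmimpliesgeomproximityforSU3 ur} with Theorem~\ref{Thm:localdescriptionoftheball ur} is genuinely $\varphi_i$-equivariant and respects vertex types, so that lifts through the Hensel-type surjection of Corollary~\ref{Cor:surjectivityofpr unramified} act identically to the original elements on the ball $B_0(i)$; this is precisely where the smoothness of the integral model $\underline{\SU}_{3}^{L/K}$ (Theorem~\ref{Thm:smoothness of SU3 unramified}) pays off.
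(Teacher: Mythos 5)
Your proposal is correct and follows exactly the route the paper takes: the paper's own proof literally says the argument is word-for-word the one for $\SL_2(D)$ (Theorem~\ref{Thm:continuity from D to Chab(Aut(T))}) with $D_i \rightsquigarrow (K_i,L_i)$, $d\rightsquigarrow 1$, and the Section~\ref{Sec:SL_2(D)} references replaced by their Section~\ref{Sec:SU3L/K unramified} analogues, which is precisely the substitution you carry out. Your brief remark that the extension of the ball isomorphism to the whole semiregular tree must respect vertex types is a harmless and indeed correct observation that the paper leaves implicit (the ball isomorphism is a base-point-preserving combinatorial isomorphism, hence automatically degree-preserving on internal vertices), so no gap is created.
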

\begin{proof}
The Bruhat--Tits tree $ \mathcal{I}_{(K_i,L_i)} $ is the semiregular tree of bidegree ($ p^{3n} +1;p^n+1 $) if and only if $ (K_i,L_i) $ belongs to $ \mathcal{L}_{p^n}^{\uram} $. Hence there exists $ N $ such that for all $ i\geq N $, $  \mathcal{I}_{(K_i,L_i)}\cong  \mathcal{I}_{(K,L)} $.

Passing to a subsequence, we can assume that $ (K_i,L_i) $ is ($ i $)-close to $ (K,L) $. We now define an explicit isomorphism $ f_i\colon \mathcal{I}_{(K_i,L_i)}\to \mathcal{I}_{(K,L)} $ as follows: let $ \mathcal{I}_{(K_i,L_i)}^{0,i}\cong \mathcal{I}_{(K,L)}^{0,i} $ be the isomorphism given by Proposition~\ref{Prop:aritmimpliesgeomproximityforSU3 ur}. By Theorem~\ref{Thm:localdescriptionoftheball ur}, this gives an isomorphism on balls of radius $ i $: $ \mathcal{I}_{(K_i,L_i)}\supset B_0(i)\cong B_0(i)\subset \mathcal{I}_{(K,L)} $ (recall that by Lemma~\ref{Lem:B_0(r) is really the ball of radius r unramified}, $ B_0(i) $ is really the ball of radius $ i $ on the tree $ \mathcal{I}_{(K,L)} $). As $ \mathcal{I}_{(K_i,L_i)} $ is a semiregular tree of the same bidegree than $ \mathcal{I}_{(K,L)} $, we can extend this isomorphism of balls to an isomorphism $ f_{i}\colon \mathcal{I}_{(K_i,L_i)}\to \mathcal{I}_{(K,L)} $ (this extension is of course not unique, but we choose one such). By means of $ f_i $, we get an embedding $ \hat{G}_i\hookrightarrow \Aut ( \mathcal{I}_{(K,L)}) $.

Now the end of the proof is word for word the same as the corresponding end of the proof of Theorem~\ref{Thm:continuity from D to Chab(Aut(T))}, upon making the following changes: replace $ D_i $ with $(K_i,L_i)$, replace $D$ with $(K,L)$, replace $d$ with $1$, and replace all references to results in Section~\ref{Sec:SL_2(D)} by their corresponding results in Section~\ref{Sec:SU3L/K unramified}.
\end{proof}

We then deduce the proof of the main theorem announced in the introduction for groups of type $ \SU_3^{L/K} $ when $ L $ is unramified. To shorten the notations, we set $ G_{(K,L)} = \SU_3^{L/K}(K) $ in the following proof.
\begin{proof}[Proof of Theorem~\ref{Thm:explicit form of main theorem ur}]
Let $ T $ be a semiregular tree and let $ \mathcal{L}^{\uram}_{T} = \lbrace (K,L)\in \mathcal{L}^{\uram}~\vert~ $the Bruhat--Tits tree of $ G_{(K,L)} $ is isomorphic to $ T\rbrace $. By Remark~\ref{Rem:regularity of the B-T tree SU unramified} and Proposition~\ref{Prop:explicit description of L ur}, $ \mathcal{L}^{\uram}_{T} $ is a compact space. Now, by Theorem~\ref{Thm:continuity from Lur to Chab(Aut(T))}, the map $ \mathcal{L}^{\uram}_T\to\mathcal{S}_T\colon (K,L)\mapsto \hat{G}_{(K,L)} $ is continuous. Let $ (K_1,L_1) $ and $ (K_2,L_2) $ be unramified pairs in $ \mathcal{L}^{\uram} $. We claim that $ \hat{G}_{(K_1,L_1)} = \hat{G}_{(K_2,L_2)} $ if and only if $ (K_1,L_1) = (K_2,L_2) $. Indeed, if $ \hat{G}_{(K_{1},L_1)} $ is abstractly isomorphic to $ \hat{G}_{(K_2,L_2)} $, then by \cite{BoTi73}*{Corollaire~8.13}, the corresponding adjoint algebraic groups $ \Ad \mathbf{G}_1 $ and $ \Ad \mathbf{G}_2 $ are algebraically isomorphic over an isomorphism of fields $ K_1\cong K_2 $. Since $ \Ad \mathbf{G}_1 $ (respectively $ \Ad \mathbf{G}_2 $) is quasi-split, there exists a smallest extension splitting it (\cite{BrTi2}*{4.1.2}), namely $ L_1 $ (respectively $ L_2 $). Hence, $ (K_1,L_1)\cong (K_2,L_2) $, as wanted.

To summarise, $ \mathcal{L}^{\uram}_T\to\mathcal{S}_T\colon (K,L)\mapsto \hat{G}_{(K,L)} $ is an injective continuous map whose source is a compact space, hence it is a homeomorphism onto its image. Now, the explicit description given in Theorem~\ref{Thm:explicit form of main theorem ur} follows from Remark~\ref{Rem:regularity of the B-T tree SU unramified} and Proposition~\ref{Prop:explicit description of L ur}.
\end{proof}

\section{Convergence of groups of type \texorpdfstring{$ \SU_3^{L/K} $}{SU3L/K}, \texorpdfstring{$ L $}{L} ramified of odd residue characteristic}\label{Sec:SU3L/K ramified}
We keep our notations for local fields (see Section~\ref{Sec:Def of alg. grps}). Furthermore, throughout this section, $ L $ is a ramified quadratic extension of the base local field $ K $, and the residue characteristic is not $2$. Note that such an extension is automatically separable (because of the assumption on the residue characteristic) and we have $ L\cong K[T]/(T^2+\beta ) $ with $ \beta $ a uniformiser of $ K $. The results are very close to those of Section~\ref{Sec:SU3L/K unramified}.

\subsection{Construction of the Bruhat--Tits tree}
In the following definition of point stabilisers, we again use the notation introduced in Definition~\ref{Def:valuation of a matrix}.
\begin{definition}\label{Def:pointstabiliserforSU ramified}
For $ x\in \R $, we define $ P_{x} = \lbrace g\in \SU_{3}^{L/K}(K)~\vert~ \omega (g)\geq \begin{psmallmatrix}
0 & -\frac{x}{2} & -x \\ 
\frac{x}{2} & 0 & -\frac{x}{2} \\
x & \frac{x}{2}& 0
\end{psmallmatrix}\rbrace $
\end{definition}

\begin{definition}\label{Def:sbgrp N SU ramified}
Consider the following subsets
\begin{enumerate}[$ \bullet $]
\item $ T = \lbrace \begin{psmallmatrix}
x & 0 & 0 \\ 
0 & x^{-1}\bar{x} & 0 \\
0 & 0 & \bar{x}^{-1}
\end{psmallmatrix}~\vert~x\in L^{\times}\rbrace < \SU_3^{L/K} (K) $
\item $ M =\lbrace \begin{psmallmatrix}
0 & 0 & x \\ 
0 & -x^{-1}\bar{x} & 0 \\
\bar{x}^{-1} & 0 & 0
\end{psmallmatrix}~\vert~x\in L^{\times}\rbrace \subset \SU_3^{L/K} (K) $
\end{enumerate}
and let $ N = T\sqcup M $.
\end{definition}

\begin{definition}\label{Def:affineactionforN ramified}
Let $ \nu\colon N\to \Aff (\R) $ be defined as follows: for $ m = \begin{psmallmatrix}
0 & 0 & x \\ 
0 & -x^{-1}\bar{x} & 0 \\
\bar{x}^{-1} & 0 & 0
\end{psmallmatrix}\in M $, $ \nu (m) $ is the reflection through $ -\omega (x) $, while for $ t=\begin{psmallmatrix}
x & 0 & 0 \\ 
0 & x^{-1}\bar{x} & 0 \\
0 & 0 & \bar{x}^{-1}
\end{psmallmatrix}\in T $, $ \nu (t) $ is the translation by $ -2\omega (x) $.
\end{definition}

Then the Bruhat--Tits tree $ \mathcal{I} $ of $ \SU_3^{L/K} $ (recall that in this section, $ L $ is ramified and the residue characteristic is not $2$) is the one obtained by applying Definition~\ref{Def:buildingassociated to a data} to the collection of subgroups $ \lbrace (P_x)_{x\in \R}, N \rbrace $ appearing in Definition~\ref{Def:pointstabiliserforSU ramified} and Definition~\ref{Def:sbgrp N SU ramified}, together with the homomorphism $ \nu \colon N\to \Aff (\R) $ of Definition~\ref{Def:affineactionforN ramified}.  We show in Appendix~\ref{App:A} that our definitions agree with \cite{BrTi1}*{7.4.1~and~7.4.2}, so that the given data is indeed obtained from a valued root datum of rank one on $ G $.

\begin{remark}\label{Rem:dependence on D of the tree ram}
Note that the construction of the Bruhat--Tits tree of $ \SU_3^{L/K} $ depends on the pair $ (K,L) $. When needed, we keep track of this dependence by adding the subscript $ (K,L) $ to the objects involved. This gives rise to the notations $ (P_x)_{(K,L)} $, $ T_{(K,L)} $, $ M_{(K,L)} $, $ N_{(K,L)} $, $ \nu_{(K,L)} $ and $ \mathcal{I}_{(K,L)} $.
\end{remark}
\begin{remark}\label{Rem:regularity of the B-T tree SU ramified}
The Bruhat--Tits tree of $ \SU_3^{L/K} $ is actually the $ (\vert \overline{K}\vert+1)$-regular tree. Indeed, this follows from the fact that our definition of $ \mathcal{I} $ agrees with the one given in \cite{BrTi1}*{7.4.1 and 7.4.2}, and from the tables in \cite{Tits77}*{4.2 and 4.3}.
\end{remark}

\subsection{Local model of the Bruhat--Tits tree}
We now proceed to define a local model for the Bruhat--Tits tree of $ SU_3^{L/K} $ when $ L $ is ramified and the residue characteristic is not $ 2 $. The same remarks as in the $ SL_2(D) $ case apply, so that we go quickly through the definitions. Note that the valuation $ \omega $ (respectively the Galois conjugation $ x\mapsto\bar{x} $) on $ L $ induces a well-defined map on $ \mathcal{O}_L/\mathfrak{m}_L^r $ that we still denote $ \omega $ (respectively $ x\mapsto \bar{x} $).
\begin{definition}\label{Def:localdataSU(3)ram}
Let $ r\in \N $ and $ x\in [-\omega (\pi_{L}^{r}),\omega (\pi_{L}^{r})] $. We set $$ P_{x}^{0,r} = \lbrace g\in \SL_{3}(\mathcal{O}_{L}/\mathfrak{m}_{L}^{r})~\vert~ ^{S}\bar{g}g= \Id, \omega (g)\geq \begin{psmallmatrix}
0 & -\frac{x}{2} & -x \\
\frac{x}{2} & 0 & -\frac{x}{2} \\
x & \frac{x}{2} & 0
\end{psmallmatrix} \rbrace. $$
\end{definition}

\begin{definition}
\begin{enumerate}[$ \bullet $]
We define
\item $ H^{0,r} = \lbrace \begin{psmallmatrix}
x & 0 & 0 \\
0 & x^{-1}\bar{x} & 0 \\
0 &0 & \bar{x}^{-1}
\end{psmallmatrix}\in \SL_{3}(\mathcal{O}_{L}/\mathfrak{m}_{L}^{r})~\vert~ \omega (x) = 0 \rbrace $
\item $ M^{0,r} = \lbrace \begin{psmallmatrix}
0 & 0 & x \\
0 & -x^{-1}\bar{x} & 0 \\
\bar{x}^{-1} &0 & 0
\end{psmallmatrix}\in \SL_{3}(\mathcal{O}_{L}/\mathfrak{m}_{L}^{r})~\vert~ \omega (x) = 0 \rbrace $
\end{enumerate}
And we set $ N^{0,r} = H^{0,r}\sqcup M^{0,r} $.
\end{definition}

\begin{definition}
We let $ H^{0,r} $ act trivially on $ \R $, and we let all elements of $ M^{0,r} $ act as a reflection through $ 0\in \R $. This gives an affine action of $ N^{0,r} $ on $ \R $, and we denote again the resulting map $ N^{0,r}\to \Aff (\R) $ by $ \nu $.
\end{definition}

We are now able to give a definition of the ball of radius $ r $ around $ [(\Id ,0)]\in \mathcal{I} $ which only depends on the ring $ \mathcal{O}_L/\mathfrak{m}_L^{r} $, and not on the whole field $ L $.
\begin{definition}\label{Def:local tree of radius r ramified}
Let $ r\in \N $. We define an $ r $-local equivalence on $ P_{0}^{0,r}\times [-\omega (\pi_L^{r}), \omega (\pi_L^{r})] $ as follows. For $ g,h\in P_{0}^{0,r} $ and $ x,y\in [-\omega (\pi_L^{r}), \omega (\pi_L^{r})] $
\begin{equation*}
(g,x)\sim_{0,r}(h,y) \Leftrightarrow \textrm{ there exists } n\in N^{0,r} \textrm{ such that } \nu (n)(x)=y \textrm{ and } g^{-1}hn \in P_{x}^{0,r}
\end{equation*}
The resulting space $ \mathcal{I}^{0,r} = P_{0}^{0,r}\times [-\omega (\pi_L^{r}), \omega (\pi_L^{r})]/\sim_{0,r} $ is called the local Bruhat--Tits tree of radius $ r $ around $ 0 $, and $ [(g,x)]^{0,r} $ stands for the equivalence class of $ (g,x) $ in $ \mathcal{I}^{0,r} $. The group $ P_{0}^{0,r} $ acts on $ \mathcal{I}^{0,r} $ by multiplication on the first component.
\end{definition}

\begin{remark}\label{Rem:dependence on D of the local tree ram}
Note that the construction of the local Bruhat--Tits tree of $ \SU_3^{L/K} $ depends on the pair $ (K,L) $ (which is assumed to be ramified of odd residue characteristic in this section). When needed, we keep track of this dependence by adding the subscript $ (K,L) $ to the objects involved. This gives rise to the notations $ (P_x^{0,r})_{(K,L)} $, $ H^{0,r}_{(K,L)} $, $ M^{0,r}_{(K,L)} $, $ N^{0,r}_{(K,L)} $ and $ \mathcal{I}^{0,r}_{(K,L)} $.
\end{remark}
\begin{remark}\label{Rem:equivalent def with for all n for local model ramified}
Also, Remark~\ref{Rem:equivalent def with for all n for local model} holds equally well in this case, with exactly the same proof, upon replacing all $ d $'s by $ 1 $.
\end{remark}

\subsection{Integral model}
We now proceed to define an integral model, and compare its rational points with our local model for the Bruhat--Tits tree of $ SU_3^{L/K} $ when $ L $ is ramified and the residue characteristic is not $ 2 $. Again, the same remarks as in the $ SL_2(D) $ case apply, so that we go quickly through the definitions.

\begin{definition}\label{Def:integral model for ram SU}
Let $ \underline{\SU}_{3}^{L/K} $ be the group $ \SU_{3} $ considered over $ \mathcal{O}_{K} $. We omit the superscript $ L/K $ when it is understood from the context.
\end{definition}

Concretely, $ \underline{\SU}_{3} $ is the $ \mathcal{O}_{K} $-scheme associated with the $ \mathcal{O}_{K} $-algebra $ \mathcal{O}_{K}[\underline{\SU}_{3}] = \mathcal{O}_{K}[X_{ij}^{kl}]/I $ ($ i,j\in \lbrace 1,2,3\rbrace $, $ k,l\in \lbrace 1,2\rbrace $), where $ I $ is the ideal generated by the following equations 
\begin{flalign*}
& \text{For all }i,j\in \lbrace 1,2,3\rbrace,~
\begin{cases}
X_{ij}^{12} = -\beta X_{ij}^{21}\\
X_{ij}^{22} = X_{ij}^{11}
\end{cases} \\
& \sum\limits_{\sigma \in \Sym (3)}[ (-1)^{sgn(\sigma)}\prod\limits_{i=1}^3X_{i\sigma (i)} ]- 1\\
& \begin{psmallmatrix}
\overline{X}_{33} & \overline{X}_{23} & \overline{X}_{13} \\
\overline{X}_{32} & \overline{X}_{22} & \overline{X}_{12} \\
\overline{X}_{31} & \overline{X}_{21} & \overline{X}_{11} 
\end{psmallmatrix}\begin{psmallmatrix}
X_{11} & X_{12} & X_{13} \\
X_{21} & X_{22} & X_{23} \\
X_{31} & X_{32} & X_{33} 
\end{psmallmatrix} - \begin{psmallmatrix}
1 & 0 & 0 \\
0 &1 & 0 \\
0 & 0 & 1
\end{psmallmatrix}
\end{flalign*}
Here $ \beta $ is a uniformiser of $ \mathcal{O}_K $ such that $ L\cong K[T]/(T^2+\beta) $ ($ L $ is always of this form because it is a ramified extension and the residue characteristic is not 2), so that the first equations encode the ring embedding $ \mathcal{O}_{L}\hookrightarrow M_{2}(\mathcal{O}_{K}) $.  In the other equations, $ X_{ij} $ stands for the $ 2\times 2 $ matrix $ \begin{psmallmatrix}
X_{ij}^{11} & X_{ij}^{12} \\
X_{ij}^{21} & X_{ij}^{22} 
\end{psmallmatrix} $. Also, for a $ 2\times 2 $ matrix $ M = \begin{psmallmatrix}
M^{11} & M^{12} \\
M^{21} & M^{22} 
\end{psmallmatrix} $, we denote $ \overline{M} = \begin{psmallmatrix}
M^{22} & -M^{12} \\
-M^{21} & M^{11} 
\end{psmallmatrix} $ (this operation reflects the conjugation on $ \mathcal{O}_{L} $). Finally note that a $ 1 $ (respectively a $ 0 $) in the above equations denotes the $ 2\times 2 $ identity matrix (respectively the $ 2\times 2 $ zero matrix), i.e. it corresponds to the $ 1\in L $ (respectively $ 0\in L $).

\begin{theorem}\label{Thm:smoothness of SU3 ramified}
$ \underline{\SU}_{3}^{L/K} $ is a smooth $ \mathcal{O}_{K} $-scheme.
\end{theorem}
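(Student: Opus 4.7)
The approach of Theorem~\ref{Thm:smoothness of SU3 unramified} relied crucially on the fact that $\mathcal{O}_K \to \mathcal{O}_L$ is étale, allowing faithfully flat descent to transport smoothness from the familiar group $\SL_3$ over $\mathcal{O}_L$. Since $\mathcal{O}_L/\mathcal{O}_K$ is now ramified, that route is no longer available. My plan is instead to apply the fibral criterion for smoothness over a discrete valuation ring: since $\mathcal{O}_K$ has Krull dimension one, it is enough to verify that $\underline{\SU}_3^{L/K}$ is $\mathcal{O}_K$-flat and that both its generic and special fibres are smooth over the corresponding residue field. The generic fibre is the classical algebraic group $\SU_3^{L/K}$ over $K$, a connected semisimple group of $K$-dimension $8$, whose smoothness over $K$ is standard in any characteristic different from $2$. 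The bulk of the work therefore concerns the special fibre.

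I would compute the special fibre directly from the defining equations. Writing $\pi_L = T$ with $T^2 + \beta = 0$, the quotient $\mathcal{O}_L/\mathfrak{m}_K\mathcal{O}_L$ becomes the ring of dual numbers $\overline{K}[\epsilon]/(\epsilon^2)$ (since $\beta \equiv 0$ modulo $\mathfrak{m}_K$), and the Galois involution acts by $\epsilon \mapsto -\epsilon$, which is nontrivial because $2$ is invertible in $\overline{K}$. For any $\overline{K}$-algebra $R$, an $R$-valued point of the special fibre is therefore a matrix $g \in \mathrm{M}_3(R[\epsilon]/(\epsilon^2))$ satisfying $^{S}\bar{g}\,g = \Id$ and $\det g = 1$. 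Decomposing $g = g_0 + \epsilon g_1$ with $g_0, g_1 \in \mathrm{M}_3(R)$ and separating the constant and linear parts in $\epsilon$ yields the four relations
\[
{}^{S}g_0\, g_0 = \Id, \qquad \det g_0 = 1, \qquad {}^{S}g_0\, g_1 = {}^{S}g_1\, g_0, \qquad \operatorname{tr}(g_0^{-1} g_1) = 0.
\]
Setting $X := g_0^{-1} g_1$, the last two conditions translate into $X = {}^{S}X$ and $\operatorname{tr}(X) = 0$. Since this change of variables is functorial in $R$, the special fibre is isomorphic as an $\overline{K}$-scheme to $\SO_3 \times V$, where $\SO_3$ denotes the special orthogonal group attached to the $^{S}$-form and $V \subset \mathrm{M}_3$ is the linear subspace of traceless $^{S}$-symmetric matrices. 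Because $2$ is invertible in $\overline{K}$, the group $\SO_3$ is smooth of dimension $3$ and $V$ is smooth of dimension $5$, so the special fibre is smooth of dimension $8$, matching the relative dimension of $\underline{\SU}_3^{L/K}$.

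It then remains to confirm flatness of $\underline{\SU}_3^{L/K}$ over $\mathcal{O}_K$. Since both fibres are geometrically integral of the same dimension $8$, no irreducible component of $\underline{\SU}_3^{L/K}$ can sit entirely inside the special fibre, so $\mathcal{O}_K[\underline{\SU}_3^{L/K}]$ is $\mathcal{O}_K$-torsion-free, hence flat. Combined with smoothness of both fibres, the fibral criterion then yields smoothness of $\underline{\SU}_3^{L/K}$ over $\mathcal{O}_K$. The step I expect to require the most care is confirming the claimed scheme-theoretic identification of the special fibre with $\SO_3 \times V$ (not merely a bijection on geometric points), which needs to be checked to be natural in $R$; if this proves subtler than expected, a workable alternative is to verify the infinitesimal lifting criterion directly by performing the same order-by-order decomposition over arbitrary square-zero extensions of $\mathcal{O}_K$-algebras.
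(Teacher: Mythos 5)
Your computation of the special fibre is a genuinely cleaner route than the paper's. Writing an $R$-point as $g = g_0 + \epsilon g_1$ with $\bar{\epsilon} = -\epsilon$, you exhibit an explicit functorial isomorphism of $\overline{K}$-schemes between the special fibre and $\SO_3 \times V$, where $V$ is the affine space of traceless $^S$-symmetric matrices; smoothness and irreducibility of the special fibre are then immediate. The paper instead studies the reduction homomorphism to $\SO_3$, computes its kernel, deduces the dimension by additivity, and must separately compute a Lie algebra to certify smoothness. Your splitting (using that $g_0 \mapsto g_0^{-1}$ is polynomial on $\SL_3$) sidesteps that last step entirely, and the change of variables is visibly natural in $R$, so the identification is one of schemes and not merely of geometric points.

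The flatness step, however, has a real gap. The assertion ``since both fibres are geometrically integral of the same dimension, no irreducible component can sit entirely inside the special fibre, so $\mathcal{O}_K[\underline{\SU}_3^{L/K}]$ is $\mathcal{O}_K$-torsion-free'' is not a valid deduction as written. First, even if no \emph{minimal} prime contains $\pi_K$, this does not rule out $\pi_K$-torsion supported on an \emph{embedded} associated prime; that implication requires, e.g., reducedness of the total coordinate ring, which has not been established. Second, the premise itself is not forced by the two fibres being integral of equal dimension: one must also rule out the degenerate possibility that the generic fibre is clopen, so that the scheme is the disjoint union of its two fibres with the special fibre as an isolated component. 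The paper's Lemma~\ref{Lem:Proving flatness of schemes over local rings} is designed precisely to handle both issues at once by passing to the schematic adherence $Y$ of the generic fibre: the hypothesis of a closed embedding $\mathbf{A}^1_{\mathcal{O}_K} \hookrightarrow \underline{\SU}_3^{L/K}$ guarantees $Y_{\overline{K}} \neq \emptyset$, whence flatness of $Y$ together with Lemma~\ref{Lem:little algerbo-geometric lemma} give $Y_{\overline{K}} = X_{\overline{K}}$ and so $Y = X$. Having already shown both fibres are smooth and irreducible of dimension $8$, you should invoke that lemma directly, exhibiting the required embedding — the paper uses the root subgroup $x \mapsto \begin{psmallmatrix} 1 & 0 & x\pi_L \\ 0 & 1 & 0 \\ 0 & 0 & 1 \end{psmallmatrix}$ with $\overline{\pi}_L = -\pi_L$, which lies in $\underline{\SU}_3^{L/K}$ because $x\pi_L$ has trace zero.
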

\begin{proof}
It suffices to prove that it is flat and that the fibres are smooth. The generic fibre is $ \SU_3^{L/K} $, and is a form of $ \SL_3 $, hence is smooth over $ K $. The closed fibre is the $ \overline{K} $-functor $ (\underline{\SU}_{3})_{\overline{K}} $ which associates to any $ \overline{K} $-algebra $ R $ the group 
\begin{equation*}
(\underline{\SU}_{3})_{\overline{K}}(R) = \lbrace g\otimes x\in \SL_3(R\otimes_{\overline{K}}\mathcal{O}_{L}/\mathfrak{m}_{L}^{2})~\vert ~^{S}(g\otimes \bar{x})(g\otimes x) = \Id \rbrace
\end{equation*}
Let $ \SO_3 $ be the special orthogonal group associated with the quadratic form $ (x_{-1},x_0,x_1)\mapsto x_{-1}x_1+x_0^{2} $, considered over $ \overline{K} $. More explicitly, for a $ \overline{K} $-algebra $ R $,
\begin{equation*}
(\SO_3)_{\overline{K}}(R) = \lbrace g\in \SL_3(R)~\vert~^{S}gg=\Id \rbrace
\end{equation*}
Recall that throughout this section, we are assuming that the characteristic of $ \overline{K} $ is not $ 2 $. It is then well known that $ \SO_3 $ is isomorphic to $ \PGL_2 $ over $ \overline{K} $, hence is smooth and connected of dimension $ 3 $. The homomorphism of $ \overline{K} $-algebra $ \mathcal{O}_{L}/\mathfrak{m}_{L}^{2}\to \overline{K} $ induces a homomorphism of algebraic groups $ f\colon (\underline{\SU}_{3})_{\overline{K}}\to (\SO_3)_{\overline{K}} $. The kernel of this map can be computed by hand, and we obtain that for any $ \overline{K} $-algebra $ R $,
\begin{align*}
\ker f (R) = \lbrace g\in \SL_{3}(R\otimes_{\overline{K}} \mathcal{O}_{L}/\mathfrak{m}_{L}^{2})~\vert~ g = \begin{psmallmatrix}
\begin{psmallmatrix}
1 & 0 \\
g_{11}^{21} & 1
\end{psmallmatrix} & \begin{psmallmatrix}
0 & 0 \\
g_{12}^{21} & 0
\end{psmallmatrix} & \begin{psmallmatrix}
0 & 0 \\
g_{13}^{21} & 0
\end{psmallmatrix} \\
\begin{psmallmatrix}
0 & 0 \\
g_{21}^{21} & 0
\end{psmallmatrix} & \begin{psmallmatrix}
1 & 0 \\
-2g_{11}^{21} & 1
\end{psmallmatrix} & \begin{psmallmatrix}
0 & 0 \\
g_{12}^{21} & 0
\end{psmallmatrix} \\
\begin{psmallmatrix}
0 & 0 \\
g_{31}^{21} & 0
\end{psmallmatrix} & \begin{psmallmatrix}
0 & 0 \\
g_{21}^{21} & 0
\end{psmallmatrix} & \begin{psmallmatrix}
1 & 0 \\
g_{11}^{21} & 1
\end{psmallmatrix}
\end{psmallmatrix} \rbrace
\end{align*}

This description makes it clear that $ \ker f $ is of dimension $ 5 $ and connected. Hence, using \cite{DG70}*{II, §5, Proposition~5.1} (note that it does not use smoothness), we conclude that $ \dim (\underline{\SU}_3)_{\overline{K}} = 8 $. But we can also easily compute that the Lie algebra of $ (\underline{\SU}_{3})_{\overline{K}} $ is
\begin{equation*}
(\mathfrak{su}_{3})_{\overline{K}} = \lbrace g\in M_{3}(\mathcal{O}_{L}/\mathfrak{m}_{L}^{2})~ \vert ~^{S}\bar{g}+g = 0, \trace (g)=0 \rbrace
\end{equation*}
This is readily seen to be of dimension $ 8 $ (recall that the residue characteristic is not $ 2 $), and hence, we conclude that $ (\underline{\SU}_3)_{\overline{K}} $ is smooth, as wanted. Also note that the homomorphism $ f\colon (\underline{\SU}_3)_{\overline{K}}\to (\SO_3)_{\overline{K}} $ is surjective onto a connected algebraic group, with connected kernel, hence $ (\underline{\SU}_3)_{\overline{K}} $ is also connected (and hence irreducible).

It remains to prove flatness, which is always a delicate matter. Since we already know that the fibres of $ \underline{\SU}_3 $ over $ \Spec \mathcal{O}_K $ are smooth, irreducible and of the same dimension, it suffices to give a closed embedding $ \mathbf{A}_{\mathcal{O}_K}^{1}\hookrightarrow \underline{\SU}_3 $ (see Lemma~\ref{Lem:Proving flatness of schemes over local rings}). Let $ \pi_L $ be a uniformiser of $ L $ such that $ \overline{\pi}_L = -\pi_L $. The morphism of schemes $ \mathbf{A}_{\mathcal{O}_K}^{1}\to \underline{\SU}_{3}\colon x\mapsto 
\begin{psmallmatrix} 1&0&x.\pi_L\\
0&1&0\\
0&0&1
\end{psmallmatrix} $ is such a closed embedding, which concludes the proof.
\end{proof}

\begin{lemma}\label{Lem:Proving flatness of schemes over local rings}
Let $ K $ be a local field, let $ X $ be an affine scheme of finite type over $ \mathcal{O}_K $ and let $ \mathbf{A}_{\mathcal{O}_K}^1 $ be the affine line over $ \mathcal{O}_K $. Assume that $ X_K $ and $ X_{\overline{K}} $ are smooth and irreducible of the same dimension. Further assume that there exists a closed embedding of $ \mathcal{O}_K $-schemes $ \mathbf{A}_{\mathcal{O}_K}^1\hookrightarrow X $. Then $ X $ is flat over $ \mathcal{O}_K $.
\end{lemma}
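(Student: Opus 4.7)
The plan is to show that $X$ is flat over $\mathcal{O}_K$ by proving that the $\pi_K$-torsion submodule $T$ of $A := \mathcal{O}(X)$ is zero, since flatness over the DVR $\mathcal{O}_K$ is equivalent to torsion-freeness. Let $Y = \Spec(A/T)$, which is the scheme-theoretic closure of $X_K$ in $X$; by construction it is flat over $\mathcal{O}_K$. Since $\mathcal{O}_K[S] = \mathcal{O}(\mathbf{A}^1_{\mathcal{O}_K})$ is torsion-free over $\mathcal{O}_K$, the surjection $A \twoheadrightarrow \mathcal{O}_K[S]$ corresponding to the hypothesised closed immersion automatically kills $T$. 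Hence the closed immersion $\mathbf{A}^1_{\mathcal{O}_K} \hookrightarrow X$ factors through a closed immersion $\mathbf{A}^1_{\mathcal{O}_K} \hookrightarrow Y$.

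The heart of the argument is then to show $Y = X$ as schemes. Since $Y$ is $\mathcal{O}_K$-flat and $Y_K = X_K$ is irreducible of dimension $d$, every generic point of $Y$ lies over the generic point of $\Spec \mathcal{O}_K$, so $Y$ is irreducible of dimension $d+1$. The factorisation above shows that $Y_{\overline{K}} \supset \mathbf{A}^1_{\overline{K}}$ is non-empty, and since $\pi_K$ is a non-zero-divisor in $A/T$, the closed fibre $Y_{\overline{K}}$ has dimension exactly $d$. Thus $Y_{\overline{K}}$ is a closed subscheme of the irreducible $d$-dimensional scheme $X_{\overline{K}}$ of the same dimension, which forces the set-theoretic equality $Y_{\overline{K}} = X_{\overline{K}}$; smoothness, and hence reducedness, of $X_{\overline{K}}$ promotes this to an equality of schemes, which in ring-theoretic terms reads $T \subseteq \pi_K A$.

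To conclude, I would run a Nakayama-style argument. Since $A$ is Noetherian (being of finite type over the Noetherian ring $\mathcal{O}_K$), $T$ is finitely generated, hence annihilated by some power $\pi_K^N$. For any $t \in T$, writing $t = \pi_K a$ with $a \in A$ gives $\pi_K^{N+1} a = \pi_K^N t = 0$, so $a \in T$ and $t \in \pi_K T$. Therefore $T = \pi_K T$, and iterating yields $T = \pi_K^N T = 0$, as desired.

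The crux, and the place where all three hypotheses are genuinely used, is the scheme-theoretic identification $Y_{\overline{K}} = X_{\overline{K}}$: the $\mathbf{A}^1_{\mathcal{O}_K}$ embedding is what ensures $Y$ has a non-empty closed fibre, the matching fibre dimensions together with the irreducibility of $X_{\overline{K}}$ force equality of underlying topological spaces, and the smoothness of $X_{\overline{K}}$ eliminates any residual nilpotent structure so that the two closed subschemes coincide on the nose.
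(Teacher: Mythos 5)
Your proof is correct and follows the same route as the paper's: form the scheme-theoretic closure $Y=\Spec(A/T)$ of $X_K$ in $X$ (flat by construction), use the closed embedding $\mathbf{A}_{\mathcal{O}_K}^1\hookrightarrow X$ --- which must factor through $Y$, since $\mathcal{O}_K[S]$ is torsion-free --- to see that $Y_{\overline{K}}$ is non-empty, match dimensions, and invoke the integrality of $X_{\overline{K}}$ (this is exactly the paper's Lemma~\ref{Lem:little algerbo-geometric lemma}) to get $Y_{\overline{K}}=X_{\overline{K}}$ as schemes. The one genuine addition in your write-up is the concluding Nakayama-type step: the paper asserts without further comment that it ``suffices to prove $Y_{\overline{K}}=X_{\overline{K}}$,'' whereas you explain why a closed immersion $Y\hookrightarrow X$ inducing isomorphisms on both fibres is an isomorphism, deriving $T=0$ from $T\subseteq\pi_K A$ together with $\pi_K^N T=0$ via $T=\pi_K T=\cdots=\pi_K^N T=0$. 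You are also right to flag that finite generation of $T$ (hence Noetherianity of $A$) is where this hinges --- a torsion module like $K/\mathcal{O}_K$ satisfies $T=\pi_K T\neq 0$ --- so this is a detail worth spelling out. One place where you are a touch quicker than warranted: the step ``$Y$ is irreducible of dimension $d+1$, hence by Krull $Y_{\overline{K}}$ has dimension exactly $d$'' is not immediate from what you write. Pinning down $\dim Y=d+1$ (and converting Krull's codimension-one conclusion into a dimension) is precisely the fibre-dimension formula for flat, finite-type morphisms over a Noetherian base --- which is what the paper cites (Stacks Project, Tag 02NM) --- together with catenarity of $Y$. So you have not bypassed that commutative-algebra input, only repackaged it; it would be cleaner to cite it directly.
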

\begin{proof}
Let $ Y $ be the schematic adherence of $ X_K $ in $ X $. By definition, $ Y $ is flat with $ X_K = Y_K $, and it suffices to prove that $ Y_{\overline{K}} = X_{\overline{K}} $. 

Since $ \mathbf{A}_{\mathcal{O}_K}^1 $ is flat, $ (\mathbf{A}_{\mathcal{O}_K}^1)_{K} $ is dense in $ \mathbf{A}_{\mathcal{O}_K}^1 $, so that $ Y_{\overline{K}} $ is non-empty. Now, flatness of $ Y $ implies that the irreducible components of $ Y_{\overline{K}} $ have the same dimension than $ Y_K = X_K $ (see for example \cite{stacks-project}*{Tag 02NM}). Also, these irreducible components are closed subschemes of $ X_{\overline{K}} $, which is a smooth irreducible (hence integral) affine scheme. Hence, by Lemma~\ref{Lem:little algerbo-geometric lemma}, we conclude that $ Y_{\overline{K}} = X_{\overline{K}} $, as wanted.
\end{proof}
\begin{lemma}\label{Lem:little algerbo-geometric lemma}
Let $ k $ be a field, and let $ X $ be an integral finite type affine $ k $-scheme of dimension $ n $. Let $ Y $ be a closed subscheme of $ X $ and assume that $ Y $ has an irreducible component $ Z $ of dimension $ n $. Then $ Z=Y=X $ (as $ k $-schemes).
\end{lemma}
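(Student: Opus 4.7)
The plan is to reduce everything to a statement about prime ideals in a finitely generated $k$-algebra domain, and then bootstrap topological equality to scheme-theoretic equality using that $X$ is reduced.

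First I would prove that $Z = X$ as reduced closed subschemes. Write $X = \Spec A$ with $A$ an integral domain that is a finitely generated $k$-algebra. Since $Z$ is a closed irreducible subset of $X$, it corresponds (with its reduced induced structure) to a prime ideal $\mathfrak{p} \subseteq A$, and the hypothesis $\dim Z = n$ reads $\dim A/\mathfrak{p} = \dim A$. The key input is the dimension formula for integral domains finitely generated over a field: $\dim A/\mathfrak{p} + \mathrm{ht}(\mathfrak{p}) = \dim A$ (a consequence of Noether normalization and catenarity of such rings). This forces $\mathrm{ht}(\mathfrak{p}) = 0$, and since $A$ is a domain this gives $\mathfrak{p} = (0)$, hence $Z = X$.

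Next I would deduce $Y = X$ as schemes. Topologically we now have $X = Z \subseteq Y \subseteq X$, so $Y = X$ as topological spaces. If $Y$ is cut out of $X$ by the ideal $I \subseteq A$, then $V(I) = \Spec A$ translates to $I \subseteq \sqrt{(0)}$, and because $A$ is a domain, $\sqrt{(0)} = (0)$, forcing $I = (0)$ and $Y = X$ as schemes. The same reduced-ambient-scheme argument applied to $Z \hookrightarrow X$ gives $Z = X$ as schemes (not merely as topological spaces), so altogether $Z = Y = X$.

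There is no real obstacle here: the statement is a standard packaging of two facts, namely that in an integral finite-type $k$-scheme a closed irreducible subset of maximal dimension must be the whole scheme, and that a closed subscheme of a reduced scheme with full support is the whole scheme. The only point one should take care of is to invoke the correct version of the dimension formula, which is specific to finite-type algebras over a field (it is this finite-type-over-a-field hypothesis that is doing the work, since in general catenarity or equidimensionality can fail).
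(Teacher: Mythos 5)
Your proof is correct and follows essentially the same route as the paper: show the prime defining $Z$ has height $0$ (hence is $(0)$ because $k[X]$ is a domain), then squeeze $Y$ between $Z$ and $X$ and use reducedness of $X$ to upgrade the topological equality to a scheme-theoretic one. The only cosmetic difference is that you invoke the dimension formula $\dim A/\mathfrak{p} + \mathrm{ht}(\mathfrak{p}) = \dim A$ for finite-type domains over a field by name, whereas the paper points to Krull's principal ideal theorem (of which the dimension formula is a standard consequence in this setting); your version is slightly more explicit about the final $Y=X$ step, which the paper leaves implicit.
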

\begin{proof}
We follow the argument given in \cite{El13}: the composite $ k[X]\twoheadrightarrow k[Y]\twoheadrightarrow k[Z] $ has its kernel contained in the nilradical of $ k[X] $ (by Krull's principal ideal theorem), which shows that $ R\twoheadrightarrow k[Z] $ is injective as well (because being a domain, $ R $ has in particular a trivial nilradical).
\end{proof}
\begin{remark}\label{Rem:reductive quotient of the closed fibre}
In passing, note that the group $ (\underline{\SU}_{3}^{L/K})_{\overline{K}} $ is not a reductive group over $ \overline{K} $ (as predicted by \cite{BrTi2}*{4.6.31}). In fact, we have just showed in the above proof that its reductive quotient is naturally described as the orthogonal group in $ 3 $ variables. 
Again, this might be seen as a reason for the complication of the ramified, residue characteristic $ 2 $ case, since philosophically, it involves orthogonal group in characteristic $ 2 $. Also note that $ (\underline{\SU}_3)_{\mathcal{O}_{L}} $ is not isomorphic to $ \SL_3 $ over $ \mathcal{O}_{L} $, as opposed to what happens in the unramified case. Indeed, if it were, then the closed fibre $ (\underline{\SU}_3)_{\overline{K}} $ would be isomorphic to $ \SL_3 $ over $ \overline{K}\cong \mathcal{O}_{L}/\mathfrak{m}_{L} $, which is not true, as we have just seen in the above proof.
\end{remark}

We now compare the rational points of the integral model with our local model.
\begin{lemma}\label{Lem:description of the restriction P_0 to P_0^0,r ramified}
$ \underline{\SU}_3(\mathcal{O}_{K}) \cong P_{0} $ and $ \underline{\SU}_3(\mathcal{O}_{K}/\mathfrak{m}_{K}^{r}) \cong P_{0}^{0,2r} $. Following the identifications
\begin{center}
\begin{tikzpicture}[->]
 \node (1) at (0,0) {$ \underline{\SU}_3(\mathcal{O}_{K}) $};
 \node (2) at (2.3,0) {$ P_{0}\leq \SL_{3}(\mathcal{O}_{L}) $};
 \node (5) at (1,0) {$ \cong $};
 \node (3) at (-0.3,-1.2) {$ \underline{\SU}_3(\mathcal{O}_{K}/\mathfrak{m}_{K}^{r}) $};
 \node (4) at (2.9, -1.2) {$ P_{0}^{0,2r}\leq \SL_3(\mathcal{O}_{L}/\mathfrak{m}_{L}^{2r}) $};
 \node (6) at (1,-1.2) {$ \cong $};
 
 \draw[->] (0,-0.3) to (0,-0.9);
 \draw[->] (1.4,-0.3) to (1.4,-0.9);
\end{tikzpicture}
\end{center}
the homomorphism $ p_{2r}\colon P_{0}\to P_{0}^{0,2r} $ is the one induced by the projection of the coefficients $ \mathcal{O}_{L}\to \mathcal{O}_{L}/\mathfrak{m}_{L}^{2r} $.
\end{lemma}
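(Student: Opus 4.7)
The plan is to mimic the proof of Lemma~\ref{Lem:description of the restriction P_0 to P_0^0,r unramified} (the unramified analogue), adapting the module decomposition of $\mathcal{O}_L$ over $\mathcal{O}_K$ to the ramified setting. The key algebraic input specific to this section is that, since $L = K[T]/(T^2+\beta)$ with $\beta$ a uniformiser of $K$, we may take as uniformiser of $L$ the element $\pi_L$ equal to the image of $T$; then $\pi_L^2 = -\beta$ is a uniformiser of $K$, and hence $\mathfrak{m}_K\mathcal{O}_L = (\beta)\mathcal{O}_L = \mathfrak{m}_L^2$ and more generally $\mathfrak{m}_K^r\mathcal{O}_L = \mathfrak{m}_L^{2r}$. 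This is precisely the source of the shift from radius $r$ to radius $2r$ that appears in the statement.

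First I would identify $\underline{\SU}_3(\mathcal{O}_K)$ with $P_0$. An element of $\underline{\SU}_3(\mathcal{O}_K) = \Mor_{\mathcal{O}_K}(\mathcal{O}_K[\underline{\SU}_3],\mathcal{O}_K)$ is determined by the images $x_{ij}^{kl} \in \mathcal{O}_K$ of the generators. The first batch of relations of the ideal $I$ forces $x_{ij}^{12} = -\beta x_{ij}^{21}$ and $x_{ij}^{22} = x_{ij}^{11}$, so each $2\times 2$ block $x_{ij}$ has the shape $\begin{psmallmatrix} a & -\beta b \\ b & a \end{psmallmatrix}$. This is exactly the image of $a + b\pi_L \in \mathcal{O}_L$ under the faithful ring embedding $\mathcal{O}_L \hookrightarrow M_2(\mathcal{O}_K)$ sending $\pi_L$ to $\begin{psmallmatrix} 0 & -\beta \\ 1 & 0 \end{psmallmatrix}$; note also that the operation $M \mapsto \overline{M}$ reflects the Galois conjugation on $\mathcal{O}_L$. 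The remaining relations thus translate into $\det(g)=1$ and $^S\bar{g}g = \Id$ for a matrix $g \in M_3(\mathcal{O}_L)$, which is exactly the defining condition of $P_0 \subset \SU_3^{L/K}(K)$.

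Next I would repeat this calculation with $\mathcal{O}_K/\mathfrak{m}_K^r$ as the target. Using the $\mathcal{O}_K$-module decomposition $\mathcal{O}_L = \mathcal{O}_K \oplus \pi_L\mathcal{O}_K$ and the equality $\mathfrak{m}_K^r\mathcal{O}_L = \mathfrak{m}_L^{2r}$, one obtains
\[
\mathcal{O}_L/\mathfrak{m}_L^{2r} \;\cong\; \mathcal{O}_K/\mathfrak{m}_K^r \;\oplus\; \pi_L\cdot(\mathcal{O}_K/\mathfrak{m}_K^r),
\]
which is again a ring, with multiplication controlled by $\pi_L^2 = -\beta$, and it embeds as $2\times 2$ matrices over $\mathcal{O}_K/\mathfrak{m}_K^r$ by the same formula as above. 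Running the same bookkeeping as in step one yields $\underline{\SU}_3(\mathcal{O}_K/\mathfrak{m}_K^r) \cong \lbrace g \in \SL_3(\mathcal{O}_L/\mathfrak{m}_L^{2r}) \mid {}^S\bar{g}g = \Id\rbrace = P_0^{0,2r}$, where the last equality uses Definition~\ref{Def:localdataSU(3)ram} at $x=0$.

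The main (in fact the only) subtlety is the index shift $r \leadsto 2r$, which must be tracked consistently; once one adopts the viewpoint that $\mathcal{O}_K/\mathfrak{m}_K^r$ agrees with the subring of $\mathcal{O}_L/\mathfrak{m}_L^{2r}$ fixed by Galois, there is nothing else to check. Finally, the commutativity of the square and the identification of $p_{2r}$ with the coefficient reduction $\mathcal{O}_L \to \mathcal{O}_L/\mathfrak{m}_L^{2r}$ is a formal consequence of the naturality of both identifications in the target $\mathcal{O}_K$-algebra: they are induced by applying the same $\mathcal{O}_K$-algebra embedding $\mathcal{O}_L \hookrightarrow M_2(\mathcal{O}_K)$ and then evaluating the functor of points on the reduction map $\mathcal{O}_K \to \mathcal{O}_K/\mathfrak{m}_K^r$.
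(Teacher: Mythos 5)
Your proposal is correct and follows essentially the same route as the paper: both rest on the decomposition $\mathcal{O}_L \cong \mathcal{O}_K \oplus \pi_L\mathcal{O}_K$ with $\pi_L^2 = -\beta$, the resulting identification $\mathcal{O}_L/\mathfrak{m}_L^{2r} \cong \mathcal{O}_K/\mathfrak{m}_K^r \oplus \pi_L\cdot(\mathcal{O}_K/\mathfrak{m}_K^r)$, and the translation of the generators $X_{ij}^{kl}$ into matrix entries in $\mathcal{O}_L$ (resp.\ $\mathcal{O}_L/\mathfrak{m}_L^{2r}$). You merely spell out explicitly what the paper compresses into ``one can check that \ldots'', including the shape $\begin{psmallmatrix} a & -\beta b \\ b & a \end{psmallmatrix}$ of each $2\times 2$ block and the observation $\mathfrak{m}_K^r\mathcal{O}_L = \mathfrak{m}_L^{2r}$ that explains the shift from $r$ to $2r$.
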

\begin{proof}
Let $ \pi_L $ be a uniformiser of $ \mathcal{O}_L $ such that $ \pi_L^2 = -\beta $ is a uniformiser of $ \mathcal{O}_K $. Using the fact that $ \mathcal{O}_{L}\cong \mathcal{O}_{K}\oplus \pi_L.\mathcal{O}_{K} $, one can check that $ \Mor_{\mathcal{O}_{K}}(\mathcal{O}_{K}[\underline{\SU}_{3}],\mathcal{O}_{K}) \cong \lbrace g\in \SL_{3}(\mathcal{O}_{L})~ \vert ~ ^{S}\bar{g}g=\Id \rbrace $, as wanted.

Furthermore, since $ L $ is ramified, $ \mathcal{O}_{L}/\mathfrak{m}_{L}^{2r}\cong \mathcal{O}_{K}/\mathfrak{m}_{K}^{r}\oplus \pi_L.\mathcal{O}_{K}/\mathfrak{m}_{K}^{r} $, and one can check that $ \Mor_{\mathcal{O}_{K}}(\mathcal{O}_{K}[\underline{\SU}_{3}],\mathcal{O}_{K}/\mathfrak{m}_{K}^{r}) \cong \lbrace g\in \SL_{3}(\mathcal{O}_{L}/\mathfrak{m}_{L}^{2r})~ \vert ~ ^{S}\bar{g}g=\Id \rbrace $, as wanted.
\end{proof}

And we can then deduce the surjectivity of the map $ p_{2r} $.
\begin{corollary}\label{Cor:surjectivityofpr ramified even}
The map $ p_{2r}\colon P_{0}\to P_{0}^{0,2r} $ is surjective, for all $ r\in \N $.
\end{corollary}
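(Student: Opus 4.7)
The plan is to imitate verbatim the argument already used in Corollary~\ref{Cor:surjectivityforSL2} and Corollary~\ref{Cor:surjectivityofpr unramified}, since all the necessary ingredients for the ramified odd residue characteristic case have been established earlier in this section. Concretely, I will exploit the commutative square
\[
\begin{array}{ccc}
\underline{\SU}_{3}(\mathcal{O}_{K}) & \cong & P_{0} \\
\downarrow & & \downarrow p_{2r} \\
\underline{\SU}_{3}(\mathcal{O}_{K}/\mathfrak{m}_{K}^{r}) & \cong & P_{0}^{0,2r}
\end{array}
\]
provided by Lemma~\ref{Lem:description of the restriction P_0 to P_0^0,r ramified}, in which the right-hand vertical map is the map $p_{2r}$ whose surjectivity we want, and the left-hand vertical map is the natural reduction on $\mathcal{O}_{K}$-points of the integral model $\underline{\SU}_{3}$.

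First, I would invoke Theorem~\ref{Thm:smoothness of SU3 ramified}, which tells us that $\underline{\SU}_{3}^{L/K}$ is a smooth $\mathcal{O}_{K}$-scheme; this is the substantive input and it has already been proved (via the flatness/fibre-smoothness argument involving the closed embedding of $\mathbf{A}^{1}_{\mathcal{O}_{K}}$ and the computation of the Lie algebra). Once smoothness is granted, the generalised Hensel's lemma for smooth schemes, Theorem~\ref{Thm:genhensel}, directly yields surjectivity of the reduction map $\underline{\SU}_{3}(\mathcal{O}_{K})\to \underline{\SU}_{3}(\mathcal{O}_{K}/\mathfrak{m}_{K}^{r})$ for every $r\in\N$.

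Transporting this surjectivity along the identifications of Lemma~\ref{Lem:description of the restriction P_0 to P_0^0,r ramified}, we obtain the desired surjectivity of $p_{2r}\colon P_{0}\to P_{0}^{0,2r}$. There is no genuine obstacle here since the work has been done elsewhere; the only point to double-check is that the indexing matches. Note that in the ramified case one compares $\mathcal{O}_{K}/\mathfrak{m}_{K}^{r}$ on the base-side with $\mathcal{O}_{L}/\mathfrak{m}_{L}^{2r}$ on the group-side (because $\mathfrak{m}_{K}\mathcal{O}_{L}=\mathfrak{m}_{L}^{2}$), which is precisely why the conclusion produces $P_{0}^{0,2r}$ rather than $P_{0}^{0,r}$; this is already accounted for in Lemma~\ref{Lem:description of the restriction P_0 to P_0^0,r ramified}. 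Thus the proof reduces to two lines: the commutative square of that lemma, together with Theorem~\ref{Thm:genhensel} applied to the smooth scheme $\underline{\SU}_{3}$.
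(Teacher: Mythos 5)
Your proposal is correct and coincides with the paper's own proof: both invoke the commutative square from Lemma~\ref{Lem:description of the restriction P_0 to P_0^0,r ramified}, smoothness of $\underline{\SU}_3^{L/K}$ from Theorem~\ref{Thm:smoothness of SU3 ramified}, and Hensel's lemma for smooth schemes (Theorem~\ref{Thm:genhensel}) applied to the left-hand column. Your remark about the index doubling $r \mapsto 2r$ being absorbed into that lemma is also accurate.
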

\begin{proof}
This is a direct consequence of the commutative square involving $ P_{0}\to P_{0}^{0,2r} $ given in Lemma~\ref{Lem:description of the restriction P_0 to P_0^0,r ramified}, together with the fact that the integral model is smooth by Theorem~\ref{Thm:smoothness of SU3 ramified}, so that Theorem~\ref{Thm:genhensel} applies to the left hand side of the diagram.
\end{proof}
While this is not necessary for our work on Chabauty limits, we nevertheless give a proof that the map $ p_r $ is also surjective for odd $ r $. 
\begin{corollary}\label{Cor:surjectivityofpr ramified}
The map $ p_{r}\colon P_{0}\to P_{0}^{0,r} $ is surjective, for all $ r\in \N $.
\end{corollary}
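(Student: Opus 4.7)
The even case being handled by Corollary~\ref{Cor:surjectivityofpr ramified even}, the plan for odd $r$ is to factor $p_r$ as $p_r = q_r \circ p_{r+1}$, where $q_r \colon P_0^{0,r+1} \to P_0^{0,r}$ is the natural reduction induced by $\mathcal{O}_L/\mathfrak{m}_L^{r+1} \twoheadrightarrow \mathcal{O}_L/\mathfrak{m}_L^r$. Since $r+1$ is even, $p_{r+1}$ is surjective by the preceding corollary, and everything reduces to proving that $q_r$ is surjective.

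For the surjectivity of $q_r$, I would argue by a first-order Lie-algebra correction. Given $\bar g \in P_0^{0,r}$, choose an arbitrary entrywise lift $\tilde g_0 \in M_3(\mathcal{O}_L/\mathfrak{m}_L^{r+1})$ of $\bar g$ and rescale one entry so that $\det \tilde g_0 = 1$; this is possible because $\det \tilde g_0 \equiv 1 \pmod{\mathfrak{m}_L^r}$. The obstruction $E := {}^{S}\overline{\tilde g_0}\,\tilde g_0 - \mathrm{Id}$ lies in $M_3(\mathfrak{m}_L^r/\mathfrak{m}_L^{r+1})$ since $\bar g \in P_0^{0,r}$. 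A short calculation shows that $E$ is Hermitian in the sense ${}^{S}\bar E = E$ (the operation $X \mapsto {}^{S}\bar X$ is an involutive antihomomorphism, and therefore fixes the product ${}^{S}\overline{\tilde g_0}\,\tilde g_0$), and that $E$ is trace-free (from $\det(\mathrm{Id}+E) = 1$ together with $E^2 \in \mathfrak{m}_L^{2r} \subseteq \mathfrak{m}_L^{r+1}$).

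The remedy is to set $\tilde g := \tilde g_0(\mathrm{Id} - E/2)$, which makes sense because the residue characteristic is odd. The element $\tilde g$ is congruent to $\bar g$ modulo $\mathfrak{m}_L^r$; moreover, expanding the products and using that $2r \geq r+1$ kills all quadratic terms, one obtains $\det \tilde g = 1 - \mathrm{tr}(E)/2 = 1$ and
\[ {}^{S}\overline{\tilde g}\,\tilde g \;=\; (\mathrm{Id} - {}^{S}\bar E/2)(\mathrm{Id} + E)(\mathrm{Id} - E/2) \;\equiv\; \mathrm{Id} + E - E/2 - E/2 \;=\; \mathrm{Id}  \pmod{\mathfrak{m}_L^{r+1}}, \]
using ${}^{S}\bar E = E$. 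Hence $\tilde g \in P_0^{0,r+1}$ with $q_r(\tilde g) = \bar g$, proving surjectivity.

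The only real obstacle is the division by $2$ in the formula for the correction, which is available precisely because the residue characteristic is odd; this is consistent with the substantially more delicate behaviour in the dyadic case treated in the next section. An alternative, more abstract route would be to apply Theorem~\ref{Thm:genhensel} to the base-change $(\underline{\SU}_3)_{\mathcal{O}_L}$, which is smooth over $\mathcal{O}_L$ by Theorem~\ref{Thm:smoothness of SU3 ramified} and base change, but identifying the rational points over $\mathcal{O}_L/\mathfrak{m}_L^s$ with $P_0^{0,s}$ for odd $s$ is less transparent (the natural tensor $\mathcal{O}_L/\mathfrak{m}_L^s \otimes_{\mathcal{O}_K}\mathcal{O}_L$ is not a product of copies of $\mathcal{O}_L/\mathfrak{m}_L^s$), so the hands-on Lie-algebra correction above seems preferable.
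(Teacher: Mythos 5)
Your proposal is correct, and it takes a genuinely different route from the paper at the one step that matters. Both you and the paper reduce, via Corollary~\ref{Cor:surjectivityofpr ramified even}, to the surjectivity of the reduction $P_0^{0,2(s+1)}\to P_0^{0,2s+1}$ (for you, this is $q_r$ with $r=2s+1$). Where you diverge is in \emph{how} that map is proved surjective. The paper uses a purely cardinality-based argument: it observes that the composite $P_0^{0,2(s+1)}\xrightarrow{f_s}P_0^{0,2s+1}\xrightarrow{g_s}P_0^{0,2s}$ is already known to be surjective, and then checks on the level of Lie-algebra kernels (the three spaces of anti-Hermitian, Hermitian-symmetric, and anti-symmetric trace-free matrices) that $\vert\ker(g_s\circ f_s)\vert = \vert\ker f_s\vert\cdot\vert\ker g_s\vert$, which in residue characteristic $\neq 2$ forces $f_s$ to be onto. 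Your argument instead produces an explicit Newton-type correction: lift entrywise, normalise the determinant, observe that the defect $E$ is Hermitian and traceless of order $r$, and correct by $\Id-E/2$. The hypothesis that the residue characteristic is odd enters in both approaches, but in different ways: for the paper it is a dimension count on Lie-algebra subspaces (Hermitian versus anti-Hermitian), for you it is the ability to divide by $2$. Your version has the mild advantage of being constructive and of not invoking finiteness of the groups; the paper's version is shorter because the kernel computations were essentially already on the table. Both are valid, and your remark that the abstract Hensel route over $\mathcal{O}_L$ is awkward (because $\mathcal{O}_L\otimes_{\mathcal{O}_K}\mathcal{O}_L$ is not \'etale when $L/K$ is ramified, so the functor of points does not split) is also correct and worth keeping in mind.

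One minor point to make precise: the step ``rescale one entry so that $\det\tilde g_0=1$'' is cleanest if phrased as multiplying a single row of $\tilde g_0$ by $(\det\tilde g_0)^{-1}$; since $\det\tilde g_0\in 1+\mathfrak{m}_L^r$, this modifies the row only by an element of $\mathfrak{m}_L^r$, hence does not disturb the reduction modulo $\mathfrak{m}_L^r$. With that said explicitly, the proof is complete.
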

\begin{proof}
By Corollary~\ref{Cor:surjectivityofpr ramified even}, it suffices to prove that $ P_{0}^{0,2(r+1)}\to P_{0}^{0,2r+1} $ is surjective for all $ r\in \N $. Note that the composite $ P_{0}^{0,2(r+1)} \xrightarrow{f_r} P_{0}^{0,2r+1} \xrightarrow{g_r} P_{0}^{0,2r}$ is surjective for all $ r\in \N $, and that the groups are finite. Hence, it suffices to prove that $ \vert \ker (f_r\circ g_r)\vert = \vert \ker (f_r)\vert.\vert \ker (g_r)\vert $. For a ring $ R $, let $ M_3(R) $ denote the $ 3 $-by-$ 3 $ matrices with coefficients in $ R $. Now, for $ r\geq 1 $, we have
\begin{align*}
\ker (f_r\circ g_r) \cong \lbrace g\in M_3(\mathcal{O}_L/\mathfrak{m}_L^2)~\vert~^S\bar{g}+g=0,\trace(g)=0\rbrace\\
\ker (f_r) \cong \lbrace g\in M_3(\mathcal{O}_L/\mathfrak{m}_L)~\vert~^Sg=g,\trace(g)=0\rbrace\\
\ker (g_r) \cong \lbrace g\in M_3(\mathcal{O}_L/\mathfrak{m}_L)~\vert~^Sg+g=0,\trace(g)=0\rbrace
\end{align*}
Since the residue characteristic is not $ 2 $, the result holds. Finally, the surjectivity of $ P_0^{0,2}\to P_0^{0,1} $ is a direct consequence of the splitting of the ring homomorphism $ \mathcal{O}_L/\mathfrak{m}_L^2\to \mathcal{O}_L/\mathfrak{m}_L $.
\end{proof}

We also need a kind of injectivity result:
\begin{lemma}\label{Lem:injectivityofpr ramified}
Let $ r\in \N $ and $ x\in [-\omega (\pi_{L}^{r}),\omega (\pi_{L}^{r})] $. Then $ p_{r}^{-1}(P_{x}^{0,r}) \subset P_{x} $.
\end{lemma}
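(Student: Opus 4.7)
The strategy is to mimic verbatim the short arguments used for the parallel Lemma~\ref{Lem:injectivityofpr} in the $\SL_2(D)$ case and Lemma~\ref{Lem:injectivityofpr unramified} in the unramified $\SU_3$ case. The point is that the constraint matrix
$$
M(x) \;=\; \begin{psmallmatrix}
0 & -x/2 & -x \\
x/2 & 0 & -x/2 \\
x & x/2 & 0
\end{psmallmatrix}
$$
defining $P_x$ has every entry $m_{ij}$ of absolute value at most $|x|\leq \omega(\pi_L^r)$. Hence all nontrivial valuation conditions imposed by membership in $P_x$ are of order strictly below the cutoff at which the quotient $\mathcal{O}_L/\mathfrak{m}_L^r$ ceases to see the valuation.

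First I would observe that for $m\in\R$ with $m\leq \omega(\pi_L^r)$ and for $a\in\mathcal{O}_L$, the condition $\omega(\bar a)\geq m$ in $\mathcal{O}_L/\mathfrak{m}_L^r$ (meaning $a\in \mathfrak{m}_L^{\lceil 2m\rceil}+\mathfrak{m}_L^r$) is equivalent to $\omega(a)\geq m$ in $L$ itself, precisely because $\lceil 2m\rceil\leq r$ so that adding $\mathfrak{m}_L^r$ gives no extra room. Note that the entries $m_{ij}$ of $M(x)$ that are negative impose no condition at all, since any $g_{ij}\in\mathcal{O}_L\subset P_0$ already has nonnegative valuation.

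Now let $g\in p_r^{-1}(P_x^{0,r})$. By definition of $P_x^{0,r}$ (Definition~\ref{Def:localdataSU(3)ram}), each entry $\overline{g}_{ij} = p_r(g)_{ij}$ satisfies $\omega(\overline{g}_{ij})\geq m_{ij}$ in $\mathcal{O}_L/\mathfrak{m}_L^r$. Applying the observation above to each $(i,j)$ yields $\omega(g_{ij})\geq m_{ij}$ in $L$, i.e.\ $\omega(g)\geq M(x)$. Since $g$ lies in $P_0\subset\SU_3^{L/K}(K)$ by hypothesis, Definition~\ref{Def:pointstabiliserforSU ramified} then gives $g\in P_x$.

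No real obstacle arises; the lemma is essentially an unfolding of definitions. The only bookkeeping point is that in the present ramified setting $\omega(\pi_L)=\frac{1}{2}$ (with our normalisation $\omega(\pi_K)=1$), so valuations take half-integer values — but the numerical bound $|m_{ij}|\leq\omega(\pi_L^r)$ is exactly what is needed to make the reduction-mod-$\mathfrak{m}_L^r$ information sharp enough to recover the valuation of $g_{ij}$ itself.
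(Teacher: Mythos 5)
Your proof is correct and follows essentially the same route as the paper's: the paper's one-line argument is precisely that membership in $p_r^{-1}(P_x^{0,r})$, combined with $g\in P_0$ (so all entries are already in $\mathcal{O}_L$), forces $\omega(g_{ij})\geq m_{ij}$ for each entry, because the cutoff $\omega(\pi_L^r)$ bounds $|m_{ij}|$ from above. Your version just makes the well-definedness of the induced $\omega$ on $\mathcal{O}_L/\mathfrak{m}_L^r$ and the reason the negative entries of $M(x)$ are vacuous explicit, which is entirely consistent with the paper's intent.
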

\begin{proof}
Belonging to $ p_{r}^{-1}(P_x^{0,r}) $ implies that the valuation of the off diagonal entries are big enough. Hence, the result follows directly from Definition~\ref{Def:pointstabiliserforSU ramified}.
\end{proof}

We finally arrive at the result corresponding to Theorem~\ref{Thm:localdescriptionoftheball}: the ball of radius $ r $ together with the action of $ \SU_{3}^{L/K}(K) $ is encoded in $ P_{0}^{0,r} $. We first need an adequate description of the ball of radius $ r $ around $0$ in $ \mathcal{I} $.

\begin{lemma}\label{Lem:B_0(r) is really the ball of radius r ramified}
Renormalise the distance on $ \R $ so that $ d_{\R}(0;\omega(\pi_L)) = 1 $, and put the metric $ d_{\mathcal{I}} $ on $ \mathcal{I} $ arising from the distance $ d_{\R} $ $ ( $see Remark~\ref{Rem:metric on I and stabiliser}$ ) $. Let $ B_0(r) = \lbrace p\in \mathcal{I}~\vert~d_{\mathcal{I}}([(\Id ,0)];p)\leq r\rbrace $ be the ball of radius $ r $ around $ 0 $ in $ \mathcal{I} $. Let $ \tilde{B}_0(r) = \lbrace [(g,x)]\in \mathcal{I} ~\vert~ g\in P_{0}, x\in [-\omega (\pi_L^{r}), \omega (\pi_L^{r})]\subset \R \rbrace $. Then $ B_0(r) = \tilde{B}_0(r) $.
\end{lemma}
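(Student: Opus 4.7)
The plan is to observe that this lemma is formally identical to Lemma~\ref{Lem:B_0(r) is really the ball of radius r} (the $\SL_2(D)$ case) and Lemma~\ref{Lem:B_0(r) is really the ball of radius r unramified} (the unramified $\SU_3$ case): indeed the statement and the ambient formalism, via Definition~\ref{Def:buildingassociated to a data}, Remark~\ref{Rem:metric on I and stabiliser} and Lemma~\ref{Lem:B-T tree from a based point}, make no use of whether we are in the $\SL_2(D)$, unramified $\SU_3$, or ramified $\SU_3$ setting. So the proof will be essentially a transcription, with the parameter $d$ of the $\SL_2(D)$ case replaced by $1$.

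For the forward inclusion $\tilde B_0(r)\subset B_0(r)$, I would start with $[(g,x)]\in \tilde B_0(r)$, i.e.\ $g\in P_0$ and $x\in[-\omega(\pi_L^r),\omega(\pi_L^r)]$. By the chosen normalisation we have $d_\R(0;x)\leq r$, and since $d_\mathcal{I}$ restricted to the standard apartment $f_{\Id}(\R)$ agrees with $d_\R$ (Remark~\ref{Rem:metric on I and stabiliser}), I get $d_\mathcal{I}([(\Id,0)];[(\Id,x)])\leq r$. Then I use that $g\in P_0$ stabilises $[(\Id,0)]$ and acts by isometries to conclude
\[
d_\mathcal{I}([(\Id,0)];[(g,x)]) = d_\mathcal{I}(g.[(\Id,0)];g.[(\Id,x)]) = d_\mathcal{I}([(\Id,0)];[(\Id,x)])\leq r,
\]
so $[(g,x)]\in B_0(r)$.

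For the reverse inclusion, I would take $[(g,x)]\in B_0(r)$ and apply Lemma~\ref{Lem:B-T tree from a based point} to rewrite $[(g,x)] = [(h,y)]$ with $h\in P_0$ and $y\in \R$. The same isometry computation as above (now using that $h\in P_0$ stabilises $[(\Id,0)]$) gives $d_\R(0;y) = d_\mathcal{I}([(\Id,0)];[(h,y)]) = d_\mathcal{I}([(\Id,0)];[(g,x)])\leq r$, which by the normalisation means $y\in[-\omega(\pi_L^r),\omega(\pi_L^r)]$. Hence $[(h,y)]=[(g,x)]\in\tilde B_0(r)$, as wanted.

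There is no real obstacle: every ingredient has already been set up in Section~\ref{Sec:Rappel sur l'arbre de B-T} (the isometric action, the apartment structure, and Lemma~\ref{Lem:B-T tree from a based point}) and none of it depends on the specific choice of quasi-split group. If anything, the minor sanity check is simply to confirm that in this ramified odd-characteristic setup the subgroup $N$ of Definition~\ref{Def:sbgrp N SU ramified} still contains a representative whose $\nu$-image reverses the sign on $\R$ (which it does, via any element of $M$), so that Lemma~\ref{Lem:B-T tree from a based point} applies unchanged. In particular, the proof can, and in the paper's style should, be stated as ``word for word the same as the proof of Lemma~\ref{Lem:B_0(r) is really the ball of radius r}, upon replacing all $d$'s by $1$''.
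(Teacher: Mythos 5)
Your proof is correct and is exactly what the paper does: the paper literally records the proof as ``word for word the same as the proof of Lemma~\ref{Lem:B_0(r) is really the ball of radius r}, upon replacing all $d$'s by $1$'s,'' and your write-up is a faithful unfolding of that translation, using the same two ingredients (the isometric action from Remark~\ref{Rem:metric on I and stabiliser} for $\tilde B_0(r)\subset B_0(r)$, and Lemma~\ref{Lem:B-T tree from a based point} plus isometry for the reverse inclusion). The only inessential slip is the closing remark about needing a sign-reversing element of $N$: Lemma~\ref{Lem:B-T tree from a based point} relies on strong transitivity of $G$ on $\mathcal{I}$, not on the reflection in the Weyl group, so that sanity check is unnecessary here (it matters for Lemma~\ref{Lem:integralityofn}, which is not used in this argument).
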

\begin{proof}
The proof is word for word the same than the proof of Lemma~\ref{Lem:B_0(r) is really the ball of radius r}, upon replacing all $d$'s by $1$'s.
\end{proof}
\begin{remark}
The distance $ d_{\mathcal{I}} $ that we introduced in Lemma~\ref{Lem:B_0(r) is really the ball of radius r ramified} is also the combinatorial distance on the tree. Indeed, looking at when $ P_y $ is inside $ P_x $ for $ x,y\in \R $, we see that $ [(\Id ,x)] $ is a vertex of $ \mathcal{I} $ if and only if $ x\in \omega(\pi_L)\Z $.
\end{remark}

\begin{theorem}\label{Thm:localdescriptionoftheball ramified}
Let $ r\in \N $. The map $ B_{0}(r)\to \mathcal{I}^{0,r}\colon [(g,x)]\mapsto [(p_{r}(g),x)]^{0,r} $ is a $ (p_{r}\colon P_{0}\to P_{0}^{0,r}) $-equivariant bijection.
\end{theorem}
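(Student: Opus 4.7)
The plan is to mirror the proofs of Theorem~\ref{Thm:localdescriptionoftheball} and Theorem~\ref{Thm:localdescriptionoftheball ur}, since all the pieces needed in the ramified, odd residue characteristic case have already been assembled in this section. First I would verify that the map is well-defined. The only issue is that two representatives of the same class in $B_0(r)$ must land on the same class in $\mathcal{I}^{0,r}$; if $[(g,x)] = [(h,y)]$ via some $n \in N$ with $\nu(n)(x) = y$ and $g^{-1}hn \in P_x$, I need an element of $N^{0,r}$ realising $x \mapsto y$ under $\nu$. Applying Lemma~\ref{Lem:integralityofn} shows one may assume $n \in N \cap P_0$, whose image under $p_r$ lands in $N^{0,r}$ and still satisfies the required condition by functoriality of the projection of coefficients.

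For injectivity, suppose $[(g,x)]$ and $[(h,y)]$ lie in $B_0(r)$ and have the same image. By Remark~\ref{Rem:equivalent def with for all n for local model ramified}, I can test the relation using any $\tilde{n} \in N^{0,r}$ with $\nu(\tilde{n})(x) = y$. The key observation is that in the two possible cases ($x = y$ or $x = -y$) one can choose $\tilde{n}$ to be either $\Id$ or the anti-diagonal involution $\begin{psmallmatrix} 0&0&1\\0&-1&0\\1&0&0 \end{psmallmatrix}$, which evidently lifts to an element $n \in N$ under $p_r$ with $\nu(n)(x) = y$. Then $g^{-1}hn$ lies in $p_r^{-1}(P_x^{0,r}) \subset P_x$ by Lemma~\ref{Lem:injectivityofpr ramified}, yielding $[(g,x)] = [(h,y)]$ in $B_0(r)$.

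Surjectivity is immediate from Corollary~\ref{Cor:surjectivityofpr ramified}: any representative $(\bar{g}, x) \in P_0^{0,r} \times [-\omega(\pi_L^r), \omega(\pi_L^r)]$ lifts to some $(g,x) \in P_0 \times [-\omega(\pi_L^r), \omega(\pi_L^r)]$, and the latter gives a point of $B_0(r)$ by Lemma~\ref{Lem:B_0(r) is really the ball of radius r ramified}. Equivariance with respect to $p_r$ is a one-line verification: for $h \in P_0$, $h.[(g,x)] = [(hg,x)]$ maps to $[(p_r(hg),x)]^{0,r} = p_r(h).[(p_r(g),x)]^{0,r}$.

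The only step that required any real work is the injectivity argument, but all the subtlety there is concentrated in Lemma~\ref{Lem:injectivityofpr ramified} (which in turn follows directly from the definitions of the local and global stabilisers) and in Corollary~\ref{Cor:surjectivityofpr ramified} (which rests on smoothness of $\underline{\SU}_3^{L/K}$, i.e.\ Theorem~\ref{Thm:smoothness of SU3 ramified}). Since these tools are already in place, the proof of Theorem~\ref{Thm:localdescriptionoftheball ramified} is essentially a transcription of the proof of Theorem~\ref{Thm:localdescriptionoftheball ur}, and no genuine obstacle arises.
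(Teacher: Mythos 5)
Your proof is correct and takes essentially the same approach as the paper, which itself simply declares the argument to be a word-for-word transcription of the proof of Theorem~\ref{Thm:localdescriptionoftheball ur} with references swapped to their analogues in the ramified odd-residue-characteristic section. Your three-part structure (well-definedness via Lemma~\ref{Lem:integralityofn}, injectivity via Remark~\ref{Rem:equivalent def with for all n for local model ramified}, the explicit lift of $\tilde{n}$, and Lemma~\ref{Lem:injectivityofpr ramified}, surjectivity via Corollary~\ref{Cor:surjectivityofpr ramified}) matches that template exactly.
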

\begin{proof}
The proof is word for word the same than the proof of Theorem~\ref{Thm:localdescriptionoftheball ur}, upon replacing all references to results in Section~\ref{Sec:SU3L/K unramified} with their analogues in this section.
\end{proof}

\subsection{Arithmetic convergence}
\begin{definition}\label{Def:space L ramified}
Consider the set of ramified pairs $(K,L)$ where $K$ is a local field of odd residue characteristic and $L$ is a ramified (separable) quadratic extension of $K$. We say that two ramified pairs $ (K_{1},L_{1}) $ and $ (K_{2},L_{2}) $ are isomorphic if there exists a conjugation equivariant isomorphism between $ L_1 $ and $ L_2 $, and we let $ \mathcal{L}^{\ram}_{\odd} $ be the set of ramified pairs in odd residue characteristic, up to isomorphism. 
For an odd prime power $ p^n $, we also define $ \mathcal{L}^{\ram}_{p^{n}} = \lbrace (K,L) \in \mathcal{L}^{\ram}_{\odd}~\vert~ \vert \overline{K}\vert = p^{n}\rbrace $.
\end{definition}

As in Section~\ref{Sec:SL_2(D)}, we define a metric on the space $ \mathcal{L}^{\ram}_{\odd} $. For $L\in \mathcal{L}^{\ram}_{\odd} $ and $ r\in \N $, the Galois conjugation induces an automorphism of $ \mathcal{O}_{L}/\mathfrak{m}_{L}^{r} $ that we still call the conjugation.
\begin{definition}
Let $ (K_{1},L_{1}) $ and $ (K_{2},L_{2}) $ be in $ \mathcal{L}^{\ram}_{\odd} $. We say that $ (K_{1},L_{1}) $ is $ r $-close to $ (K_2,L_2) $ if and only if there exists a conjugation equivariant isomorphism $ \mathcal{O}_{L_{1}}/\mathfrak{m}_{L_1}^{r}\to \mathcal{O}_{L_{2}}/\mathfrak{m}_{L_2}^{r} $.
\end{definition}
\begin{remark}\label{Rem:induced proximity on K ram}
A conjugation equivariant isomorphism $ \mathcal{O}_{L_{1}}/\mathfrak{m}_{L_1}^{r}\to \mathcal{O}_{L_{2}}/\mathfrak{m}_{L_2}^{r} $ always induces an isomorphism $ \mathcal{O}_{K_{1}}/\mathfrak{m}_{K_1}^{\lceil \frac{r}{2}\rceil}\to \mathcal{O}_{K_{2}}/\mathfrak{m}_{L_2}^{\lceil \frac{r}{2}\rceil} $, since $ \mathcal{O}_{K_{i}}/\mathfrak{m}_{K_i}^{\lceil \frac{r}{2}\rceil} $ is the invariant subring of $ \mathcal{O}_{L_{i}}/\mathfrak{m}_{L_i}^{r} $.
\end{remark}
\begin{remark}\label{Rem:one accumulation point ram}
For any uniformiser $ \beta \in \mathbf{F}_{p^{n}}(\!(X)\!) $, the pair $ (\mathbf{F}_{p^{n}}(\!(X)\!),\mathbf{F}_{p^{n}}(\!(X)\!)[T]/(T^{2}+\beta ))$ is isomorphic to the pair $ (\mathbf{F}_{p^{n}}(\!(X)\!),\mathbf{F}_{p^{n}}(\!(\sqrt{X})\!)) $ (because $ \mathbf{F}_{p^{n}}(\!(X)\!) $ has many automorphisms). Hence, despite the fact that there are two non-isomorphic quadratic ramified extensions of $ \mathbf{F}_{p^{n}}(\!(X)\!) $, there is only one ramified pair of positive characteristic in $ \mathcal{L}_{p^n}^{\ram} $.
\end{remark}
\begin{remark}\label{Rem:transitivityofdistance  ramified}
Note that being $ r $-close is an equivalence relation, and that if $ r\geq l $ and $ (K_1,L_1) $ is $ r $-close to $ (K_2,L_2) $, then $ (K_1,L_1) $ is $ l $-close to $ (K_2,L_2) $.
\end{remark}

Again, this notion of closeness induces a non-archimedean metric on $ \mathcal{L}^{\ram}_{\odd} $. Let
\begin{equation*}
d\colon \mathcal{L}^{\ram}_{\odd}\times \mathcal{L}^{\ram}_{\odd}\to \mathbf{R}_{\geq 0}\colon d((K_1,L_1);(K_2,L_2))= \inf \lbrace \frac{1}{2^{r}} ~\vert~ (K_1,L_1) \textrm{ is } r \textrm{-close to }(K_2,L_2) \rbrace
\end{equation*}

\begin{lemma}\label{Lem:Non-archim. metric space ram}
$ d(\cdot~;~\cdot ) $ is a non-archimedean metric on $ \mathcal{L}^{\ram}_{\odd} $. 
\end{lemma}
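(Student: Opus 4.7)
The plan is to imitate the proof of Lemma~\ref{Lem:Non-archim. metric space ur} almost verbatim, since the only formal change in the definition is that the ``conjugation-equivariant'' isomorphism now concerns ramified extensions rather than unramified ones; the axioms of a non-archimedean metric do not depend on this distinction. I would verify the three standard axioms in turn.

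First, non-negativity and symmetry are immediate from the definition, as the relation ``$r$-close'' is symmetric (given an isomorphism, take its inverse). For the separation axiom, suppose $d((K_1,L_1);(K_2,L_2))=0$. Then for every $r\in \N$ there exists a conjugation-equivariant ring isomorphism $\varphi_r\colon \mathcal{O}_{L_1}/\mathfrak{m}_{L_1}^r \to \mathcal{O}_{L_2}/\mathfrak{m}_{L_2}^r$. Since each quotient ring is finite (the residue fields are finite by Claim~\ref{Claim:1'}), and since the reduction maps $\mathcal{O}_{L_i}/\mathfrak{m}_{L_i}^{r+1}\twoheadrightarrow \mathcal{O}_{L_i}/\mathfrak{m}_{L_i}^{r}$ are surjective, a standard König's lemma / compactness argument applied to the finite trees of isomorphisms at each finite level produces a compatible system $(\tilde{\varphi}_r)_{r\in \N}$. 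Passing to the inverse limit gives a conjugation-equivariant isomorphism $\mathcal{O}_{L_1}\cong \mathcal{O}_{L_2}$, which extends uniquely to a conjugation-equivariant isomorphism of the fields of fractions $L_1\cong L_2$, showing that $(K_1,L_1)=(K_2,L_2)$ in $\mathcal{L}^{\ram}_{\odd}$.

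Finally, for the strong (non-archimedean) triangle inequality, let $(K_i,L_i)$ for $i=1,2,3$ be in $\mathcal{L}^{\ram}_{\odd}$, and suppose $d((K_1,L_1);(K_2,L_2))\leq \frac{1}{2^r}$ and $d((K_2,L_2);(K_3,L_3))\leq \frac{1}{2^r}$. By Remark~\ref{Rem:transitivityofdistance ramified}, being $r$-close is an equivalence relation, so $(K_1,L_1)$ is $r$-close to $(K_3,L_3)$, giving $d((K_1,L_1);(K_3,L_3))\leq \frac{1}{2^r}$, which is the non-archimedean inequality. The only step requiring a tiny bit of thought is the compactness argument in the separation axiom; this is routine and identical to the corresponding step one would use in the unramified case (indeed, it is implicit in the proof of Lemma~\ref{Lem:Non-archim. metric space ur}).
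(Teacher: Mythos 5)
Your proof is correct and follows essentially the same approach as the paper's, which simply asserts that $d=0$ implies $\mathcal{O}_{L_1}$ and $\mathcal{O}_{L_2}$ are equivariantly isomorphic and invokes Remark~\ref{Rem:transitivityofdistance  ramified} for the ultrametric inequality. You merely make explicit the inverse-limit/K\"onig's-lemma step that the paper leaves implicit, which is a reasonable thing to spell out.
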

\begin{proof}
If $ d((K_1,L_1);(K_2,L_2)) = 0 $, then $ \mathcal{O}_{L_1} $ and $ \mathcal{O}_{L_2} $ are equivariantly isomorphic. 
Hence, the pairs of field of fraction are isomorphic in $ \mathcal{L}^{\ram}_{\odd} $, as wanted. The fact that this distance is non-archimedean is a consequence of Remark~\ref{Rem:transitivityofdistance  ramified}.
\end{proof}

We now go on to prove that $ \mathcal{L}_{p^n}^{\ram} $ is homeomorphic to $ \hat{\N} $. Again, the key ingredient in this identification is Theorem~\ref{Thm:fundamental approximation lemma}. We further need a variation for ramified quadratic extension in odd residue characteristic.
\begin{corollary}\label{Cor:variation on fundamental lemma ram}
Let $ K $ be a totally ramified extension of degree $ k $ of $ \mathbf{Q}_{p^{n}} $, where $ p $ is an odd prime, and let $ L $ be a ramified quadratic extension of $ K $. The distance between $ (K, L) $ and $ (\mathbf{F}_{p^{n}}(\!(X)\!),\mathbf{F}_{p^{n}}(\!(\sqrt{X})\!)) $ is $ \frac{1}{2^{2k}} $.
\end{corollary}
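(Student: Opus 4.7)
The plan is to reduce the corollary to a careful reapplication of Theorem~\ref{Thm:fundamental approximation lemma} to $L$, viewed as a totally ramified extension of $\mathbf{Q}_{p^{n}}$ of degree $2k$, together with a careful tracking of the Galois involutions on both sides. The key issue is not the construction of a ring isomorphism at depth $2k$ -- Theorem~\ref{Thm:fundamental approximation lemma} gives this for free -- but to arrange that it intertwines the nontrivial element of $\Gal(L/K)$ with the nontrivial element of $\Gal(\mathbf{F}_{p^{n}}(\!(\sqrt{X})\!)/\mathbf{F}_{p^{n}}(\!(X)\!))$.

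For the upper bound $d\leq \frac{1}{2^{2k}}$, I would proceed as follows. Since the residue characteristic is odd and $L/K$ is ramified quadratic, I may write $L\cong K[T]/(T^{2}+\beta)$ with $\beta$ a uniformiser of $K$, and choose a uniformiser $\pi_{L}\in L$ satisfying $\pi_{L}^{2}=-\beta\in K$. The nontrivial element of $\Gal(L/K)$ then sends $\pi_{L}\mapsto -\pi_{L}$. Apply Theorem~\ref{Thm:fundamental approximation lemma} to the totally ramified extension $L/\mathbf{Q}_{p^{n}}$ of degree $2k$, choosing the representatives $\{a_{x}\}_{x\in\mathbf{F}_{p^{n}}}$ of $\overline{L}\cong\mathbf{F}_{p^{n}}$ inside $\mathbf{Q}_{p^{n}}\subset K$ (so that they are fixed by the conjugation). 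The resulting ring isomorphism $\overline{\varphi}_{\pi_{L}}\colon\mathcal{O}_{L}/\mathfrak{m}_{L}^{2k}\xrightarrow{\sim}\mathbf{F}_{p^{n}}[\![Y]\!]/(Y^{2k})$ sends $\pi_{L}\mapsto Y$ and fixes each $a_{x}$. Writing $Y=\sqrt{X}$ so that conjugation on $\mathbf{F}_{p^{n}}(\!(\sqrt{X})\!)/\mathbf{F}_{p^{n}}(\!(X)\!)$ is $Y\mapsto -Y$, the substitutions match up term by term in the power series expansion, and the isomorphism is conjugation-equivariant. Hence $(K,L)$ is $2k$-close to $(\mathbf{F}_{p^{n}}(\!(X)\!),\mathbf{F}_{p^{n}}(\!(\sqrt{X})\!))$.

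For the lower bound $d>\frac{1}{2^{2k+1}}$, I would invoke the same obstruction that is used at the end of the proof of Theorem~\ref{Thm:fundamental approximation lemma}, observing that it does not even require conjugation-equivariance. Because $L/\mathbf{Q}_{p^{n}}$ is totally ramified of degree $2k$, we have $\omega(p)=2k\,\omega(\pi_{L})$, so $p\in \mathfrak{m}_{L}^{2k}\setminus\mathfrak{m}_{L}^{2k+1}$. Consequently $\sum_{i=1}^{p}1\neq 0$ in $\mathcal{O}_{L}/\mathfrak{m}_{L}^{2k+1}$, whereas this sum is $0$ in the characteristic $p$ ring $\mathbf{F}_{p^{n}}[\![Y]\!]/(Y^{2k+1})$. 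No ring isomorphism between the two can exist at level $2k+1$, and a fortiori no conjugation-equivariant one.

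The only genuinely new point compared to Theorem~\ref{Thm:fundamental approximation lemma} is the verification of equivariance in the upper bound, and the main obstacle there is simply the selection of a uniformiser of $L$ that lies in a Kummer form over $K$. This is precisely why the hypothesis of \emph{odd} residue characteristic is essential: it guarantees that every ramified quadratic extension of $K$ is of Kummer type, so that such a $\pi_{L}$ with $\pi_{L}^{2}\in K$ exists. The dyadic case, treated later in the paper, is exactly the case where this choice is no longer available.
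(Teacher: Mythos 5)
Your argument is correct and works, though it runs along a slightly different line than the paper's own proof. For the upper bound, the paper picks a Kummer uniformiser $t$ of $L$ with $t^{2}=\pi_{K}$ and works with the equivariant module decompositions $\mathcal{O}_{L}/\mathfrak{m}_{L}^{2k}\cong\mathcal{O}_{K}/\mathfrak{m}_{K}^{k}\oplus t\cdot\mathcal{O}_{K}/\mathfrak{m}_{K}^{k}$ and $\mathbf{F}_{p^{n}}[\![\sqrt{X}]\!]/(\sqrt{X}^{2k})\cong\mathbf{F}_{p^{n}}[\![X]\!]/(X^{k})\oplus\sqrt{X}\cdot\mathbf{F}_{p^{n}}[\![X]\!]/(X^{k})$, so that Theorem~\ref{Thm:fundamental approximation lemma} applied to $K/\mathbf{Q}_{p^{n}}$ at level $k$ extends to the full $2k$-isomorphism by sending $t\mapsto\sqrt{X}$. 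You instead apply Theorem~\ref{Thm:fundamental approximation lemma} directly to the totally ramified extension $L/\mathbf{Q}_{p^{n}}$ of degree $2k$ and then check that the resulting isomorphism is equivariant. That is a perfectly valid, and arguably more economical, variant: one invocation of the fundamental lemma at depth $2k$ rather than one at depth $k$ followed by a bookkeeping extension. For the lower bound, the paper passes to the invariant subring via Remark~\ref{Rem:induced proximity on K ram} and then cites Theorem~\ref{Thm:fundamental approximation lemma} for $K$; you instead rerun the characteristic-$p$ obstruction from the proof of that theorem directly at the $L$-level, using $\omega(p)=2k\,\omega(\pi_{L})$. Both work, and yours has the mild advantage of ruling out even a non-equivariant ring isomorphism at level $2k+1$.

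One cosmetic point. The phrase ``the substitutions match up term by term in the power series expansion'' is the right intuition but not quite a proof, because conjugating an expansion $\sum a_{x_{i}}\pi_{L}^{i}$ produces $\sum a_{x_{i}}(-1)^{i}\pi_{L}^{i}$, whose coefficients $-a_{x_{i}}$ need not belong to your chosen set of representatives. The clean way to finish is to note that $\overline{\varphi}_{\pi_{L}}\circ\sigma$ and $(Y\mapsto -Y)\circ\overline{\varphi}_{\pi_{L}}$ are both ring homomorphisms $\mathcal{O}_{L}/\mathfrak{m}_{L}^{2k}\to\mathbf{F}_{p^{n}}[\![Y]\!]/(Y^{2k})$ that send $\pi_{L}\mapsto -Y$ (using $\sigma(\pi_{L})=-\pi_{L}$ and linearity of $\overline{\varphi}_{\pi_{L}}$) and $a_{x}\mapsto x$ (using $a_{x}\in K$); since $\mathcal{O}_{L}/\mathfrak{m}_{L}^{2k}$ is generated as a ring by $\pi_{L}$ and the $a_{x}$'s, the two homomorphisms coincide.
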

\begin{proof}
Let $ t $ be a uniformiser of $L$ such that $ t^2 = \pi_K $ is a uniformiser of $ K $ (such a $ t $ exists because the residue characteristic is odd). Since we have the following equivariant isomorphisms of rings
\begin{align*}
\mathcal{O}_{L}/\mathfrak{m}_{L}^{2k}&\cong \mathcal{O}_{K}/\mathfrak{m}_{K}^{k}\oplus t.\mathcal{O}_{K}/\mathfrak{m}_{K}^{k}\\
\mathbf{F}_{p^{n}}[\![\sqrt{X}]\!]/(\sqrt{X}^{2k})&\cong \mathbf{F}_{p^{n}}[\![X]\!]/(X^k)\oplus \sqrt{X}.\mathbf{F}_{p^{n}}[\![X]\!]/(X^k)
\end{align*} it is clear (in view of Theorem~\ref{Thm:fundamental approximation lemma}) that $ (K, L) $ is $ 2k $-close to $ (\mathbf{F}_{p^{n}}(\!(X)\!),\mathbf{F}_{p^{n}}(\!(\sqrt{X})\!)) $.

To conclude that the distance is $ \frac{1}{2^{2k}} $ it suffices to note that if $ (K, L) $ and $ (\mathbf{F}_{p^{n}}(\!(X)\!),\mathbf{F}_{p^{n}}(\!(\sqrt{X})\!)) $ were $ r $-close for $ r > 2k $, then $ K $ and $ \mathbf{F}_{p^{n}}(\!(X)\!) $ would be $ \lceil \frac{r}{2}\rceil $-close as well by Remark~\ref{Rem:induced proximity on K ram}, contradicting Theorem~\ref{Thm:fundamental approximation lemma}.
\end{proof}

We deduce the homeomorphism type of $ \mathcal{L}_{p^{n}}^{\ram} $.
\begin{proposition}\label{Prop:explicit description of L ram}
Let $ p $ be an odd prime number. Then $ \mathcal{L}_{p^{n}}^{\ram} $ is homeomorphic to $ \hat{\N} $.
\end{proposition}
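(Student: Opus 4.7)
The plan is to mirror the strategy used for Proposition~\ref{Prop:explicit description of D}: establish that $\mathcal{L}_{p^n}^{\ram}$ is countable, that every characteristic $0$ pair is isolated, and that the unique characteristic $p$ pair (pointed out in Remark~\ref{Rem:one accumulation point ram}) is an accumulation point; then invoke \cite{MS20}*{Théorème~1} to conclude that the space is homeomorphic to $\hat{\N}$.

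For countability, any $(K,L) \in \mathcal{L}_{p^n}^{\ram}$ has $K$ either of characteristic $p$ (in which case, by Remark~\ref{Rem:one accumulation point ram}, this accounts for a single element) or a totally ramified extension of $\mathbf{Q}_{p^n}$ (by the argument of Claim~\ref{Claim:1'} in the proof of Proposition~\ref{Prop:explicit description of D}). By Claim~\ref{Claim:3'}, only finitely many such $K$ have bounded degree over $\mathbf{Q}_{p^n}$, and over each $K$ there are only finitely many separable quadratic ramified extensions $L$, so $\mathcal{L}_{p^n}^{\ram}$ is countable.

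For the isolation of a characteristic $0$ pair $(K,L)$, with $K$ totally ramified of degree $k$ over $\mathbf{Q}_{p^n}$, I would argue as follows. If $(K',L')$ is $r$-close to $(K,L)$, then by Remark~\ref{Rem:induced proximity on K ram}, $K'$ is $\lceil r/2\rceil$-close to $K$. For $\lceil r/2\rceil$ larger than the bound provided by (the analogue of) Claim~\ref{Claim:2'} applied to the finitely many totally ramified extensions of degree $\leq k$ given by Claim~\ref{Claim:3'}, this forces $K' \cong K$. Then $(K,L')$ and $(K,L)$ are $r$-close; since there are only finitely many quadratic ramified extensions of $K$ and since $d$ is a genuine metric on $\mathcal{L}_{p^n}^{\ram}$ (Lemma~\ref{Lem:Non-archim. metric space ram}), any two non-isomorphic pairs over $K$ lie at positive distance, so for $r$ large enough we must have $(K,L') \cong (K,L)$. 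Hence $(K,L)$ is isolated.

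That the characteristic $p$ pair $(\mathbf{F}_{p^n}(\!(X)\!), \mathbf{F}_{p^n}(\!(\sqrt{X})\!))$ is an accumulation point is an immediate consequence of Corollary~\ref{Cor:variation on fundamental lemma ram}: a sequence of totally ramified quadratic extensions $(K_k, L_k)$ with $[K_k : \mathbf{Q}_{p^n}] = k \to \infty$ (which exist by standard Eisenstein polynomial constructions) satisfies $d((K_k,L_k);(\mathbf{F}_{p^n}(\!(X)\!), \mathbf{F}_{p^n}(\!(\sqrt{X})\!))) = 1/2^{2k} \to 0$. Combined with Remark~\ref{Rem:one accumulation point ram}, this pair is the unique non-isolated point of $\mathcal{L}_{p^n}^{\ram}$. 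By \cite{MS20}*{Théorème~1}, the space $\mathcal{L}_{p^n}^{\ram}$ is therefore homeomorphic to $\hat{\N}$, with the accumulation point being $(\mathbf{F}_{p^n}(\!(X)\!), \mathbf{F}_{p^n}(\!(\sqrt{X})\!))$. The main subtlety, and the only place where the proof departs from that of Proposition~\ref{Prop:explicit description of D}, is the careful use of Remark~\ref{Rem:induced proximity on K ram} (the factor $\lceil r/2 \rceil$) when transferring closeness from the pair level to the base field level, together with the observation of Remark~\ref{Rem:one accumulation point ram} that the two quadratic ramified extensions of $\mathbf{F}_{p^n}(\!(X)\!)$ become identified as pairs.
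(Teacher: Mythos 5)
Your proof is correct and follows the same route as the paper: countability, isolation of characteristic-$0$ pairs via Remark~\ref{Rem:induced proximity on K ram} together with Claims~\ref{Claim:2'} and \ref{Claim:3'}, accumulation at the unique characteristic-$p$ pair via Corollary~\ref{Cor:variation on fundamental lemma ram} and Remark~\ref{Rem:one accumulation point ram}, and then \cite{MS20}*{Théorème~1}. The only difference is that you spell out more explicitly the final step of the isolation argument (reducing, once $K'\cong K$ is forced, to the finitely many ramified quadratic $L$ over $K$ being pairwise at positive distance), which the paper leaves implicit.
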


\begin{proof} \setcounter{claim}{0}
$  $

\begin{claim}\label{Claim:1 ram}
Let $ (K,L)\in \mathcal{L}_{p^{n}}^{\ram} $. If $ K $ is of characteristic $ 0 $, $ (K,L) $ is isolated in $ \mathcal{L}_{p^{n}}^{\ram} $.
\end{claim}
\begin{claimproof}
If $ (K_1,L_1) $ is $ r $-close to $ (K_2,L_2) $, then $ K_1 $ is $ \lceil \frac{r}{2}\rceil $-close to $ K_2 $ by Remark~\ref{Rem:induced proximity on K ram}. Hence, the result follows from Claim~\ref{Claim:2'} and Claim~\ref{Claim:3'} in the proof of Proposition~\ref{Prop:explicit description of D}.
\end{claimproof}

\medskip

\begin{claim}\label{Claim:2 ram} 
$ \mathcal{L}_{p^{n}}^{\ram} $ is a countable space.
\end{claim}
\begin{claimproof}
By Claim~\ref{Claim:3'} in the proof of Proposition~\ref{Prop:explicit description of D}, there are only countably many pairs of characteristic $ 0 $ in $ \mathcal{L}_{p^{n}}^{\ram} $. Furthermore, there is only one ramified pair of characteristic $p$ in $ \mathcal{L}_{p^{n}}^{\ram} $ by Remark~\ref{Rem:one accumulation point ram}.
\end{claimproof}

\medskip

We are now able to deduce the homeomorphism type of $ \mathcal{L}_{p^{n}}^{\ram} $: ramified pairs of characteristic~$ 0 $ are isolated by Claim~\ref{Claim:1 ram}, and the ramified pair of positive characteristic is an accumulation point in $ \mathcal{L}_{p^{n}}^{\ram} $ by Corollary~\ref{Cor:variation on fundamental lemma ram}. Hence, by \cite{MS20}*{Théorème~1}, $ \mathcal{L}_{p^{n}}^{\ram} $ is homeomorphic to $ \hat{\N} $.
\end{proof}

\subsection{Continuity from pairs in \texorpdfstring{$ \mathcal{L}^{\ram}_{\odd} $}{L^ram_odd} to subgroups of \texorpdfstring{$ \Aut (T) $}{Aut(T)}}
In this section, we start to vary the ramified pair $ (K,L) $, and look at the variation it produces on the Bruhat--Tits tree of $ \SU_3^{L/K} $. Recall that we introduced a notation to keep track of the dependence on $ (K,L) $ of many of the definitions we made in this section (see Remark~\ref{Rem:dependence on D of the tree ram} and Remark~\ref{Rem:dependence on D of the local tree ram}).

\begin{proposition}\label{Prop:aritmimpliesgeomproximityforSU3 ram}
Let $ (K_1,L_1) $ and $ (K_2,L_2) $ be two elements in $ \mathcal{L}^{\ram}_{\odd} $. Assume that $ (K_1,L_1) $ is $ r $-close to $ (K_2,L_2) $, for some $ r \in \N $. Then, $ (P_{0}^{0,r})_{(K_1,L_1)}\cong (P_{0}^{0,r})_{(K_2,L_2)} $, and $ \mathcal{I}_{(K_1,L_1)}^{0,r}$ is equivariantly in bijection with $ \mathcal{I}_{(K_2,L_2)}^{0,r} $.
\end{proposition}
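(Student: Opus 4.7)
The plan is to mimic verbatim the proof of Proposition~\ref{Prop:aritmimpliesgeomproximityforSU3 ur}, taking advantage of the fact that all the relevant definitions (namely Definition~\ref{Def:localdataSU(3)ram}, the definitions of $H^{0,r}$, $M^{0,r}$, $N^{0,r}$, and Definition~\ref{Def:local tree of radius r ramified}) are formally identical to their unramified counterparts: they only depend on the ring $\mathcal{O}_L/\mathfrak{m}_L^r$ together with its induced involution $x\mapsto\bar x$ and its induced valuation $\omega$. Since the hypothesis provides a conjugation-equivariant isomorphism $\psi\colon \mathcal{O}_{L_1}/\mathfrak{m}_{L_1}^{r}\to \mathcal{O}_{L_2}/\mathfrak{m}_{L_2}^{r}$ (and such an isomorphism automatically preserves the valuation, hence maps a uniformiser to a uniformiser), all these auxiliary structures transport faithfully from one side to the other.

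Concretely, I would first apply $\psi$ entry by entry to get a group isomorphism
\[
\varphi\colon \SL_3(\mathcal{O}_{L_1}/\mathfrak{m}_{L_1}^{r})\xrightarrow{\;\cong\;}\SL_3(\mathcal{O}_{L_2}/\mathfrak{m}_{L_2}^{r}),
\]
and then check that this isomorphism carries the subgroup defined by $^{S}\bar g g = \Id$ onto itself (this uses only the equivariance of $\psi$ with respect to conjugation together with the fact that the anti-diagonal transpose is purely combinatorial). Next, I would introduce the linear rescaling $f\colon \mathbf{R}\to\mathbf{R}\colon x\mapsto x\cdot\omega(\pi_{L_2})/\omega(\pi_{L_1})$ to account for the possibly different normalisations of the valuations; because $\psi$ preserves the valuation, one checks directly from Definition~\ref{Def:localdataSU(3)ram} that for every $x\in [-\omega(\pi_{L_1}^r),\omega(\pi_{L_1}^r)]$, $\varphi$ restricts to an isomorphism $(P_x^{0,r})_{(K_1,L_1)}\cong (P_{f(x)}^{0,r})_{(K_2,L_2)}$, and similarly that $\varphi(H^{0,r}_{(K_1,L_1)}) = H^{0,r}_{(K_2,L_2)}$ and $\varphi(M^{0,r}_{(K_1,L_1)}) = M^{0,r}_{(K_2,L_2)}$, together with the compatibility $f(\nu(n)(x)) = \nu(\varphi(n))(f(x))$ for all $n\in N^{0,r}_{(K_1,L_1)}$.

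Finally, passing to the quotient by $\sim_{0,r}$, these pieces assemble into the $\varphi$-equivariant bijection
\[
\mathcal{I}^{0,r}_{(K_1,L_1)}\longrightarrow \mathcal{I}^{0,r}_{(K_2,L_2)}\colon [(g,x)]^{0,r}\longmapsto [(\varphi(g), f(x))]^{0,r},
\]
which is the desired conclusion. There is no real obstacle here: because we have been careful to phrase everything in terms of the quotient ring with its involution, the unramified proof applies word for word. The only subtle point worth flagging is that in the ramified case, the valuation of a uniformiser of $L_i$ equals half that of a uniformiser of $K_i$ (rather than being equal to it, as in the unramified setting), but this is precisely absorbed by the rescaling $f$ and plays no role in the verification that $\varphi$ matches up the point stabilisers, so no modification of the argument is needed.
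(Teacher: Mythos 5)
Your proposal is correct and matches the paper's approach: the paper's own proof of this proposition is literally the one line ``Parallel the proof of Proposition~\ref{Prop:aritmimpliesgeomproximityforSU3 ur},'' which is exactly what you carry out. You also correctly flag (and dismiss) the only superficial difference from the unramified case, namely that $\omega(\pi_L)=\tfrac{1}{2}\omega(\pi_K)$ here, which is indeed absorbed by the rescaling $f$.
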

\begin{proof}
Parallel the proof of Proposition~\ref{Prop:aritmimpliesgeomproximityforSU3 ur}.
\end{proof}

\begin{proposition}\label{Prop:Embedding G(K) in Aut(T) ram}
Let $ \mathcal{I} = \mathcal{I}_{(K,L)} $ be the Bruhat--Tits tree of $ \SU_3^{L/K}(K) $. The homomorphism~ $ \hat{}~\colon \SU_3^{L/K}(K)\to \Aut (\mathcal{I}) $ is continuous with closed image, and the kernel is equal to the centre of $ \SU_3^{L/K}(K) $. 
\end{proposition}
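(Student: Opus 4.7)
The plan is to follow verbatim the strategy used in Proposition~\ref{Prop:Embedding G(K) in Aut(T)} (and repeated in Proposition~\ref{Prop:Embedding G(K) in Aut(T) ur}), with only cosmetic adjustments to the explicit description of $P_x$, $T$ and $M$ given in Definition~\ref{Def:pointstabiliserforSU ramified} and Definition~\ref{Def:sbgrp N SU ramified}. The point is that the proof of these earlier propositions only used the following three ingredients, all of which remain available in our ramified setting: (i) the identification $P_x = \Stab_{G}([(\Id,x)])$ from Remark~\ref{Rem:metric on I and stabiliser}; (ii) the fact that $\Aut(\mathcal{I})$ has a basis of identity neighbourhoods given by pointwise fixators of finite subtrees; and (iii) the explicit description of the groups $T$ and $M$ generating $N$, together with the diagonal/off-diagonal structure of the $P_x$'s.

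First, I would establish continuity of $\widehat{\phantom{a}}\colon \SU_3^{L/K}(K)\to \Aut(\mathcal{I})$. A sub-basis of identity neighbourhoods in $\Aut(\mathcal{I})$ is given by pointwise fixators of finite subsets of $\mathcal{I}$, so it suffices to check that for each $x\in \R$, the preimage of the stabiliser of $[(\Id,x)]$ under $\widehat{\phantom{a}}$ is open in $\SU_3^{L/K}(K)$. But this preimage is exactly $P_x$ by Remark~\ref{Rem:metric on I and stabiliser}, and inspection of Definition~\ref{Def:pointstabiliserforSU ramified} shows that $P_x$ is defined by valuation inequalities on matrix entries, hence is open in $\SU_3^{L/K}(K)$.

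Next, closedness of the image is a black box: it follows from the general criterion \cite{BM96}*{Lemma~5.3}, which applies because $\SU_3^{L/K}(K)$ is a compactly generated (in fact second countable and $\sigma$-compact) locally compact group acting continuously on the locally finite tree $\mathcal{I}$.

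Finally, I would compute $\ker(\widehat{\phantom{a}})$. Any element of the kernel lies in $\bigcap_{x\in\R} P_x$; from Definition~\ref{Def:pointstabiliserforSU ramified} this intersection forces all off-diagonal entries to have arbitrarily large valuation, hence to vanish, so any kernel element is diagonal, i.e.\ lies in the group $T$ of Definition~\ref{Def:sbgrp N SU ramified}. A diagonal element however also acts on the root groups by conjugation, and if this conjugation action is to be trivial (which it must be, since the root groups fix distinct ends of $\mathcal{I}$ that a kernel element necessarily fixes as well), one reads off that the diagonal entries must all coincide with a central scalar. Conversely, the centre of $\SU_3^{L/K}(K)$ acts trivially on $\mathcal{I}$ since it acts trivially on each $[(g,x)]$ by Definition~\ref{Def:buildingassociated to a data}. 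The only slightly delicate point is the root-group computation, but it is entirely formal and identical to the one carried out implicitly in the proof of Proposition~\ref{Prop:Embedding G(K) in Aut(T)}, so no genuine new obstacle appears here.
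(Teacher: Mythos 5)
Your proposal is correct and takes essentially the same approach as the paper: the paper's own proof simply refers back to Proposition~\ref{Prop:Embedding G(K) in Aut(T)}, whose argument you have faithfully reproduced (openness of the $P_x$ for continuity, \cite{BM96}*{Lemma~5.3} for closed image, and the ``diagonal plus trivial conjugation on root groups'' computation for the kernel). The extra justifications you supply (second countability for the BM96 criterion, the remark about root groups and ends) are harmless elaborations of what the paper leaves implicit.
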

\begin{proof}
The proof is word for word the same as the proof of Proposition~\ref{Prop:Embedding G(K) in Aut(T)}, upon replacing $ \SL_2(D) $ by $ \SU_3^{L/K}(K) $.
\end{proof}

\begin{theorem}\label{Thm:continuity from Lram to Chab(Aut(T))}
Let $ ((K_i,L_i))_{i\in \mathbf{N}} $ be a sequence in $ \mathcal{L}^{\ram}_{\odd} $ which converges to $ (K,L) $, and let $G_i = \SU_3^{L_i/K_i}(K_i) $ $ ( $respectively $ G = \SU_3^{L/K}(K)) $. For $ N $ big enough and for all $ i\geq N $, there exist isomorphisms $ \mathcal{I}_{(K_i,L_i)} \cong \mathcal{I}_{(K,L)} $ such that the induced embeddings~ $ \hat{G_i}\hookrightarrow \Aut (\mathcal{I}_{(K,L)}) $ make $ (\hat{G_i})_{i\geq N} $ converge to $ \hat{G} $ in the Chabauty topology of $ \Aut (\mathcal{I}_{(K,L)}) $.
\end{theorem}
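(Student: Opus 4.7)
The plan is to mimic verbatim the proofs of Theorem~\ref{Thm:continuity from D to Chab(Aut(T))} and Theorem~\ref{Thm:continuity from Lur to Chab(Aut(T))}, with the obvious substitutions. By Remark~\ref{Rem:regularity of the B-T tree SU ramified}, the tree $\mathcal{I}_{(K_i,L_i)}$ is the $(|\overline{K_i}|+1)$-regular tree, so it depends only on the class of $(K_i,L_i)$ in $\mathcal{L}^{\ram}_{p^n}$. Since $(K_i,L_i)\to(K,L)$, for all $i$ large enough these pairs all lie in the same $\mathcal{L}^{\ram}_{p^n}$, so there exists $N$ with $\mathcal{I}_{(K_i,L_i)}\cong\mathcal{I}_{(K,L)}$ for $i\geq N$.

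After passing to a subsequence, I may assume that $(K_i,L_i)$ is $i$-close to $(K,L)$. Proposition~\ref{Prop:aritmimpliesgeomproximityforSU3 ram} then yields a group isomorphism $\varphi_i\colon (P_0^{0,i})_{(K_i,L_i)}\xrightarrow{\sim}(P_0^{0,i})_{(K,L)}$ together with an equivariant bijection $\mathcal{I}_{(K_i,L_i)}^{0,i}\xrightarrow{\sim}\mathcal{I}_{(K,L)}^{0,i}$. By Theorem~\ref{Thm:localdescriptionoftheball ramified} and Lemma~\ref{Lem:B_0(r) is really the ball of radius r ramified}, this transports to an equivariant isomorphism between the balls of radius $i$ around $0$ in $\mathcal{I}_{(K_i,L_i)}$ and $\mathcal{I}_{(K,L)}$. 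Because both trees are $(p^n+1)$-regular, any such isomorphism of balls extends (non-canonically) to a tree isomorphism $f_i\colon\mathcal{I}_{(K_i,L_i)}\xrightarrow{\sim}\mathcal{I}_{(K,L)}$, which provides the embedding $\hat{G_i}\hookrightarrow\Aut(\mathcal{I}_{(K,L)})$.

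The main work is to show Chabauty convergence of the stabiliser sequence $((\hat{P_0})_{(K_i,L_i)})_{i\geq N}$ to $(\hat{P_0})_{(K,L)}$; I verify the two conditions of \cite{CR16}*{Lemma~2.1}. Given a convergent sequence $\hat h_i\to\hat h$ with $\hat h_i\in(\hat{P_0})_{(K_i,L_i)}$, I lift each $\hat h_i$ to $h_i\in (P_0)_{(K_i,L_i)}$, project via $p_i$ to $\bar h_i\in(P_0^{0,i})_{(K_i,L_i)}$, apply $\varphi_i$, and lift back to $\tilde h_i\in(P_0)_{(K,L)}$ using the surjectivity provided by Corollary~\ref{Cor:surjectivityofpr ramified}. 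By construction, $\tilde h_i$ acts identically to $\hat h_i$ on the ball of radius $i$ around $0$ in $\mathcal{I}_{(K,L)}$, so $\hat{\tilde h_i}\to\hat h$ as well, and closedness of $(\hat{P_0})_{(K,L)}$ (Proposition~\ref{Prop:Embedding G(K) in Aut(T) ram}) gives $\hat h\in(\hat{P_0})_{(K,L)}$. Conversely, given $\hat h\in(\hat{P_0})_{(K,L)}$, the reverse chain of lifts (again relying on Corollary~\ref{Cor:surjectivityofpr ramified}) produces approximants $\hat h_i\in(\hat{P_0})_{(K_i,L_i)}$ converging to $\hat h$.

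From this convergence at the level of maximal compact stabilisers, the convergence $\hat G_i\to\hat G$ follows formally: by \cite{CR16}*{Theorem~1.2} the sequence $(\hat G_i)$ subconverges to a topologically simple group $H$, which contains an open compact subgroup isomorphic to $(\hat{P_0})_{(K,L)}$; by \cite{CS15}*{Corollary~1.3} $H$ is algebraic, and by \cite{Pink98}*{Corollary~0.3} $H\cong G$. Since every subsequence subconverges to $\hat G$, the whole sequence converges to $\hat G$. The only genuinely new ingredient is Corollary~\ref{Cor:surjectivityofpr ramified} (whose surjectivity for odd $r$ is precisely what requires the slightly more delicate computation carried out earlier); the remaining arguments reproduce verbatim those of Theorems~\ref{Thm:continuity from D to Chab(Aut(T))} and \ref{Thm:continuity from Lur to Chab(Aut(T))}, so I do not expect any serious obstacle here.
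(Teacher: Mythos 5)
Your proposal is correct and reproduces the paper's argument, which itself is stated in a single line (identify the trees for $i$ large, then ``the end of the proof parallels the proof of Theorem~\ref{Thm:continuity from Lur to Chab(Aut(T))}''); you have spelled out exactly what that parallel amounts to. One small inaccuracy in your closing remark: you single out Corollary~\ref{Cor:surjectivityofpr ramified} (surjectivity of $p_r$ for \emph{all} $r$, including odd $r$) as the indispensable new ingredient, but the paper explicitly notes just before that corollary that the odd-$r$ case is \emph{not} needed for the Chabauty limits; passing to a subsequence that is $2i$-close and working with balls of even radius, Corollary~\ref{Cor:surjectivityofpr ramified even} alone suffices, and that one follows directly from smoothness of the integral model (Theorem~\ref{Thm:smoothness of SU3 ramified}) and Hensel's lemma without the extra kernel-counting computation.
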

\begin{proof}
The Bruhat--Tits tree $ \mathcal{I}_{(K_i,L_i)} $ is the regular tree of degree $ p^n+1 $ if and only if $ (K_i,L_i) $ belongs to $ \mathcal{L}_{p^n}^{\ram} $. Hence there exists $ N $ such that for all $ i\geq N $, $  \mathcal{I}_{(K_i,L_i)}\cong  \mathcal{I}_{(K,L)} $.

Then the end of the proof parallels the proof of Theorem~\ref{Thm:continuity from Lur to Chab(Aut(T))}.
\end{proof}

We then deduce the proof of the main theorem announced in the introduction for groups of type $ \SU_3^{L/K} $ when $ L $ is ramified and of odd residue characteristic. To shorten the notations, we set $ G_{(K,L)} = \SU_3^{L/K}(K) $ and $ G_{K} = \SL_2(K) $ in the following proof.
\begin{proof}[Proof of Theorem~\ref{Thm:explicit form of main theorem odd}]
Let $ T $ be the ($ p^n+1 $)-regular tree. Paralleling the proof of Theorem~\ref{Thm:explicit form of main theorem ur}, we see that the maps $ \mathcal{L}^{\ram}_{p^n}\to\mathcal{S}_T\colon (K,L)\mapsto \hat{G}_{(K,L)} $ and $ \mathcal{K}_{p^n}\to\mathcal{S}_T\colon K\mapsto \hat{G}_{K} $ are injective continuous maps whose source is a compact space, hence they are homeomorphisms onto their respective image. Now, the explicit description given in Theorem~\ref{Thm:explicit form of main theorem odd} follows from Remark~\ref{Rem:regularity of the B-T tree SU ramified} and Proposition~\ref{Prop:explicit description of L ram}. 
\end{proof}

\section{Convergence of groups of type \texorpdfstring{$ \SU_3^{L/K} $}{SU3L/K}, \texorpdfstring{$ L $}{L} ramified of residue characteristic \texorpdfstring{$ 2 $}{2}}\label{Sec:SU3L/K ramifiedeven}
We keep our notations for local fields (see Section~\ref{Sec:Def of alg. grps}). Furthermore, throughout this section, $ L $ is a separable quadratic ramified extension of the base local field $ K $, and the residue characteristic is $2$. We carry out the same program than in previous sections, but with more technicalities to overcome. The comments made all along Section~\ref{Sec:SL_2(D)} also apply here, and we do not repeat them to not lengthen too much the paper.

\subsection{Construction of the Bruhat--Tits tree}
The definition of $ P_{x} $ is less straightforward in this case. Following \cite{BrTi2}*{4.3.3}, we define a parameter that handles the complication.

\begin{lemma}\label{Lem:goodformforL}
Let $ L/K $ be as in this section. There exists $ t\in L $ and $ \alpha ,\beta \in K $ such that:
\begin{enumerate}
\item  $ L=K[t] $ and $ t^{2}-\alpha t+\beta = 0 $.
\item $ \beta $ is a uniformiser of $ K $.
\item $ \alpha = 0 $, or $ 0<\omega (\beta)\leq \omega (\alpha)\leq \omega (2) $.
\end{enumerate}
\end{lemma}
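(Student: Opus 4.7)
The plan is to obtain $t$ by starting from any uniformiser of $L$ and, if necessary, performing a completing-the-square substitution in order to kill the linear coefficient. I would first pick $t = \pi_{L}$, a uniformiser of $L$. Since $L/K$ is separable of degree $2$, this element automatically generates $L$ as a $K$-algebra and satisfies a unique monic minimal polynomial $t^{2} - \alpha t + \beta = 0$ with $\alpha , \beta \in K$, which settles condition~$(1)$. Because $L/K$ is totally ramified of degree $2$, the minimal polynomial of a uniformiser is Eisenstein, which gives $\omega (\beta ) = 1$ (condition~$(2)$) together with $\omega (\alpha ) \geq 1 = \omega (\beta )$.

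I would then split on the characteristic of $K$. If $\mathrm{char}\,K = 2$, then $\omega (2) = \infty$ and the chain $0 < \omega (\beta ) \leq \omega (\alpha ) \leq \omega (2)$ holds automatically (note that $\alpha \neq 0$ in this case: otherwise $t^{2}=\beta $ would force $L/K$ to be purely inseparable). If $\mathrm{char}\,K = 0$ and already $\omega (\alpha ) \leq \omega (2)$, condition~$(3)$ is satisfied and there is nothing more to do.

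The only remaining case is $\mathrm{char}\,K = 0$ with $\omega (\alpha ) > \omega (2)$. Here I would complete the square by setting $t' = t - \alpha /2 \in L$. Since $t\notin K$ while $\alpha /2\in K$, the element $t'$ still lies in $L\setminus K$ and so generates $L/K$; substituting $t = t' + \alpha /2$ into $t^{2} - \alpha t + \beta = 0$ shows after direct expansion that the minimal polynomial of $t'$ is $t'^{2} + \beta ' = 0$ with $\beta ' = \beta - \alpha^{2}/4$. The assumption $\omega (\alpha ) > \omega (2)$, together with the fact that $\omega$ takes integer values on $K^{\times }$, yields
\[ \omega (\alpha^{2}/4) \; = \; 2\omega (\alpha ) - 2\omega (2) \; \geq \; 2 \; > \; 1 \; = \; \omega (\beta ), \]
so the ultrametric inequality gives $\omega (\beta ') = \omega (\beta ) = 1$; hence $\beta '$ remains a uniformiser of $K$. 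The pair $(\alpha ', \beta ') = (0, \beta - \alpha^{2}/4)$ then satisfies all three conditions.

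Overall the argument amounts to a standard Newton-polygon bookkeeping exercise for quadratic Eisenstein polynomials. The only genuinely subtle point is ensuring that the completing-the-square operation (which requires $2 \in K^{\times }$, hence the case split on the characteristic) does not spoil $\beta $ being a uniformiser; the valuation inequality displayed above is precisely what provides this safeguard, and I do not anticipate any further obstacle.
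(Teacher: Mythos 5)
Your proof is correct, but it takes a genuinely different route from the paper, which simply cites \cite{BrTi2}*{Lemme~4.3.3, (ii)} without giving an argument. Your approach — take $t=\pi_L$, invoke the Eisenstein property of its minimal polynomial to get $\beta$ a uniformiser and $\omega(\alpha)\geq\omega(\beta)$, then in characteristic $0$ complete the square whenever $\omega(\alpha)>\omega(2)$ — is more elementary and has the advantage of being self-contained. The cited Bruhat--Tits lemma has broader scope: it also establishes that the resulting $l = t\alpha^{-1}$ (or $l=\tfrac12$) achieves the maximum valuation among elements of $L$ of trace $1$, a maximality property the paper later invokes in Lemma~\ref{Lem:lambdaisinL1max}; your argument proves only the existence statement that Lemma~\ref{Lem:goodformforL} actually asserts, which is all that is asked here. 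Two small points worth tidying: the fact that $t=\pi_L$ generates $L$ over $K$ follows from $\pi_L\notin K$ (a consequence of ramification, since $\omega(\pi_L)\notin\omega(K^\times)$), not from separability per se; and your third case ``$\mathrm{char}\,K=0$ with $\omega(\alpha)>\omega(2)$'' implicitly covers $\alpha=0$, where the substitution is the identity — this is fine but could be stated explicitly to avoid any appearance of an uncovered case.
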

\begin{proof}
See \cite{BrTi2}*{Lemme~4.3.3, (ii)}. 
\end{proof}

Note that $ \alpha = 0 $ implies $ 2\neq 0 $ in $ K $, since $ L $ is assumed to be a separable extension.
\begin{definition}\label{Def:gamma}
Let $ L/K $ be as in this section, and let $ t, \alpha ,\beta $ be as in Lemma~\ref{Lem:goodformforL}. Let $ l = t\alpha^{-1} \in L $ if $ \alpha \neq 0 $, and $ l = \frac{1}{2} \in L $ if $ \alpha = 0 $, where $ \alpha $ is as in Lemma~\ref{Lem:goodformforL}. We then define $ \gamma = -\frac{1}{2}\omega (l) \in \R $.
\end{definition}

\begin{remark}
Note that since the residue characteristic is $ 2 $, $ \gamma > 0 $.
\end{remark}

In fact, the parameter $ \gamma $ associated with the extension $ L/K $ only depends on the normalisation of the valuation on $ K $.
\begin{proposition}
Let $ L/K $ be as in this section. Then the parameter $ \gamma $ introduced in Definition~\ref{Def:gamma} does not depend on the choices of $ t,\alpha $ and $ \beta $. We call $ \gamma $ the parameter associated with the extension $ L $ of $ K $.
\end{proposition}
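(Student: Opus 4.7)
My plan is to express $\gamma$ in terms of two quantities that are manifestly intrinsic to $L/K$: the valuation $\omega(\beta)$ of any uniformiser of $K$, and $\omega(2t-\alpha) = \omega(t - \overline{t})$, where $\overline{t}$ denotes the Galois conjugate of $t$. Specifically, I aim to show that $\gamma = \frac{1}{2}\omega(2t-\alpha) - \frac{1}{4}\omega(\beta)$ holds in both cases of Definition~\ref{Def:gamma}, which immediately yields the result once $\omega(2t-\alpha)$ is seen to depend only on $L/K$.

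I would first establish that $t$ is always a uniformiser of $L$ with $\omega(t) = \omega(\beta)/2$. In the case $\alpha = 0$, the identity $t^2 = -\beta$ gives this at once. In the case $\alpha \neq 0$, Vieta's relations give $t(\alpha - t) = \beta$, and one can rule out the Newton-polygon possibility that the two roots of the minimal polynomial have distinct valuations by noting that it would force $\omega(\beta) > 2\omega(\alpha)$, contradicting the bound $\omega(\alpha) \geq \omega(\beta)$ from Lemma~\ref{Lem:goodformforL}.

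Next I would prove that $\omega(2t - \alpha)$ is an invariant of $L/K$. Any alternative generator $t'$ from another admissible triple can be written as $t' = at + b$ with $a \in K^{\times}$ and $b \in K$, and since $2t - \alpha = t - \overline{t}$, one obtains $t' - \overline{t'} = a(t - \overline{t})$. It therefore suffices to show that $a$ must be a unit. This is forced by $\omega(t) = \omega(t') = \omega(\beta)/2 \notin \omega(K^{\times})$: comparing the distinct cosets of $\omega(at)$ and $\omega(b)$ modulo $\omega(K^{\times})$ shows that the equation $\omega(at + b) = \omega(\beta)/2$ requires $\omega(a) = 0$ (and incidentally $\omega(b) \geq \omega(\beta)$).

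For the final step, I would compute $\gamma$ in each case of Definition~\ref{Def:gamma}. When $\alpha = 0$, one has $\omega(2t - \alpha) = \omega(2) + \omega(\beta)/2$ and $\gamma = \frac{1}{2}\omega(2)$, so indeed $\gamma = \frac{1}{2}\omega(2t-\alpha) - \frac{1}{4}\omega(\beta)$. When $\alpha \neq 0$, the bound $\omega(\alpha) \leq \omega(2)$ from Lemma~\ref{Lem:goodformforL} gives the strict inequality $\omega(\alpha) < \omega(2) + \omega(\beta)/2 = \omega(2t)$, hence $\omega(2t - \alpha) = \omega(\alpha)$, and a direct computation yields $\gamma = \frac{1}{2}\omega(\alpha) - \frac{1}{4}\omega(\beta)$ as well. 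The main subtlety is ensuring that these two cases collapse into a single intrinsic formula; this is precisely what the normalisation condition $\omega(\alpha) \leq \omega(2)$ in Lemma~\ref{Lem:goodformforL} is designed to guarantee.
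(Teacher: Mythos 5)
Your proof is correct, and it takes a genuinely different route from the paper. The paper dispatches the proposition with a one-line citation to \cite{BrTi2}*{4.3.3 (ii)}: the element $l$ is characterised intrinsically (up to valuation) as having maximal valuation among the elements of $L$ of trace $1$, so that $\gamma = -\frac{1}{2}\omega(l)$ is immediately an invariant of $L/K$. You instead derive the closed formula $\gamma = \frac{1}{2}\omega(t-\bar t) - \frac{1}{4}\omega(\beta)$, exhibiting $\gamma$ in terms of two manifestly intrinsic quantities: the valuation of a uniformiser of $K$, and the valuation of $t-\bar t = f'(t)$, which (because $t$ is a uniformiser and $\mathcal{O}_L = \mathcal{O}_K[t]$) is a generator of the different of $L/K$. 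Your argument therefore yields a bit more than the paper's, namely an explicit expression for $\gamma$, at the cost of carrying out the change-of-basis computation by hand. Two small simplifications are worth noting. First, $\omega(t) = \omega(\bar t)$ holds automatically because the nontrivial element of $\Gal(L/K)$ preserves the unique extension of the valuation to $L$, so the Newton-polygon step is unnecessary. Second, the invariance of $\omega(t - \bar t)$ is the standard fact that the valuation of the different is an invariant of the extension; your verification via $t' = at + b$ is sound and keeps the argument self-contained, but it re-proves something one could simply cite.
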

\begin{proof}
This is a direct corollary of the work of Bruhat--Tits. Indeed, according to \cite{BrTi2}*{Proposition~4.3.3, (ii)}, the element $ l $ appearing in Definition~\ref{Def:gamma} has a maximal valuation amongst elements of $ L $ of trace $ 1 $.
\end{proof}

In the following definition of point stabilisers, we again use the notation introduced in Definition~\ref{Def:valuation of a matrix}.
\begin{definition}\label{Def:pointstabiliserforSU ramifiedeven}
Let $ \gamma $ be the parameter associated with the extension $ L $ of $ K $ as in Definition~\ref{Def:gamma}. For $ x\in \R $, we define 
\begin{equation*}
P_{x} = \lbrace g\in \SU_{3}^{L/K}(K)~\vert~ \omega (g)\geq \begin{psmallmatrix}
0 & -\frac{x}{2} - \gamma & -x \\ 
\frac{x}{2} + \gamma & 0 & -\frac{x}{2} + \gamma \\
x & \frac{x}{2} - \gamma & 0
\end{psmallmatrix}\rbrace
\end{equation*}
\end{definition}


\begin{definition}\label{Def:sbgrp N SU ramifiedeven}
Consider the following subsets
\begin{enumerate}[$ \bullet $]
\item $ T = \lbrace \begin{psmallmatrix}
x & 0 & 0 \\ 
0 & x^{-1}\bar{x} & 0 \\
0 & 0 & \bar{x}^{-1}
\end{psmallmatrix}~\vert~x\in L^{\times}\rbrace < \SU_3^{L/K} (K) $
\item $ M =\lbrace \begin{psmallmatrix}
0 & 0 & x \\ 
0 & -x^{-1}\bar{x} & 0 \\
\bar{x}^{-1} & 0 & 0
\end{psmallmatrix}~\vert~x\in L^{\times}\rbrace \subset \SU_3^{L/K} (K) $
\end{enumerate}
and let $ N = T\sqcup M $.
\end{definition}

\begin{definition}\label{Def:affineactionforN ramifiedeven}
Let $ \nu\colon N\to \Aff (\R) $ be defined as follows: for $ m = \begin{psmallmatrix}
0 & 0 & x \\ 
0 & -x^{-1}\bar{x} & 0 \\
\bar{x}^{-1} & 0 & 0
\end{psmallmatrix}\in M $, $ \nu (m) $ is the reflection through $ -\omega (x) $, while for $ t=\begin{psmallmatrix}
x & 0 & 0 \\ 
0 & x^{-1}\bar{x} & 0 \\
0 & 0 & \bar{x}^{-1}
\end{psmallmatrix}\in T $, $ \nu (t) $ is the translation by $ -2\omega (x) $.
\end{definition}

Then the Bruhat--Tits tree $ \mathcal{I} $ of $ \SU_3^{L/K} $ (recall that in this section, $ L $ is ramified of residue characterisitic $ 2 $) is the one obtained by applying Definition~\ref{Def:buildingassociated to a data} to the collection of subgroups $ \lbrace (P_x)_{x\in \R}, N \rbrace $ appearing in Definition~\ref{Def:pointstabiliserforSU ramifiedeven} and Definition~\ref{Def:sbgrp N SU ramifiedeven}, together with the homomorphism $ \nu \colon N\to \Aff (\R) $ of Definition~\ref{Def:affineactionforN ramifiedeven}.  We show in Appendix~\ref{App:A} that our definitions agree with \cite{BrTi1}*{7.4.1~and~7.4.2}, so that the given data is indeed obtained from a valued root datum of rank one on $ G $.

\begin{remark}\label{Rem:dependence on D of the tree rameven}
Note that the construction of the Bruhat--Tits tree of $ \SU_3^{L/K} $ depends on the pair $ (K,L) $. When needed, we keep track of this dependence by adding the subscript $ (K,L) $ to the objects involved. This gives rise to the notations $ \gamma_{(K,L)} $, $ (P_x)_{(K,L)} $, $ T_{(K,L)} $, $ M_{(K,L)} $, $ N_{(K,L)} $, $ \nu_{(K,L)} $ and $ \mathcal{I}_{(K,L)} $.
\end{remark}
\begin{remark}\label{Rem:regularity of the B-T tree SU rameven}
The Bruhat--Tits tree of $ \SU_3^{L/K} $ is actually the $ (\vert \overline{K}\vert+1) $-regular tree. Indeed, this follows from the fact that our definition of $ \mathcal{I} $ agrees with the one given in \cite{BrTi1}*{7.4.1 and 7.4.2}, and from the tables in \cite{Tits77}*{4.2 and 4.3}.
\end{remark}

\subsection{Local model of the Bruhat--Tits tree}
We now proceed to define a local model for the Bruhat--Tits tree of $ SU_3^{L/K} $ when $ L $ is ramified of residue characteristic $ 2 $. In fact, there are two kinds of local models: when the radius is small, the local action degenerates to an $ \SL_2 $-type action, whereas for large radii, the local action is (similar to) an $ \SU_3 $-type action. We introduce a new parameter which controls the meaning of small in this case.

\begin{definition}\label{Def:parameteri_0}
Set $ i_{0} = \min \lbrace r\in \N ~\vert~ \omega (\pi_{L}^{r})\geq \gamma \rbrace $. Equivalently, let $ \alpha $ be as in Lemma~\ref{Lem:goodformforL}. If $ \alpha = 0 $ (respectively if $ \alpha \neq 0 $), $ i_0 $ is such that $ \omega (\pi_{K}^{i_0}) = \omega (2) $ (respectively $ \omega (\pi_{K}^{i_0}) = \omega (\alpha ) $).
\end{definition}
Note that since $ \gamma $ only depends on the normalisation of the valuation on $ K $, $ i_0 $ is an intrinsic parameter of $ L/K $ (not even depending on the normalisation of the valuation).
\begin{lemma}\label{Lem:trace always of big valuation}
Let $ x $ be in $ \mathcal{O}_L $. Then $ x+\bar{x}\in \pi_K^{i_0}\mathcal{O}_K $.
\end{lemma}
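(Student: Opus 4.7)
The plan is to reduce the statement to a direct computation in the basis $\{1,t\}$ of $L$ over $K$, where $t$ is the generator of $L$ provided by Lemma~\ref{Lem:goodformforL}. First I would observe that since $L/K$ is totally ramified, any uniformiser of $L$ generates $\mathcal{O}_L$ as an $\mathcal{O}_K$-module. The element $t$ satisfying $t^2-\alpha t+\beta=0$ has valuation $\omega(t)=\omega(\beta)/2$ (this can be read off the minimal polynomial: if $\alpha=0$ it is immediate, and if $\alpha\neq 0$ the inequality $\omega(\alpha t)>\omega(\beta)$ forced by $\omega(\alpha)\geq \omega(\beta)$ gives $2\omega(t)=\omega(\beta)$). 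Hence $t$ is a uniformiser of $L$ and $\mathcal{O}_L=\mathcal{O}_K\oplus t\,\mathcal{O}_K$.

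Next, from the minimal polynomial $T^2-\alpha T+\beta$ one reads off that the Galois conjugate of $t$ is $\bar{t}=\alpha-t$. Thus, writing $x=a+bt$ with $a,b\in\mathcal{O}_K$, we get
\begin{equation*}
x+\bar{x}=(a+bt)+(a+b(\alpha-t))=2a+b\alpha,
\end{equation*}
and the task reduces to showing $\omega(2a+b\alpha)\geq \omega(\pi_K^{i_0})$.

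Finally, I would split into the two cases from Lemma~\ref{Lem:goodformforL}. If $\alpha=0$, then by the definition of $i_0$ we have $\omega(\pi_K^{i_0})=\omega(2)$, and $\omega(2a)\geq\omega(2)$ trivially since $a\in\mathcal{O}_K$. If $\alpha\neq 0$, then by definition $\omega(\pi_K^{i_0})=\omega(\alpha)$, and we have $\omega(b\alpha)\geq\omega(\alpha)$ together with $\omega(2a)\geq\omega(2)\geq\omega(\alpha)$, the last inequality being precisely the bound $\omega(\alpha)\leq\omega(2)$ provided by Lemma~\ref{Lem:goodformforL}. In both cases the non-archimedean triangle inequality yields $\omega(2a+b\alpha)\geq\omega(\pi_K^{i_0})$, which is the claim. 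No step is really the main obstacle here; the only point that requires care is correctly identifying $\omega(\pi_K^{i_0})$ with $\min(\omega(2),\omega(\alpha))$ directly from Definition~\ref{Def:parameteri_0}, which is what makes the two cases collapse to the same statement.
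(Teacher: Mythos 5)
Your proposal is correct and follows essentially the same route as the paper: decompose $\mathcal{O}_L = \mathcal{O}_K \oplus t\,\mathcal{O}_K$ with $t$ from Lemma~\ref{Lem:goodformforL}, compute $x+\bar{x} = 2a + b\alpha$, and invoke Definition~\ref{Def:parameteri_0}. You merely spell out a few details the paper leaves implicit (that $t$ is a uniformiser, that $\bar{t}=\alpha-t$, and the two-case check that $2a+b\alpha \in \pi_K^{i_0}\mathcal{O}_K$).
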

\begin{proof}
Let $ t,\alpha ,\beta $ be as in Lemma~\ref{Lem:goodformforL}, so that $ \mathcal{O}_L=\mathcal{O}_K\oplus t.\mathcal{O}_K $. Let $ x_1,x_2 \in \mathcal{O}_K $ such that $ x=x_1+tx_2 $. Then $ x+\bar{x} = 2x_1+\alpha x_2 $, which belongs to $ \pi_K^{i_0}\mathcal{O}_K $ by Definition~\ref{Def:parameteri_0}.
\end{proof}

Hence, we can divide the trace of any element of $ \mathcal{O}_L $ by $ \pi_K^{i_0} $ and still get an element of $ \mathcal{O}_K $. Since this map plays a central role, we make a formal definition.
\begin{definition}\label{Def:reduced trace}
Let $ L/K $ be as in this section, and let $ i_0 $ be the associated parameter as in Definition~\ref{Def:parameteri_0}. Given a uniformiser $ \pi_K $, we define a homomorphism of $ \mathcal{O}_K $-module $ \frac{Tr}{\pi_K^{i_0}}\colon \mathcal{O}_L\to \mathcal{O}_K\colon x\mapsto \frac{x+\bar{x}}{\pi_K^{i_0}} $ that we call the reduced trace map. More generally, for any $ \mathcal{O}_K $-algebra $ R $, the map $ R\otimes_{\mathcal{O}_K}\mathcal{O}_L\to R\otimes_{\mathcal{O}_K}\mathcal{O}_K\colon r\otimes x\mapsto r\otimes \frac{Tr}{\pi_K^{i_0}}(x) $ is also called the reduced trace map and is also denoted by $ \frac{Tr}{\pi_K^{i_0}} $. In particular, taking $ R $ to be $ \mathcal{O}_K/\mathfrak{m}_K^r $, we get a reduced trace map $ \frac{Tr}{\pi_K^{i_0}}\colon \mathcal{O}_L/\mathfrak{m}_L^{2r}\to \mathcal{O}_K/\mathfrak{m}_K^r $.
\end{definition}

The reduced trace map depends on a choice of uniformiser in $ \mathcal{O}_K $. However, its action on $ \mathcal{O}_L/\mathfrak{m}_L^{2r} $ only depends on the uniformiser modulo $ \mathfrak{m}_K^{r+1} $, as the following result shows. Technically, this result is only going to be used much later, but we decided to place it here to illustrate concretely how this reduced trace map acts on quotient.
\begin{lemma}\label{Lem:reduced trace does not depend on lifting uniformiser}
Let $ L/K $ and $ i_0 $ be as in Definition~\ref{Def:reduced trace}. Let $ \pi_K $ and $ \tilde{\pi}_K $ be two uniformisers of $ K $ such that  $ \pi_K - \tilde{\pi}_K \in \mathfrak{m}_K^{r+1} $. Then for all $ s\leq r $ and for all $ x\in \mathcal{O}_L/\mathfrak{m}_L^{2s} $, $ \frac{Tr}{\pi_K^{i_0}}(x) = \frac{Tr}{\tilde{\pi}_K^{i_0}}(x) $.
\end{lemma}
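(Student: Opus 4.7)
The plan is to compare the two normalisations by writing one uniformiser as a unit multiple of the other and then controlling this unit. Since both $\pi_K$ and $\tilde\pi_K$ are uniformisers of $\mathcal{O}_K$, their ratio $v = \pi_K/\tilde\pi_K$ lies in $\mathcal{O}_K^\times$. The assumption $\pi_K - \tilde\pi_K \in \mathfrak{m}_K^{r+1}$ translates into $\tilde\pi_K(v-1)\in\mathfrak{m}_K^{r+1}$, hence $v \in 1+\mathfrak{m}_K^r$, and consequently $v^{-i_0}\in 1+\mathfrak{m}_K^r$ as well.

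Next, I would verify that the reduced trace maps are well defined on the quotient. For this, observe that if $y\in\mathfrak{m}_L^{2s}$, write $y = \pi_K^s z$ with $z\in\mathcal{O}_L$; then Lemma~\ref{Lem:trace always of big valuation} gives $z+\bar z\in\pi_K^{i_0}\mathcal{O}_K$, so $y+\bar y\in\pi_K^{s+i_0}\mathcal{O}_K$ and $(y+\bar y)/\pi_K^{i_0}\in\mathfrak{m}_K^s$. The same computation works for $\tilde\pi_K$, showing that both $\frac{Tr}{\pi_K^{i_0}}$ and $\frac{Tr}{\tilde\pi_K^{i_0}}$ descend to well defined maps $\mathcal{O}_L/\mathfrak{m}_L^{2s}\to\mathcal{O}_K/\mathfrak{m}_K^s$.

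For the comparison itself, I would lift $x$ to $\tilde x\in\mathcal{O}_L$ and compute
\begin{equation*}
\frac{\tilde x+\overline{\tilde x}}{\pi_K^{i_0}} \;=\; \frac{\tilde x+\overline{\tilde x}}{\tilde\pi_K^{i_0} v^{i_0}} \;=\; v^{-i_0}\cdot \frac{\tilde x+\overline{\tilde x}}{\tilde\pi_K^{i_0}},
\end{equation*}
so the two candidates for $\frac{Tr}{\pi_K^{i_0}}(x)$ and $\frac{Tr}{\tilde\pi_K^{i_0}}(x)$ in $\mathcal{O}_K$ differ by the factor $v^{-i_0}$. Hence their difference is
\begin{equation*}
(v^{-i_0}-1)\cdot \frac{\tilde x+\overline{\tilde x}}{\tilde\pi_K^{i_0}} \;\in\; \mathfrak{m}_K^r\cdot\mathcal{O}_K \;=\; \mathfrak{m}_K^r \;\subseteq\; \mathfrak{m}_K^s,
\end{equation*}
where the last inclusion uses the standing hypothesis $s\leq r$. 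Reducing modulo $\mathfrak{m}_K^s$ yields the claimed equality in $\mathcal{O}_K/\mathfrak{m}_K^s$.

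There is no serious obstacle here: the statement is essentially a bookkeeping exercise in how truncation of the uniformiser propagates through multiplication by $v^{-i_0}$. The only mild subtlety is to ensure that the reduced trace on the quotient is well defined, which is immediate from Lemma~\ref{Lem:trace always of big valuation}, and to notice that the exponent $i_0$ does not weaken the estimate because $v^{-i_0}$ still lies in $1+\mathfrak{m}_K^r$.
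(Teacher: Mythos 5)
Your proof is correct and follows essentially the same route as the paper: both arguments express the ratio $\pi_K/\tilde\pi_K$ as a unit in $1+\mathfrak{m}_K^r$ (the paper calls it $1+\delta$, you call it $v$), observe that the two lifted reduced traces differ by the factor $v^{-i_0}\in 1+\mathfrak{m}_K^r$, and conclude the difference lies in $\mathfrak{m}_K^r\subseteq\mathfrak{m}_K^s$. The extra verification that the reduced trace descends to the quotient is already built into Definition~\ref{Def:reduced trace} via the tensor-product description, so it is not strictly needed, but it does no harm.
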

\begin{proof}
Let $ x\in \mathcal{O}_L/\mathfrak{m}_L^{2s} $ and let $ x'\in \mathcal{O}_L $ be a lift of $ x $. Let $ \lambda $ and $ \tilde{\lambda} $ be elements of $ \mathcal{O}_K $ such that $ \pi_K^{i_0}\lambda = x'+\overline{x'} $ and $ \tilde{\pi}_K^{i_0}\tilde{\lambda} = x'+\overline{x'} $. We have to show that $ \lambda -\tilde{\lambda} $ is in $ \mathfrak{m}_K^s $.

By assumption, there exists $ \varepsilon \in \mathfrak{m}_K^{r+1} $ such that $ \tilde{\pi}_K = \pi_K+\varepsilon $. Hence $ \pi_K^{-1}.\tilde{\pi}_K = 1+\pi_K^{-1}\varepsilon $, so that there exists $ \delta \in \mathfrak{m}_K^r $ such that $ (1+\delta )\pi_K^{-1} = \tilde{\pi}_K^{-1} $. We thus get that $ \lambda = \pi_K^{-i_0}(x'+\overline{x'}) $, while $ \tilde{\lambda} = (1+\delta )^{i_0}\pi_K^{-i_0}(x'+\overline{x'}) $, so that $ \lambda -\tilde{\lambda} $ is in $ \mathfrak{m}_K^s $, as wanted.
\end{proof}

We now define local stabilisers. In case the radius is smaller than $ 2i_0 $, local stabilisers somehow degenerate to $ \SL_2 $-type stabilisers, which explains the surprising following definition. Ultimately, this fact is responsible for Chabauty convergence of groups of type $ \SU_3 $ to $ \SL_2 $.

\begin{definition}\label{Def:localdataforbadSU}
Let $ i_0 $ be the parameter associated to $ L/K $, let $ r\in \N $ and let $ x\in [-\omega (\pi_{L}^{2r}),\omega (\pi_{L}^{2r})] $.
\begin{enumerate}[$ \bullet $]
\item For $2r\leq 2i_0$, we define
$ P_{x}^{0,2r} = \lbrace g\in \SL_{2}(\mathcal{O}_{L}/\mathfrak{m}_{L}^{2r})~\vert~ \omega (g)\geq \begin{psmallmatrix}
0 & -x \\
x & 0
\end{psmallmatrix} \rbrace $
\item For $ 2r>2i_0 $, let $ t $ be a uniformiser of $ \mathcal{O}_{L} $ and let $ \beta = t\bar{t} $. We define
\begin{align*}
 P_{x}^{0,2r} = \lbrace  g \in \SL_3(\mathcal{O}_L/\mathfrak{m}_L^{2r})~\vert~^S\bar{g}\begin{psmallmatrix} 1&0 &0\\
0&\beta^{i_0}&0\\
0&0&1 \end{psmallmatrix}g=\begin{psmallmatrix} 1&0 &0\\
0&\beta^{i_0}&0\\
0&0&1 \end{psmallmatrix}, ~\omega (g)\geq \begin{psmallmatrix}
0 & -\frac{x}{2} &-x \\
\frac{x}{2} &0&-\frac{x}{2} \\
x & \frac{x}{2} &0
\end{psmallmatrix}\\
\frac{Tr}{\beta^{i_0}}(\bar{g}_{31}g_{11}) = -N(g_{21}), \frac{Tr}{\beta^{i_0}}(\bar{g}_{33}g_{13}) = -N(g_{23}) \rbrace.
\end{align*}
When needed, we emphasize the dependence on $ t $ by denoting this group $ P_x^{0,2r}(t) $.
\end{enumerate}
\end{definition}
\begin{remark}
It is not a priori clear that $  P_{x}^{0,2r} $ is a group when $ 2r>2i_0 $. This is a consequence of our work on integral model, see Corollary~\ref{Cor:local model are subgroups}.
\end{remark}
\begin{remark}
We only give local models for an even radius. It seems a priori possible to give local models for odd radius as well, but it would make this section even more technical than it already is, so that we decided to not dwell on those complications.
\end{remark}
\begin{remark}
If one defines $ \tilde{P}_x^{0,2r} $ by taking the second (more complicated) definition for all $ r\in \N $, then one can see (by carrying the kind of computations we perform later) that for $ 2r\leq 2i_0 $, $ P_x^{0,2r} $ is a kind of reductive quotient of $ \tilde{P}_x^{0,2r} $.
\end{remark}

We now show that $ P_x^{0,2r}(t) $ does not depend on $ t $.
\begin{lemma}\label{Lem:local stabiliser does not depend on choice of uniformiser}
Let $ 2r>2i_0 $, and let $ t,t' $ be uniformisers of $ \mathcal{O}_L $. Then for all $ x\in [-\omega (\pi_{L}^{2r}),\omega (\pi_{L}^{2r})] $, $ P_x^{0,2r}(t)\cong P_x^{0,2r}(t') $.
\end{lemma}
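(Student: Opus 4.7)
The plan is to exploit that any two uniformisers of $\mathcal{O}_L$ differ by a unit. Write $t' = ut$ with $u \in \mathcal{O}_L^{\times}$, and set $v := u\bar{u} \in \mathcal{O}_K^{\times}$, so that $\tilde\beta := t'\overline{t'} = v\beta$ and $\tilde\beta^{i_0} = v^{i_0}\beta^{i_0}$. The core observation is that the two Hermitian matrices $J := \mathrm{diag}(1,\beta^{i_0},1)$ and $\tilde J := \mathrm{diag}(1,\tilde\beta^{i_0},1)$ are related by a diagonal base change: setting $M := \mathrm{diag}(1, u^{-i_0}, 1) \in \GL_3(\mathcal{O}_L/\mathfrak{m}_L^{2r})$, a direct computation using $N(u^{-i_0}) = v^{-i_0}$ yields ${}^{S}\bar{M}\, \tilde J\, M = J$.

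The candidate isomorphism is then the inner automorphism $\varphi \colon g \mapsto MgM^{-1}$ of $\SL_3(\mathcal{O}_L/\mathfrak{m}_L^{2r})$, which is well defined because $M$ is invertible. To see that $\varphi$ restricts to a bijection $P_x^{0,2r}(t) \to P_x^{0,2r}(t')$, one checks that each of the three families of defining conditions is preserved. Since $u^{-i_0}$ is a unit, the entries of $g' := MgM^{-1}$ satisfy $\omega(g'_{ij}) = \omega(g_{ij})$ for all $i,j$, so the valuation inequality is preserved verbatim. The modified Hermitian condition transforms by
\[ {}^{S}\bar{g'}\, \tilde J\, g' \;=\; M^{-1}\bigl({}^{S}\bar{g}\,({}^{S}\bar{M}\, \tilde J\, M)\, g\bigr) M \;=\; M^{-1}\,({}^{S}\bar{g}\, J\, g)\, M, \]
which equals $\tilde J$ if and only if ${}^{S}\bar{g}\, J\, g = J$.

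For the two trace conditions, one uses the rescaling identity $\frac{Tr}{\tilde\beta^{i_0}}(x) = v^{-i_0}\, \frac{Tr}{\beta^{i_0}}(x)$. Since $g'_{11} = g_{11}$, $g'_{31} = g_{31}$ and $g'_{21} = u^{-i_0}g_{21}$, one has $N(g'_{21}) = v^{-i_0}N(g_{21})$, so both sides of the identity $\frac{Tr}{\tilde\beta^{i_0}}(\bar{g'}_{31}g'_{11}) = -N(g'_{21})$ pick up the same unit factor $v^{-i_0}$ with respect to its analogue for $g$; the identity for the third column is handled symmetrically. No serious obstacle arises, as $u^{-i_0}$ is manifestly a unit in $\mathcal{O}_L/\mathfrak{m}_L^{2r}$; the subtle point is simply that the exponent $i_0$ in $M$ is pinned down by the requirement that the unit $v^{-i_0}$ produced by rescaling the reduced trace exactly cancels the one produced by $N(u^{-i_0})$ on the norm side.
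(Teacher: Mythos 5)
Your approach coincides with the paper's: both exhibit a conjugating diagonal matrix whose middle entry is the $i_0$-th power of the unit $u = t'/t$ (or its inverse). Your $M = \diag(1,u^{-i_0},1)$ is the inverse of the paper's $\diag(1,t^{-i_0}(t')^{i_0},1)$, and since conjugating either way gives the isomorphism, that choice is immaterial. The supporting observations you supply --- the identity ${}^S\bar M\,\tilde J\, M = J$, the preservation of entry valuations, the rescaling $\frac{Tr}{\tilde\beta^{i_0}} = v^{-i_0}\frac{Tr}{\beta^{i_0}}$, and $N(u^{-i_0}) = v^{-i_0}$ --- are all correct and make explicit what the paper dismisses as ``readily seen.''

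However, the displayed Hermitian computation is incorrect as written. Since $g' = MgM^{-1}$ and ${}^S(\,\cdot\,)$ is an anti-automorphism, one has ${}^S\bar{g'} = {}^S(\bar M^{-1})\,{}^S\bar g\,{}^S\bar M$, whence
\[
{}^S\bar{g'}\,\tilde J\,g' \;=\; {}^S(\bar M^{-1})\bigl({}^S\bar g\,({}^S\bar M\,\tilde J\, M)\,g\bigr)M^{-1}.
\]
The outer factors are ${}^S(\bar M^{-1}) = \diag(1,\bar u^{i_0},1)$ and $M^{-1} = \diag(1,u^{i_0},1)$, not $M^{-1}$ and $M$ as in your display. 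The discrepancy is not cosmetic: substituting ${}^S\bar g J g = J$ into \emph{your} formula gives $M^{-1}JM = J$, not $\tilde J$, so the formula as you wrote it actually contradicts the conclusion you then assert. With the corrected outer factors one gets ${}^S(\bar M^{-1})JM^{-1} = \diag(1,\bar u^{i_0}\beta^{i_0}u^{i_0},1) = \diag(1,v^{i_0}\beta^{i_0},1) = \tilde J$, which is what you want. The stated conclusion is therefore right, but it does not follow from the displayed identity; two independent slips silently cancel in your last sentence. The overall strategy remains sound and is the same as the paper's.
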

\begin{proof}
Those groups are actually conjugate. Indeed, the map 
\begin{equation*}
P_x^{0,2r}(t)\to P_x^{0,2r}(t')\colon g\mapsto \begin{psmallmatrix} 1&0 &0\\
0&t^{-i_0}(t')^{i_0}&0\\
0&0&1 \end{psmallmatrix}g\begin{psmallmatrix} 1&0 &0\\
0&t^{i_0}(t')^{-i_0}&0\\
0&0&1 \end{psmallmatrix}
\end{equation*}
is readily seen to be an isomorphism.
\end{proof}
\begin{remark}
In Definition~\ref{Def:localdataforbadSU} and in the case $ 2r>2i_0 $, one could bypass the choice of a uniformiser $ t\in \mathcal{O}_L $ by directly choosing a uniformiser $ \beta \in \mathcal{O}_K $ and proceed to make the same definition. However, it would then not be clear that the definition does not depend on the choice of $ \beta $, because there are uniformisers of $ \mathcal{O}_K $ that are not the norm of uniformisers of $ \mathcal{O}_L $ (see \cite{S79}*{Chapter~V, §3, Remark}).
\end{remark}

\begin{definition}
Let $ r\in \N $. For $ 2r\leq 2i_0 $, we define
\begin{enumerate}
\item $ H^{0,2r} = \lbrace \begin{psmallmatrix}
x & 0 \\
0 & x^{-1}
\end{psmallmatrix}\in \SL_{2}(\mathcal{O}_{L}/\mathfrak{m}_{L}^{2r})~\vert~ \omega (x) = 0 \rbrace $
\item $ M^{0,2r} = \lbrace \begin{psmallmatrix}
0 & -x \\
x^{-1} & 0
\end{psmallmatrix}\in \SL_{2}(\mathcal{O}_{L}/\mathfrak{m}_{L}^{2r})~\vert~ \omega (x) = 0 \rbrace $,
\end{enumerate} while for $ 2r>2i_0 $, we define
\begin{enumerate}
\item $ H^{0,2r} = \lbrace \begin{psmallmatrix}
x & 0 & 0 \\
0 & x^{-1}\bar{x} & 0 \\
0 &0 & \bar{x}^{-1}
\end{psmallmatrix}\in \SL_{3}(\mathcal{O}_{L}/\mathfrak{m}_{L}^{2r})~\vert~ \omega (x) = 0 \rbrace $
\item $ M^{0,2r} = \lbrace \begin{psmallmatrix}
0 & 0 & x \\
0 & -x^{-1}\bar{x} & 0 \\
\bar{x}^{-1} &0 & 0
\end{psmallmatrix}\in \SL_{3}(\mathcal{O}_{L}/\mathfrak{m}_{L}^{2r})~\vert~ \omega (x) = 0 \rbrace $
\end{enumerate}
And then for all $ r\in \N $, we set $ N^{0,2r} = H^{0,2r}\sqcup M^{0,2r} $.
\end{definition}
\begin{definition}
For all $ r\in \N $, we let $ H^{0,2r} $ act trivially on $ \R $, and we let all elements of $ M^{0,2r} $ act as a reflection through $ 0\in \R $. This gives an affine action of $ N^{0,2r} $ on $ \R $, and we denote again the resulting map $ N^{0,2r}\to \Aff (\R) $ by $ \nu $.
\end{definition}

We are now able to give a local definition of the ball of radius $ 2r $ around $ [(\Id ,0)]\in \mathcal{I} $. Since the reduced trace is involved in the definition when $ 2r>2i_0 $, this local definition depends on a bit more than $ \mathcal{O}_L/\mathfrak{m}_L^{2r} $. Using Lemma~\ref{Lem:controlling the reduced norm}, we prove in Proposition~\ref{Prop:aritmimpliesgeomproximityforSU3 rameven} that it only depends on $ \mathcal{O}_L/\mathfrak{m}_L^{2r+2i_0} $.
\begin{definition}\label{Def:local tree of radius r rameven}
Let $ r\in \N $. We define a $ 2r $-local equivalence on $ P_{0}^{0,2r}\times [-\omega (\pi_L^{2r}), \omega (\pi_L^{2r})] $ as follows. For $ g,h\in P_{0}^{0,2r} $ and $ x,y\in [-\omega (\pi_L^{2r}), \omega (\pi_L^{2r})] $
\begin{equation*}
(g,x)\sim_{0,2r}(h,y) \Leftrightarrow \textrm{ there exists } n\in N^{0,2r} \textrm{ such that } \nu (n)(x)=y \textrm{ and } g^{-1}hn \in P_{x}^{0,2r}
\end{equation*}
The resulting space $ \mathcal{I}^{0,2r} = P_{0}^{0,2r}\times [-\omega (\pi_L^{2r}), \omega (\pi_L^{2r})]/\sim_{0,2r} $ is called the local Bruhat--Tits tree of radius $ 2r $ around $ 0 $, and $ [(g,x)]^{0,2r} $ stands for the equivalence class of $ (g,x) $ in $ \mathcal{I}^{0,2r} $. The group $ P_{0}^{0,2r} $ acts on $ \mathcal{I}^{0,2r} $ by multiplication on the first component.
\end{definition}

\begin{remark}\label{Rem:dependence on D of the local tree rameven}
Note that the construction of the local Bruhat--Tits tree of $ \SU_3^{L/K} $ depends on the pair $ (K,L) $ (which is assumed to be ramified and of residue characteristic $ 2 $ in this section). When needed, we keep track of this dependence by adding the subscript $ (K,L) $ to the objects involved. This gives rise to the notations $ (i_0)_{(K,L)} $, $ (P_x^{0,2r})_{(K,L)} $, $ H^{0,2r}_{(K,L)} $, $ M^{0,2r}_{(K,L)} $, $ N^{0,2r}_{(K,L)} $ and $ \mathcal{I}^{0,2r}_{(K,L)} $.
\end{remark}

\begin{remark}\label{Rem:equivalent def with for all n for local model ram even}
Also, Remark~\ref{Rem:equivalent def with for all n for local model} holds equally well in this case, with exactly the same proof, upon replacing all $ d $'s by $ 2 $.
\end{remark}

\subsection{Integral model}
We now proceed to define an integral model, and compare its rational points with our local model for the Bruhat--Tits tree of $ SU_3^{L/K} $ when $ L $ is ramified of residue characteristic $ 2 $. The construction is more involved, because the naive integral model is not smooth in this case. In the following definition, we denote by $ \mathcal{R}^{\mathcal{O}_L}_{\mathcal{O_K}}\SL_3 $ the Weil restriction from $ \mathcal{O}_L $ to $ \mathcal{O}_K $ of $ \SL_3 $.
\begin{definition}\label{Def:integral model bad SU}
Let $ L/K $ be as in this section, let $ t $ be a uniformiser of $ L $ and let $ i_0 $ be as in Definition~\ref{Def:parameteri_0}. Consider the homomorphism of $K$-group schemes 
\begin{align*}
\varphi_{i_0}\colon \SU_3^{L/K}&\to [\mathcal{R}^{\mathcal{O}_L}_{\mathcal{O_K}}\SL_3]_K \\
g&\mapsto \begin{psmallmatrix} g_{11}&t^{i_0}g_{12} &g_{13}\\
t^{-i_0}g_{21}&g_{22}&t^{-i_0}g_{23}\\
g_{31}&t^{i_0}g_{32}&g_{33} \end{psmallmatrix}
\end{align*}
We define $ \underline{\SU}_3^{L/K} $ to be the schematic adherence of $ \varphi_{i_0} (\SU_3^{L/K}) $ in $ \mathcal{R}^{\mathcal{O}_L}_{\mathcal{O_K}}\SL_3 $.
\end{definition}
\begin{remark}
Since two uniformisers of $ L $ differ by an invertible element of $ \mathcal{O}_L $, it follows from the definition of the schematic adherence that $ \underline{\SU}_3^{L/K} $ does not depend on the choice of a uniformiser $ t\in \mathcal{O}_L $. One can also check that directly using our explicit description of $ \underline{\SU}_3^{L/K} $ in Theorem~\ref{Thm:smoothness of integral model rameven}. 
\end{remark}
\begin{remark}\label{Rem:underlineGisthesameinTits}
The concrete description given here was derived from the general construction made in \cite{BrTi3}, see especially section $ 3.9 $ and the Theorem in section $ 5 $ of loc. cit. In Theorem~\ref{Thm:smoothness of integral model rameven}, we give the defining equations of $ \underline{\SU}_3 $ inside $ \mathcal{R}^{\mathcal{O}_L}_{\mathcal{O_K}}\SL_3 $. This allows us to prove in a direct way that it is a smooth $ \mathcal{O}_{K} $-group scheme, without referring to loc. cit.
\end{remark}
\begin{lemma}\label{Lem:intergal model is a group}
$ \underline{\SU}_3^{L/K} $ is an $ \mathcal{O}_K $-group scheme.
\end{lemma}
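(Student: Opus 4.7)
The strategy is the standard one for showing that a schematic closure of a subgroup scheme is a subgroup scheme: combine flatness of the closure with density of its generic fibre. I would organise the argument into the following steps.

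First, I would record the preliminary facts about the ambient scheme. The $\mathcal{O}_K$-group scheme $\mathcal{R}^{\mathcal{O}_L}_{\mathcal{O}_K}\SL_3$ is affine, flat, and (in fact smooth) over $\mathcal{O}_K$, since $\mathcal{O}_L/\mathcal{O}_K$ is finite flat and Weil restriction along such a morphism preserves affine group schemes together with smoothness and flatness. Next, $\varphi_{i_0}$ is a $K$-group scheme homomorphism: one checks on $R$-points for any $K$-algebra $R$ that the formula amounts to conjugation by $\mathrm{diag}(1,t^{i_0},1)\in\GL_3(L\otimes_K R)$, composed with the canonical inclusion $\SU_3^{L/K}(R)\hookrightarrow \SL_3(L\otimes_K R)$; both operations are group homomorphisms. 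It is moreover a closed immersion, since $\SU_3^{L/K}$ is a smooth affine $K$-group and $\varphi_{i_0}$ has trivial kernel on $\bar{K}$-points.

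Second, I would show that $\underline{\SU}_3^{L/K}$ is flat over $\mathcal{O}_K$. This is a general feature of schematic adherences inside flat schemes over a Dedekind base: the defining ideal $I$ of $\underline{\SU}_3^{L/K}\subset \mathcal{R}^{\mathcal{O}_L}_{\mathcal{O}_K}\SL_3$ is by construction the kernel of the composite $\mathcal{O}(\mathcal{R}^{\mathcal{O}_L}_{\mathcal{O}_K}\SL_3) \to \mathcal{O}([\mathcal{R}^{\mathcal{O}_L}_{\mathcal{O}_K}\SL_3]_K)\to \mathcal{O}(\varphi_{i_0}(\SU_3^{L/K}))$, so the quotient embeds into a $K$-algebra, hence is torsion-free as an $\mathcal{O}_K$-module, hence flat. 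In particular, the generic fibre of $\underline{\SU}_3^{L/K}$ is precisely $\varphi_{i_0}(\SU_3^{L/K})$, which I denote by $H$, and the closed immersion $\underline{\SU}_3^{L/K}\hookrightarrow \mathcal{R}^{\mathcal{O}_L}_{\mathcal{O}_K}\SL_3$ is flat-schematically dominant.

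Third comes the main step: transporting the group laws. The guiding principle is that if $X$ is flat over $\mathcal{O}_K$, $Y$ is an $\mathcal{O}_K$-scheme, $Z\subset Y$ a closed subscheme, and a morphism $f\colon X\to Y$ satisfies $f_K(X_K)\subset Z_K$, then $f$ factors through $Z$. (Indeed, flatness makes $X_K\subset X$ scheme-theoretically dense, and pulling back the defining ideal of $Z$ yields an ideal of $\mathcal{O}(X)$ that is both torsion and vanishes on $X_K$, hence is zero.) I apply this with $X$ equal to, successively, $\underline{\SU}_3^{L/K}\times_{\mathcal{O}_K}\underline{\SU}_3^{L/K}$ (flat since both factors are), $\underline{\SU}_3^{L/K}$, and $\Spec \mathcal{O}_K$; with $Y=\mathcal{R}^{\mathcal{O}_L}_{\mathcal{O}_K}\SL_3$ and $Z=\underline{\SU}_3^{L/K}$; and with $f$ the multiplication of the ambient group, the inversion, and the identity section, respectively. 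In each case the generic fibre lands in $H=\underline{\SU}_3^{L/K}\times_{\mathcal{O}_K}K$ because $H$ is a $K$-subgroup scheme, so by the principle above $f$ factors through $\underline{\SU}_3^{L/K}$, endowing it with an $\mathcal{O}_K$-group scheme structure for which the inclusion into $\mathcal{R}^{\mathcal{O}_L}_{\mathcal{O}_K}\SL_3$ is a homomorphism.

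There is no serious obstacle here; the only point that deserves care is the bookkeeping in the third step, in particular to check that $\underline{\SU}_3^{L/K}\times_{\mathcal{O}_K}\underline{\SU}_3^{L/K}$ is flat (so that its generic fibre is indeed dense), which follows from flatness of each factor and the fact that the tensor product of flat modules is flat. Once this is in place, the three factorisations yield the unit, inversion and multiplication of $\underline{\SU}_3^{L/K}$, and the group axioms for these are automatic from their validity on the (dense) generic fibre.
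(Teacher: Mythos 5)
Your proof is correct and follows the same underlying strategy as the paper's: observe that the ambient Weil restriction $\mathcal{R}^{\mathcal{O}_L}_{\mathcal{O}_K}\SL_3$ is smooth (hence flat) over $\mathcal{O}_K$, and then use that the schematic adherence of a subgroup of the generic fibre in a flat ambient group scheme is again a group scheme. The only real difference is presentational: the paper delegates the last step to a citation of \cite{BrTi2}*{1.2.7}, whereas you reprove that general principle from scratch via torsion-freeness over the DVR $\mathcal{O}_K$ and density of the generic fibre, which is a perfectly standard and complete argument.
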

\begin{proof}
Since $ \SL_3 $ is smooth over $ \mathcal{O}_L $, $ \mathcal{R}^{\mathcal{O}_L}_{\mathcal{O_K}}\SL_3 $ is smooth and hence flat over $ \mathcal{O}_K $. The result then follows from \cite{BrTi2}*{1.2.7}.
\end{proof}

We are actually able to determine explicitly the equations defining $ \underline{\SU}_3^{L/K} $. Recall the reduced trace $ \frac{Tr}{\pi_K^{i_0}}\colon \mathcal{O}_L\to \mathcal{O}_K $ introduced in Definition~\ref{Def:reduced trace}.
\begin{theorem}\label{Thm:smoothness of integral model rameven}
Keep the notations of Definition~\ref{Def:integral model bad SU} and let $ \beta = t\bar{t} $. For any $ \mathcal{O}_K $-algebra $ R $, 
\begin{align*}
\underline{\SU}_3^{L/K}(R) = \lbrace g\in &\mathcal{R}^{\mathcal{O}_L}_{\mathcal{O_K}}\SL_3(R)~\vert~^S\bar{g}\begin{psmallmatrix} 1&0 &0\\
0&\beta^{i_0}&0\\
0&0&1 \end{psmallmatrix}g=\begin{psmallmatrix} 1&0 &0\\
0&\beta^{i_0}&0\\
0&0&1 \end{psmallmatrix},\\
&\frac{Tr}{\beta^{i_0}}(\bar{g}_{31}g_{11}) = -N(g_{21}), \frac{Tr}{\beta^{i_0}}(\bar{g}_{33}g_{13}) = -N(g_{23})\rbrace
\end{align*} Furthermore, $ \underline{\SU}_3^{L/K} $ is smooth over $ \mathcal{O}_K $.
\end{theorem}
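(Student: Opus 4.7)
The plan is to define the closed subscheme $X \subseteq \mathcal{R}^{\mathcal{O}_L}_{\mathcal{O}_K}\SL_3$ cut out by the three families of equations in the statement, and to show that $X$ coincides with $\underline{\SU}_3^{L/K}$ by verifying that $X$ is smooth over $\mathcal{O}_K$ with generic fiber $X_K = \varphi_{i_0}(\SU_3^{L/K})$. Since the schematic adherence is the unique flat closed subscheme of $\mathcal{R}^{\mathcal{O}_L}_{\mathcal{O}_K}\SL_3$ with that generic fiber, this will simultaneously establish the explicit description of rational points and the smoothness assertion.

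First I would verify that $\varphi_{i_0}(\SU_3^{L/K}(K)) \subseteq X(K)$. The map $\varphi_{i_0}$ is conjugation by the diagonal matrix $D = \mathrm{diag}(1, t^{-i_0}, 1)$, which satisfies $^SD = D$ and $D\bar{D} = \mathrm{diag}(1, \beta^{-i_0}, 1)$; a short manipulation then transforms the unitary identity $^S\bar{h}h = \mathrm{Id}$ into $^S\bar{g}\,\mathrm{diag}(1,\beta^{i_0},1)\,g = \mathrm{diag}(1,\beta^{i_0},1)$. The two reduced trace identities come from the vanishing of the $(3,1)$- and $(1,3)$-entries of $^S\bar{h}h$, which read $\mathrm{Tr}(\bar{h}_{31}h_{11}) = -N(h_{21})$ and $\mathrm{Tr}(\bar{h}_{33}h_{13}) = -N(h_{23})$; substituting $h_{2j} = t^{i_0}g_{2j}$ so that $N(h_{2j}) = \beta^{i_0}N(g_{2j})$ and dividing by $\beta^{i_0}$ produces exactly the stated equations. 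Conversely, $D$ is invertible over $K$, so running the same substitution in reverse yields $X_K \subseteq \varphi_{i_0}(\SU_3^{L/K})$; hence $X_K = \varphi_{i_0}(\SU_3^{L/K})$ is a form of $\SL_3$, smooth and irreducible of dimension $8$.

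The heart of the proof is showing that $X$ is smooth over $\mathcal{O}_K$, for which I would invoke Lemma~\ref{Lem:Proving flatness of schemes over local rings}. For the required closed immersion $\mathbf{A}^1_{\mathcal{O}_K} \hookrightarrow X$, take the one-parameter family $c \mapsto u(c) := \mathrm{Id} + c\,E_{1,3}$, where $c$ ranges over $\ker(\mathrm{Tr}\colon \mathcal{O}_L \to \mathcal{O}_K)$, a free rank-one $\mathcal{O}_K$-module; one checks directly that $u(c)$ satisfies the unitary relation (since $c + \bar{c} = 0$) and both reduced trace equations (trivially, because $g_{21} = g_{23} = g_{31} = 0$). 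For the special fiber, $X_{\overline{K}}$ is obtained by reducing all equations modulo $\pi_K$: the unitary relation degenerates to $^S\bar{g}\,\mathrm{diag}(1,0,1)\,g = \mathrm{diag}(1,0,1)$, constraining only the outer rows and columns, whereas the two reduced trace equations retain non-trivial content thanks to Lemma~\ref{Lem:reduced trace does not depend on lifting uniformiser}. I would then show that $X_{\overline{K}}$ is a smooth connected extension of a copy of $(\SO_3)_{\overline{K}}$ by a connected unipotent group, and that a Lie algebra computation gives a tangent space of dimension $8$ at the identity, matching the generic fiber.

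The hardest step will be this special-fiber analysis: the defining equations degenerate non-trivially precisely because of residue characteristic $2$ and ramification, so one must carefully track which relations become redundant, which impose genuine constraints, and verify that the reduced trace conditions cut out exactly the correct codimension rather than being automatic. This is where the appearance of orthogonal groups in characteristic $2$ (cf.\ Remark~\ref{Rem:reductive quotient of the closed fibre}) makes the bookkeeping genuinely subtle, in contrast to the straightforward analysis of the odd-residue-characteristic case treated in Section~\ref{Sec:SU3L/K ramified}. Once smoothness of $X_{\overline{K}}$ is established, Lemma~\ref{Lem:Proving flatness of schemes over local rings} yields flatness of $X$, and smoothness of both fibers combined with flatness delivers smoothness of $X$; then $X = \underline{\SU}_3^{L/K}$ by the universal property of the schematic closure, completing the proof.
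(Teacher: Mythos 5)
Your high-level structure matches the paper's proof: define $X$ by the listed equations, check that the generic fibre is $\varphi_{i_0}(\SU_3^{L/K})$, embed $\mathbf{A}^1_{\mathcal{O}_K}$ along the trace-zero line so that Lemma~\ref{Lem:Proving flatness of schemes over local rings} yields flatness, and conclude $X=\underline{\SU}_3^{L/K}$ by uniqueness of the flat closure. Your $\mathbf{A}^1$-embedding $c\mapsto\Id + cE_{1,3}$ with $c$ trace-zero is exactly the paper's $x\mapsto\begin{psmallmatrix}1&0&x\tau\\0&1&0\\0&0&1\end{psmallmatrix}$, and the generic-fibre verification via conjugation by $\mathrm{diag}(1,t^{-i_0},1)$ is correct.

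The gap is the special-fibre analysis, which is where all the work of this theorem actually lies, and your sketch of it would not go through as written. You propose to present $X_{\overline{K}}$ as an extension of $(\SO_3)_{\overline{K}}$ by a connected unipotent group, transplanting the argument of Theorem~\ref{Thm:smoothness of SU3 ramified}. But in residue characteristic $2$ the functor $\lbrace g\in\SL_3 : {}^Sgg = \Id\rbrace$ playing the role of $\SO_3$ in the odd case is neither smooth nor even reduced, so exhibiting $X_{\overline{K}}$ as such an extension cannot by itself yield the smoothness or the dimension count, and indeed there is no short exact sequence of the kind you describe with a smooth $\SO_3$ as quotient. The paper handles this step by a direct computation over an algebraically closed residue field: the defining equations (using that conjugation is trivial mod $\pi_K$ and $2=0$) force $g_{12}=g_{32}=0$ and $g_{22}=1$, one checks \emph{by hand}, with explicit characteristic-$2$ identities, that the resulting locus is a subgroup functor, one then factors it through $X_{\overline{K}}\to\SL_2(\overline{K}[t])\to\SL_2(\overline{K})$ with non-reduced kernels of dimensions $2$ and $3$ to get $\dim X_{\overline{K}}=8$ and connectedness, and finally one computes the tangent space at the identity to be $8$-dimensional to conclude smoothness. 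This tangent-space computation is indispensable precisely because the kernels along the way are non-reduced, so no extension argument spares you from it. Until you supply that analysis your proof does not close, and the $\SO_3$ route is the wrong one to try to complete it.
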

\begin{proof}\setcounter{claim}{0}
Recall that in loose terms, for $ Z $ an $ \mathcal{O}_K $-scheme and $ Y_K $ a closed subscheme of $ Z_K $, the schematic adherence of $ Y_K $ in $ Z $ is defined by all equations satisfied by $ Y_K $, where we ``put the valuation of the coefficients as low as possible'' (while still remaining in $ \mathcal{O}_K $). More precisely, let $ I(Y) $ be the ideal of $ K[Z] $ defining $ Y_K $ in $ Z_K $, and let $ j\colon \mathcal{O}_K[Z]\to K[Z] $ be the canonical map (which is injective if and only if $ Z $ is flat). Then the ideal defining the schematic adherence of $Y_K$ in $ Z $ is $ j^{-1}(I(Y)) $ (see \cite{BrTi2}*{1.2.6}).

Hence, an element $ g $ in $ \underline{\SU}_3^{L/K}(R) $ clearly satisfies $ ^S\bar{g}\begin{psmallmatrix} 1&0 &0\\
0&\beta^{i_0}&0\\
0&0&1 \end{psmallmatrix}g=\begin{psmallmatrix} 1&0 &0\\
0&\beta^{i_0}&0\\
0&0&1 \end{psmallmatrix} $. Moreover, since $ g $ satisfies this equation, we get in particular that $ \bar{g}_{33}g_{13}+\bar{g}_{23}\beta^{i_0}g_{23}+\bar{g}_{13}g_{33}=0 $, so that after division by $ \beta^{i_0} $, we get $ \frac{Tr}{\beta^{i_0}}(\bar{g}_{33}g_{13}) = -N(g_{23}) $. The remaining equation is obtained similarly.

Let $ X $ be the $\mathcal{O}_K$-closed subscheme of $ \mathcal{R}^{\mathcal{O}_L}_{\mathcal{O_K}}\SL_3 $ defined on the right hand side of the equality in the theorem. We have just showed that $ X $ contains $ \underline{\SU}_3^{L/K} $ as a closed subscheme, and we want to prove that they are actually equal. For this, let us study the fibres of $ X $ over $ \Spec \mathcal{O}_K $. Obviously, $ X_K $ is isomorphic to $ \SU_3^{L/K} $, and hence is smooth of dimension $ 8 $.

\begin{claim*}\label{Claim:smoothness}
$ X_{\overline{K}} $ is a smooth irreducible scheme of dimension $ 8 $.
\end{claim*}
\begin{claimproof}
To prove those claims, we can assume that the residue field $ \overline{K} $ is algebraically closed. Note that modulo $ \pi_K $, the Galois conjugation is trivial and $ t^2 = 0 $. Also note that $ \overline{K} $ is of characteristic $ 2 $, so that we do not need to use the minus sign for the moment. A direct application of the definitions shows that
\begin{align*}
 X_{\overline{K}}(\overline{K}) = \lbrace g\in \SL_3(\overline{K}[t])~&\vert~^Sg\begin{psmallmatrix} 1&0 &0\\
0&0&0\\
0&0&1 \end{psmallmatrix}g=\begin{psmallmatrix} 1&0 &0\\
0&0&0\\
0&0&1 \end{psmallmatrix},~\frac{Tr}{\beta^{i_0}}(g_{31}g_{11}) = (g_{21})^{2}, \frac{Tr}{\beta^{i_0}}(g_{33}g_{13}) = (g_{23})^{2} \rbrace, 
\end{align*}
Let $ g\in X_{\overline{K}}(\overline{K}) $. In particular, $ g $ satisfies the following equations:
\begin{align*}
g_{33}g_{11}+g_{13}g_{31}=1\\
g_{33}g_{12}+g_{13}g_{32}=0\\
g_{32}g_{11}+g_{12}g_{31}=0
\end{align*}

From this, we deduce that $ g_{12}=0 $ as follows:
\begin{align*}
g_{12} &= (g_{33}g_{11}+g_{13}g_{31})g_{12}=g_{11}g_{33}g_{12}+g_{13}g_{12}g_{31}\\
&=g_{11}g_{13}g_{32}+g_{13}g_{32}g_{11}=2(g_{11}g_{13}g_{32}) =0
\end{align*}

And using the same trick, we get $ g_{32}=0 $ as well. Now since $ g\in \SL_3(\overline{K}[t]) $, its determinant is $ 1 $. But since $ g_{12}=g_{32}=0 $, the equation $ \det g=1 $ degenerates to $ g_{11}g_{22}g_{33}+g_{13}g_{22}g_{31}=1 $. And using again that $ g_{33}g_{11}+g_{13}g_{31}=1 $, this implies that $ g_{22}=1 $.

We can summarise our results so far as follows:
\begin{align*}
 X_{\overline{K}}(\overline{K}) \cong \lbrace g\in \SL_3(\overline{K}[t])~&\vert~g_{22}=1,~g_{12} = g_{32} = 0,\\
&\frac{Tr}{\beta^{i_0}}(g_{31}g_{11}) = (g_{21})^{2},~\frac{Tr}{\beta^{i_0}}(g_{33}g_{13}) = (g_{23})^{2} \rbrace.
\end{align*}
In fact, the right hand side is a subgroup of $ \SL_3(\overline{K}[t]) $. To prove it, we write down two representative examples of the computations involved. First, for $ g,h \in \SL_3(\overline{K}[t]) $ and satisfying the conditions, we check that $ \frac{Tr}{\beta^{i_0}}((gh)_{33}(gh)_{13}) = ((gh)_{23})^{2} $. We have
\begin{align*}
(gh)_{33}(gh)_{13} &= (g_{31}h_{13}+g_{33}h_{33})(g_{11}h_{13}+g_{13}h_{33})\\
&=h_{13}^2g_{31}g_{11}+h_{33}h_{13}+h_{33}^2g_{33}g_{13}.
\end{align*} 

Hence, recalling that we are working in characteristic $ 2 $,
\begin{align*}
\frac{Tr}{\beta^{i_0}}((gh)_{33}(gh)_{13}) &= (h_{13})^2\frac{Tr}{\beta^{i_0}}(g_{31}g_{11})+\frac{Tr}{\beta^{i_0}}(h_{33}h_{13})+(h_{33})^2\frac{Tr}{\beta^{i_0}}(g_{33}g_{13})\\
&=(h_{13})^2(g_{21})^2+(h_{23})^2+(h_{33})^2(g_{23})^2\\
&=(g_{21}h_{13}+g_{22}h_{23}+g_{23}h_{33})^2=((gh)_{23})^2
\end{align*}

As a second example, for $ g\in \SL_3(\overline{K}[t]) $ such that $ g $ satisfies the given conditions, we check that $ \frac{Tr}{\beta^{i_0}}((g^{-1})_{33}(g^{-1})_{13}) = ((g^{-1})_{23})^{2} $. We have $ (g^{-1})_{31}(g^{-1})_{11} = g_{11}g_{13} $, while $ ((g^{-1})_{21})^{2} = (g_{11}g_{23}+g_{21}g_{13})^2 $. Hence we want to show that $ \frac{Tr}{\beta^{i_0}}(g_{11}g_{13}) = (g_{11}g_{23}+g_{21}g_{13})^2 $. This goes as follows:
\begin{align*}
\frac{Tr}{\beta^{i_0}}(g_{11}g_{13}) &= \frac{Tr}{\beta^{i_0}}(g_{11}g_{13}(g_{11}g_{33}+g_{13}g_{31}))\\
&=\frac{Tr}{\beta^{i_0}}(g_{11}^2g_{13}g_{33}+g_{11}g_{13}^2g_{31})\\
&=g_{11}^2\frac{Tr}{\beta^{i_0}}(g_{13}g_{33})+g_{13}^2\frac{Tr}{\beta^{i_0}}(g_{11}g_{31})\\
&=g_{11}^2g_{23}^2+g_{13}^2g_{21}^2 = (g_{11}g_{23}+g_{21}g_{13})^2,
\end{align*}
as wanted (recall that we are in characteristic $ 2 $). Hence, $ X_{\overline{K}}(\overline{K}) $ is indeed a group scheme over $ \overline{K} $.

Now, let $ \pi \colon \overline{K}[t]\to \overline{K} $ be the quotient modulo $ t $, and consider the following composition of homomorphism of $ \overline{K} $-group schemes:
\begin{align*}
 X_{\overline{K}}(\overline{K})\xrightarrow{f}~\SL_2(\overline{K}[t])&\xrightarrow{g} \SL_2(\overline{K})\\
 \begin{psmallmatrix} g_{11}& 0 &g_{13}\\
g_{21}&1&g_{23}\\
g_{31}& 0 &g_{33} \end{psmallmatrix}\mapsto \begin{psmallmatrix} g_{11}&g_{13}\\
g_{31}&g_{33} \end{psmallmatrix}&\mapsto \begin{psmallmatrix} \pi (g_{11})&\pi (g_{13})\\
\pi (g_{31})&\pi (g_{33}) \end{psmallmatrix}
\end{align*}
Note that $ \ker f = \lbrace  \begin{psmallmatrix} 1& 0 &0\\
g_{21}&1&g_{23}\\
0&0&1 \end{psmallmatrix}~\vert~g_{ij}\in \overline{K}[t],~(g_{21})^2=0=(g_{23})^2\rbrace $. Hence $ \ker f $ is not reduced, but the corresponding reduced $ \overline{K} $-scheme is irreducible of dimension $ 2 $ (recall that $ t^2 = 0 $). On the other hand, $ \ker g = \lbrace  \begin{psmallmatrix} 1+tg_{11} &tg_{12}\\
tg_{21}&1+tg_{22}\end{psmallmatrix}~\vert~g_{ij}\in \overline{K},~g_{11}+g_{22} = 0\rbrace $, which is irreducible of dimension $ 3 $. Finally, note that $ \SL_2(\overline{K}) $ is connected of dimension $ 3 $, and that $ f $ and $ g $ are surjective. Hence, we deduce that $  X_{\overline{K}}(\overline{K}) $ is connected. Furthermore, using twice \cite{DG70}*{II, §5, Proposition~5.1} (note that it does not use smoothness), $  X_{\overline{K}}(\overline{K}) $ is of dimension $ 8 $. To conclude, the tangent space at the identity of $ X_{\overline{K}}(\overline{K}) $ is $ \lbrace g\in M_3(\overline{K}[t])~\vert~g_{22}=0, g_{11}+g_{33}=0, g_{12}=g_{32}=0, \frac{Tr}{\beta^{i_0}}(g_{31})=0, \frac{Tr}{\beta^{i_0}}(g_{13})=0 \rbrace $, which is of dimension $ 8 $, as wanted.
\end{claimproof}

Since $ X_{\overline{K}} $ is irreducible and smooth of the same dimension than $ X_K $, we can prove that $ X $ is flat over $ \mathcal{O}_K $ simply by giving a closed embedding of $ \mathcal{O}_K $-schemes $ \mathbf{A}_{\mathcal{O}_K}^1\hookrightarrow X $ (see Lemma~\ref{Lem:Proving flatness of schemes over local rings}). But finding such an embedding is easy enough: the elements in $ \mathcal{O}_L $ having trace $ 0 $ form a free $ \mathcal{O}_K $-module of rank $ 1 $. Let $ \tau $ denote a generator of this $ \mathcal{O}_K $-module. Now, the map $ \mathbf{A}_{\mathcal{O}_K}^1\to X\colon x\mapsto \begin{psmallmatrix} 1&0 & x.\tau \\
0&1&0\\
0&0&1 \end{psmallmatrix} $ is the desired closed embedding.

We can finally conclude that $ \underline{SU}_3^{L/K} = X $. Indeed, both schemes are flat over $ \mathcal{O}_K $, and their generic fibres are equal. But a flat subscheme of $ \mathcal{R}^{\mathcal{O}_L}_{\mathcal{O_K}}\SL_3 $ is uniquely determined by its generic fibre by \cite{BrTi2}*{1.2.6} (which also uses the fact that $ \mathcal{R}^{\mathcal{O}_L}_{\mathcal{O_K}}\SL_3 $ is (flat and hence) torsion free). The last statement of the theorem follows from the fact that $ X $ is flat with smooth fibres.
\end{proof}

We now compare the rational points of the integral model with our local model.
\begin{lemma}\label{Lem:integral point is P_0}
Let $ \varphi_{i_0}\colon \SU_3^{L/K}\mapsto (\underline{\SU}_3^{L/K})_K $ be the homomorphism introduced in Definition~\ref{Def:integral model bad SU}, and let $ \varphi_{i_0}(K) $ be the induced isomorphism on $ K $-rational points. Then $ P_0 = \varphi_{i_0}^{-1}(K)(\underline{\SU}_3^{L/K}(\mathcal{O}_K)) $.
\end{lemma}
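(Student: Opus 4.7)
The plan is to verify the equality by translating the explicit description of $\underline{\SU}_3^{L/K}(\mathcal{O}_K)$ supplied by Theorem~\ref{Thm:smoothness of integral model rameven} back into conditions on $g\in\SU_3^{L/K}(K)$ via the isomorphism $\varphi_{i_0}$. For $g\in\SU_3^{L/K}(K)$ and $h=\varphi_{i_0}(g)$, a direct expansion of $^S\bar{h}Dh$ and of the two reduced-trace equations shows that each of these polynomial identities is equivalent to the corresponding entry of $^S\bar{g}g=\Id$: the factors $t^{\pm i_0}$ in $\varphi_{i_0}$ exactly cancel the diagonal entry $\beta^{i_0}$ of $D$ and the denominators $\beta^{i_0}$ of the reduced traces. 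Consequently $\varphi_{i_0}(g)\in\underline{\SU}_3^{L/K}(\mathcal{O}_K)$ if and only if $g\in\SU_3^{L/K}(K)$ together with the integrality of each entry of $h$, which unpacks into the valuation inequalities $\omega(g_{ii}),\omega(g_{13}),\omega(g_{31})\geq 0$, $\omega(g_{12}),\omega(g_{32})\geq -i_0\omega(\pi_L)$ and $\omega(g_{21}),\omega(g_{23})\geq i_0\omega(\pi_L)$.

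The inclusion $P_0\subseteq\varphi_{i_0}^{-1}(K)(\underline{\SU}_3^{L/K}(\mathcal{O}_K))$ is then immediate because $\gamma\leq i_0\omega(\pi_L)$ by the very definition of $i_0$ (Definition~\ref{Def:parameteri_0}), so the $\gamma$-bounds defining $P_0$ imply all of the integrality bounds. For the reverse inclusion I would exploit the fact that $\omega$ takes values in $\frac{1}{2}\Z\cup\{\infty\}$ on $L^\times$: on the entries $g_{ii},g_{13},g_{31},g_{21},g_{23}$ the integrality bound rounds up on the half-integer lattice to the $\gamma$-bound of $P_0$, and when $\alpha=0$ one has $\gamma=i_0\omega(\pi_L)$, so all bounds coincide. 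The delicate entries, in the case $\alpha\neq 0$, are $g_{12}$ and $g_{32}$, where integrality only yields $\omega\geq -i_0\omega(\pi_L)$ while $P_0$ requires the strictly stronger $\omega\geq -(i_0-1)\omega(\pi_L)$.

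To close this last gap I would reduce the $(1,1),(1,2),(3,2)$ and $(3,3)$ entries of $^S\bar{g}g=\Id$ modulo $\mathfrak{m}_L$, after multiplying the two mixed ones by $t^{i_0}$. Using that the residue field has characteristic $2$ and so the Galois conjugation becomes trivial, and that the terms $\bar{g}_{23}g_{2?}$ and $\bar{g}_{21}g_{2?}$ have valuation at least $i_0\omega(\pi_L)$ and therefore vanish in the reduction, the four reductions read $g_{33}g_{11}+g_{13}g_{31}\equiv 1$, $g_{33}h_{12}+g_{13}h_{32}\equiv 0$, $g_{31}h_{12}+g_{11}h_{32}\equiv 0$ and $g_{31}g_{13}+g_{11}g_{33}\equiv 1$ in the residue field, where $h_{12}=t^{i_0}g_{12}$ and $h_{32}=t^{i_0}g_{32}$. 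A short linear algebra argument over the residue field—observing that the pairs $(g_{33},g_{13})$ and $(g_{31},g_{11})$ cannot both be orthogonal to a non-zero $(h_{12},h_{32})$ without being proportional, and that in characteristic~$2$ proportionality of these two pairs forces $g_{33}g_{11}+g_{13}g_{31}\equiv 0$, contradicting the first reduction—forces $(h_{12},h_{32})\equiv (0,0)$. By discreteness this upgrades the bound to $\omega(g_{12}),\omega(g_{32})\geq -(i_0-1)\omega(\pi_L)\geq -\gamma$, as wanted.

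The main obstacle is this last closing step in the case $\alpha\neq 0$: it is precisely the point where the specific residue-characteristic-$2$ ramified behaviour of the integral model shows up, and it relies crucially on the vanishing of $2$ in the residue field so that the orthogonality plus proportionality forced by the two mixed relations collapses the relevant quadratic form.
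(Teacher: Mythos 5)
Your proof is correct and, in fact, more careful than the proof the paper gives. The paper's argument for this lemma is a single sentence: it names the explicit form of $\varphi_{i_0}^{-1}(K)$, recalls that $i_0$ is minimal with $\omega(t^{i_0})\geq\gamma$, and declares that the lemma ``follows'' from the explicit shape of $P_0$. That terse argument handles the entries $g_{21},g_{23}$ (there the integrality bound $\omega\geq i_0\omega(\pi_L)$ is the discrete rounding of the $P_0$-bound $\omega\geq\gamma$), but it silently skips precisely the point you isolate: when $\alpha\neq 0$ one has $\gamma=(i_0-\tfrac{1}{2})\omega(\pi_L)<i_0\omega(\pi_L)$, so on $g_{12},g_{32}$ the integrality bound $\omega\geq -i_0\omega(\pi_L)$ coming from $\underline{\SU}_3^{L/K}(\mathcal{O}_K)$ is \emph{strictly weaker} than the $P_0$-bound $\omega\geq -\gamma$ (which rounds to $\omega\geq -(i_0-1)\omega(\pi_L)$), and one genuinely needs the unitary relations to upgrade it. Your residue-field argument closing that gap is valid: the $(1,1)$, $(1,2)$, $(3,2)$ entries of $^S\bar g g=\Id$ reduced modulo $\mathfrak{m}_L$ give that $\bigl(\begin{smallmatrix}g_{33}&g_{13}\\ g_{31}&g_{11}\end{smallmatrix}\bigr)$ is invertible over $\overline{L}$ and annihilates the column $(t^{i_0}g_{12},t^{i_0}g_{32})^T$, forcing that column to vanish mod $\mathfrak{m}_L$. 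It is worth noticing that this is exactly the computation the paper does make, but in a different place: in the proof of Theorem~\ref{Thm:smoothness of integral model rameven}, where it is used to show $g_{12}=g_{32}=0$ on the special fibre $X_{\overline{K}}$. So the missing input for Lemma~\ref{Lem:integral point is P_0} is present in the paper but is not wired into the proof of this lemma; your proposal makes that connection explicit.

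One small slip in your write-up: you justify the inclusion $P_0\subseteq\varphi_{i_0}^{-1}(K)(\underline{\SU}_3^{L/K}(\mathcal{O}_K))$ by saying it is ``immediate because $\gamma\leq i_0\omega(\pi_L)$''. That inequality does handle $g_{12},g_{32}$ (where $-\gamma\geq -i_0\omega(\pi_L)$ gives the implication directly), but for $g_{21},g_{23}$ it goes the wrong way: the $P_0$-bound $\omega\geq\gamma$ is a priori weaker than the integrality bound $\omega\geq i_0\omega(\pi_L)$. What saves you is discreteness together with the \emph{other} half of the defining property of $i_0$, namely $\gamma>(i_0-1)\omega(\pi_L)$, so that any element of $\omega(\pi_L)\Z$ that is $\geq\gamma$ is automatically $\geq i_0\omega(\pi_L)$. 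The conclusion is right, but the stated reason is not the one carrying the weight on those two entries.
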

\begin{proof}
By definition, $ \varphi_{i_0}^{-1}(K)\colon (\underline{\SU}_3^{L/K})(K)\to \SU_3^{L/K}(K)\colon g\mapsto \begin{psmallmatrix} g_{11}&t^{-i_0}g_{12} &g_{13}\\
t^{i_0}g_{21}&g_{22}&t^{i_0}g_{23}\\
g_{31}&t^{-i_0}g_{32}&g_{33} \end{psmallmatrix} $ is an isomorphism. Since by Definition~\ref{Def:parameteri_0}, $ i_0 $ is the smallest integer such that $ \omega (t^{i_0})\geq \gamma $, the lemma follows from the fact that $ P_0 = \lbrace g\in \SU_3^{L/K}(K)~\vert~ \omega (g)\geq \begin{psmallmatrix} 0&-\gamma &0\\
\gamma &0&\gamma \\
0&-\gamma &0 \end{psmallmatrix}\rbrace $ (see Definition~\ref{Def:pointstabiliserforSU ramifiedeven}).
\end{proof}

\begin{lemma}\label{Lem:surjection on local model}
Let $ r\in \N $.
\begin{enumerate}[$ \bullet $]
\item For $ 2r\leq 2i_0 $, we define a map $ f_{2r}\colon \underline{\SU}_3(\mathcal{O}_K/\mathfrak{m}_K^{r})\to \SL_2(\mathcal{O}_L/\mathfrak{m}_L^{2r})\colon g\mapsto \begin{psmallmatrix} g_{11}&g_{13}\\
g_{31}&g_{33} \end{psmallmatrix} $. The map $ f_{2r} $ is a group homomorphism whose image is $ P_0^{0,2r} $.
\item For $ 2r>2i_0 $, we have an isomorphism $ f_{2r}\colon \underline{\SU}_3(\mathcal{O}_K/\mathfrak{m}_K^{r})\cong P_0^{0,2r} $.
\end{enumerate}
\end{lemma}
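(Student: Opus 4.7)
The plan is to treat the two cases separately.

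For $2r > 2i_0$, the desired isomorphism is essentially tautological. By Theorem~\ref{Thm:smoothness of integral model rameven} the integral model $\underline{\SU}_3$ is cut out inside $\mathcal{R}^{\mathcal{O}_L}_{\mathcal{O}_K}\SL_3$ by precisely the equations appearing in Definition~\ref{Def:localdataforbadSU} (the valuation condition being trivial at $x=0$). Since $\mathfrak{m}_K^r\mathcal{O}_L = \mathfrak{m}_L^{2r}$, the canonical identification $\mathcal{R}^{\mathcal{O}_L}_{\mathcal{O}_K}\SL_3(\mathcal{O}_K/\mathfrak{m}_K^r) = \SL_3(\mathcal{O}_L/\mathfrak{m}_L^{2r})$ restricts to a bijection between $\underline{\SU}_3(\mathcal{O}_K/\mathfrak{m}_K^r)$ and $P_0^{0,2r}$.

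For $2r \leq 2i_0$, the strategy is to upgrade the closed-fibre computation from the proof of Theorem~\ref{Thm:smoothness of integral model rameven} from $r=1$ to all admissible $r$. The key observation is that modulo $\mathfrak{m}_L^{2r}$ three simultaneous simplifications occur: $\beta^{i_0} \equiv 0$ (because $\omega_L(\beta^{i_0})=2i_0 \geq 2r$), $2 \equiv 0$ (because $\omega_K(2) \geq i_0 \geq r$ by Definition~\ref{Def:parameteri_0}), and the Galois involution becomes trivial (because $\bar{t} - t = 2t - \alpha \in \mathfrak{m}_L^{2i_0} \subseteq \mathfrak{m}_L^{2r}$ via Lemma~\ref{Lem:goodformforL}).

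The degenerated unitary equation $^S g \cdot \mathrm{diag}(1,0,1) \cdot g = \mathrm{diag}(1,0,1)$ then splits informatively. From its $(1,1)$, $(1,3)$, $(3,1)$, $(3,3)$-entries one recovers $^S h \cdot h = \Id$ for $h = \begin{psmallmatrix}g_{11}&g_{13}\\g_{31}&g_{33}\end{psmallmatrix}$; in particular $^S h$ is invertible with inverse $h$. The $(1,2)$ and $(3,2)$-entries then read $^S h \cdot \begin{psmallmatrix}g_{12}\\g_{32}\end{psmallmatrix} = 0$ and force $g_{12} = g_{32} = 0$. A one-line $2\times 2$ computation shows that in characteristic~$2$ the identity $^S h \cdot h = \Id$ is equivalent to $\det h = 1$, because its off-diagonal entries are $2h_{12}h_{22}$ and $2h_{11}h_{21}$ while both diagonal entries equal $\det h$. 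Combined with $\det g = g_{22}\det h = 1$, this gives $g_{22} = 1$. Hence $g$ has the shape $\begin{psmallmatrix}h_{11}&0&h_{12}\\g_{21}&1&g_{23}\\h_{21}&0&h_{22}\end{psmallmatrix}$, from which $f_{2r}$ is visibly a well-defined homomorphism into $\SL_2(\mathcal{O}_L/\mathfrak{m}_L^{2r}) = P_0^{0,2r}$.

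What remains is surjectivity, and this is where I expect the main technical work. Given $h \in \SL_2(\mathcal{O}_L/\mathfrak{m}_L^{2r})$, I would reconstruct $g$ by the ansatz $g_{12} = g_{32} = 0$, $g_{22} = 1$. The unitary condition then holds for free (again by the characteristic-$2$ equivalence), but the reduced trace equations impose $N(g_{21}) = -\tfrac{Tr}{\beta^{i_0}}(\bar{g}_{31}g_{11})$ and analogously for $g_{23}$; the prescribed right-hand side lies in $\mathcal{O}_K/\mathfrak{m}_K^r$ and must be realised as a norm. With trivial bar and characteristic~$2$, the norm reads $N(a+bt) = a^2 + b^2\beta$; separating an arbitrary target of $\mathcal{O}_K/\mathfrak{m}_K^r$ according to the parity of powers of a uniformiser and using Frobenius-surjectivity on the finite residue field $\overline{K}$, each parity piece can be realised, so $N$ surjects onto $\mathcal{O}_K/\mathfrak{m}_K^r$ and the required lift exists. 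The delicate point is the careful bookkeeping of the nilpotent arithmetic at all orders simultaneously; the characteristic-$2$ collapse is what ultimately makes the whole mechanism go through.
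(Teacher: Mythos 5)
Your proposal is correct and follows essentially the same route as the paper's own proof: the three collapses $\beta^{i_0}=0$, $2=0$, and triviality of the conjugation in $\mathcal{O}_L/\mathfrak{m}_L^{2r}$ yield the block-triangular description of $\underline{\SU}_3(\mathcal{O}_K/\mathfrak{m}_K^r)$ (hence $f_{2r}$ is a homomorphism), and surjectivity reduces to showing the squaring (equivalently, norm) map $\mathcal{O}_L/\mathfrak{m}_L^{2r}\to\mathcal{O}_K/\mathfrak{m}_K^r$ is onto, which both you and the paper establish via $\beta$-adic digit separation together with Frobenius bijectivity on $\mathbf{F}_{2^n}$. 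The only cosmetic deviation is that you obtain $g_{12}=g_{32}=0$ from invertibility of $^{S}h$, whereas the paper simply redoes the explicit closed-fibre computations from the proof of Theorem~\ref{Thm:smoothness of integral model rameven}.
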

\begin{proof}
We begin with the case $ 2r\leq 2i_0 $. In this case, note that $ t^{2i_0} = 0 $ in $ \mathcal{O}_L/\mathfrak{m}_L^{2r} $, and that the conjugation action is trivial in $ \mathcal{O}_L/\mathfrak{m}_L^{2r} $. Furthermore, $ \beta^{i_0} = 0 $ and $ 2=0 $ in $ \mathcal{O}_L/\mathfrak{m}_L^{2r} $. Hence we can reproduce the computations made in the beginning of the proof of the claim in Theorem~\ref{Thm:smoothness of integral model rameven}. We thus get 
\begin{align*}
\underline{\SU}_3(\mathcal{O}_K/\mathfrak{m}_K^{r}) = \lbrace g\in \SL_3(\mathcal{O}_L/\mathfrak{m}_L^{2r})~&\vert~g_{22}=1,~g_{12} = g_{32} = 0\\
&\frac{Tr}{\beta^{i_0}}(g_{31}g_{11}) = (g_{21})^{2},~\frac{Tr}{\beta^{i_0}}(g_{33}g_{13}) = (g_{23})^{2} \rbrace.
\end{align*}
This already shows that $ f_{2r} $ is a group homomorphism. Now, to prove that $ f_{2r} $ is surjective, there just remains to prove that the map $ \mathcal{O}_L/\mathfrak{m}_L^{2r}\to \mathcal{O}_K/\mathfrak{m}_K^r\colon x\mapsto x^2 $ is surjective. Note that modulo $ \mathfrak{m}_L^{2r} $, $ t^2 = t\bar{t} = \beta $. Hence for $ x = x_1+tx_2 \in \mathcal{O}_L/\mathfrak{m}_L^{2r} $ with $ x_i\in \mathcal{O}_K/\mathfrak{m}_K^r $, we have $ x^2 = x_1^2+\beta x_2^2 $. Hence the result follows from the fact that $ \mathcal{O}_K/\mathfrak{m}_K^r = \sum_{i=0}^{r-1}\mathbf{F}_{2^n}\beta^i $, and that squaring is a bijection on the field $ \mathbf{F}_{2^n} $.

When $ 2r>2i_0 $, the assertion follows directly from Definition~\ref{Def:localdataforbadSU} and Theorem~\ref{Thm:smoothness of integral model rameven}.
\end{proof}
\begin{corollary}\label{Cor:local model are subgroups}
Let $ r\in \N $ and let $ x\in [-\omega (\pi_{L}^{2r}),\omega (\pi_{L}^{2r})] $. Then $ P_x^{0,2r} $ is a group.
\end{corollary}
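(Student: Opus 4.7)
The plan is to handle separately the two cases in Definition~\ref{Def:localdataforbadSU}. For $2r \le 2i_{0}$, the set $P_{x}^{0,2r} \subset \SL_{2}(\mathcal{O}_{L}/\mathfrak{m}_{L}^{2r})$ is cut out by valuation constraints governed by the matrix $V = \begin{psmallmatrix} 0 & -x \\ x & 0 \end{psmallmatrix}$, which is formally identical to the one appearing in Definition~\ref{Def:stabiliserforSL2}. I would verify that $P_{x}^{0,2r}$ is a subgroup by the same two-line matrix computation that shows the subgroups $P_{x}$ of $\SL_{2}(D)$ form groups, the underlying reason being that $V$ is sub-additive ($V_{ik}+V_{kj}\ge V_{ij}$) and anti-symmetric ($V_{ij}=-V_{ji}$), so the defining valuation inequalities are preserved by multiplication and (using $\det g = 1$ and the adjugate formula) by inversion.

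For $2r > 2i_{0}$, I would proceed in two stages. First, $P_{0}^{0,2r}$ is itself a group: Lemma~\ref{Lem:surjection on local model} exhibits it as the image (in fact isomorphic image) of $\underline{\SU}_{3}(\mathcal{O}_{K}/\mathfrak{m}_{K}^{r})$ under $f_{2r}$, and the latter is a group by Lemma~\ref{Lem:intergal model is a group} combined with Theorem~\ref{Thm:smoothness of integral model rameven}. In particular, the unitary-type relation $^{S}\bar{g}\,\mathrm{diag}(1,\beta^{i_{0}},1)\,g = \mathrm{diag}(1,\beta^{i_{0}},1)$ together with the two reduced-trace equations is automatically preserved by the group operations on $P_{0}^{0,2r}$.

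Second, I would write $P_{x}^{0,2r} = P_{0}^{0,2r} \cap F_{x}$, where $F_{x}$ is the set of $g \in \SL_{3}(\mathcal{O}_{L}/\mathfrak{m}_{L}^{2r})$ satisfying $\omega(g)\ge V$ with $V = \begin{psmallmatrix} 0 & -x/2 & -x \\ x/2 & 0 & -x/2 \\ x & x/2 & 0 \end{psmallmatrix}$. Thus it suffices to prove $F_{x}$ is closed under multiplication and inversion within $\SL_{3}(\mathcal{O}_{L}/\mathfrak{m}_{L}^{2r})$. A direct case check on the nine entries shows that $V$ is sub-additive, so that $\omega((gh)_{ij}) \ge \min_{k}(V_{ik}+V_{kj}) \ge V_{ij}$ whenever $g,h \in F_{x}$. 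For inversion, I would use $(g^{-1})_{ij} = (-1)^{i+j}C_{ji}$ where $C_{ji}$ is the $(j,i)$-cofactor (valid since $\det g = 1$); writing out each $2\times 2$ minor and invoking the anti-symmetry $V_{ij} = -V_{ji}$ gives $\omega((g^{-1})_{ij}) \ge V_{ij}$ in each of the nine cases.

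The only substantive work is the combinatorial book-keeping on valuations, but both sub-additivity and anti-symmetry of $V$ hold with equality in almost every case, so each verification is essentially one line. I therefore do not expect any genuine obstacle; the corollary is really just a packaging result, whose content is that once the unitary/reduced-trace relations are known to cut out a group (Stage~1) and the valuation matrix $V$ is sub-additive and anti-symmetric (Stage~2), the parahoric-type subsets $P_{x}^{0,2r}$ are automatically groups.
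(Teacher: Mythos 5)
Your proposal is correct and takes essentially the same route as the paper: it dismisses the $2r\le 2i_0$ case as a direct valuation check on the antisymmetric sub-additive matrix $V$, establishes that $P_0^{0,2r}$ is a group for $2r>2i_0$ via Lemma~\ref{Lem:intergal model is a group} and Lemma~\ref{Lem:surjection on local model}, and then writes $P_x^{0,2r}=P_0^{0,2r}\cap A_x$ with $A_x$ the set cut out by $\omega(g)\ge V$. The only difference is that the paper simply asserts that $A_x$ is a subgroup and that the $2r\le 2i_0$ case needs no argument, whereas you spell out both verifications explicitly.
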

\begin{proof}
Only the case $ 2r>2i_0 $ requires a proof. By Lemma~\ref{Lem:intergal model is a group} and Lemma~\ref{Lem:surjection on local model}, $ P_{0}^{0,2r} $ is a group. Note that $ A_x = \lbrace g\in \SL_3(\mathcal{O}_L/\mathfrak{m}_L^{2r})~\vert~\omega(g)\geq \begin{psmallmatrix}
0 & -\frac{x}{2} &-x \\
\frac{x}{2}&0&-\frac{x}{2}\\
x & \frac{x}{2} &0
\end{psmallmatrix} \rbrace $ is a subgroup of $ \SL_3(\mathcal{O}_L/\mathfrak{m}_L^{2r}) $. But $ P_x^{0,2r} = P_0^{0,2r}\cap A_x $. Hence the result follows.
\end{proof}
\begin{definition}\label{Def:description of the restriction P_0 to P_0^0,r rameven}
Let $ p_{2r}\colon P_{0}\to P_{0}^{0,2r} $ be the homomorphism such that the following square commutes
\begin{center}
\begin{tikzpicture}[->]
 \node (1) at (0,0) {$ \underline{\SU}_3(\mathcal{O}_{K}) $};
 \node (2) at (2.7,0) {$ P_{0}\leq \SL_{3}(\mathcal{O}_{L}) $};
 \node (5) at (1.2,0.2) {$ \xrightarrow{\varphi_{i_0}^{-1}} $};
 \node (3) at (-0.3,-1.2) {$ \underline{\SU}_3(\mathcal{O}_{K}/\mathfrak{m}_{K}^{r}) $};
 \node (4) at (2, -1.2) {$ P_{0}^{0,2r} $};
 \node (6) at (1.2,-1.1) {$ \xrightarrow{f_{2r}} $};
 \node (7) at (2.1,-0.6) {$ p_{2r} $};
 
 \draw[->] (0,-0.3) to (0,-0.9);
 \draw[->] (1.8,-0.3) to (1.8,-0.9);
\end{tikzpicture}
\end{center}
Let $ \pi_{2r}\colon \mathcal{O}_{L}\to \mathcal{O}_{L}/\mathfrak{m}_{L}^{2r} $ denote the reduction modulo $ \mathfrak{m}_{L}^{2r} $.
\begin{enumerate}[$ \bullet $]
\item If $ 2r\leq 2i_0 $, $ p_{2r}(\begin{psmallmatrix} g_{11}&t^{-i_0}g_{12} &g_{13}\\
t^{i_0}g_{21}&g_{22}&t^{i_0}g_{23}\\
g_{31}&t^{-i_0}g_{32}&g_{33} \end{psmallmatrix}) = \begin{psmallmatrix} \pi_{2r}(g_{11})&\pi_{2r}(g_{13})\\
\pi_{2r}(g_{31})&\pi_{2r}(g_{33}) \end{psmallmatrix}$
\item If $ 2r> 2i_0 $, $ p_{2r}(\begin{psmallmatrix} g_{11}&t^{-i_0}g_{12} &g_{13}\\
t^{i_0}g_{21}&g_{22}&t^{i_0}g_{23}\\
g_{31}&t^{-i_0}g_{32}&g_{33} \end{psmallmatrix}) = \begin{psmallmatrix} \pi_{2r}(g_{11})&\pi_{2r}(g_{12}) &\pi_{2r}(g_{13})\\
\pi_{2r}(g_{21})&\pi_{2r}(g_{22})&\pi_{2r}(g_{23})\\
\pi_{2r}(g_{31})&\pi_{2r}(g_{32})&\pi_{2r}(g_{33}) \end{psmallmatrix}$.
\end{enumerate}
\end{definition}

And we can then deduce the surjectivity of the map $ p_{2r} $.
\begin{corollary}\label{Cor:surjectivityofpr rameven}
The map $ p_{2r}\colon P_{0}\to P_{0}^{0,2r} $ is surjective, for all $ r\in \N $.
\end{corollary}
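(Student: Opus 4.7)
The plan is to mimic the proof of Corollary~\ref{Cor:surjectivityofpr ramified even}, chasing the commutative square introduced in Definition~\ref{Def:description of the restriction P_0 to P_0^0,r rameven}, namely
\begin{center}
\begin{tikzpicture}[->]
 \node (1) at (0,0) {$ \underline{\SU}_3(\mathcal{O}_{K}) $};
 \node (2) at (3,0) {$ P_{0} $};
 \node (5) at (1.5,0.2) {$ \xrightarrow{\varphi_{i_0}^{-1}\circ f_0^{-1}} $};
 \node (3) at (0,-1.2) {$ \underline{\SU}_3(\mathcal{O}_{K}/\mathfrak{m}_{K}^{r}) $};
 \node (4) at (3, -1.2) {$ P_{0}^{0,2r} $};
 \node (6) at (1.5,-1.1) {$ \xrightarrow{f_{2r}} $};
 \node (7) at (3.1,-0.6) {$ p_{2r} $};
 \draw[->] (0,-0.3) to (0,-0.9);
 \draw[->] (3,-0.3) to (3,-0.9);
\end{tikzpicture}
\end{center}
The right vertical map is $p_{2r}$, and the left vertical map is the reduction of $\mathcal{O}_K$-points modulo $\mathfrak{m}_K^r$.

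First I would apply Hensel's lemma for smooth schemes (Theorem~\ref{Thm:genhensel}) to the $\mathcal{O}_K$-scheme $\underline{\SU}_3^{L/K}$, which is smooth by Theorem~\ref{Thm:smoothness of integral model rameven}. This gives surjectivity of the left vertical arrow $\underline{\SU}_3(\mathcal{O}_{K}) \twoheadrightarrow \underline{\SU}_3(\mathcal{O}_{K}/\mathfrak{m}_{K}^{r})$. Next, by Lemma~\ref{Lem:surjection on local model}, the bottom horizontal arrow $f_{2r}$ is surjective onto $P_0^{0,2r}$ (it is an isomorphism in the case $2r > 2i_0$, and surjective by direct computation in the case $2r \leq 2i_0$). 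The top horizontal arrow is a bijection by Lemma~\ref{Lem:integral point is P_0}, since $\varphi_{i_0}$ restricted to $\underline{\SU}_3(\mathcal{O}_K)$ is precisely the isomorphism onto $P_0$.

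Composing the surjective left vertical arrow with the surjective bottom arrow, and using the commutativity of the square together with the bijectivity of the top arrow, immediately yields that $p_{2r} \colon P_0 \to P_0^{0,2r}$ is surjective. There is no real obstacle here: all the work has been done in Theorem~\ref{Thm:smoothness of integral model rameven} (the delicate smoothness computation) and Lemma~\ref{Lem:surjection on local model} (identifying the image of $f_{2r}$ with the local stabiliser $P_0^{0,2r}$, which is where the case distinction $2r \leq 2i_0$ versus $2r > 2i_0$ must be handled carefully). Once those two ingredients are in place, the surjectivity of $p_{2r}$ is a formal diagram chase identical in spirit to the proofs of Corollary~\ref{Cor:surjectivityforSL2}, Corollary~\ref{Cor:surjectivityofpr unramified} and Corollary~\ref{Cor:surjectivityofpr ramified even}.
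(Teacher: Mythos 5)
Your proof is correct and follows exactly the paper's argument: smoothness of $\underline{\SU}_3^{L/K}$ (Theorem~\ref{Thm:smoothness of integral model rameven}) plus Hensel's lemma (Theorem~\ref{Thm:genhensel}) give surjectivity of the left vertical arrow, and Lemma~\ref{Lem:surjection on local model} gives surjectivity of $f_{2r}$, whence the diagram chase. The only cosmetic point: the top arrow in Definition~\ref{Def:description of the restriction P_0 to P_0^0,r rameven} is labelled $\varphi_{i_0}^{-1}$ (there is no map $f_0$ in the paper), though this slip does not affect your argument.
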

\begin{proof}
This is a direct consequence of the commutative square involving $ P_{0}\to P_{0}^{0,2r} $ given in Definition~\ref{Def:description of the restriction P_0 to P_0^0,r rameven}. Indeed, the integral model is smooth by Theorem~\ref{Thm:smoothness of integral model rameven}, so that Theorem~\ref{Thm:genhensel} applied to the left hand side of the diagram shows surjectivity there, and we proved in Lemma~\ref{Lem:surjection on local model} that $ f_{2r} $ is surjective as well.
\end{proof}

We also need a kind of injectivity result:
\begin{lemma}\label{Lem:injectivityofpr rameven}
Let $ r\in \N $ and $ x\in [-\omega (\pi_{L}^{2r}),\omega (\pi_{L}^{2r})] $. Then $ p_{2r}^{-1}(P_{x}^{0,2r}) \subset P_{x} $.
\end{lemma}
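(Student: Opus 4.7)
The plan is to follow the three earlier injectivity lemmas (Lemma~\ref{Lem:injectivityofpr}, Lemma~\ref{Lem:injectivityofpr unramified}, Lemma~\ref{Lem:injectivityofpr ramified}) and derive the $ P_x $-bounds of Definition~\ref{Def:pointstabiliserforSU ramifiedeven} from the joint inputs $ g\in P_0 $ and $ p_{2r}(g)\in P_x^{0,2r} $, using the hypothesis $ \vert x \vert \leq \omega(\pi_L^{2r}) $. The argument naturally splits along the two branches of Definition~\ref{Def:localdataforbadSU}.

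In the easy case $ 2r>2i_0 $, Definition~\ref{Def:description of the restriction P_0 to P_0^0,r rameven} says $ p_{2r}(g) $ is the full entrywise reduction modulo $ \mathfrak{m}_L^{2r} $ of $ \varphi_{i_0}(g)\in \SL_3(\mathcal{O}_L) $, so $ p_{2r}(g)\in P_x^{0,2r} $ imposes valuation bounds on all nine entries of $ \varphi_{i_0}(g) $. These modular bounds lift to genuine valuations since $ \vert x\vert \leq \omega(\pi_L^{2r}) $ and $ \varphi_{i_0}(g) $ is $ \mathcal{O}_L $-integral, and translating back through the formulas $ \varphi_{i_0}(g)_{ij} = t^{\pm i_0}g_{ij} $ for the middle row and column recovers exactly the bounds of Definition~\ref{Def:pointstabiliserforSU ramifiedeven}.

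The hard case is $ 2r\leq 2i_0 $, where $ p_{2r} $ only records the corner $ 2\times 2 $ submatrix, so $ p_{2r}(g)\in P_x^{0,2r} $ yields only $ \omega(g_{13})\geq -x $ and $ \omega(g_{31})\geq x $, and nothing direct about the five middle entries. To propagate control to the middle I would first use the determinant condition on the corner block together with the corner bounds and $ \vert x\vert \leq \omega(\pi_L^{2r}) $ to force $ g_{11} $ and $ g_{33} $ to be units in $ \mathcal{O}_L $, and then exploit the off-diagonal entries $ (1,2),(2,1),(2,3),(3,2) $ of $ ^{S}\bar{g}g=\Id $ to express each middle entry, up to multiplication by a unit, as an $ \mathcal{O}_L $-linear combination of products involving the now-controlled corner entries and the remaining middle entries (whose $ P_0 $-bounds are known). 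The main obstacle, and the reason this is not a one-liner as in the three earlier cases, is that the $ P_0 $-bounds alone do not imply the $ P_x $-bounds on the middle entries when $ x $ has the ``wrong'' sign; closing this gap relies on the unitarity relations together with the fact that valuations on $ L $ lie in $ \mathbf{Z}\omega(\pi_L) $, so that the fractional bounds $ -x/2\pm \gamma $ of Definition~\ref{Def:pointstabiliserforSU ramifiedeven} round up to the same multiple of $ \omega(\pi_L) $ that the unitarity calculation outputs. Handling this uniformly in the two subcases $ \alpha=0 $ and $ \alpha\neq 0 $ of Lemma~\ref{Lem:goodformforL} is the technical core of the proof.
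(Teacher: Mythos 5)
Your case split along $2r\le 2i_0$ vs.\ $2r>2i_0$ is the same as the paper's, and your treatment of the easy branch $2r>2i_0$ is fine: there $p_{2r}$ records all nine entries of $\varphi_{i_0}(g)\in \SL_3(\mathcal{O}_L)$, so the $P_x^{0,2r}$-bounds lift directly to the $P_x$-bounds, exactly as in Lemma~\ref{Lem:injectivityofpr}, Lemma~\ref{Lem:injectivityofpr unramified} and Lemma~\ref{Lem:injectivityofpr ramified}.

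The hard branch $2r\le 2i_0$, however, is where your plan breaks down. You propose to force $g_{11},g_{33}$ to be units (which is correct for $x>0$, using $\det$ of the corner block) and then to exploit the $(1,2),(2,1),(2,3),(3,2)$ entries of $^{S}\bar{g}g=\Id$. Those four entries give relations that are \emph{linear} in the middle row and column, e.g.\ $\bar{g}_{32}g_{11}+\bar{g}_{22}g_{21}+\bar{g}_{12}g_{31}=0$. If you try to solve such a relation for $g_{21}$ and bound its valuation, all you can input are the corner bound $\omega(g_{31})\geq x$ and the $P_0$-bounds $\omega(g_{12}),\omega(g_{32})\geq -\gamma$, which yields at best $\omega(g_{21})\geq -\gamma$ --- nowhere near the required $\omega(g_{21})\geq \tfrac{x}{2}+\gamma$, and no amount of rounding into $\mathbf{Z}\omega(\pi_L)$ can produce the factor $\tfrac{1}{2}$ in the exponent from a linear identity. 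The mechanism the paper actually uses is the $(3,1)$ entry of $^{S}\bar{g}g=\Id$ (and, for the $(3,2)$ bound, the $(3,1)$ entry of $g\,^{S}\bar{g}=\Id$), which, after the $\varphi_{i_0}$-twist, read as norm--trace identities: writing the paper's $g_{ij}\in\mathcal{O}_L$ for the twisted entries, one gets $\frac{Tr}{\beta^{i_0}}(\bar{g}_{11}g_{31})=-N(g_{21})$ (and the analogue for $g_{32}$). Since $\omega(N(g_{21}))=2\omega(g_{21})$, a lower bound $\omega(\bar{g}_{11}g_{31})\geq x$ on the trace side is what produces the halved lower bound $\omega(g_{21})\gtrsim \tfrac{x}{2}$. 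It is precisely the \emph{quadratic} nature of the norm that makes the estimate work; the linear slots you single out cannot do this, so the core of your argument does not close.

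Briefly on the technical remark you flag at the end: you are right that a rounding step (via $\omega(\mathcal{O}_L^\times\pi_L^m)\subset\mathbf{Z}\omega(\pi_L)$ and $\omega(\mathcal{O}_K^\times\pi_K^m)\subset 2\mathbf{Z}\omega(\pi_L)$) is needed to reconcile the fractional bound $\tfrac{x}{2}\pm\gamma$ with the integer-valued valuations, and that there is a further subtlety depending on whether $\alpha=0$ or $\alpha\neq 0$ in Lemma~\ref{Lem:goodformforL}. But this rounding only finishes the estimate once the halving is already in place; it does not substitute for the norm--trace identity.
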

\begin{proof}
For $ 2r>2i_0 $, belonging to $ p_{2r}^{-1}(P_x^{0,2r}) $ implies that the valuation of the off diagonal entries are big enough. Hence, the result follows directly from Definition~\ref{Def:pointstabiliserforSU ramifiedeven}.

For $ 2r\leq 2i_0 $, we have to show that if $ g\in P_0 $ is such that $ \omega (g)\geq \begin{psmallmatrix} 0&-\gamma &-x\\
\gamma &0&\gamma \\
x&-\gamma &0 \end{psmallmatrix} $, then $ \omega (g)\geq \begin{psmallmatrix} 0& -\frac{x}{2}-\gamma &-x\\
 \frac{x}{2}+\gamma &0& -\frac{x}{2}+\gamma \\
x& \frac{x}{2}-\gamma &0 \end{psmallmatrix} $. Let us for example prove it for $ x\in [0,\omega (\pi_L^{2r})] $ (the other case being similar). Let $ t, \beta $ be as in Lemma~\ref{Lem:goodformforL}. Then there exists $ g_{ij}\in \mathcal{O}_L $ such that $ g = \begin{psmallmatrix} g_{11}&t^{-i_0}g_{12} &g_{13}\\
t^{i_0}g_{21}&g_{22}&t^{i_0}g_{23}\\
g_{31}&t^{-i_0}g_{32}&g_{33} \end{psmallmatrix} $, with $ \omega (g_{31})\geq x $. Using $ ^S\bar{g}g=\Id $ (respectively $ g^S\bar{g} = \Id $), we get in particular $ \bar{g}_{31}g_{11}+\bar{t}^{i_0}\bar{g}_{21}t^{i_0}g_{21}+\bar{g}_{11}g_{31} = 0 $ (respectively $ g_{31}\bar{g}_{33}+t^{i_0}g_{32}\bar{t}^{i_0}\bar{g}_{32}+g_{33}\bar{g}_{31} = 0 $). Hence, $ \frac{Tr}{\beta^{i_0}}(\bar{g}_{11}g_{31}) = -N(g_{21}) $ (respectively $ \frac{Tr}{\beta^{i_0}}(g_{33}\bar{g}_{31}) = -N(g_{32}) $), which implies that $ \omega (g_{21})\geq \frac{x}{2} $ (respectively $ \omega (g_{32})\geq \frac{x}{2} $), as wanted.
\end{proof}

We finally arrive at the result corresponding to Theorem~\ref{Thm:localdescriptionoftheball}: the ball of radius $ 2r $ together with the action of $ \SU_{3}^{L/K}(K) $ is encoded in $ P_{0}^{0,2r} $. We first need an adequate description of the ball of radius $ 2r $ around $0$ in $ \mathcal{I} $.

\begin{lemma}\label{Lem:B_0(r) is really the ball of radius r rameven}
Renormalise the distance on $ \R $ so that $ d_{\R}(0;\omega(\pi_L)) = 1 $, and put the metric $ d_{\mathcal{I}} $ on $ \mathcal{I} $ arising from the distance $ d_{\R} $ $ ( $see Remark~\ref{Rem:metric on I and stabiliser}$ ) $. Let $ B_0(2r) = \lbrace p\in \mathcal{I}~\vert~d_{\mathcal{I}}([(\Id ,0)];p)\leq 2r\rbrace $ be the ball of radius $ 2r $ around $ 0 $ in $ \mathcal{I} $. Let $ \tilde{B}_0(2r) = \lbrace [(g,x)]\in \mathcal{I} ~\vert~ g\in P_{0}, x\in [-\omega (\pi_L^{2r}), \omega (\pi_L^{2r})]\subset \R \rbrace $. Then $ B_0(2r) = \tilde{B}_0(2r) $.
\end{lemma}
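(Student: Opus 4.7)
The plan is simply to copy, with the obvious changes in subscripts, the proofs of Lemma~\ref{Lem:B_0(r) is really the ball of radius r} and Lemma~\ref{Lem:B_0(r) is really the ball of radius r ramified}: the argument there only uses general properties of Bruhat--Tits trees (the $G$-action by isometries, the group $P_0$ being the stabiliser of $[(\Id,0)]$, and Lemma~\ref{Lem:B-T tree from a based point}), all of which still hold in our ramified, residue characteristic $2$ setting with the same definitions of $P_x$ and $N$. The normalization $d_\R(0;\omega(\pi_L))=1$ means that $[-\omega(\pi_L^{2r}),\omega(\pi_L^{2r})]$ is exactly the interval of points in the standard apartment $\R$ within distance $2r$ from $0$.

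First I would show $\tilde B_0(2r)\subset B_0(2r)$. Given $[(g,x)]\in\tilde B_0(2r)$, the normalization yields $d_\R(0;x)\leq 2r$. Since $g\in P_0$ fixes $[(\Id,0)]$ and acts on $\mathcal{I}$ by isometries (Remark~\ref{Rem:metric on I and stabiliser}), we get
\begin{equation*}
d_{\mathcal{I}}([(\Id,0)];[(g,x)])=d_{\mathcal{I}}([(\Id,0)];[(\Id,x)])=d_\R(0;x)\leq 2r,
\end{equation*}
using that on the fundamental apartment the metric restricts to $d_\R$.

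For the reverse inclusion, given $[(g,x)]\in B_0(2r)$, Lemma~\ref{Lem:B-T tree from a based point} produces $h\in P_0$ and $y\in\R$ with $[(g,x)]=[(h,y)]$. The same isometry argument gives
\begin{equation*}
d_\R(0;y)=d_{\mathcal{I}}([(\Id,0)];[(\Id,y)])=d_{\mathcal{I}}([(\Id,0)];[(h,y)])=d_{\mathcal{I}}([(\Id,0)];[(g,x)])\leq 2r,
\end{equation*}
so $y\in[-\omega(\pi_L^{2r}),\omega(\pi_L^{2r})]$, whence $[(g,x)]=[(h,y)]\in\tilde B_0(2r)$. There is no real obstacle here; the only point worth noting is that the modified point stabilisers $P_x$ in Definition~\ref{Def:pointstabiliserforSU ramifiedeven} (with the shift $\gamma$) do not affect the argument, since the argument never inspects the explicit matrix form of $P_x$, but only uses the abstract properties from Section~\ref{Sec:Rappel sur l'arbre de B-T}.
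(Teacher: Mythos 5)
Your proof is correct and follows exactly the same argument as the paper, which simply says the proof is word for word that of Lemma~\ref{Lem:B_0(r) is really the ball of radius r} (isometric action, $P_0$ as point stabiliser, and Lemma~\ref{Lem:B-T tree from a based point}). Your additional remark that the $\gamma$-shift in the definition of $P_x$ is irrelevant to this argument is accurate and a useful clarification.
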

\begin{proof}
The proof is word for word the same than the proof of Lemma~\ref{Lem:B_0(r) is really the ball of radius r}, upon replacing all $d$'s by $2$'s.
\end{proof}
\begin{remark}
The distance $ d_{\mathcal{I}} $ that we introduced in Lemma~\ref{Lem:B_0(r) is really the ball of radius r rameven} is also the combinatorial distance on the tree. Indeed, looking at when $ P_y $ is inside $ P_x $ for $ x,y\in \R $, we see that $ [(\Id ,x)] $ is a vertex of $ \mathcal{I} $ if and only if $ x\in \omega(\pi_L)\Z $.
\end{remark}

\begin{theorem}\label{Thm:localdescriptionoftheball rameven}
Let $ r\in \N $. The map $ B_{0}(2r)\to \mathcal{I}^{0,2r}\colon [(g,x)]\mapsto [(p_{2r}(g),x)]^{0,2r} $ is a $ (p_{2r}\colon P_{0}\to P_{0}^{0,2r}) $-equivariant bijection.
\end{theorem}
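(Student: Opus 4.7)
The plan is to follow the same template used in the previous three analogous results (Theorems~\ref{Thm:localdescriptionoftheball}, \ref{Thm:localdescriptionoftheball ur}, and \ref{Thm:localdescriptionoftheball ramified}), since all the technical lemmas needed have already been set up in this section to make the argument go through in parallel. More precisely, I will first argue well-definedness using Lemma~\ref{Lem:integralityofn}: if $(g,x) \sim (h,y)$ in $\mathcal{I}$ with $g,h \in P_0$, then there exists $n \in N \cap P_0$ with $\nu(n)(x)=y$, and applying $p_{2r}$ to both sides shows that $(p_{2r}(g),x) \sim_{0,2r} (p_{2r}(h),y)$ in $\mathcal{I}^{0,2r}$.

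Next, I will establish injectivity. Suppose $[(g,x)], [(h,y)] \in B_0(2r)$ have the same image in $\mathcal{I}^{0,2r}$. By Remark~\ref{Rem:equivalent def with for all n for local model ram even}, for every $\tilde{n} \in N^{0,2r}$ with $\nu(\tilde{n})(x)=y$ we have $p_{2r}(g)^{-1} p_{2r}(h) \tilde{n} \in P_x^{0,2r}$. Choosing $\tilde{n}$ to be either $\Id$ or the standard Weyl element (which admits an obvious lift in $N$), I can produce $n \in N$ with $p_{2r}(n) = \tilde{n}$ and $\nu(n)(x)=y$, so that $g^{-1} h n \in p_{2r}^{-1}(P_x^{0,2r})$. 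The key point is then the injectivity-type statement Lemma~\ref{Lem:injectivityofpr rameven}, which gives $p_{2r}^{-1}(P_x^{0,2r}) \subset P_x$, hence $(g,x) \sim (h,y)$ in $\mathcal{I}$ as desired.

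Surjectivity is immediate from Corollary~\ref{Cor:surjectivityofpr rameven}: every class $[(\bar{g}, x)]^{0,2r}$ with $\bar{g} \in P_0^{0,2r}$ is the image of some $[(g,x)]$ with $g \in P_0$, and by Lemma~\ref{Lem:B_0(r) is really the ball of radius r rameven} this lies in $B_0(2r)$. Equivariance is just the computation $h.[(g,x)] = [(hg,x)] \mapsto [(p_{2r}(hg),x)]^{0,2r} = p_{2r}(h).[(p_{2r}(g),x)]^{0,2r}$.

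In short, there is no genuinely new obstacle: all the heavy lifting—smoothness of the integral model in Theorem~\ref{Thm:smoothness of integral model rameven}, surjectivity of $p_{2r}$ in Corollary~\ref{Cor:surjectivityofpr rameven}, and the delicate injectivity in Lemma~\ref{Lem:injectivityofpr rameven} (which is where the peculiar shift by $\gamma$ in the definition of $P_x$ plays its role)—has been dealt with upstream. The remaining work is purely formal and mirrors the proof of Theorem~\ref{Thm:localdescriptionoftheball ur} verbatim, with every occurrence of $r$ replaced by $2r$ and every reference to Section~\ref{Sec:SU3L/K unramified} replaced by the corresponding reference in this section.
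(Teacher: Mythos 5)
Your proof is correct and follows essentially the same route as the paper: well‑definedness from Lemma~\ref{Lem:integralityofn}, injectivity via Remark~\ref{Rem:equivalent def with for all n for local model ram even} together with the containment $p_{2r}^{-1}(P_x^{0,2r})\subset P_x$ of Lemma~\ref{Lem:injectivityofpr rameven}, surjectivity from Corollary~\ref{Cor:surjectivityofpr rameven}, and the direct equivariance computation. The paper writes the proof out explicitly rather than referring to the unramified case verbatim, but the content is identical to what you describe.
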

\begin{proof}
The map is well-defined by Lemma~\ref{Lem:integralityofn}.
\begin{itemize}
\item Injectivity: let $ [(g,x)], [(h,y)] \in B_0(2r) $ be such that they have the same image in $ \mathcal{I}^{0,2r} $. By Remark~\ref{Rem:equivalent def with for all n for local model ram even}, it means that for all $ \tilde{n}\in N^{0,2r} $ such that $ \nu (\tilde{n})(x) = y $, $ p_{2r}(g)^{-1}p_{2r}(h)\tilde{n}\in P_{x}^{0,2r} $. So, we can assume that $ \tilde{n} $ is either equal to $ \Id $, or is of the form $ \begin{psmallmatrix} 
0&0 & 1 \\
0&-1 & 0 \\
1&0&0
\end{psmallmatrix} $. Hence, there exists $ n\in N $ such that $ p_{2r}(n) = \tilde{n} $. But $ \nu (n)(x) = y $, and $ g^{-1}hn\in p_{2r}^{-1}(P_{x}^{0,2r}) \subset P_{x} $ by Lemma~\ref{Lem:injectivityofpr rameven}. Hence, $ [(g,x)] = [(h,y)] $, as wanted.
\item Surjectivity: follows directly from the surjectivity of $ p_{2r}\colon P_{0}\to P_{0}^{0,2r} $ (Corollary~\ref{Cor:surjectivityofpr rameven}).
\item Equivariance: $ h.[(g,x)] = [(hg,x)] \mapsto [(p_{2r}(hg),x)]^{0,2r} = p_{2r}(h).[(p_{2r}(g),x)]^{0,2r} $. \qedhere
\end{itemize}
\end{proof}

\subsection{Arithmetic convergence}\label{Sec:arithmetic convergence Lrameven}
In order to obtain a compact space of pairs of local fields, we need to break our convention that $ L $ is separable and also allow inseparable extensions. Note that all inseparable ramified quadratic extensions of $ \mathbf{F}_{2^n}(\!(X)\!) $ are isomorphic over an isomorphism of $ \mathbf{F}_{2^n}(\!(X)\!) $ (because $ \mathbf{F}_{2^n}(\!(X)\!) $ has many automorphisms), so that it is actually enough for our purposes to consider only one form of inseparable pair.
\begin{definition}\label{Def:admissible pairs rameven}
Consider the set of pairs of local fields $ (K,L) $ where $ K $ is of residue characteristic $ 2 $ and such that one of the following holds
\begin{enumerate}
\item $ K=\mathbf{F}_{2^n}(\!(X)\!) $ and $ L = \mathbf{F}_{2^n}(\!(\sqrt{X})\!) $ (endowed with the trivial conjugation action).
\item $ L $ is a separable quadratic ramified extension of $ K $.
\end{enumerate}
\end{definition}

\begin{definition}\label{Def:the space L rameven}
We say that two pairs $ (K_{1},L_{1}) $ and $ (K_{2},L_{2}) $ are isomorphic if there exists a conjugation equivariant isomorphism between $ L_1 $ and $ L_2 $. Let $ \mathcal{L}^{\ram}_{\even} $ be the set of pairs of local fields as in Definition~\ref{Def:admissible pairs rameven}, up to isomorphism. 
For each (non-zero) natural number $ n $, let us also define $ \mathcal{L}_{2^n}^{\ram} = \lbrace (K,L) \in \mathcal{L}^{\ram}_{\even}~\vert~ \vert \overline{K}\vert = 2^{n}\rbrace $.
\end{definition}

As in Section~\ref{Sec:SL_2(D)}, we define a metric on the space $ \mathcal{L}^{\ram}_{\even} $. For $L\in \mathcal{L}^{\ram}_{\even} $ and $ r\in \N $, the ``Galois'' conjugation (which is trivial for an inseparable pair) induces an automorphism of $ \mathcal{O}_{L}/\mathfrak{m}_{L}^{r} $ that we still call the conjugation.
\begin{definition}
Let $ (K_{1},L_{1}) $ and $ (K_{2},L_{2}) $ be in $ \mathcal{L}^{\ram}_{\even} $. We say that $ (K_{1},L_{1}) $ is $ r $-close to $ (K_2,L_2) $ if and only if there exists a conjugation equivariant isomorphism $ \mathcal{O}_{L_{1}}/\mathfrak{m}_{L_1}^{r}\to \mathcal{O}_{L_{2}}/\mathfrak{m}_{L_2}^{r} $.
\end{definition}
Again, this notion of closeness induces a non-archimedean metric on $ \mathcal{L}^{\ram}_{\even} $. Let
\begin{equation*}
d\colon \mathcal{L}^{\ram}_{\even}\times \mathcal{L}^{\ram}_{\even}\to \mathbf{R}_{\geq 0}\colon d((K_1,L_1);(K_2,L_2))= \inf \lbrace \frac{1}{2^{r}} ~\vert~ (K_1,L_1) \textrm{ is } r \textrm{-close to }(K_2,L_2) \rbrace
\end{equation*}

\begin{lemma}\label{Lem:Non-archim. metric space rameven}
$ d(\cdot~;~\cdot ) $ is a non-archimedean metric on $ \mathcal{L}^{\ram}_{\even} $. 
\end{lemma}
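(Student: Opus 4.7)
The plan is to follow essentially verbatim the arguments used in Lemma~\ref{Lem:Non-archim. metric space ur} and Lemma~\ref{Lem:Non-archim. metric space ram}, since the definition of the distance is formally identical; the only extra ingredient that requires thought is the separation axiom, because the space $\mathcal{L}^{\ram}_{\even}$ now includes the inseparable pair $(\mathbf{F}_{2^{n}}(\!(X)\!),\mathbf{F}_{2^{n}}(\!(\sqrt{X})\!))$ endowed with trivial conjugation.

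First I would observe that symmetry and $d((K,L);(K,L))=0$ are immediate from the symmetry of the notion of $r$-closeness and the fact that the identity is a conjugation equivariant automorphism. Second, for the non-archimedean triangle inequality, I would record (as in Remark~\ref{Rem:transitivityofdistance unramified}) that $r$-closeness is an equivalence relation on $\mathcal{L}^{\ram}_{\even}$, and that being $r$-close implies being $l$-close for every $l\leq r$; these two facts together formally force $d((K_1,L_1);(K_3,L_3))\leq \max\{d((K_1,L_1);(K_2,L_2)),d((K_2,L_2);(K_3,L_3))\}$.

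The main (mild) point to check is separation. If $d((K_1,L_1);(K_2,L_2))=0$, then for every $r\in\N$ the (finite) set $E_r$ of conjugation equivariant isomorphisms $\mathcal{O}_{L_{1}}/\mathfrak{m}_{L_1}^{r}\to \mathcal{O}_{L_{2}}/\mathfrak{m}_{L_2}^{r}$ is non-empty. Reduction modulo $\mathfrak{m}_{L_i}^{r}$ gives natural maps $E_{r+1}\to E_r$, so the $E_r$ form a projective system of finite non-empty sets; by compactness the inverse limit is non-empty, producing a conjugation equivariant isomorphism $\mathcal{O}_{L_1}\cong \mathcal{O}_{L_2}$, which extends to the fraction fields and shows that $(K_1,L_1)$ and $(K_2,L_2)$ are isomorphic in $\mathcal{L}^{\ram}_{\even}$. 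Note that this argument automatically rules out mixing of categories: a separable pair cannot be at distance $0$ from the inseparable pair, because an equivariant isomorphism of their rings of integers would transport the non-trivial conjugation of one to the trivial conjugation of the other, a contradiction. Hence $d$ is a genuine non-archimedean metric on $\mathcal{L}^{\ram}_{\even}$.
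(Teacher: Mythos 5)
Your proof is correct and takes essentially the same approach as the paper: the non-archimedean inequality comes from the hereditary/transitive properties of $r$-closeness (Remark~\ref{Rem:transitivityofdistance rameven}), and separation comes from promoting the compatible family of finite-level equivariant isomorphisms to an equivariant isomorphism $\mathcal{O}_{L_1}\cong \mathcal{O}_{L_2}$ — your inverse-limit argument is exactly what the paper leaves implicit when it asserts this step. The separate remark about the inseparable pair is already subsumed in the general conclusion (equivariant isomorphism forces both conjugations to have the same triviality), so it is correct but not needed.
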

\begin{proof}
If $ d((K_1,L_1);(K_2,L_2)) = 0 $, then $ \mathcal{O}_{L_1} $ and $ \mathcal{O}_{L_2} $ are equivariantly isomorphic. 
Hence, the pairs of field of fraction are isomorphic in $ \mathcal{L}^{\ram}_{\even} $, as wanted. The fact that this distance is non-archimedean is a consequence of Remark~\ref{Rem:transitivityofdistance rameven}.
\end{proof}
\begin{remark}\label{Rem:transitivityofdistance rameven}
Note that being $ r $-close is an equivalence relation, and that if $ r\geq l $ and $ (K_1,L_1) $ is $ r $-close to $ (K_2,L_2) $, then $ (K_1,L_1) $ is $ l $-close to $ (K_2,L_2) $.
\end{remark}

\begin{lemma}\label{Lem:induced proximity on K rameven}
Let $ r\in \N $, and $ (K_1,L_1),~(K_2,L_2)\in \mathcal{L}^{\ram}_{\even} $. A conjugation equivariant isomorphism $ \mathcal{O}_{L_{1}}/\mathfrak{m}_{L_1}^{2r}\to \mathcal{O}_{L_{2}}/\mathfrak{m}_{L_2}^{2r} $ induces an isomorphism $ \mathcal{O}_{K_{1}}/\mathfrak{m}_{K_1}^{r}\to \mathcal{O}_{K_{2}}/\mathfrak{m}_{L_2}^{r} $.
\end{lemma}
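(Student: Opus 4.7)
Let $R_i := \mathcal{O}_{L_i}/\mathfrak{m}_{L_i}^{2r}$ and $S_i := \mathcal{O}_{K_i}/\mathfrak{m}_{K_i}^{r}$. Since $L_i/K_i$ is ramified of degree $2$, we have $\mathfrak{m}_{L_i}^{2r}\cap \mathcal{O}_{K_i} = \mathfrak{m}_{K_i}^{r}$, so $S_i$ embeds naturally as a subring of $R_i$, and the norm $N(y) := y\bar{y}$ takes values in $S_i$. My plan is to exhibit $S_i$ as the subring $T_i$ of $R_i$ generated by all norms $\{N(y): y \in R_i\}$. Once this intrinsic characterization is established, the conclusion is immediate: any conjugation-equivariant ring isomorphism $\phi \colon R_1 \to R_2$ satisfies $\phi(N(y)) = N(\phi(y))$, hence maps the norm-generated subring $T_1$ bijectively onto $T_2$, and the restriction $\phi|_{S_1} \colon S_1 \to S_2$ is the desired induced isomorphism.

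To prove $T_i = S_i$, I would fix a uniformiser $t_i$ of $\mathcal{O}_{L_i}$ and apply Lemma~\ref{Lem:goodformforL} to write $t_i^2 - \alpha t_i + \beta = 0$ with $\beta$ a uniformiser of $\mathcal{O}_{K_i}$ (and with $\alpha = 0$ in the inseparable case). In the decomposition $R_i = S_i \oplus t_i S_i$ the norm computes as
\[
N(y_1 + t_i y_2) = y_1^2 + \alpha y_1 y_2 + \beta y_2^2.
\]
This shows at once that $T_i$ contains $\beta = N(t_i)$, the element $\alpha = N(1+t_i) - 1 - \beta$, every square $y_1^2 = N(y_1)$ for $y_1 \in S_i$, and (subtracting) every product $\alpha y_1$ for $y_1 \in S_i$.

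The inclusion $S_i \subseteq T_i$ then follows by a Teichmüller-expansion argument valid in all cases of Definition~\ref{Def:admissible pairs rameven}. Since Frobenius is bijective on the residue field $\mathbf{F}_{2^n}$, squaring permutes the set of Teichmüller representatives of $\mathbf{F}_{2^n}$ in $S_i$; since $T_i$ already contains every square in $S_i$, it contains every Teichmüller representative. Combined with $\beta \in T_i$ and closure under ring operations, this shows that $T_i$ contains every finite expression $\sum_{j=0}^{r-1}[a_j]\beta^j$, i.e.\ every element of $S_i$ via its Teichmüller expansion. Hence $T_i = S_i$, which, as explained above, yields the induced isomorphism $S_1 \to S_2$.

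The main obstacle is this intrinsic characterization $T_i = S_i$. The ``take invariants'' approach that worked in odd residue characteristic (Remark~\ref{Rem:induced proximity on K ram}) fails here, because in residue characteristic $2$ the conjugation is either identically trivial (inseparable case) or trivial on a strictly larger subset than $S_i$ (separable case, as soon as $\omega_{K_i}(\alpha)$ is large compared with $r$), so invariants strictly contain $S_i$ in general. Replacing ``invariants'' by ``norm-generated subring'' is the correct substitute; the computations above recover $\beta$, $\alpha$ and every square, and the bijectivity of Frobenius on $\mathbf{F}_{2^n}$ together with the Teichmüller expansion is what closes the gap uniformly across the separable equal-characteristic, separable mixed-characteristic, and inseparable subcases.
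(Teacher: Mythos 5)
Your proof is correct, and it establishes a slightly sharper intrinsic characterization than the one the paper uses. The paper identifies $\mathcal{O}_{K_i}/\mathfrak{m}_{K_i}^r$ as the subring of $\mathcal{O}_{L_i}/\mathfrak{m}_{L_i}^{2r}$ generated by the images of \emph{both} the norm and the trace; it then observes that the trace image equals the ideal $\pi_{K_i}^{i_0}\mathcal{O}_{K_i}/\mathfrak{m}_{K_i}^{r}$, passes to the quotient modulo this ideal (where characteristic $2$ kills the $\alpha$-term), and uses surjectivity of Frobenius on $\mathbf{F}_{2^n}$ to see that norms $x_1^2+\beta x_2^2$ fill out the whole quotient. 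You instead work with the norm alone: $N(y)=y^2$ for $y\in S_i$ already forces every reduced Teichm\"uller representative into $T_i$ (again via surjectivity of Frobenius), while $\beta=N(t_i)\in T_i$, and the Teichm\"uller $\beta$-adic expansion closes the argument uniformly in the separable and inseparable cases without any passage to a smaller quotient. Both routes rest on the same key fact --- bijectivity of squaring on $\mathbf{F}_{2^n}$ --- but yours dispenses with the trace entirely, which is cleaner; the extra observations that $\alpha$ and $\alpha y_1$ lie in $T_i$, though true, play no role in your final step and could be omitted.
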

\begin{proof}
Let $ (K,L)\in \mathcal{L}^{\ram}_{\even} $. The proof of the lemma follows if we can characterise $ \mathcal{O}_{K}/\mathfrak{m}_{K}^{r} $ inside $ \mathcal{O}_{L}/\mathfrak{m}_{L}^{2r} $ in an algebraic way. We claim that $ \mathcal{O}_{K}/\mathfrak{m}_{K}^{r} $ is the subring of $ \mathcal{O}_{L}/\mathfrak{m}_{L}^{2r} $ generated by the images of the norm and the trace map.

First assume that $L$ is a separable extension of $ K $, and let $ t,\alpha ,\beta $ be as in Lemma~\ref{Lem:goodformforL}, so that $ \mathcal{O}_{L}/\mathfrak{m}_{L}^{2r}\cong\mathcal{O}_{K}/\mathfrak{m}_{K}^{r}\oplus t.\mathcal{O}_{K}/\mathfrak{m}_{K}^{r}  $.  Let $ x = x_1+tx_2 \in \mathcal{O}_{K}/\mathfrak{m}_{K}^{r}\oplus t.\mathcal{O}_{K}/\mathfrak{m}_{K}^{r} $ and let $ i_0 $ be the parameter associated to $ L/K $ as in Definition~\ref{Def:parameteri_0}. Using $ x+\bar{x} = 2x_1 + \alpha x_2 $, we readily see that the image of the trace map generates $ \pi_K^{i_0}.\mathcal{O}_{K}/\mathfrak{m}_{K}^{r} $. Hence, we can work modulo $ (\pi_K^{i_0}) $. In particular, in view of Lemma~\ref{Lem:goodformforL}, we are in characteristic $ 2 $, and $ x\bar{x} = x_1^2+\beta x_2^2 $. Thus, the claim follows because squaring is surjective in the finite field $ \mathbf{F}_{2^n} $, so that for $ r\leq i_0 $, we have $ \mathcal{O}_{K}/\mathfrak{m}_{K}^{r} = \lbrace x_1^2+\beta x_2^2~\vert~x_i \in \mathcal{O}_{K}/\mathfrak{m}_{K}^{r} \rbrace $.

To conclude, note that if $ (K,L) $ is an inseparable pair, the ``norm'' map is just squaring while the ``trace" map is trivial, and that in this case, $ \mathcal{O}_{K}/\mathfrak{m}_{K}^{r} = (\mathcal{O}_{L}/\mathfrak{m}_{L}^{2r})^2 $ as well.
\end{proof}

We now go on to prove that $ \mathcal{L}_{2^n}^{\ram} $ is homeomorphic to $ \hat{\N}^2 $. Again, the key ingredient in this identification is Theorem~\ref{Thm:fundamental approximation lemma}. We further need a variation for ramified quadratic extension in residue characteristic $ 2 $. We begin by refining our knowledge about separable ramified extensions in characteristic $ 2 $.
\begin{lemma}\label{Lem:further results on mathcal L}
Let $ K = \mathbf{F}_{2^{n}}(\!(X)\!) $. Up to isomorphism, any separable pair of positive characteristic in $ \mathcal{L}_{2^n}^{\ram} $ is of the form $ (K, K[T]/(T^{2}-\alpha T+X)) $, for some non zero $ \alpha \in (X) $. Also, given $ i\in \N $, there are only finitely many extensions of $ K $ $ ( $up to isomorphism$ ) $ of the form $ K[T]/(T^{2}-\alpha T+X) $ where $ \alpha \in (X^{i})\setminus (X^{i+1}) $.
\end{lemma}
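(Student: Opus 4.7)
The plan is to prove the two assertions separately. The first will follow from Lemma~\ref{Lem:goodformforL} together with the existence of many automorphisms of the local field $\mathbf{F}_{2^{n}}(\!(X)\!)$; the second will reduce to the Artin--Schreier classification of separable degree-$2$ extensions in characteristic $2$.

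For the first part, let $(K,L)$ be a separable pair of positive characteristic in $\mathcal{L}_{2^n}^{\ram}$. By the same argument as in Claim~\ref{Claim:1'} of the proof of Proposition~\ref{Prop:explicit description of D}, $K$ is a Laurent series field over $\mathbf{F}_{2^{n}}$; by Lemma~\ref{Lem:goodformforL} one may write $L=K[t]$ with $t^{2}-\alpha t+\beta=0$, where $\beta$ is a uniformiser of $K$ and, since $\alpha=0$ would force the extension to be inseparable in characteristic $2$, $\alpha\in\mathfrak{m}_{K}\setminus\{0\}$. Because $K\cong \mathbf{F}_{2^{n}}(\!(Y)\!)$ admits a continuous $\mathbf{F}_{2^{n}}$-algebra automorphism sending any chosen uniformiser to $X$, I can assume $K=\mathbf{F}_{2^{n}}(\!(X)\!)$ and $\beta=X$, which puts the pair in the required form with $\alpha\in (X)\setminus\{0\}$.

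For the second part, fix $K=\mathbf{F}_{2^{n}}(\!(X)\!)$ and $\alpha\in (X^{i})\setminus (X^{i+1})$. The substitution $T=\alpha S$ converts $T^{2}-\alpha T+X$ into $S^{2}-S+c_{\alpha}$ with $c_{\alpha}:=X/\alpha^{2}$, so $L$ is the Artin--Schreier extension of $K$ attached to $c_{\alpha}$. Two such extensions are $K$-isomorphic if and only if $[c_{\alpha}]=[c_{\alpha'}]$ in $K/\wp(K)$, where $\wp(y):=y^{2}-y$, and $K$-isomorphism is stricter than isomorphism of pairs; hence it is enough to prove that $\{[c_{\alpha}]:\alpha\in(X^{i})\setminus(X^{i+1})\}$ is finite in $K/\wp(K)$. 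To exhibit a finite canonical form, I will use two successive reductions. First, Hensel's lemma applied to the polynomial $Y^{2}-Y-z$ (of derivative $-1\in\mathcal{O}_{K}^{\times}$) shows $\mathfrak{m}_{K}\subset\wp(\mathfrak{m}_{K})$, so every element of $K$ is congruent modulo $\wp(K)$ to its principal part plus a constant in $\mathbf{F}_{2^{n}}$. Second, the identity $\wp(c^{1/2}X^{j})=cX^{2j}-c^{1/2}X^{j}$ (valid because Frobenius is bijective on $\mathbf{F}_{2^{n}}$) gives $cX^{2j}\equiv c^{1/2}X^{j}\pmod{\wp(K)}$, allowing iterative replacement of every negative even-exponent monomial by one of strictly larger exponent. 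Starting from $c_{\alpha}=u^{-2}X^{1-2i}$ with $u\in\mathcal{O}_{K}^{\times}$, the smallest exponent that occurs is $1-2i$, which is odd and therefore not touched by the second reduction; the process thus terminates with a representative supported on the odd negative integers in $\{1-2i,3-2i,\dots,-1\}$ plus a constant class in $\mathbf{F}_{2^{n}}/\wp(\mathbf{F}_{2^{n}})$, a quotient that has exactly $2$ elements. This yields at most $2(2^{n}-1)\,2^{n(i-1)}$ canonical forms, hence the desired finiteness.

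The principal delicate point will be verifying that the minimum exponent $1-2i$ is preserved throughout the reduction, i.e.\ that the substitution $cX^{2j}\mapsto c^{1/2}X^{j}$ never produces a monomial below $X^{1-2i}$; this will follow from the inequality $j>2j$ valid for all $j<0$, which guarantees that the process strictly raises exponents. Everything else is a combination of the normal-form statement of Lemma~\ref{Lem:goodformforL}, the freedom to change uniformiser in $\mathbf{F}_{2^{n}}(\!(X)\!)$, and classical Artin--Schreier theory.
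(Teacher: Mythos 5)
Your first paragraph is essentially the paper's argument: Lemma~\ref{Lem:goodformforL} provides the normal form $T^2-\alpha T+\beta$, the observation that $\alpha\neq 0$ for separability is sound, and the change of uniformiser via an automorphism of $\mathbf{F}_{2^n}(\!(X)\!)$ normalises $\beta$ to $X$.

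For the finiteness statement, however, you take a genuinely different route. The paper follows \cite{Lang94}*{Chapter~II, §5, Proposition~14}, i.e.\ a Krasner-type argument: two ramified extensions of $K$ whose defining polynomials are sufficiently close are $K$-isomorphic, so the compactness of $(X^i)\setminus(X^{i+1})$ gives a finite cover by $K$-isomorphism classes. Your argument instead passes to Artin--Schreier theory: the substitution $T=\alpha S$ brings the defining polynomial into the form $S^2-S+X/\alpha^2$, so each extension is classified by the class of $c_\alpha=X/\alpha^2$ in $K/\wp(K)$, and you exhibit an explicitly finite set of canonical forms supported on the odd exponents in $\{1-2i,\dots,-1\}$ together with a class in $\mathbf{F}_{2^n}/\wp(\mathbf{F}_{2^n})$. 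This buys more than the paper does: you get an explicit numerical bound $2(2^n-1)2^{n(i-1)}$ on the number of classes and concrete representatives, at the cost of some bookkeeping. Two small remarks. First, you correctly note that finiteness of $K$-isomorphism classes is stronger than finiteness in $\mathcal{L}_{2^n}^{\ram}$ (where pairs are taken up to equivariant isomorphism allowing automorphisms of $K$), so your argument applies a fortiori. Second, the termination justification ``the process strictly raises exponents'' is a little informal: a single reduction step $cX^{2j}\mapsto c^{1/2}X^{j}$ can create a new monomial at an exponent already occupied, so the set of exponents does not simply march upward. The clean potential function is $\Phi=\sum_{j<0,\,c_j\neq 0}|j|$, which strictly decreases at every step (it loses $|j|$ and gains at most $|j|/2$), forcing termination; your claim that the minimal exponent $1-2i$ is never touched (since it is odd) is correct and gives the nonvanishing of the leading coefficient in your count.
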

\begin{proof}
By Lemma~\ref{Lem:goodformforL}, any quadratic ramified extension of $ K $ is of the form $ K[T]/(T^{2}-\alpha T+\beta) $, where $ \beta \in (X)\setminus (X^{2}) $ and $ \alpha \in (X) $. Now, because $ \mathbf{F}_{2^{n}}(\!(X)\!) $ has many automorphisms, the pair $ (K, K[T]/(T^{2}-\alpha T+\beta)) $ is equivariantly isomorphic to a pair of the desired form. For the last statement, mimicking the proof of \cite{Lang94}*{Chapter~II, §5, Proposition~14}, the finiteness follows directly from the compactness of $ (X^{i})\setminus (X^{i+1}) $.
\end{proof}

We can now give the variations on Theorem~\ref{Thm:fundamental approximation lemma}:
\begin{corollary}\label{Cor:variation on fundamental lemma}
\begin{enumerate}[$ (1) $]
\item Let $ \mathbf{F}_{2^{n}}(\!(X)\!)[T]/(T^{2}-\alpha T+X) $ be a separable quadratic ramified extension of $ \mathbf{F}_{2^{n}}(\!(X)\!) $, with $ \alpha \in (X) $. Let $ K $ be a totally ramified extension of degree $ k $ of $ \mathbf{Q}_{2^{n}} $, and let $ \varphi_{\pi_K} \colon \mathcal{O}_{K}\to \mathbf{F}_{2^{n}}[\![X]\!] $ be the bijection defined in Theorem~\ref{Thm:fundamental approximation lemma}. Finally, let $ a = \varphi_{\pi_K}^{-1}(\alpha ) \in \mathcal{O}_{K} $. Then $ (K, K[T]/(T^{2}-aT+\pi_{K})) $ is $ 2k $-close to $ (\mathbf{F}_{2^{n}}(\!(X)\!),\mathbf{F}_{2^{n}}(\!(X)\!)[T]/(T^{2}-\alpha T+X) ) $.
\item $ (\mathbf{F}_{2^{n}}(\!(X)\!),\mathbf{F}_{2^{n}}(\!(X)\!)[T]/(T^{2}-X^{i}T+X)) $ is at distance $ \frac{1}{2^{2i}} $ from $ (\mathbf{F}_{2^{n}}(\!(X)\!), \mathbf{F}_{2^{n}}(\!(\sqrt{X})\!)) $. 
\end{enumerate}
\end{corollary}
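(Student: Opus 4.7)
The plan is to transport each extension polynomial through the isomorphism $\overline{\varphi}_{\pi_K} : \mathcal{O}_K/\mathfrak{m}_K^k \xrightarrow{\cong} \mathbf{F}_{2^n}[\![X]\!]/(X^k)$ provided by Theorem~\ref{Thm:fundamental approximation lemma}, and then lift the resulting ring isomorphism to the quadratic extensions, taking care of the conjugation action.

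For part~$(1)$, I will first observe that because $\varphi_{\pi_K}$ matches $\pi_K$-adic expansions with $X$-adic expansions term by term, the condition $\alpha \in (X)$ forces $a \in (\pi_K)$. Consequently the polynomials $T^2 - aT + \pi_K$ and $T^2 - \alpha T + X$ are Eisenstein over their respective bases, so the corresponding rings of integers are $\mathcal{O}_L = \mathcal{O}_K[T]/(T^2 - aT + \pi_K)$ and $\mathcal{O}_{L'} = \mathbf{F}_{2^n}[\![X]\!][T]/(T^2 - \alpha T + X)$, with $T$ a uniformiser in each. Since $T^{2k}$ and $\pi_K^k$ share the same valuation in $L$, they generate $\mathfrak{m}_L^{2k}$; reducing modulo this ideal (and similarly for $L'$) gives presentations $(\mathcal{O}_K/\mathfrak{m}_K^k)[T]/(T^2 - aT + \pi_K)$ and $(\mathbf{F}_{2^n}[\![X]\!]/(X^k))[T]/(T^2 - \alpha T + X)$. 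Applying $\overline{\varphi}_{\pi_K}$ coefficient by coefficient then yields the desired ring isomorphism, and conjugation equivariance is automatic because $T \mapsto T$ intertwines the involutions $T \mapsto a - T$ and $T \mapsto \alpha - T$ (recall $\overline{\varphi}_{\pi_K}$ sends $a$ to $\alpha$).

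For part~$(2)$, set $L = \mathbf{F}_{2^n}(\!(X)\!)[T]/(T^2 - X^i T + X)$. The polynomial is Eisenstein, hence $T$ is a uniformiser; since $X$ has valuation $2$ in $L$, the cross term $X^i T$ has valuation $2i + 1$. Modulo $\mathfrak{m}_L^{2i}$ the cross term therefore vanishes and the defining relation collapses to $T^2 = X$ (signs being harmless in characteristic $2$), producing a ring isomorphism $\mathcal{O}_L/\mathfrak{m}_L^{2i} \cong \mathbf{F}_{2^n}[\![\sqrt{X}]\!]/(\sqrt{X}^{2i})$ via $T \mapsto \sqrt{X}$. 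The Galois conjugation $T \mapsto X^i - T$ reduces to the identity modulo $\mathfrak{m}_L^{2i}$ (because $X^i \in \mathfrak{m}_L^{2i}$), matching the trivial conjugation on the inseparable side by definition, which establishes $2i$-closeness. Conversely, $X^i$ is nonzero in $\mathcal{O}_L/\mathfrak{m}_L^{2i+1}$, so $T \mapsto X^i - T$ remains a nontrivial involution there; since the inseparable side still carries the trivial conjugation, no equivariant ring isomorphism modulo $\mathfrak{m}^{2i+1}$ can exist, and the distance is exactly $\frac{1}{2^{2i}}$.

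The main obstacle, such as it is, lies in tracking the valuation of the cross term $X^i T$ against the powers of the uniformiser $T$ and making sure the degeneration happens exactly at radius $2i$, with a detectable asymmetry at radius $2i + 1$. Once the Eisenstein structure and the coefficient-wise nature of $\varphi_{\pi_K}$ are understood, both parts are a direct bookkeeping exercise modulo appropriate powers of the uniformiser.
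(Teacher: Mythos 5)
Your proof is correct and follows essentially the same strategy as the paper's: part (1) reduces to Theorem~\ref{Thm:fundamental approximation lemma} via the presentation of $\mathcal{O}_L/\mathfrak{m}_L^{2k}$ over $\mathcal{O}_K/\mathfrak{m}_K^k$ (the paper phrases this as the $\mathcal{O}_K$-module decomposition $\mathcal{O}_K/\mathfrak{m}_K^r \oplus t\,\mathcal{O}_K/\mathfrak{m}_K^r$ rather than a polynomial quotient, but this is the same computation), and part (2) hinges on the conjugation $T\mapsto X^i - T$ becoming trivial modulo $\mathfrak{m}_L^{2i}$ yet remaining nontrivial modulo $\mathfrak{m}_L^{2i+1}$. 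Your more explicit bookkeeping of the Eisenstein structure and the cross-term valuation is a fair expansion of the paper's terse ``we directly obtain the conclusion.''
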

\begin{proof}
\begin{enumerate}[(1)]
\item By Theorem~\ref{Thm:fundamental approximation lemma}, $ \mathcal{O}_{K}/\mathfrak{m}_{K}^{k}\cong \mathbf{F}_{2^{n}}[\![X]\!]/(X^{k}) $. Observing that for a ramified quadratic extension $ L = K[t] $ of $ K $ with $ t $ a uniformiser of $ L $, we have $ \mathcal{O}_{L}/\mathfrak{m}_{L}^{2r}\cong \mathcal{O}_{K}/\mathfrak{m}_{K}^{r}\oplus t.\mathcal{O}_{K}/\mathfrak{m}_{K}^{r} $, we directly obtain the conclusion. We could also easily conclude that the distance is $ \frac{1}{2^{2k}} $, but we do not need this information.

\item  To simplify notations, let $ L = \mathbf{F}_{2^{n}}(\!(X)\!)[T]/(T^{2}-X^{i}T+X) $. Observe that the conjugation action is trivial on $ \mathcal{O}_{L}/\mathfrak{m}_{L}^{2i} $, so that $ \mathcal{O}_{L}/\mathfrak{m}_{L}^{2i}\cong \mathbf{F}_{2^{n}}[\![X]\!]/(X^{i})\oplus \sqrt{X}.\mathbf{F}_{2^{n}}[\![X]\!]/(X^{i}) $, with trivial conjugation action. Hence, $ (\mathbf{F}_{2^{n}}(\!(X)\!),L) $ is $ 2i $-close from the inseparable pair $ (\mathbf{F}_{2^{n}}(\!(X)\!), \mathbf{F}_{2^{n}}(\!(\sqrt{X})\!))  $. Now, the conjugation action is non-trivial on $ \mathcal{O}_{L}/\mathfrak{m}_{L}^{2i+1} $, so that the distance is $ \frac{1}{2^{2i}} $.\qedhere
\end{enumerate} 
\end{proof}

We deduce the homeomorphism type of $ \mathcal{L}_{2^{n}}^{\ram} $.
\begin{proposition}\label{Prop:explicit description of L rameven}
The space $ \mathcal{L}_{2^{n}}^{\ram} $ is homeomorphic to $ \hat{\N}^2 $. Its first Cantor--Bendixson derivative $ (\mathcal{L}_{2^{n}}^{\ram})^{(1)} $ consists of pairs of positive characteristic, while its second Cantor--Bendixson derivative is the singleton consisting of the inseparable pair $ (\mathbf{F}_{2^{n}}(\!(X)\!), \mathbf{F}_{2^{n}}(\!(\sqrt{X})\!))  $.
\end{proposition}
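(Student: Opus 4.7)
The plan is to mimic the strategy used in Proposition~\ref{Prop:explicit description of D} and Proposition~\ref{Prop:explicit description of L ram}, organising the proof as a succession of claims that together identify the isolated points and the two levels of Cantor--Bendixson derivatives of $\mathcal{L}^{\ram}_{2^n}$.

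First, I would show that every pair $(K,L)\in\mathcal{L}^{\ram}_{2^n}$ with $K$ of characteristic $0$ is isolated. By Lemma~\ref{Lem:induced proximity on K rameven}, if $(K_1,L_1)$ is $2r$-close to $(K_2,L_2)$ then $K_1$ is $r$-close to $K_2$ in $\mathcal{K}$, so Claim~\ref{Claim:2'} and Claim~\ref{Claim:3'} of the proof of Proposition~\ref{Prop:explicit description of D} reduce the problem to finitely many possibilities for $K_2$. For each such $K_2$, there are only finitely many ramified separable quadratic extensions (still by Claim~\ref{Claim:3'}, applied inside $K_2$), which proves the isolation.

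Next, I would show that $\mathcal{L}^{\ram}_{2^n}$ is countable. By Claim~\ref{Claim:3'} again, there are only countably many pairs of characteristic $0$, and by Lemma~\ref{Lem:further results on mathcal L} there are only countably many separable pairs of characteristic $2$ inside $\mathcal{L}^{\ram}_{2^n}$; together with the unique inseparable pair $(\mathbf{F}_{2^n}(\!(X)\!),\mathbf{F}_{2^n}(\!(\sqrt{X})\!))$, this gives countability. Combined with the previous step, the first Cantor--Bendixson derivative $(\mathcal{L}^{\ram}_{2^n})^{(1)}$ is contained in the set of pairs of positive characteristic.

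For the converse, I would use Corollary~\ref{Cor:variation on fundamental lemma}(1) to produce, for any separable pair $(\mathbf{F}_{2^n}(\!(X)\!),\mathbf{F}_{2^n}(\!(X)\!)[T]/(T^2-\alpha T+X))$ of characteristic $2$, an explicit sequence of characteristic $0$ pairs converging to it (vary $K$ through totally ramified extensions of $\mathbf{Q}_{2^n}$ of arbitrarily large degree $k$ via the bijection $\varphi_{\pi_K}$). Hence every separable pair of positive characteristic lies in $(\mathcal{L}^{\ram}_{2^n})^{(1)}$. Moreover, by Lemma~\ref{Lem:further results on mathcal L} only finitely many separable pairs of positive characteristic can lie in any fixed ball of radius $2^{-2i}$ around the inseparable pair, so each such pair is itself isolated in $(\mathcal{L}^{\ram}_{2^n})^{(1)}$. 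Finally, Corollary~\ref{Cor:variation on fundamental lemma}(2) exhibits a sequence of separable pairs of positive characteristic converging to the inseparable pair, showing that the inseparable pair is the unique accumulation point of $(\mathcal{L}^{\ram}_{2^n})^{(1)}$, i.e. the sole element of $(\mathcal{L}^{\ram}_{2^n})^{(2)}$.

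Combining everything, $\mathcal{L}^{\ram}_{2^n}$ is a countable compact Hausdorff totally disconnected space whose second Cantor--Bendixson derivative is a single point and non-empty; by the Mazurkiewicz--Sierpi\'nski theorem (\cite{MS20}*{Théorème~1}), it is homeomorphic to $\hat{\N}^2$. The main obstacle I anticipate is not any single step but rather checking that the inseparable pair indeed lies in the second derivative, not just the first: I need Lemma~\ref{Lem:further results on mathcal L} to guarantee that the separable characteristic $2$ pairs do not cluster at each other, so that the accumulation levels really stratify as claimed.
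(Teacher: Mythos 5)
Your proposal follows essentially the same route as the paper's proof: isolate characteristic-$0$ pairs via Lemma~\ref{Lem:induced proximity on K rameven} together with Claims~\ref{Claim:2'} and \ref{Claim:3'}, establish countability, use Corollary~\ref{Cor:variation on fundamental lemma}(1) to place every positive-characteristic pair in the first derivative, Corollary~\ref{Cor:variation on fundamental lemma}(2) together with Lemma~\ref{Lem:further results on mathcal L} to stratify the first derivative, and conclude via Mazurkiewicz--Sierpi\'nski.

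One step is misstated, though. You write that ``only finitely many separable pairs of positive characteristic can lie in any fixed ball of radius $2^{-2i}$ around the inseparable pair''; this is false. By the argument in the proof of Corollary~\ref{Cor:variation on fundamental lemma}(2), every separable pair $(\mathbf{F}_{2^n}(\!(X)\!),\mathbf{F}_{2^n}(\!(X)\!)[T]/(T^2-\alpha T+X))$ with $\alpha\in(X^i)$ lies at distance $\leq 2^{-2i}$ from the inseparable pair, and there are infinitely many such $\alpha$ giving non-isomorphic extensions. What Lemma~\ref{Lem:further results on mathcal L} actually gives is that only finitely many separable pairs lie at distance \emph{exactly} $2^{-2i}$ (namely those with $\alpha\in(X^i)\setminus(X^{i+1})$). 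The isolation then follows from the ultrametric isosceles-triangle property: if $P$ is a separable positive-characteristic pair at distance $2^{-2i}$ from the inseparable pair $I$, any $Q$ with $d(Q,P)<2^{-2i}$ must satisfy $d(Q,I)=2^{-2i}$ as well, so $Q$ belongs to a finite set; shrinking the ball around $P$ further excludes these finitely many candidates. With this correction the argument matches the paper's intent (which cites the same two results but leaves this reasoning implicit).
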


\begin{proof} \setcounter{claim}{0}
$  $

\begin{claim}\label{Claim:1 rameven}
Let $ (K,L)\in \mathcal{L}_{2^{n}}^{\ram} $. If $ K $ is of characteristic $ 0 $, $ (K,L) $ is isolated in $ \mathcal{L}_{2^{n}}^{\ram} $.
\end{claim}
\begin{claimproof}
If $ (K_1,L_1) $ is $ r $-close to $ (K_2,L_2) $, then $ K_1 $ is $ \lfloor \frac{r}{2}\rfloor $-close to $ K_2 $ by Lemma~\ref{Lem:induced proximity on K rameven}. Hence, the result follows from Claim~\ref{Claim:2'} and Claim~\ref{Claim:3'} in the proof of Proposition~\ref{Prop:explicit description of D}.
\end{claimproof}

\medskip

\begin{claim}\label{Claim:2 rameven} 
$ \mathcal{L}_{2^{n}}^{\ram} $ is a countable space.
\end{claim}
\begin{claimproof}
By Claim~\ref{Claim:3'} in the proof of Proposition~\ref{Prop:explicit description of D}, there are only countably many pairs of characteristic $ 0 $ in $ \mathcal{L}_{2^{n}}^{\ram} $. Furthermore, there is only one inseparable pair by definition, and there are countably many separable pair of characteristic $2$ in $ \mathcal{L}_{2^{n}}^{\ram} $ by Lemma~\ref{Lem:further results on mathcal L}.
\end{claimproof}

\medskip

We can now conclude the proof: since pairs of characteristic $ 0 $ are isolated by Claim~\ref{Claim:4'}, the first Cantor--Bendixson derivative $ (\mathcal{L}_{2^{n}}^{\ram})^{(1)} $ contains only pairs of positive characteristic, and $ (\mathcal{L}_{2^{n}}^{\ram})^{(1)} $ contains all of them by Corollary~\ref{Cor:variation on fundamental lemma}~($1$). Also, by Corollary~\ref{Cor:variation on fundamental lemma}~($2$) and Lemma~\ref{Lem:further results on mathcal L}, separable pairs are isolated in $ \mathcal{L}_{2^{n}}^{(1)} $, and the inseparable pair is an accumulation point in $ (\mathcal{L}_{2^{n}}^{\ram})^{(1)} $. So that again by \cite{MS20}*{Théorème~1}, we get $ \mathcal{L}_{2^{n}}^{\ram}\cong \hat{\N}^2 $.
\end{proof}

\subsection{Continuity from pairs in \texorpdfstring{$ \mathcal{L}^{\ram}_{\even} $}{L^{ram}_{even}} to subgroups of \texorpdfstring{$ \Aut (T) $}{Aut(T)}}
In this section, we start to vary the ramified pair $ (K,L) $, and look at the variation it produces on the Bruhat--Tits tree of $ \SU_3^{L/K} $. Recall that we introduced a notation to keep track of the dependence on $ (K,L) $ of many of the definitions we made in this section (see Remark~\ref{Rem:dependence on D of the tree rameven} and Remark~\ref{Rem:dependence on D of the local tree rameven}). 

Note however that in Section~\ref{Sec:arithmetic convergence Lrameven} we were forced to consider inseparable pairs, i.e. pairs of the form $ (\mathbf{F}_{2^n}(\!(X)\!), \mathbf{F}_{2^n}(\!(\sqrt{X})\!) ) $, and that we have not yet associated any object to those pairs. The definition of $ \SU_3^{L/K} $ still makes sense for $ L $ an inseparable extension, but this $K$-group scheme is not smooth. In fact, it is the Weil restriction from $ L $ to $ K $ of the naive split special orthogonal group $ \SO_3 $ in characteristic $ 2 $. Instead of $ \SU_3^{L/K} $, the algebraic group associated to the inseparable pair $ (K,L) $ should be the algebraic group $ \SL_2(L) $. It is the group of $ K $-rational point of $ \mathcal{R}^L_K \SL_2 $, the Weil restriction from $ L $ to $ K $ of $ \SL_2 $ (as an aside, note that $ \mathcal{R}^L_K \SL_2 $ is the prototypical example of a non-reductive pseudo-reductive group). The preceding discussion motivates the following definitions.

\begin{definition}\label{Def:data for the inseparable pair}
Let $ (K,L) $ be an inseparable pair in $ \mathcal{L}^{\ram}_{\even} $. We set $ (P_x)_{(K,L)} = (P_x)_L $, $ T_{(K,L)}=T_L $, $ M_{(K,L)}=M_L $, $ N_{(K,L)}=N_L $, $ \nu_{(K,L)}=\nu_L $ and $ \mathcal{I}_{(K,L)}=\mathcal{I}_L $, where objects appearing on the right hand side of an equality were defined in Remark~\ref{Rem:dependence on D of the tree}. And similarly for local objects, we set $ (P_x^{0,2r})_{(K,L)} = (P_x^{0,2r})_{L}  $, $ H^{0,2r}_{(K,L)} = H^{0,2r}_{L} $, $ M^{0,2r}_{(K,L)} = M^{0,2r}_{L} $, $ N^{0,2r}_{(K,L)} = N^{0,2r}_{L} $ and $ \mathcal{I}^{0,2r}_{(K,L)} = \mathcal{I}^{0,2r}_{L} $, where objects appearing on the right hand side of an equality were defined in Remark~\ref{Rem:dependence on D of the local tree}. Finally, we set $ (i_0)_{(K,L)} = \infty $.
\end{definition}

\begin{definition}\label{Def:associating groups to pair}
Let $ (K,L)\in \mathcal{L}^{\ram}_{\even} $.
\begin{enumerate}[$ \bullet $]
\item If $ (K,L) $ is separable, we set $ G_{(K,L)} = \SU_3^{L/K}(K) = \lbrace g\in \SL_3(L)~\vert~^S\bar{g}g=\Id \rbrace $.
\item If $ (K,L) $ is inseparable, we set $ G_{(K,L)} = \SL_2(L) $.
\end{enumerate}
\end{definition}

As in the previous section, we now aim to prove that when two pairs in $ \mathcal{L}^{\ram}_{\even} $ are close, their local Bruhat--Tits tree are equivariantly isomorphic. The appearance of the reduced trace in the local model makes it a bit less straightforward, so that we first need the following lemmas.
\begin{lemma}\label{Lem:proximity implies nearly same i0}
Let $ (K_1,L_1) $ and $ (K_2,L_2) $ be two pairs in $ \mathcal{L}^{\ram}_{\even} $, and assume that that they are $ r $-close for some $ r\in \mathbf{N} $. If $ r\leq (i_0)_{(K_1,L_1)} $, then $ r\leq (i_0)_{(K_2,L_2)} $, while if $ r> (i_0)_{(K_1,L_1)} $, then $ (i_0)_{(K_2,L_2)} = (i_0)_{(K_1,L_1)} $.
\end{lemma}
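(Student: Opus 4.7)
The plan is to read the invariant $i_0$ off the ring $\mathcal{O}_L/\mathfrak{m}_L^r$ equipped with its conjugation action, using the trace map $Tr(x)=x+\bar x$. Since any conjugation-equivariant isomorphism $\varphi\colon \mathcal{O}_{L_1}/\mathfrak{m}_{L_1}^r\xrightarrow{\sim}\mathcal{O}_{L_2}/\mathfrak{m}_{L_2}^r$ commutes with conjugation, it intertwines the induced trace maps, and any intrinsic invariant of $Tr$ is transported from one side to the other. The arithmetic input I would set up first is that, in the separable case, $Tr(\mathcal{O}_L)=\pi_K^{i_0}\mathcal{O}_K$: this follows from Lemma~\ref{Lem:trace always of big valuation} combined with the explicit formula $Tr(a+bt)=2a+\alpha b$ read off from Lemma~\ref{Lem:goodformforL}, together with the definition of $i_0$. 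In the inseparable case $Tr\equiv 0$, matching the convention $(i_0)_{(K,L)}=\infty$. Hence $i_0$ is encoded in the image $J_r\subseteq\mathcal{O}_L/\mathfrak{m}_L^r$ of the trace.

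Next I would analyse $J_r$ and its propagation through $\varphi$. A direct computation gives the dichotomy: either $J_r=0$, which happens exactly when $\pi_K^{i_0}\in\mathfrak{m}_L^r$, or else $J_r$ is a non-zero $\mathcal{O}_K$-submodule whose minimal $L$-valuation equals $v_L(\pi_K^{i_0})=2i_0$. In the vanishing regime, $\varphi(J_r^{(1)})=J_r^{(2)}=0$ only yields a lower bound on $i_0^{(2)}$, which however is enough to propagate the hypothesis that ``$r$ lies below the relevant threshold set by $i_0^{(1)}$''. In the non-vanishing regime, $\varphi$ preserves the minimal-valuation invariant and forces the exact equality $i_0^{(2)}=i_0^{(1)}$. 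The inseparable pair fits cleanly into this framework: $Tr\equiv 0$ makes $J_r=0$ for all $r$, consistent with $i_0=\infty$.

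From this the two clauses assemble formally. The core content is the \emph{detection} clause: once $r$ is large enough for $J_r^{(1)}$ to be non-zero, the minimal $L$-valuation of $J_r^{(1)}$ pins $i_0^{(1)}$ down, and equivariance forces $i_0^{(2)}$ to take the same value. The \emph{threshold} clause follows by contraposition together with the obvious symmetry of $r$-closeness: if we had $r\le i_0^{(1)}$ but $r>i_0^{(2)}$, then applying the detection clause with the pairs swapped would yield $i_0^{(1)}=i_0^{(2)}<r$, contradicting $r\le i_0^{(1)}$.

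The main obstacle I anticipate is bookkeeping around thresholds and normalisations. One must juggle the $K$- and $L$-valuation conventions used to define $i_0$, combine the two sub-cases $\alpha=0$ and $\alpha\ne 0$ of Lemma~\ref{Lem:goodformforL} into a single argument, and keep the inseparable pair in step with the separable ones. The most delicate point is matching the numerical threshold at which $J_r$ becomes detectable against the comparison ``$r$ versus $i_0$'' appearing in the lemma, which requires a careful translation through $v_L(\pi_K^{i_0})=2i_0$ and some attention to the parity of $r$.
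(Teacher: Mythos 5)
Your strategy---detecting $i_0$ from the image of the trace on $\mathcal{O}_L/\mathfrak{m}_L^r$---is essentially the paper's own argument (the paper uses triviality of conjugation on $\mathcal{O}_{L_i}/\mathfrak{m}_{L_i}^r$ for the first clause and the maximal power of $\mathfrak{m}_{K_i}$ containing all traces of units for the second; both are equivariant invariants transported by $\varphi$). Your computations are correct, and that is precisely where the trouble lies: normalising $\omega(\pi_L)=1$, you find $\omega(\pi_K^{i_0})=2i_0$, and likewise $\omega(t-\bar t)=\omega(2t-\alpha)=2i_0$ when $\alpha\neq 0$, so the detection threshold is $r>2i_0$, not $r>i_0$. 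This factor of~$2$ is not a bookkeeping problem that a choice of normalisation will absorb; the lemma as written is too strong. Take $K=\mathbf{F}_2(\!(X)\!)$ and $L_1=K[T]/(T^2-X^2T+X)$, $L_2=K[T]/(T^2-XT+X)$, so that $i_0^{(1)}=2$ and $i_0^{(2)}=1$. Both $\mathcal{O}_{L_j}/\mathfrak{m}_{L_j}^2$ are $\mathbf{F}_2[T]/(T^2)$ with trivial conjugation, so $(K,L_1)$ and $(K,L_2)$ are $2$-close; with $r=2$ the first clause would force $2\leq i_0^{(2)}=1$. The paper's proof makes the same slip: the assertion ``conjugation is trivial on $\mathcal{O}_{L_i}/\mathfrak{m}_{L_i}^r$ iff $r\leq i_0$'' should read $r\leq 2i_0$ (and $r\leq 2i_0+1$ in the $\alpha=0$ case).

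What your argument correctly proves is the version with $2i_0$ on both sides: if $r\leq 2i_0^{(1)}$ then $r\leq 2i_0^{(2)}$, and if $r>2i_0^{(1)}$ then $i_0^{(1)}=i_0^{(2)}$. That weaker statement is exactly what is invoked in the proof of Proposition~\ref{Prop:aritmimpliesgeomproximityforSU3 rameven} (which assumes $2r$-closeness and compares $2r$ with $2i_0$) and in Theorem~\ref{Thm:continuity from Lrameven to Chab(Aut(T))}. So carry your translation through $\omega(\pi_K^{i_0})=2i_0$ honestly and state the corrected inequality, rather than trying to close a gap that is really a misprint in the source statement; in particular, your contraposition step for the first clause (``apply detection with the pairs swapped'') fails in the window $i_0^{(2)}<r\leq 2i_0^{(2)}$, which is precisely where the counterexample above lives.
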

\begin{proof}
Note that the conjugation is trivial on $ \mathcal{O}_{L_i}/\mathfrak{m}_{L_i}^{r} $ if and only if $ r\leq (i_0)_{(K_i,L_i)} $, so that the first assertion is clear. On the other hand, note that $ (i_0)_{(K_i,L_i)} $ is the largest integer such that for all units $ x\in \mathcal{O}_{L_i} $, $ x+\bar{x} \in \mathfrak{m}_{K_i}^{i_0} $. Indeed, for $ t_i, \alpha_i $ and $ \beta_i $ as in Lemma~\ref{Lem:goodformforL}, we see that either $ [Tr(1+t_i) = 2+\alpha_i] $ or $ [Tr(1) = 2] $ belong to $ \mathfrak{m}_{K_i}^{i_0}\setminus \mathfrak{m}_{K_i}^{i_0+1} $. Hence, the second assertion follows.
\end{proof}

We now give a lemma allowing us to control the reduced norm. Recall that the valuation $ \omega $ on $ \mathcal{O}_L $ induces a map on $ \mathcal{O}_L/\mathfrak{m}_L^r $ that we still denote $ \omega $. By a uniformiser of $ \mathcal{O}_L/\mathfrak{m}_L^r $, we mean a non-invertible element of minimal image under $ \omega $ (amongst non-invertible elements of $ \mathcal{O}_L/\mathfrak{m}_L^r $). Uniformisers of $ \mathcal{O}_K/\mathfrak{m}_K^r $ are defined in a similar way.

Recall that by Lemma~\ref{Lem:reduced trace does not depend on lifting uniformiser}, given a uniformiser $ \pi_K\in \mathcal{O}_K/\mathfrak{m}_K^r $, we get for every $ s<r $ a unique map (not depending on the lift of $ \pi_K $) $ \frac{Tr}{\pi_K^{i_0}}\colon \mathcal{O}_L/\mathfrak{m}_L^{2s}\to\mathcal{O}_K/\mathfrak{m}_K^s $. We use this fact in the statement of the following lemma.

\begin{lemma}\label{Lem:controlling the reduced norm}
Let $ (K_1,L_1) $ and $ (K_2,L_2) $ be two pairs in $ \mathcal{L}^{\ram}_{\even} $. Assume that $ (K_1,L_1) $ is separable and let $ i_0 = (i_0)_{(K_1,L_1)} $. Assume that the two pairs are $ 2r+2i_0 $-close, and let $ \varphi \colon \mathcal{O}_{L_1}/\mathfrak{m}_{L_1}^{2r+2i_0}\to \mathcal{O}_{L_2}/\mathfrak{m}_{L_2}^{2r+2i_0} $ be the given conjugation equivariant isomorphism. We also denote $ \varphi $ the induced equivariant isomorphism $ \mathcal{O}_{L_1}/\mathfrak{m}_{L_1}^{2r}\to \mathcal{O}_{L_2}/\mathfrak{m}_{L_2}^{2r} $. Finally let $ \pi_{K_1} $ be a uniformiser of $ \mathcal{O}_{K_1}/\mathfrak{m}_{K_1}^{r+i_0} $. Then for all $ x\in \mathcal{O}_{L_1}/\mathfrak{m}_{L_1}^{2r} $,  we have $ \varphi (\frac{Tr}{\pi_{K_1}^{i_0}}(x)) = \frac{Tr}{\varphi (\pi_{K_1})^{i_0}}(\varphi (x)) $.  
\end{lemma}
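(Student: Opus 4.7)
The plan is to unwind the definitions and push the defining identity of the reduced trace through the ring isomorphism $\varphi$, keeping careful track of the precision lost when dividing by $\pi_{K_1}^{i_0}$. First, by Lemma~\ref{Lem:proximity implies nearly same i0} (applied with $2r+2i_0 > 2i_0$), the pair $(K_2,L_2)$ is also separable with the same parameter $i_0$, so both reduced traces make sense. As in the proof of Lemma~\ref{Lem:induced proximity on K rameven}, conjugation equivariance of $\varphi$ ensures that it restricts to a ring isomorphism $\mathcal{O}_{K_1}/\mathfrak{m}_{K_1}^{r+i_0} \xrightarrow{\sim} \mathcal{O}_{K_2}/\mathfrak{m}_{K_2}^{r+i_0}$, so $\varphi(\pi_{K_1})$ is well-defined and is a uniformiser at that precision.

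I would then lift $x$ to $\tilde{x}\in \mathcal{O}_{L_1}$ and $\pi_{K_1}$ to $\tilde{\pi}_{K_1}\in \mathcal{O}_{K_1}$. By Lemma~\ref{Lem:trace always of big valuation}, there is a unique $\lambda \in \mathcal{O}_{K_1}$ with $\tilde{x}+\bar{\tilde{x}} = \tilde{\pi}_{K_1}^{i_0}\lambda$, and by definition $\frac{Tr}{\pi_{K_1}^{i_0}}(x) = \lambda \bmod \mathfrak{m}_{K_1}^r$. Reducing this equation to $\mathcal{O}_{L_1}/\mathfrak{m}_{L_1}^{2r+2i_0}$ and applying $\varphi$ yields, via conjugation equivariance and multiplicativity,
\begin{equation*}
\varphi(\tilde{x})+\overline{\varphi(\tilde{x})} = \varphi(\tilde{\pi}_{K_1})^{i_0}\,\varphi(\lambda) \quad \text{in } \mathcal{O}_{L_2}/\mathfrak{m}_{L_2}^{2r+2i_0},
\end{equation*}
an identity that in fact takes place in the subring $\mathcal{O}_{K_2}/\mathfrak{m}_{K_2}^{r+i_0}$.

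To finish, I would compute $\frac{Tr}{\varphi(\pi_{K_1})^{i_0}}(\varphi(x))$ by choosing arbitrary lifts $\tilde{\pi}'_{K_2}\in \mathcal{O}_{K_2}$ of $\varphi(\pi_{K_1})$ and $\tilde{y}\in \mathcal{O}_{L_2}$ of $\varphi(x)$; independence of the choice of $\tilde{\pi}'_{K_2}$ is guaranteed by Lemma~\ref{Lem:reduced trace does not depend on lifting uniformiser}, which applies since $i_0 \geq 1$ gives $r+i_0 \geq r+1$. Combining $\tilde{y}+\bar{\tilde{y}} \equiv \varphi(\tilde{x})+\overline{\varphi(\tilde{x})}$ and $\tilde{\pi}'_{K_2} \equiv \varphi(\tilde{\pi}_{K_1})$ modulo the appropriate powers of the maximal ideals with the displayed identity gives $\tilde{y}+\bar{\tilde{y}} \equiv (\tilde{\pi}'_{K_2})^{i_0}\varphi(\lambda) \pmod{\mathfrak{m}_{K_2}^{r+i_0}}$; dividing by $(\tilde{\pi}'_{K_2})^{i_0}$ consumes exactly $i_0$ units of precision and recovers $\varphi(\lambda) \bmod \mathfrak{m}_{K_2}^r$, i.e.\ $\varphi(\frac{Tr}{\pi_{K_1}^{i_0}}(x))$. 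The only real difficulty is this precision bookkeeping, which is precisely why one needs $(2r+2i_0)$-closeness rather than merely $2r$-closeness.
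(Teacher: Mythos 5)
Your proposal is correct and follows essentially the same route as the paper: lift $x$ and the uniformiser, write the defining identity $\tilde{x}+\bar{\tilde{x}}=\tilde{\pi}_{K_1}^{i_0}\lambda$ at integral level, push it through $\varphi$ at precision $2r+2i_0$, and reduce down — with the same precision bookkeeping being the heart of the argument. The only cosmetic difference is the order of operations (you lift all the way to $\mathcal{O}_{L_1}$ and then reduce, while the paper works in $\mathcal{O}_{L_1}/\mathfrak{m}_{L_1}^{2r+2i_0}$ and only invokes integral lifts to justify its intermediate claim), and you make explicit the appeals to Lemma~\ref{Lem:proximity implies nearly same i0} and Lemma~\ref{Lem:reduced trace does not depend on lifting uniformiser} that the paper leaves implicit in the surrounding discussion.
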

\begin{proof}
First note that $ \varphi (\pi_{K_1}) $ is a uniformiser of $ \mathcal{O}_{K_2}/\mathfrak{m}_{K_2}^{r+i_0} $. Indeed, by Lemma~\ref{Lem:induced proximity on K rameven}, $ \varphi (\pi_{K_1}) $ is an element of $ \mathcal{O}_{K_2}/\mathfrak{m}_{K_2}^{r+i_0} $, and the fact that it is a uniformiser follows from the fact that $ \varphi (\pi_{K_1})^{r+i_0-1}\neq 0 $ but $ \varphi (\pi_{K_1})^{r+i_0}= 0 $.

To simplify notations, let $ \lambda = \frac{Tr}{\pi_{K_1}^{i_0}}(x) $. Let $ x' $ (respectively $ \lambda ' $) be a lift of $ x $ (respectively $ \lambda $) to $ \mathcal{O}_{L_1}/\mathfrak{m}_{L_1}^{2r+2i_0} $ (respectively to $ \mathcal{O}_{K_1}/\mathfrak{m}_{K_1}^{r+i_0} $). We claim that $ \pi_{K_1}^{i_0}\lambda ' = x'+\overline{x'} $ in $ \mathcal{O}_{K_1}/\mathfrak{m}_{K_1}^{r+i_0} $. Indeed, by the definition of $ \frac{Tr}{\pi_{K_1}^{i_0}} $, there exists a lift $ x'' $ (respectively $ \lambda '' $, $ \pi_{K_1} '' $) of $ x $ (respectively $ \lambda $, $ \pi_{K_1} $) to $ \mathcal{O}_{L_1} $ (respectively to $ \mathcal{O}_{K_1} $, $ \mathcal{O}_{K_1} $) such that $ (\pi_{K_1} '')^{i_0}\lambda '' = x''+\overline{x''} $. But note that neither $ \pi_{K_1}^{i_0}\lambda ' $ nor $ x'+\overline{x'} $ depends on the chosen lifts $ x' $ and $ \lambda ' $. Hence, letting $ p_{2r+2i_0}\colon \mathcal{O}_{L_1}\to \mathcal{O}_{L_1}/\mathfrak{m}_{L_1}^{2r+2i_0} $, we can assume that $ x'=p_{2r+2i_0}(x'') $ and $ \lambda ' = p_{2r+2i_0}(\lambda '') $, so that the claim holds.

But now, we get $ \varphi (\pi_{K_1})^{i_0}\varphi (\lambda ') = \varphi (x')+\overline{\varphi (x')} $ in $ \mathcal{O}_{K_2}/\mathfrak{m}_{K_2}^{r+i_0} $. Furthermore, $ \varphi (x') $ (respectively $ \varphi (\lambda ') $) is a lift of $ \varphi (x) $ (respectively $ \varphi (\lambda ) $). We therefore conclude that $ \varphi (\lambda ) = \frac{Tr}{\varphi (\pi_{K_1})^{i_0}}(\varphi (x))  $, as wanted.
\end{proof}

\begin{proposition}\label{Prop:aritmimpliesgeomproximityforSU3 rameven}
Let $ (K_1,L_1) $ and $ (K_2,L_2) $ be two elements in $ \mathcal{L}^{\ram}_{\even} $. Assume that $ (K_1,L_1) $ is $ 2r $-close to $ (K_2,L_2) $, for some $ r \in \N $. Let $ i_0 = (i_0)_{(K_1,L_1)} $ be the parameter associated to $ (K_1,L_1) $ $ ( $see Definition~\ref{Def:parameteri_0} and Definition~\ref{Def:data for the inseparable pair}$ ) $.
\begin{enumerate}
\item If $ 2r\leq 2i_0 $, then $ (P_{0}^{0,2r})_{(K_1,L_1)}\cong (P_{0}^{0,2r})_{(K_2,L_2)} $, and $ \mathcal{I}_{(K_1,L_1)}^{0,2r}$ is equivariantly in bijection with $ \mathcal{I}_{(K_2,L_2)}^{0,2r} $.
\item If $ 2r > 2i_0 $, then $ (P_{0}^{0,2r-2i_0})_{(K_1,L_1)}\cong (P_{0}^{0,2r-2i_0})_{(K_2,L_2)} $, and $ \mathcal{I}_{(K_1,L_1)}^{0,2r-2i_0}$ is equivariantly in bijection with $ \mathcal{I}_{(K_2,L_2)}^{0,2r-2i_0} $.
\end{enumerate}
\end{proposition}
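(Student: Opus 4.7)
The plan is to follow the strategy of Proposition~\ref{Prop:aritmimpliesgeomproximityforSU3 ur} and Proposition~\ref{Prop:aritmimpliesgeomproximityforSU3 ram}, i.e., to lift the given conjugation-equivariant ring isomorphism $\mathcal{O}_{L_1}/\mathfrak{m}_{L_1}^{2r}\cong \mathcal{O}_{L_2}/\mathfrak{m}_{L_2}^{2r}$ to an isomorphism on local stabilisers, and then use a linear rescaling $f\colon \mathbf{R}\to \mathbf{R}\colon x\mapsto x\frac{\omega(\pi_{L_2})}{\omega(\pi_{L_1})}$ of the real line to match intervals and obtain an equivariant bijection of the local Bruhat--Tits trees. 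However, because $P_x^{0,2r}$ is defined very differently according to whether $2r\leq 2i_0$ or $2r>2i_0$ (see Definition~\ref{Def:localdataforbadSU}), the proof naturally splits into the two cases of the statement.

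For case $(1)$, where $2r\leq 2i_0$, I would first invoke Lemma~\ref{Lem:proximity implies nearly same i0} to guarantee that $r\leq (i_0)_{(K_2,L_2)}$, so that the $\SL_2$-type definition of the local stabiliser applies on both sides. Since that definition involves only valuation inequalities on entries living in $\mathcal{O}_L/\mathfrak{m}_L^{2r}$, with no reduced trace and no extra ambient data, the conjugation-equivariant isomorphism $\mathcal{O}_{L_1}/\mathfrak{m}_{L_1}^{2r}\cong \mathcal{O}_{L_2}/\mathfrak{m}_{L_2}^{2r}$ directly induces a group isomorphism $\varphi\colon (P_0^{0,2r})_{(K_1,L_1)}\cong (P_0^{0,2r})_{(K_2,L_2)}$ that restricts, along $f$, to isomorphisms $(P_x^{0,2r})_{(K_1,L_1)}\cong (P_{f(x)}^{0,2r})_{(K_2,L_2)}$, maps $T^{0,2r}$ and $M^{0,2r}$ to their counterparts, and satisfies $f(n.x) = \varphi(n).f(x)$ for all $n\in N^{0,2r}$. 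The induced map $[(g,x)]^{0,2r}\mapsto [(\varphi(g),f(x))]^{0,2r}$ is then the sought $\varphi$-equivariant bijection of local trees.

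For case $(2)$, where $2r>2i_0$, Lemma~\ref{Lem:proximity implies nearly same i0} first ensures that $(i_0)_{(K_2,L_2)} = i_0$, so that both pairs share the same parameter and the $\SU_3$-type definition is in force on both sides. The crucial additional point is that this definition involves the reduced trace map $\frac{Tr}{\beta^{i_0}}$, which a priori depends on a lift of a uniformiser and needs enough ambient information to be transported under $\varphi$. This is exactly what Lemma~\ref{Lem:controlling the reduced norm} provides: setting $s=r-i_0$, the $2r=2s+2i_0$-closeness hypothesis is precisely what is needed so that, after choosing a uniformiser $\pi_{K_1}$ of $\mathcal{O}_{K_1}/\mathfrak{m}_{K_1}^{r}$ and transporting it via $\varphi$, the reduced traces $\frac{Tr}{\pi_{K_1}^{i_0}}$ and $\frac{Tr}{\varphi(\pi_{K_1})^{i_0}}$ intertwine on $\mathcal{O}_{L_1}/\mathfrak{m}_{L_1}^{2s}\cong \mathcal{O}_{L_2}/\mathfrak{m}_{L_2}^{2s}$ (Lemma~\ref{Lem:local stabiliser does not depend on choice of uniformiser} then kills the residual dependence on the chosen uniformiser lift). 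This compatibility yields an isomorphism $(P_0^{0,2s})_{(K_1,L_1)}\cong (P_0^{0,2s})_{(K_2,L_2)}$, after which the same rescaling argument as in case $(1)$ produces the required equivariant bijection of $\mathcal{I}_{(K_1,L_1)}^{0,2s}$ with $\mathcal{I}_{(K_2,L_2)}^{0,2s}$, where $2s = 2r-2i_0$.

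The main obstacle is the step just described: transporting the reduced-trace condition through $\varphi$ in case $(2)$. Lemma~\ref{Lem:controlling the reduced norm} was prepared precisely to handle it, and the explicit loss of $2i_0$ in the admissible radius is the exact amount of extra ambient information beyond $\mathcal{O}_L/\mathfrak{m}_L^{2s}$ needed to make $\frac{Tr}{\beta^{i_0}}$ well-defined and natural on that quotient. Everything else is a direct parallel of the arguments already carried out in the unramified and ramified-odd cases, and the inseparable pairs are automatically absorbed into case $(1)$ via Definition~\ref{Def:data for the inseparable pair} and the convention $(i_0)_{(K,L)} = \infty$.
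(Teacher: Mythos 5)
Your overall approach — transport the conjugation-equivariant ring isomorphism through the definitions of the local stabilisers, then use the linear rescaling $f\colon x\mapsto x\frac{\omega(\pi_{L_2})}{\omega(\pi_{L_1})}$ to produce an equivariant bijection of local trees — is exactly what the paper does, and your case $(1)$ and your handling of the reduced trace via Lemma~\ref{Lem:controlling the reduced norm} and Lemma~\ref{Lem:local stabiliser does not depend on choice of uniformiser} are correct. But there is a concrete error in your case $(2)$.

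You write that, when $2r > 2i_0$, ``the $\SU_3$-type definition is in force on both sides.'' That is false in general. The group being compared in case $(2)$ is $P_0^{0,\,2r-2i_0}$, not $P_0^{0,2r}$, and by Definition~\ref{Def:localdataforbadSU} the relevant dichotomy is whether $2r-2i_0\le 2i_0$ or $2r-2i_0 > 2i_0$. So the $\SU_3$-type description only kicks in once $2r > 4i_0$. In the intermediate range $2i_0 < 2r \le 4i_0$ (equivalently $0 < s \le i_0$ in your notation $s=r-i_0$), the local stabiliser $P_0^{0,2s}$ is $\SL_2(\mathcal{O}_L/\mathfrak{m}_L^{2s})$, not the constrained subgroup of $\SL_3$, and the reduced-trace transport you build out of Lemma~\ref{Lem:controlling the reduced norm} yields an isomorphism of the wrong pair of groups. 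The paper treats this intermediate range as a separate (and easier) sub-case: the ring isomorphism $\mathcal{O}_{L_1}/\mathfrak{m}_{L_1}^{2r}\cong \mathcal{O}_{L_2}/\mathfrak{m}_{L_2}^{2r}$ descends to radius $2r-2i_0$ and directly gives $\SL_2(\mathcal{O}_{L_1}/\mathfrak{m}_{L_1}^{2r-2i_0})\cong\SL_2(\mathcal{O}_{L_2}/\mathfrak{m}_{L_2}^{2r-2i_0})$, with no reduced trace needed. Inserting that sub-case closes the gap and makes your proof match the paper's.
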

\begin{proof}
When $ 2r\leq 2i_0 $, then $ 2r\leq (2i_0)_{(K_2,L_2)} $ as well by Lemma~\ref{Lem:proximity implies nearly same i0}. In view of Definition~\ref{Def:localdataforbadSU} and Definition~\ref{Def:data for the inseparable pair}, the isomorphism $ \mathcal{O}_{L_1}/\mathfrak{m}_{L_1}^{2r}\cong \mathcal{O}_{L_2}/\mathfrak{m}_{L_2}^{2r} $ induces a group isomorphism $ \varphi \colon (P_0^{0,2r})_{(K_1,L_1)} = \SL_2(\mathcal{O}_{L_1}/\mathfrak{m}_{L_1}^{2r})\cong \SL_2(\mathcal{O}_{L_2}/\mathfrak{m}_{L_2}^{2r}) = (P_0^{0,2r})_{(K_2,L_2)} $.

When $2r>2i_0$, then $ (i_0)_{(K_2,L_2)} = i_0 $ by Lemma~\ref{Lem:proximity implies nearly same i0}. If $ 2r\leq 4i_0 $, then the isomorphism $ \mathcal{O}_{L_1}/\mathfrak{m}_{L_1}^{2r}\cong \mathcal{O}_{L_2}/\mathfrak{m}_{L_2}^{2r} $ induces a group isomorphism $ \varphi \colon (P_0^{0,2r-2i_0})_{(K_1,L_1)} = \SL_2(\mathcal{O}_{L_1}/\mathfrak{m}_{L_1}^{2r-2i_0})\cong \SL_2(\mathcal{O}_{L_2}/\mathfrak{m}_{L_2}^{2r-2i_0}) = (P_0^{0,2r-2i_0})_{(K_2,L_2)} $. On the other hand, if $ 2r> 4i_0 $, then in view of Lemma~\ref{Lem:local stabiliser does not depend on choice of uniformiser} and Lemma~\ref{Lem:controlling the reduced norm}, the isomorphism $ \mathcal{O}_{L_1}/\mathfrak{m}_{L_1}^{2r}\cong \mathcal{O}_{L_2}/\mathfrak{m}_{L_2}^{2r} $ induces a group isomorphism $ \varphi $
\begin{center}
\begin{tikzpicture}[->]
 \node (1) at (0,0) {$ \SL_{3}(\mathcal{O}_{L_1}/\mathfrak{m}_{L_1}^{2r-2i_0}) $};
 \node (2) at (3.6,0) {$ \SL_{3}(\mathcal{O}_{L_2}/\mathfrak{m}_{L_2}^{2r-2i_0}) $};
 \node (7) at (1.8,0) {$ \cong$};
 \node (5) at (-0.5,-0.4) {$ \vee $};
 \node (3) at (-0.2,-0.9) {$ (P_{0}^{0,2r-2i_0})_{(K_1,L_1)} $};
 \node (4) at (3.4, -0.9) {$ (P_{0}^{0,2r-2i_0})_{(K_2,L_2)} $};
 \node (6) at (3.00,-0.4) {$ \vee $};

\path [every node/.style={font=\sffamily\small}]
 (3) edge node [below]{$ \varphi $} (4);

\end{tikzpicture}
\end{center}

Let $\varepsilon = \begin{cases}
 2r \text{ if } 2r\leq 2i_0\\
2r-2i_0 \text{ if } 2r>2i_0
\end{cases} $. In both cases, define a linear map $ f\colon \mathbf{R}\to \mathbf{R}\colon x\mapsto x\frac{\omega (\pi_{L_2})}{\omega (\pi_{L_1})} $. It is clear that for all $ x\in [-\omega (\pi_{L_1}^{\varepsilon}), \omega (\pi_{L_1}^{\varepsilon})] $, $ \varphi $ restricts to an isomorphism $ (P_{x}^{0,\varepsilon})_{(K_1,L_1)}\cong (P_{f(x)}^{0,\varepsilon})_{(K_2,L_2)} $. Furthermore, 
\begin{align*}
\varphi (T^{0,\varepsilon})_{(K_1,L_1)}&= (T^{0,\varepsilon})_{(K_2,L_2)}\\
\varphi (M^{0,\varepsilon})_{(K_1,L_1)}&= (M^{0,\varepsilon})_{(K_2,L_2)}
\end{align*}
and for all $ n\in N^{0,\varepsilon} $, $ f(n.x) = \varphi (n).f(x) $. Hence, the map $ \mathcal{I}_{(K_1,L_1)}^{0,\varepsilon}\to \mathcal{I}_{(K_1,L_1)}^{0,\varepsilon}\colon [(g,x)]^{0,\varepsilon}\mapsto [(\varphi (g), f(x))]^{0,\varepsilon} $ is a $ \varphi $-equivariant bijection.
\end{proof}

We again discuss the homomorphism $ \SU_3^{L/K}(K) \to \Aut (\mathcal{I}_{(K,L)}) $.
\begin{proposition}\label{Prop:Embedding G(K) in Aut(T) rameven}
Let $ \mathcal{I} = \mathcal{I}_{(K,L)} $ be the Bruhat--Tits tree of $ \SU_3^{L/K}(K) $. The homomorphism~ $ \hat{}~\colon \SU_3^{L/K}(K)\to \Aut (\mathcal{I}) $ is continuous with closed image, and the kernel is equal to the centre of $ \SU_3^{L/K}(K) $. 
\end{proposition}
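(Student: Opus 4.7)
The plan is to follow verbatim the argument already given for Proposition~\ref{Prop:Embedding G(K) in Aut(T)}, since all three ingredients of that proof (description of $P_x$ as a vertex stabiliser, continuity from the basis of neighbourhoods of $\Aut(\mathcal I)$, and closedness via \cite{BM96}*{Lemma~5.3}) transfer without change to the present ramified residue characteristic $2$ setting, the only datum that differs being the explicit shape of $P_x$ in Definition~\ref{Def:pointstabiliserforSU ramifiedeven}.

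First I would verify continuity. A basic identity neighbourhood in $\Aut(\mathcal I)$ is the pointwise stabiliser of a finite set of vertices $\{v_1,\dots,v_k\}$. Since the Bruhat--Tits apparatus $\{(P_x)_{x\in\R},N,\nu\}$ constructed in this section does come from a valued root datum (as checked in Appendix~\ref{App:A}), Remark~\ref{Rem:metric on I and stabiliser} applies and each $P_x$ is exactly the stabiliser of $[(\Id,x)]\in\mathcal I$. Writing each $v_j$ as $g_j\cdot [(\Id,x_j)]$, the preimage of the neighbourhood is $\bigcap_j g_j P_{x_j} g_j^{-1}$, which is open in $\SU_3^{L/K}(K)$ because $P_{x_j}$ is so by Definition~\ref{Def:pointstabiliserforSU ramifiedeven}. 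The image is closed by \cite{BM96}*{Lemma~5.3}, using that $\SU_3^{L/K}(K)$ is $\sigma$-compact and that the orbit maps are proper on vertex stabilisers.

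Next I would compute the kernel. Let $g\in\bigcap_{x\in\R}P_x$. The valuation constraint of Definition~\ref{Def:pointstabiliserforSU ramifiedeven} forces $\omega(g_{ij})\geq|x|/2-\gamma$ off the anti-diagonal and $\omega(g_{13}),\omega(g_{31})\geq|x|$, for every $x\in\R$; letting $|x|\to\infty$ (the fixed parameter $\gamma>0$ is harmless) all off-diagonal entries vanish, so $g$ is diagonal. The requirement that the conjugation action of $g$ on each root group be trivial (equivalent to $g$ fixing every apartment pointwise) then pins $g$ down to the centre of $\SU_3^{L/K}(K)$. The converse containment is immediate since central elements act trivially on any building.

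I expect no genuine obstacle, and therefore would just write the proof as "word for word the same as the proof of Proposition~\ref{Prop:Embedding G(K) in Aut(T)}, upon replacing $\SL_2(D)$ by $\SU_3^{L/K}(K)$ and using Definition~\ref{Def:pointstabiliserforSU ramifiedeven} in the last step." The only point that one might worry about is that the presence of the nontrivial parameter $\gamma$ (Definition~\ref{Def:gamma}), which shifts the valuation inequalities cutting out $P_x$, could leave some off-diagonal freedom in $\bigcap_x P_x$; this is dispelled because $\gamma$ is a finite real number independent of $x$, so the argument "let $|x|\to\infty$" still annihilates all off-diagonal entries.
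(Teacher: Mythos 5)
Your proposal is correct and follows exactly the same route as the paper, which simply states that the proof of Proposition~\ref{Prop:Embedding G(K) in Aut(T) rameven} is word for word the proof of Proposition~\ref{Prop:Embedding G(K) in Aut(T)} with $\SL_2(D)$ replaced by $\SU_3^{L/K}(K)$. Your additional check that the constant shift $\gamma$ does not obstruct the ``let $|x|\to\infty$'' argument for diagonality is a sensible sanity check, and indeed harmless, so nothing more is needed.
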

\begin{proof}
The proof is word for word the same as the proof of Proposition~\ref{Prop:Embedding G(K) in Aut(T)}, upon replacing $ \SL_2(D) $ by $ \SU_3^{L/K}(K) $.
\end{proof}

The convergence is then a more or less direct consequence of Theorem~\ref{Thm:localdescriptionoftheball rameven}.
\begin{theorem}\label{Thm:continuity from Lrameven to Chab(Aut(T))}
Let $ ((K_i,L_i))_{i\in \mathbf{N}} $ be a sequence in $ \mathcal{L}^{\ram}_{\even} $ which converges to $ (K,L) $, and let $G_i = G_{(K_i,L_i)} $ $ ( $respectively $ G = G_{(K,L)} )$. For $ N $ big enough and for all $ i\geq N $, there exist isomorphisms $ \mathcal{I}_{(K_i,L_i)} \cong \mathcal{I}_{(K,L)} $ such that the induced embeddings~ $ \hat{G_i}\hookrightarrow \Aut (\mathcal{I}_{(K,L)}) $ make $ (\hat{G_i})_{i\geq N} $ converge to $ \hat{G} $ in the Chabauty topology of $ \Aut (\mathcal{I}_{(K,L)}) $.
\end{theorem}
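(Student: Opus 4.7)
The plan is to mirror verbatim the structure of Theorem~\ref{Thm:continuity from D to Chab(Aut(T))} (and its close cousins Theorem~\ref{Thm:continuity from Lur to Chab(Aut(T))} and Theorem~\ref{Thm:continuity from Lram to Chab(Aut(T))}), while carefully tracking which of the two regimes of Definition~\ref{Def:localdataforbadSU} is in force. First, by Remark~\ref{Rem:regularity of the B-T tree SU rameven} (augmented by Remark~\ref{Rem:regularity of the B-T tree SL2} to cover the inseparable case via Definition~\ref{Def:data for the inseparable pair}), each $\mathcal{I}_{(K_i,L_i)}$ is the $(|\overline{K_i}|+1)$-regular tree, and since convergence in $\mathcal{L}^{\ram}_{\even}$ forces the residue fields to eventually coincide, $\mathcal{I}_{(K_i,L_i)} \cong \mathcal{I}_{(K,L)}$ for $i \geq N$.

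Next, pass to a subsequence with $(K_i,L_i)$ being $2i$-close to $(K,L)$ and set
\[
\varepsilon_i = \begin{cases}
2i & \text{if } 2i \leq 2(i_0)_{(K_i,L_i)}, \\
2i - 2(i_0)_{(K_i,L_i)} & \text{if } 2i > 2(i_0)_{(K_i,L_i)}.
\end{cases}
\]
Lemma~\ref{Lem:proximity implies nearly same i0} shows that $(i_0)_{(K_i,L_i)}$ either stabilises to $(i_0)_{(K,L)}$ (when that parameter is finite) or tends to infinity (when $(K,L)$ is the inseparable pair), so $\varepsilon_i\to\infty$ in both situations. Proposition~\ref{Prop:aritmimpliesgeomproximityforSU3 rameven} then produces equivariant bijections $\mathcal{I}^{0,\varepsilon_i}_{(K_i,L_i)} \cong \mathcal{I}^{0,\varepsilon_i}_{(K,L)}$, and Theorem~\ref{Thm:localdescriptionoftheball rameven}, applied on both sides, promotes these to equivariant bijections $B_0(\varepsilon_i) \cong B_0(\varepsilon_i)$ between the actual balls of radius $\varepsilon_i$. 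Regularity of the trees lets us extend (non-canonically) to a global tree isomorphism $f_i\colon \mathcal{I}_{(K_i,L_i)} \to \mathcal{I}_{(K,L)}$, through which $\hat{G}_i$ becomes a closed subgroup of $\Aut(\mathcal{I}_{(K,L)})$.

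The main step is to verify that $((\hat{P}_0)_{(K_i,L_i)})_{i\geq N}$ converges to $(\hat{P}_0)_{(K,L)}$ in the Chabauty topology of $\Aut(\mathcal{I}_{(K,L)})$, using the two-part criterion of \cite{CR16}*{Lemma~2.1}. Given $\hat{h}_i \in (\hat{P}_0)_{(K_i,L_i)}$ converging to some $\hat{h}$, lift to $h_i \in (P_0)_{(K_i,L_i)}$, project via $p_{\varepsilon_i}$, transport via the isomorphism $(P_0^{0,\varepsilon_i})_{(K_i,L_i)} \cong (P_0^{0,\varepsilon_i})_{(K,L)}$ supplied by Proposition~\ref{Prop:aritmimpliesgeomproximityforSU3 rameven}, and lift back to $\tilde{h}_i \in (P_0)_{(K,L)}$ using the surjectivity of Corollary~\ref{Cor:surjectivityofpr rameven} (and Corollary~\ref{Cor:surjectivityforSL2} when the small regime reduces the local model to $\SL_2$-type, in particular when $(K,L)$ is inseparable). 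The equivariance of all intermediate steps forces $\hat{\tilde{h}}_i$ to agree with $\hat{h}_i$ on $B_0(\varepsilon_i)$, so $\hat{\tilde{h}}_i$ also converges to $\hat{h}$; closedness of $(\hat{P}_0)_{(K,L)}$ in $\Aut(\mathcal{I}_{(K,L)})$, furnished by Proposition~\ref{Prop:Embedding G(K) in Aut(T) rameven} (or Proposition~\ref{Prop:Embedding G(K) in Aut(T)} for the inseparable limit), then gives $\hat{h} \in (\hat{P}_0)_{(K,L)}$. The reverse direction is obtained by reading the same chain of identifications backwards.

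Finally, from the convergence of stabilisers one bootstraps to the convergence of the whole groups by the standard rigidity argument used at the end of Theorem~\ref{Thm:continuity from D to Chab(Aut(T))}: any Chabauty cluster point $H$ of $(\hat{G}_i)_{i\geq N}$ is topologically simple by \cite{CR16}*{Theorem~1.2}, admits $(\hat{P}_0)_{(K,L)}$ as an open compact subgroup, is therefore algebraic by \cite{CS15}*{Corollary~1.3}, and is isomorphic to $\hat{G}$ by \cite{Pink98}*{Corollary~0.3}; since every subsequence has a sub-subsequence converging to $\hat{G}$, the whole sequence does. The genuine subtlety, already absorbed in Proposition~\ref{Prop:aritmimpliesgeomproximityforSU3 rameven}, lies in bookkeeping the two regimes and the loss of $2(i_0)_{(K_i,L_i)}$ in the large regime; the payoff is that when $(K,L)$ is inseparable, the $\SU_3$-type local model of $\hat{G}_i$ degenerates to the $\SL_2$-type one throughout (since $\varepsilon_i$ stays below $2(i_0)_{(K_i,L_i)}$), which is precisely the mechanism by which groups of type $\SU_3$ Chabauty-converge to $\SL_2(L)$ and concretely realises the non-preservation of Tits indices advertised in the introduction.
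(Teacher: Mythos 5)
Your proof is correct and follows essentially the same strategy as the paper: reduce Chabauty convergence of vertex stabilisers to arithmetic convergence of pairs via the local models, then bootstrap to full Chabauty convergence of the groups by the rigidity argument of \cite{CS15} and \cite{Pink98}. The only difference is in the bookkeeping of the subsequence: the paper chooses the subsequence to be $(2i+2i_0)$-close to $(K,L)$ when $(i_0)_{(K,L)}$ is finite, so that Proposition~\ref{Prop:aritmimpliesgeomproximityforSU3 rameven} outputs local isomorphisms of radius exactly $2i$, whereas you uniformly take the $2i$-close subsequence and track the resulting radius $\varepsilon_i$ (which, as you correctly verify via Lemma~\ref{Lem:proximity implies nearly same i0}, tends to infinity in both the finite and infinite $i_0$ regimes). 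The two choices are interchangeable. One minor imprecision: your parenthetical suggests Corollary~\ref{Cor:surjectivityforSL2} is needed whenever the small regime is in force, but Corollary~\ref{Cor:surjectivityofpr rameven} already covers all radii for separable $(K,L)$ (via Lemma~\ref{Lem:surjection on local model}); the $\SL_2$-type surjectivity statement is only needed when $(K,L)$ is the inseparable pair, which you do also say, so this does not affect the argument.
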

\begin{proof}
The Bruhat--Tits tree $ \mathcal{I}_{(K_i,L_i)} $ is the regular tree of degree $ 2^n+1 $ if and only if $ (K_i,L_i) $ belongs to $ \mathcal{L}_{2^n}^{\ram} $. Hence there exists $ N $ such that for all $ i\geq N $, $  \mathcal{I}_{(K_i,L_i)}\cong  \mathcal{I}_{(K,L)} $.

Let $ i_0 $ be the parameter associated to $ (K,L) $ as in Definition~\ref{Def:parameteri_0} and Definition~\ref{Def:data for the inseparable pair}. First assume that $ i_0 $ is infinite (or in other words that $ (K,L) $ is an inseparable pair). Then by Lemma~\ref{Lem:proximity implies nearly same i0}, the sequence $ (i_0)_{(K_i,L_i)} $ diverge, and hence up to passing to a subsequence, we can assume that $ (i_0)_{(K_i,L_i)}\geq i $ and that $ (K_i,L_i) $ is $ 2i $-close to $ (K,L) $. On the other hand, when $ i_0 $ is finite, up to passing to a subsequence, we can assume that $ (K_i,L_i) $ is $ 2i+2i_0 $-close to $ (K,L) $.

We now define an explicit isomorphism $ f_i\colon \mathcal{I}_{(K_i,L_i)}\to \mathcal{I}_{(K,L)} $ as follows: let $ \mathcal{I}_{(K_i,L_i)}^{0,i}\cong \mathcal{I}_{(K,L)}^{0,i} $ be the isomorphism given by Proposition~\ref{Prop:aritmimpliesgeomproximityforSU3 rameven}. By Theorem~\ref{Thm:localdescriptionoftheball rameven}, this gives an isomorphism on balls of radius $ i $: $ \mathcal{I}_{(K_i,L_i)}\supset B_0(i)\cong B_0(i)\subset \mathcal{I}_{(K,L)} $ (recall that by Lemma~\ref{Lem:B_0(r) is really the ball of radius r rameven}, $ B_0(i) $ is really the ball of radius $ i $ on the tree $ \mathcal{I}_{(K,L)} $). As $ \mathcal{I}_{(K_i,L_i)} $ is a regular tree of the same degree than $ \mathcal{I}_{(K,L)} $, we can extend this isomorphism of balls to an isomorphism $ f_{i}\colon \mathcal{I}_{(K_i,L_i)}\to \mathcal{I}_{(K,L)} $ (this extension is of course not unique, but we choose one such). By means of $ f_i $, we get an embedding $ \hat{G}_i\hookrightarrow \Aut ( \mathcal{I}_{(K,L)}) $.

Now the end of the proof is word for word the same as the corresponding end of the proof of Theorem~\ref{Thm:continuity from D to Chab(Aut(T))}, upon making the following changes: replace $ D_i $ with $(K_i,L_i)$, replace $D$ with $(K,L)$, replace $d$ with $2$, and replace all references to results in Section~\ref{Sec:SL_2(D)} by their corresponding results in Section~\ref{Sec:SU3L/K ramifiedeven}.
\end{proof}

We then deduce the proof of the main theorem announced in the introduction for regular trees of degree $ 2^n+1 $. Recall the notation $ G_{(K,L)} $ introduced in Definition~\ref{Def:associating groups to pair}. Furthermore, as in Section~\ref{Sec:SL_2(D)}, we set $ G_{K} = \SL_2(K) $. Recall also the notation $ \mathcal{K}_{p^n} $ introduced in Definition~\ref{Def:the space D}. We use this notation in the following proof with $ p=2 $.
\begin{proof}[Proof of Theorem~\ref{Thm:explicit form of main theorem rameven}]
Let $ T $ be the ($ 2^n+1 $)-regular tree. Paralleling the proof of Theorem~\ref{Thm:explicit form of main theorem ur}, we see that the maps $ \mathcal{L}^{\ram}_{2^n}\to\mathcal{S}_T\colon (K,L)\mapsto \hat{G}_{(K,L)} $ and $ \mathcal{K}_{2^n}\to\mathcal{S}_T\colon K\mapsto \hat{G}_{K} $ are injective continuous map whose source is a compact space, hence they are homeomorphism onto their respective image. Now, the explicit description given in Theorem~\ref{Thm:explicit form of main theorem rameven} follows from Remark~\ref{Rem:regularity of the B-T tree SU rameven}, Proposition~\ref{Prop:explicit description of L rameven} and Proposition~\ref{Prop:explicit description of D}.
\end{proof}

\appendix
\section{Comparison with the original Bruhat--Tits definitions}\label{App:A}

The purpose of this appendix is to show that our definitions of the Bruhat--Tits tree agrees with the one in \cite{BrTi1}*{7.4.1 and 7.4.2}. Since the relative rank of $ \SL_2(D) $ and $ \SU_3 $ is $ 1 $, it is already clear that the apartment $ A $ is indeed isomorphic to $ \R $. The main task is to show that our group $ P_x $ is the same as the group $ \hat{P}_{x} $ used to define the equivalence relation in \cite{BrTi1}*{7.4.1}.

In the $ \SL_2(D) $ case, the explicit description of $ \hat{P}_{x} $ is given in \cite{BrTi1}*{Corollaire~10.2.9}, that we take as a definition.
\begin{definition}[\cite{BrTi1}*{Corollaire~10.2.9}]\label{Def:hatPx in B-T for SL_2}
Let $ \lbrace a_1,a_2\rbrace $ be the canonical basis of $ \R^{2} $, and let $ a_{ij} = a_j-a_i $ ($ i,j\in \lbrace 1,2\rbrace $). Identify $ \R $ with a vector space $ V $, whose dual is the vector space $ V^{\ast} = \R .a_{12} $. For $ x\in \R $, we set $ \hat{P}_{x} = \lbrace g\in \SL_{2}(K)~\vert~ \omega(g_{ij})\geq a_{ji}(x), \text{ for all }1\leq i,j\leq 2 \rbrace $.
\end{definition}

Note that we can omit the factor $ (r+1)^{-1}\delta $ appearing in loc. cit. since by definition, $ \delta = \omega (\det (g)) = \omega (1) = 0 $. 

This description obviously depends on the identification of $ \R $ as the dual of $ V^{\ast} $. Now, if we furthermore impose the condition $ a_{12} = \Id\colon \R\to \R $, then $ \hat{P}_{x} $ is indeed equal to the group $ P_x $ of Definition~\ref{Def:stabiliserforSL2}. To end the comparison between \cite{BrTi1}*{Définition~7.4.2} and our definitions, one has to show that $ N=T\sqcup M $ and the maps $ \nu \colon N\to \Aff (\R ) $ are the same. This is easily obtained by comparing \cite{BrTi1}*{Proposition~10.2.5 (ii)} with Definition~\ref{Def:sbgrp N for SL2} and Definition~\ref{Def:affineactionforNSL2}.

We now treat all the $ \SU_3 $ cases at once. As in Definition~\ref{Def:SU(3,f)}, we index the rows and the columns of a $ 3 $-by-$ 3 $ matrix by $ \lbrace -1,0,1\rbrace $. Let $ a_1 $ be a non-trivial element of $ \R^{\ast} $, and set $ a_{-1} = -a_1 $ and $ a_0 = 0 $. We now take some time to spell out the definition of $ \overline{\omega}_{ij} $ as defined in \cite{BrTi1}*{10.1.27}. This requires to extend the definition of $ \gamma $ (see Definition~\ref{Def:gamma}) to all separable quadratic extensions.
\begin{lemma}\label{Lem:goodformforLgeneral}
Let $ L $ be a separable quadratic extension of $ K $. There exists $ t\in L $ and $ \alpha ,\beta \in K $ such that:
\begin{enumerate}
\item  $ L=K[t] $ and $ t^{2}-\alpha t+\beta = 0 $.
\item $ \omega (\beta) = 0 $ when $ L $ is unramified, and $ \beta $ is a uniformiser of $ K $ when $ L $ is ramified.
\item $ \alpha = 0 $, or $ 0 = \omega (\beta) = \omega (\alpha) < \omega (2) $, or $ 0<\omega (\beta)\leq \omega (\alpha)\leq \omega (2) $.
\end{enumerate}
\end{lemma}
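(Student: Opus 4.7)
The plan is to split into three cases according to the residue characteristic and the ramification behaviour of $ L/K $, and to bring the minimal polynomial of a generator into the desired shape by means of affine substitutions $ t\mapsto ct+d $ with $ c\in K^{\times} $, $ d\in K $. Such a substitution (after rescaling to make the polynomial monic) transforms a minimal polynomial $ t^{2}-\alpha t+\beta $ into one of the form $ s^{2}-(2d+c\alpha)s+(d^{2}+c\alpha d+c^{2}\beta) $, which is the only tool I need.

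When the residue characteristic of $ K $ is odd, so that $ \omega(2)=0 $, I would start with any generator $ t_{0} $ of minimal polynomial $ t_{0}^{2}-\alpha_{0}t_{0}+\beta_{0}=0 $ and complete the square by taking $ d=-\alpha_{0}/2 $, producing $ t $ with $ t^{2}+\beta=0 $. From $ 2\omega(t)=\omega(\beta) $ I would read that $ \omega(\beta) $ is even in $ \omega(K^{\times}) $ when $ L/K $ is unramified (since then $ \omega(L^{\times})=\omega(K^{\times}) $), and odd when $ L/K $ is ramified (since $ t\notin K $ forces $ \omega(t)\in \frac{1}{2}\omega(K^{\times})\setminus \omega(K^{\times}) $). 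A further rescaling of $ t $ by an appropriate power of $ \pi_{K} $ then adjusts $ \omega(\beta) $ to $ 0 $ or to $ \omega(\pi_{K}) $ as required by condition (2), while $ \alpha=0 $ verifies the first alternative of condition (3).

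When the residue characteristic is $ 2 $ and $ L/K $ is unramified, I would lift a generator $ \bar{s} $ of the separable quadratic extension $ \overline{L}/\overline{K} $ of finite fields to an element $ s\in \mathcal{O}_{L}^{\times} $. Since $ \bar{s}\notin \overline{K} $ we have $ s\notin \mathcal{O}_{K}=K\cap \mathcal{O}_{L} $, so $ s $ generates $ L $ over $ K $ with minimal polynomial $ s^{2}-\alpha s+\beta $ satisfying $ \omega(\beta)=0 $, and whose reduction modulo $ \mathfrak{m}_{L} $ is the minimal polynomial of $ \bar{s} $ over $ \overline{K} $. Separability of $ \overline{L}/\overline{K} $ in characteristic $ 2 $ forces the coefficient $ \bar{\alpha} $ (which is the trace of $ \bar{s} $) to be nonzero, whence $ \omega(\alpha)=0 $, so the second alternative of condition (3) holds. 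Finally, the residue characteristic $ 2 $, ramified case is precisely Lemma~\ref{Lem:goodformforL}, quoted from \cite{BrTi2}*{Lemme~4.3.3, (ii)}, and requires no further argument.

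There is no conceptual obstacle, only some careful bookkeeping with the valuations under the affine substitutions, together with a verification that the three alternatives in condition (3) correspond exactly to the three cases above. The only case with substantive content is the ramified residue characteristic $ 2 $ case, but it has already been done by Bruhat--Tits.
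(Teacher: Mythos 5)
Your proof is correct, and it takes a genuinely different route from the paper's. The paper disposes of the whole lemma by citing \cite{BrTi2}*{Lemme~4.3.3, (ii)}, supplemented by a one-line remark that in the unramified case $\omega(\alpha)=0$ is achievable by the structure theory of unramified extensions. You instead give a direct case analysis: in odd residue characteristic you complete the square (legitimate since $2\in\mathcal{O}_K^\times$) and then rescale by a power of $\pi_K$, reading the parity of $\omega(\beta)$ off the ramification behaviour; in residue characteristic $2$ unramified, you lift a generator of $\overline{L}/\overline{K}$ and use separability of that degree-$2$ residue extension in characteristic $2$ to force $\omega(\alpha)=0$; only the ramified residue characteristic $2$ case is deferred to Bruhat--Tits via Lemma~\ref{Lem:goodformforL}. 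What your approach buys is that it isolates where the genuinely delicate content lives (the dyadic ramified case) and makes the rest self-contained; what the paper's citation buys is brevity and a single uniform reference. One place worth a sentence more of care in your write-up: your claim in the odd residue characteristic, ramified subcase that $t\notin K$ forces $\omega(t)\notin\omega(K^\times)$ is not true for arbitrary $t\in L\setminus K$, but it is true for your specific $t$ satisfying $t^2\in K$ --- if $\omega(t)\in\omega(K^\times)$ one rescales $t$ to a unit whose square lies in $\mathcal{O}_K^\times$, observes that its residue lies in $\overline{K}$ (as $\overline{L}=\overline{K}$ in the ramified case), and applies Hensel's lemma (using $2\in\mathcal{O}_K^\times$) to produce a square root in $K$, contradicting $t\notin K$. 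That is exactly the ``careful bookkeeping'' you alluded to, and it closes the only real gap.
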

\begin{proof}
See \cite{BrTi2}*{Lemme~4.3.3, (ii)}. The fact that $ \alpha $ can be chosen so that $ \omega (\alpha ) = 0 $ in the unramified case is a direct consequence of the theory of unramified extensions of local fields (see for example \cite{Fes02}*{Chapter~II, Section~3.2, Proposition}). With this in mind, the equivalence with \cite{BrTi2}*{Lemme~4.3.3, (ii)} is clear. 
\end{proof}
\begin{remark}\label{Rem:good form for L}
To make Lemma~\ref{Lem:goodformforLgeneral} possibly clearer, let us state what is the valuation of $ \alpha $ on a case-by-case analysis:
\begin{enumerate}
\item If $ L $ is unramified, 
$\begin{cases}
\alpha = 0 \text{ if the residue characteristic is not } 2\\
\omega (\alpha ) = 0 \text{ if the residue characteristic is } 2
\end{cases} $
\item If $ L $ is ramified, 
$\begin{cases}
\alpha = 0 \text{ if the residue characteristic is not } 2\\
\alpha = 0 \text{ or } 0<\omega (\alpha )\leq \omega (2) \text{ if the residue characteristic is } 2
\end{cases} $
\end{enumerate}
The only difference between Remark~\ref{Rem:good form for L} and Lemma~\ref{Lem:goodformforLgeneral} is that the latter allows the possibility that $ \alpha = 0 $ in the unramified residue characteristic $ 2 $ case. But this clearly cannot happen.
\end{remark}

We can now extend our definition of the parameter $ \gamma $ to any separable quadratic extension $L/K$.

\begin{definition}\label{Def:l}
Let $ L/K $ be a separable quadratic extension, and let $ t, \alpha ,\beta $ be as in Lemma~\ref{Lem:goodformforLgeneral}. Let $ l = t\alpha^{-1} \in L $ if $ \alpha \neq 0 $, and $ l = \frac{1}{2} \in L $ if $ \alpha = 0 $, where $ \alpha $ is as in Lemma~\ref{Lem:goodformforLgeneral} (note that $ \alpha = 0 $ implies $ 2\neq 0 $ in $ K $, since $ L $ is assumed to be a separable extension). We then define $ \gamma = -\frac{1}{2}\omega (l) \in \R $.
\end{definition}

Again, we can restate the fact that $ \gamma $ does not depend on the choice of $ \alpha $ and $ \beta $ as in Lemma~\ref{Lem:goodformforLgeneral} in the following way.
\begin{lemma}\label{Lem:lambdaisinL1max}
Let $ L^{1} = \lbrace x \in L~\vert~ x+\bar{x} = 1\rbrace $ and $ L_{\max}^{1} = \lbrace x\in L^{1}~\vert~ \omega (x) = \sup \lbrace \omega (x)~\vert~ x\in L^{1}\rbrace \rbrace $. The element $ l \in L $ in Definition~\ref{Def:l} belongs to $ L_{\max}^{1} $
\end{lemma}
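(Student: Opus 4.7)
The plan is to verify the claim in two steps: first check that $l$ itself lies in $L^{1}$, then show that its valuation dominates that of any other element with trace $1$.

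Step one is an immediate algebraic computation. When $\alpha = 0$, we have $l = 1/2$ and so $l + \bar{l} = 1$ (this case requires the residue characteristic to be different from $2$, which holds since $L/K$ is separable). When $\alpha \neq 0$, Vieta's formulas applied to the minimal polynomial $T^{2} - \alpha T + \beta$ of $t$ give $t + \bar{t} = \alpha$, whence $l + \bar{l} = (t + \bar{t})/\alpha = 1$.

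For step two, observe that $L^{1}$ is a coset of the additive subgroup $L^{0} = \ker(\mathrm{Tr})$, which is a one-dimensional $K$-subspace of $L$: using the basis $(1,t)$ of $L$ over $K$, the trace condition $2a + b\alpha = 0$ shows that $L^{0}$ is generated over $K$ by $t$ when $\alpha = 0$ and by $t - \bar{t} = 2t - \alpha$ otherwise. Any $x \in L^{1}$ can therefore be written as $x = l + y$ with $y \in L^{0}$, and in each of the three regimes of Lemma~\ref{Lem:goodformforLgeneral} one computes $\omega$ of the above generator directly from $t^{2} = \alpha t - \beta$, thereby pinning down $\omega(L^{0} \setminus \{0\}) \subseteq \omega(l) + \omega(K)$ with $\omega(y) \geq \omega(l)$ for all nonzero $y \in L^{0}$. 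By the ultrametric inequality, $\omega(l + y) \leq \omega(l)$ then holds automatically unless $\omega(y) = \omega(l)$ and the leading terms of $l$ and $y$ cancel modulo $\mathfrak{m}_{L}$.

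The main obstacle is ruling out this cancellation case, and this is where the normalisations $0 < \omega(\beta) \leq \omega(\alpha) \leq \omega(2)$ from Lemma~\ref{Lem:goodformforLgeneral} become crucial. The point is that $\bar{l} \equiv 1 - l \pmod{\mathfrak{m}_{L}}$ while $\bar{y} \equiv -y \pmod{\mathfrak{m}_{L}}$, so $l$ and $y$ reduce to elements of $\bar{L}$ with opposite symmetry under the residual involution (trivial in the ramified case, nontrivial in the unramified one); a cancellation in $l + y$ would force $2l \equiv 1 \pmod{\mathfrak{m}_{L}}$ (or an analogous relation), which the inequalities on $\omega(\alpha)$ and $\omega(2)$ exclude. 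This is exactly the content of \cite{BrTi2}*{Proposition~4.3.3, (ii)}, which we invoke to conclude $\omega(l) = \sup\{\omega(x) : x \in L^{1}\}$, i.e.\ $l \in L^{1}_{\max}$.
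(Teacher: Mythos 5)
Your proof ultimately rests on the same citation, \cite{BrTi2}*{4.3.3 (ii)}, as the paper --- the paper's entire proof is that one-line reference --- and your Step~1 (Vieta's formulas giving $l + \bar l = 1$) is a correct preliminary check that the citation subsumes. The problems are in Step~2. The claim ``$\omega(y) \geq \omega(l)$ for all nonzero $y \in L^0$'' is false: $L^0$ is a full one-dimensional $K$-vector space, so it contains $\pi_K^{-n}y_0$ for any generator $y_0$ and any $n$, hence elements of arbitrarily negative valuation. (The ultrametric reduction survives nonetheless, since if $\omega(y)<\omega(l)$ then $\omega(l+y)=\omega(y)<\omega(l)$ at once; the only case needing work is $\omega(y)=\omega(l)$.)

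More substantively, the ``opposite symmetry'' cancellation argument is not a proof. When $\gamma>0$ the elements $l$ and $y$ have negative valuation and therefore have no residue in $\bar{L}$ until you clear denominators by a power of a uniformiser; and once you do, unwinding $l+\bar l = 1$ and $y+\bar y = 0$ merely reproduces the tautology that $l+y$ has trace $1$, which is no contradiction. Your sketch also only engages with the regime $0<\omega(\beta)\leq\omega(\alpha)\leq\omega(2)$ of Lemma~\ref{Lem:goodformforLgeneral}, leaving the other two implicit. The actual content --- a case-by-case valuation computation across all regimes --- is precisely what \cite{BrTi2}*{4.3.3 (ii)} supplies, so your argument genuinely needs that reference, exactly as the paper's does.
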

\begin{proof}
See \cite{BrTi2}*{4.3.3 (ii)}.
\end{proof}

It is also important to note that $ \gamma \geq 0 $, and that in view of Remark~\ref{Rem:good form for L}, $ \gamma > 0 $ if and only if the residue characteristic is $ 2 $ and $ L $ is a ramified extension. 

\begin{definition}[{\cite{BrTi1}*{10.1.20}}]
Let $ q $ be the pseudo-quadratic form associated with the hermitian form used to defined $ \SU_3 $ (see Remark~\ref{Rem:thehermitianform}). Explicitly, for $ x\in L^{3} $, $ q(x) = l f(x,x) + L^{0} $, where $ L^{0} = \lbrace x \in L~\vert~ x+\bar{x} = 0\rbrace $ (see \cite{BrTi1}*{10.1.1 (7), (8)}). For $ x\in L $, we define $ \omega_{q}(x)=\frac{1}{2}\sup \lbrace \omega (k)~\vert~ k\in q((0,x,0))\rbrace = \frac{1}{2}\sup \lbrace \omega (k)~\vert~ k\in l \bar{x}x + L^{0}\rbrace $.
\end{definition}

We can actually compute explicitly the value of $ \omega_{q} $.
\begin{lemma}
$ $
\begin{enumerate}
\item $ \omega_{q}(x)=\omega (x)+\omega_{q}(1) $
\item $ \omega_{q}(1) = \frac{1}{2}\omega (l ) $
\end{enumerate}
Hence, $ \omega_{q}(x) = \omega (x)+\frac{1}{2}\omega (l ) $
\end{lemma}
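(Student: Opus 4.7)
The plan is to establish $(1)$ by a simple scaling argument, and then to reduce $(2)$ to Lemma~\ref{Lem:lambdaisinL1max}. The final formula then follows immediately.

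For $(1)$, I would first observe that $L^{0}$ is a $K$-vector subspace of $L$, and that $\bar{x}x$ lies in $K^{\times}$ (since $\overline{\bar{x}x} = \bar{x}x$) with valuation $\omega(\bar{x}x)=2\omega(x)$. Consequently, multiplication by $\bar{x}x$ sends $L^{0}$ bijectively to itself, so
\begin{equation*}
l\bar{x}x + L^{0} \;=\; \bar{x}x\bigl(l + L^{0}\bigr).
\end{equation*}
Taking the supremum of $\omega$ over both sides gives
\begin{equation*}
\sup\{\omega(k)\mid k\in l\bar{x}x+L^{0}\} = 2\omega(x) + \sup\{\omega(k)\mid k\in l+L^{0}\},
\end{equation*}
and dividing by $2$ yields $\omega_{q}(x)=\omega(x)+\omega_{q}(1)$, as desired.

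For $(2)$, the key observation is that $l+L^{0} = L^{1}$. Indeed, by Lemma~\ref{Lem:lambdaisinL1max}, $l\in L^{1}$, so for any $y\in L^{0}$, $(l+y)+\overline{(l+y)} = (l+\bar{l})+(y+\bar{y}) = 1+0 = 1$, showing $l+L^{0}\subseteq L^{1}$; conversely, if $z\in L^{1}$ then $(z-l)+\overline{(z-l)} = 1-1 = 0$, so $z-l\in L^{0}$ and $z\in l+L^{0}$. Hence
\begin{equation*}
\omega_{q}(1) = \tfrac{1}{2}\sup\{\omega(k)\mid k\in L^{1}\} = \tfrac{1}{2}\omega(l),
\end{equation*}
where the last equality is precisely Lemma~\ref{Lem:lambdaisinL1max}, which asserts that $l$ attains the supremum of $\omega$ on $L^{1}$.

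Combining $(1)$ and $(2)$ gives $\omega_{q}(x)=\omega(x)+\tfrac{1}{2}\omega(l)$. There is no real obstacle here: the whole content of the lemma is repackaging the definition of $\omega_{q}$, the $K$-linearity of $L^{0}$, and the maximality property of $l$ already recorded in Lemma~\ref{Lem:lambdaisinL1max}.
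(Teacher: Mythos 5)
Your proposal is correct and is exactly the argument the paper intends: the paper dismisses part $(1)$ with ``follows from the definition'' (your multiplication-by-$\bar{x}x$ identity $l\bar{x}x+L^{0}=\bar{x}x(l+L^{0})$, valid since $\bar{x}x\in K^{\times}$ and $L^{0}$ is a $K$-subspace, is precisely the required unpacking), and for $(2)$ it simply cites Lemma~\ref{Lem:lambdaisinL1max}, which is what your observation $l+L^{0}=L^{1}$ reduces things to. Your write-up is a faithful, slightly more detailed version of the paper's one-line proof.
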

\begin{proof}
The first property follows from the definition, and the second one is Lemma~\ref{Lem:lambdaisinL1max}.
\end{proof}

\begin{definition}[{\cite{BrTi1}*{10.1.27}}]
Let $ \lbrace e_{-1}, e_0, e_1\rbrace $ be the canonical basis of $ L^{3} $. For $ g\in End(L^{3}) $, let $ (g_{ij}) $ be the matrix of $ g $ in the basis $ \lbrace e_{-1}, e_0, e_1\rbrace $. For $ i,j\in \lbrace -1,0,1\rbrace $, we define $ \bar{\omega}_{ij}(g) = \tilde{\omega}_{i}(g_{ij})-\tilde{\omega}_{j}(1) $, where $ \tilde{\omega}_{\pm 1} = \omega $, while $ \tilde{\omega}_{0} = \omega_{q} $.
\end{definition}
\begin{remark}
One readily check that this definition agrees with the one given in \cite{BrTi1}*{10.1.27}. Indeed, we can take advantage of the fact that $ X_0 $ is one dimensional. Let us identify $ \Hom (X_j,X_i) $ with $ L $, through the basis $ \lbrace e_{-1}, e_0, e_1\rbrace $, and define $ \omega_i $ as in \cite{BrTi1}*{10.1.27}. Then, for $ x\in L $ and $ \alpha \in \Hom (X_j,X_i)\cong L $, we have $ \omega_i(\alpha (xe_j))-\omega_{j}(xe_j) = \omega_i((\alpha x)e_i)-\omega_{j}(xe_j) = \tilde{\omega}_i(\alpha x) - \tilde{\omega}_j (x) = \tilde{\omega}_i(\alpha ) - \tilde{\omega}_j (1) $.
\end{remark}

\begin{definition}[{\cite{BrTi1}*{Corollaire~10.1.33}}]\label{Def:stabilisersinTitsforSU}
With these notations, $ \hat{P}_{x} = \lbrace g\in \SU_3(K)~\vert~ \overline{\omega}_{ij}(g)\geq a_{i}(x)-a_{j}(x), i,j\in \lbrace -1,0,1\rbrace \rbrace $.
\end{definition}

Note that we can omit the factor $ \frac{1}{2}\omega c(g) $ appearing in loc. cit. since by definition, $ c(g) $ is the similitude ratio (see \cite{BrTi1}*{Definition~10.1.4}) and is equal to $ 1 $ for $ g\in \SU_3 $.

Again, this description depends on the choice of a non-trivial element in $ \R^{\ast} $. Now, if we choose $ a_{1}\colon \R\to \R \colon x\to \frac{x}{2} $, then for $ x\in \R $, the group $ \hat{P}_x $ of Definition~\ref{Def:stabilisersinTitsforSU} is the following group:

$$ \hat{P}_x = \lbrace g\in \SU_{3}^{L/K}(K)~\vert~ \omega (g)\geq \begin{psmallmatrix}
0 & -\frac{x}{2} - \gamma & -x \\ 
\frac{x}{2} + \gamma & 0 & -\frac{x}{2} + \gamma \\
x & \frac{x}{2} - \gamma & 0
\end{psmallmatrix}\rbrace $$

When $ \gamma = 0 $, i.e. when $ L $ is unramified or when the residue characteristic is not $ 2 $, then this indeed coincides with our definition of $ P_x $ (see Definition~\ref{Def:pointstabiliserforSU unramified} and Definition~\ref{Def:pointstabiliserforSU ramified}). Finally, when $ \gamma > 0 $, the group $ P_x $ of Definition~\ref{Def:pointstabiliserforSU ramifiedeven} also coincides with $ \hat{P}_x $.

To end the comparison between \cite{BrTi1}*{Definition~7.4.2} and our definitions, one has to show that $ N=T\sqcup M $ and the maps $ \nu \colon N\to \Aff (\R ) $ are the same. This is easily obtained by comparing \cite{BrTi1}*{Proposition~10.1.28 (iii)} with our definitions (see Definition~\ref{Def:sbgrp N SU unramified}, Definition~\ref{Def:affineactionforN unramified}, Definition~\ref{Def:sbgrp N SU ramified}, Definition~\ref{Def:affineactionforN ramified}, Definition~\ref{Def:sbgrp N SU ramifiedeven} and Definition~\ref{Def:affineactionforN ramifiedeven}).

\section{A review of the theory of CSA over local fields}\label{App:division algebra theory}
Let $ D $ be a central division algebra of degree $ d $ over a local field $ K $ (recall that the degree of $ D $ over $ K $ is the square root of the dimension of the $ K $-vector space $ D $). It is well known that such division algebras are classified (up to isomorphism) by elements of $ (\mathbf{Z}/d\mathbf{Z})^{\times} $ (see for example \cite{Pie82}*{Corollary~17.7a and Corollary~17.8b}).

To be explicit, for $ r\in (\mathbf{Z}/d\mathbf{Z})^{\times} $, the corresponding division algebra is the cyclic algebra $ (E/K,\sigma^{r}, \pi_{K}) $ where
\begin{itemize}
\item $ E $ is the unramified extension of $ K $ of dimension $ d $.
\item $ \sigma \in \Gal (E/K) $ is the element in $ \Gal (E/K) $ inducing the Frobenius automorphism on $ \overline{E} $.
\end{itemize}
For the reader's  convenience, we recall the definition of a cyclic algebra.
\begin{definition}\label{Def:cyclic algebra}
Let $ K $ be a field and let $ E/K $ be a cyclic extension of degree $ d $. Let $ \sigma $ be a generator of $ \Gal (E/K) $, and let $ a\in K^{\times} $. The cyclic algebra $ (E/K,\sigma ,a) $ is defined as follows:
\begin{itemize}
\item $ (E/K,\sigma ,a) = \bigoplus \limits_{i=0}^{d-1}u^{i}E $
\item $ u^{-1}xu = \sigma (x) $, for all $ x\in E $
\item $ u^{d} = a $
\end{itemize}
\end{definition}

\begin{definition}\label{Def:Hasse invariant}
As in \cite{dlST15}, for a finite central division algebra $ D $ of degree $ d $ over $ K $, we call the corresponding element $ r\in (\Z /d\Z )^{\times} $ the Hasse invariant of $ D $.
\end{definition}

An important fact about such a division algebra $ D $ is that it splits over $ E $. It is important for us to describe explicitly the embedding of $ D $ inside $ M_d(E) $, the algebra of $ d\times d $ matrices with coefficients in $ E $.
\begin{definition}\label{Def:embedding D in Mn(E)}
Let $ D $ be a division algebra isomorphic to the cyclic algebra $ (E/K,\sigma^{r},\pi_K) $ of degree $ d $ over $ K $. Consider the isomorphism of (right) $ E $-vector spaces 
\begin{equation*}
 f\colon E^d\to D\colon v=(v_1,\dots,v_d)\mapsto \sum \limits_{i=0}^{d-1}u^iv_{i+1}
\end{equation*}
Let $ \varphi \colon D\to M_d(E)\colon x\mapsto (v\mapsto f^{-1}(x.f(v))) $. More explicitly,
\begin{equation*}
\varphi (\sum \limits_{i=0}^{d-1}u^ix_{i+1}) = \begin{pmatrix}
x_1 & \pi_K \sigma^{r}(x_d) & \pi_K \sigma^{2r}(x_{d-1}) & \dots & \pi_K \sigma^{(d-1)r}(x_2)\\
x_2 & \sigma^{r}(x_1) & \pi_K \sigma^{2r}(x_{d}) & \dots & \pi_K \sigma^{(d-1)r}(x_{3}) \\
x_3 & \sigma^{r}(x_2) & \sigma^{2r}(x_1) & \dots & \pi_K \sigma^{(d-1)r}(x_{4}) \\
\vdots & \vdots & \vdots & \ddots & \vdots \\
x_d & \sigma^{r}(x_{d-1}) & \sigma^{2r}(x_{d-2}) & \dots & \sigma^{(d-1)r}(x_1)
\end{pmatrix}
\end{equation*}
\end{definition}

We can now properly spell out the definition of the reduced norm.
\begin{definition}\label{Def:reduced norm}
Let $ D $ be a division algebra isomorphic to the cyclic algebra $ (E/K,\sigma^{r},\pi_K) $ of degree $ d $ over $ K $. We define the reduced norm $ \Nrd $ as follows:
\begin{equation*}
\Nrd \colon M_n(D)\to K\colon g\to \det (\varphi (g_{ij}))
\end{equation*}
where $ \varphi (g_{ij}) $ is seen as a $ dn \times dn $ matrix with coefficients in $ E $.
\end{definition}

We end this discussion by an analysis of the ring of integers of $ D $.
\begin{lemma}\label{Lem:analysing O_D}
Let $ D,E,K $ be as in Definition~\ref{Def:reduced norm}, and let $ r\in \mathbf{N}\cup \lbrace \infty \rbrace $. Since $ E $ is unramified, $ \mathcal{O}_{E}/\mathfrak{m}_{E}^{r} \cong \mathcal{O}_{K}/\mathfrak{m}_{K}^{r}\oplus \dots \oplus \mathcal{O}_{K}/\mathfrak{m}_{K}^{r} $. Furthermore, $ \mathcal{O}_{D}/\mathfrak{m}_{D}^{rd}\cong \bigoplus \limits_{i=0}^{d-1}u^{i}.\mathcal{O}_{E}/\mathfrak{m}_{E}^{r} $. This shows that $ \mathcal{O}_{D}/\mathfrak{m}_{D}^{rd} $ is a free $ \mathcal{O}_{E}/\mathfrak{m}_{E}^{r} $-module $ ( $with the convention that $ \mathfrak{m}^{\infty} = (0))$, and that we can define a map $ \overline{\varphi} \colon \mathcal{O}_{D}/\mathfrak{m}_{D}^{rd}\hookrightarrow M_d(\mathcal{O}_{E}/\mathfrak{m}_{E}^{r}) $, which is compatible with the map $ \varphi $ of Definition~\ref{Def:embedding D in Mn(E)}, in the sense that the following diagram commutes

\begin{center}
\begin{tikzpicture}[->]
 \node (1) at (0,0) {$ \mathcal{O}_{D} $};
 \node (2) at (2.0,0) {$ M_d(\mathcal{O}_{E}) $};
 \node (5) at (0.8,0) {$ \hookrightarrow $};
 \node (3) at (-0.3,-1.2) {$ \mathcal{O}_{D}/\mathfrak{m}_{D}^{rd} $};
 \node (4) at (2.3, -1.2) {$ M_d(\mathcal{O}_{E}/\mathfrak{m}_{E}^{r}) $};
 \node (6) at (0.8,-1.2) {$ \hookrightarrow $};
  
 \draw[->] (0,-0.3) to (0,-0.9);
 \draw[->] (1.9,-0.3) to (1.9,-0.9);
\end{tikzpicture}
\end{center}
\end{lemma}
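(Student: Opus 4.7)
The plan is to unwind the three assertions in order, leaning on the standard structural facts about unramified extensions of local fields and about cyclic division algebras whose second slot is $\pi_K$.

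First I would handle the $\mathcal{O}_E/\mathfrak{m}_E^r$ decomposition. Since $E/K$ is unramified of degree $d$, a uniformiser $\pi_K$ of $K$ is also a uniformiser of $E$, so $\mathfrak{m}_E^r = \pi_K^r\mathcal{O}_E$, and $\mathcal{O}_E$ is a free $\mathcal{O}_K$-module of rank $d$ (pick any lift of an $\mathbf{F}_{p^n}$-basis of $\overline{E}$ over $\overline{K}$). Reducing this free decomposition modulo $\pi_K^r$ gives $\mathcal{O}_E/\mathfrak{m}_E^r \cong (\mathcal{O}_K/\mathfrak{m}_K^r)^{\oplus d}$ as $\mathcal{O}_K/\mathfrak{m}_K^r$-modules, which is the first claim (the convention $\mathfrak{m}^\infty = (0)$ covers the $r = \infty$ case uniformly).

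Second I would identify $\mathcal{O}_D$ and $\mathfrak{m}_D^{rd}$ inside the cyclic algebra decomposition $D = \bigoplus_{i=0}^{d-1} u^i E$. The key observation is that $u$ is a uniformiser of $D$: since $r$ is coprime to $d$, the relation $u^d = \pi_K$ combined with the fact that the valuation of $D$ extends that of $K$ and takes values in $\tfrac{1}{d}\mathbf{Z}$ forces $\omega(u) = \tfrac{1}{d}\omega(\pi_K)$, so $u$ generates $\mathfrak{m}_D$. Using that $E$ is the maximal unramified subfield of $D$ and $u$-conjugation is the Frobenius, one concludes $\mathcal{O}_D = \bigoplus_{i=0}^{d-1} u^i \mathcal{O}_E$ (an explicit check on valuations of $\sum u^i x_{i+1}$ does the job). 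Then $\mathfrak{m}_D^{rd} = u^{rd}\mathcal{O}_D = \pi_K^r \mathcal{O}_D = \bigoplus_i u^i \pi_K^r\mathcal{O}_E$, and quotienting yields $\mathcal{O}_D/\mathfrak{m}_D^{rd} \cong \bigoplus_{i=0}^{d-1} u^i \cdot \mathcal{O}_E/\mathfrak{m}_E^r$, which makes the freeness over $\mathcal{O}_E/\mathfrak{m}_E^r$ manifest.

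Finally I would construct $\overline{\varphi}$. Inspecting the explicit matrix from Definition~\ref{Def:embedding D in Mn(E)}, every entry of $\varphi(\sum u^i x_{i+1})$ is either $\sigma^{jr}(x_k)$ or $\pi_K \sigma^{jr}(x_k)$; since $\sigma$ preserves $\mathcal{O}_E$ and $\pi_K\in\mathcal{O}_E$, the map $\varphi$ restricts to $\mathcal{O}_D \hookrightarrow M_d(\mathcal{O}_E)$. Post-composing with reduction modulo $\mathfrak{m}_E^r$ kills $\pi_K^r\mathcal{O}_D = \mathfrak{m}_D^{rd}$, hence factors through $\overline{\varphi}\colon \mathcal{O}_D/\mathfrak{m}_D^{rd} \to M_d(\mathcal{O}_E/\mathfrak{m}_E^r)$; the commutative diagram is then automatic. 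For injectivity, I would read off the first column of $\varphi(g)$: it equals $(x_1,\ldots,x_d)^{\top}$, so if $\overline{\varphi}(g)=0$ then $x_1,\ldots,x_d \in \mathfrak{m}_E^r = \pi_K^r\mathcal{O}_E$, forcing $g \in \pi_K^r \mathcal{O}_D = \mathfrak{m}_D^{rd}$. No step is genuinely hard; the only point requiring mild care is checking that $u$ really is a uniformiser of $\mathcal{O}_D$ in the cyclic-algebra presentation, which is where the coprimality of $r$ and $d$ enters.
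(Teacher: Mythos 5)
Your argument is correct and simply spells out what the paper declares ``straightforward from the definitions,'' so you take exactly the expected route: $\pi_K$ stays a uniformiser of $E$, $u$ is a uniformiser of $D$, $\mathfrak{m}_D^{rd}=\pi_K^r\mathcal{O}_D$, and $\varphi$ visibly preserves integrality and kills $\mathfrak{m}_D^{rd}$. One blemish to fix before this could be used: you write ``since $r$ is coprime to $d$'' when arguing $u$ is a uniformiser, but the lemma's $r$ is the reduction level, whereas the integer that must be coprime to $d$ is the Hasse invariant appearing in $(E/K,\sigma^{r},\pi_K)$ — rename one of them so the two roles of the letter $r$ do not collide.
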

\begin{proof}
This is straightforward from the definitions.
\end{proof}

\section{An integral model of \texorpdfstring{$ \SL_2(D) $}{SL_2(D)}}\label{App:integral models over D}
Recall that the group $ \SL_2(D) $ consists of the $ 2\times 2 $ matrices with coefficients in $ D $ having reduced norm $ 1 $ (Definition~\ref{Def:SL_2(D)}). Recall the definition of the embedding $ \varphi \colon D\to M_d(E) $ given in Definition~\ref{Def:embedding D in Mn(E)}. In view of the definition of the reduced norm (Definition~\ref{Def:reduced norm}), we arrive at the following explicit definition of $ \SL_2(D) $.

\begin{definition}$ \SL_2(D) = \lbrace \begin{psmallmatrix} g_{11} & g_{12}\\g_{21} & g_{22}\end{psmallmatrix}~\vert~g_{ij} \in D,~\det (\begin{psmallmatrix} \varphi(g_{11}) & \varphi(g_{12})\\\varphi(g_{21}) & \varphi(g_{22})\end{psmallmatrix}) = 1 \rbrace $
\end{definition}

Mimicking this definition, we can define a similar group over $ \mathcal{O}_{D}/\mathfrak{m}_{D}^{rd} $.

\begin{definition}\label{Def:SL_2(O_D/m^r)}
Let $ D $ be a central division algebra over $ K $ of degree $ d $, and let $ r\in \mathbf{N}\cup \lbrace \infty \rbrace $. Keeping the notations of Lemma~\ref{Lem:analysing O_D}, we define
\begin{equation*}
\SL_2(\mathcal{O}_{D}/\mathfrak{m}_{D}^{rd}) = \lbrace \begin{psmallmatrix} g_{11} & g_{12}\\g_{21} & g_{22}\end{psmallmatrix}~\vert~g_{ij}\in \mathcal{O}_{D}/\mathfrak{m}_{D}^{rd},~\det (\begin{psmallmatrix} \overline{\varphi}(g_{11}) & \overline{\varphi}(g_{12})\\\overline{\varphi}(g_{21}) & \overline{\varphi}(g_{22})\end{psmallmatrix}) =1 \rbrace
\end{equation*}
\end{definition} 

Let us now discuss the underlying algebraic group of $ \SL_2(D) $. Let $ M_2(D) $ be the algebra of $ 2\times 2 $ matrices with coefficients in $ D $. Using the embedding $ D\hookrightarrow M_d(E) $, we can identify $ M_2(D) $ with a $ K $-linear subspace of $ M_{2d}(E) $. Now, $ \SL_2(D) $ is the closed subspace of $ M_2(D)\cong \mathbf{A}_{K}^{4d^2} $ cut out by the polynomial equation $ \Nrd = 1 $. We can mimic this situation over the ring of integers to define an integral model of $ \SL_2(D) $.

\begin{definition}\label{Def:integral model for SL_2(D)}
Let $ D $ be a central division algebra of degree $ d $ over $ K $, and let $ M_2(\mathcal{O}_D) $ be the $ \mathcal{O}_K $-algebra of $ 2\times 2 $ matrices with coefficients in $ \mathcal{O}_D $. Using the embedding $ \mathcal{O}_D\hookrightarrow M_d(\mathcal{O}_E) $, where $ E $ is the unramified extension of $ K $ of degree $ d $, we can identify $ M_2(\mathcal{O}_D) $ with a free $ \mathcal{O}_K $-submodule of $ M_{2d}(\mathcal{O}_E) $. We define the $ \mathcal{O}_{K} $-scheme $ \underline{\SL}_{2,D} $ to be the closed subscheme of $ M_2(\mathcal{O}_D)\cong \mathbf{A}_{\mathcal{O}_{K}}^{4d^2} $ cut out by the polynomial equation $ \Nrd = 1 $.
\end{definition}

Of course, the crucial point is to check that $ \underline{\SL}_{2,D} $ is in fact smooth.
\begin{theorem}\label{Thm:smmoothness for SL_2(D)}
$ \underline{\SL}_{2,D} $ is a smooth $ \mathcal{O}_{K} $-scheme.
\end{theorem}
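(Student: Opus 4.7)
The plan is to reduce the statement to the well-known smoothness of $\SL_{2d}$ over a general base, via faithfully flat descent along the unramified extension $\mathcal{O}_K \to \mathcal{O}_E$, where $E$ is the unramified extension of $K$ of degree $d$. Since $E/K$ is unramified, $\mathcal{O}_E$ is a free $\mathcal{O}_K$-module of rank $d$, so $\mathcal{O}_K \to \mathcal{O}_E$ is faithfully flat (and in fact étale). Consequently, smoothness of $\underline{\SL}_{2,D}$ over $\mathcal{O}_K$ is equivalent to smoothness of its base change $(\underline{\SL}_{2,D})_{\mathcal{O}_E}$ over $\mathcal{O}_E$, and it suffices to establish the latter.

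The key step is to exhibit an isomorphism $(\underline{\SL}_{2,D})_{\mathcal{O}_E} \cong \SL_{2d}$ of $\mathcal{O}_E$-group schemes. Because $D$ is the cyclic algebra $(E/K,\sigma^{r},\pi_K)$ with $E/K$ unramified, the embedding $\varphi\colon D \hookrightarrow M_d(E)$ of Definition~\ref{Def:embedding D in Mn(E)} restricts to an $\mathcal{O}_K$-algebra embedding $\overline{\varphi}\colon \mathcal{O}_D \hookrightarrow M_d(\mathcal{O}_E)$ (cf.\ Lemma~\ref{Lem:analysing O_D}), and extension of scalars yields an isomorphism of $\mathcal{O}_E$-algebras
\[
\mathcal{O}_D \otimes_{\mathcal{O}_K} \mathcal{O}_E \xrightarrow{\;\sim\;} M_d(\mathcal{O}_E),
\]
since $\mathcal{O}_E$ is the ring of integers of a splitting field of $D$ which is unramified. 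This identification induces an isomorphism of affine $\mathcal{O}_E$-schemes $M_2(\mathcal{O}_D) \otimes_{\mathcal{O}_K} \mathcal{O}_E \cong M_{2d}(\mathcal{O}_E)$, under which the reduced norm $\Nrd$ on $M_2(\mathcal{O}_D)$ (Definition~\ref{Def:reduced norm}) corresponds precisely to the usual determinant of $2d \times 2d$ matrices. The closed condition $\Nrd = 1$ defining $\underline{\SL}_{2,D}$ therefore pulls back to the condition $\det = 1$ defining $\SL_{2d}$, yielding the desired isomorphism $(\underline{\SL}_{2,D})_{\mathcal{O}_E} \cong \SL_{2d}$.

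Finally, $\SL_{2d}$ is smooth over any base ring (as recalled in the proof of Theorem~\ref{Thm:smoothness of SL2} for the $D=K$ case, via the infinitesimal lifting criterion), so $(\underline{\SL}_{2,D})_{\mathcal{O}_E}$ is smooth over $\mathcal{O}_E$, and faithfully flat descent completes the argument. The only non-routine step is the verification that $\mathcal{O}_D \otimes_{\mathcal{O}_K} \mathcal{O}_E \cong M_d(\mathcal{O}_E)$ with the reduced norm intertwining the determinant; this is where the explicit description of the integral structure of a cyclic division algebra over a local field with unramified maximal subfield is essential, and is the main technical point to spell out carefully.
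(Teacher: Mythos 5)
Your strategy hinges on the claim that the scalar extension of $\overline{\varphi}$ gives an isomorphism $\mathcal{O}_D \otimes_{\mathcal{O}_K} \mathcal{O}_E \xrightarrow{\sim} M_d(\mathcal{O}_E)$, and this claim is false. It is true that $D \otimes_K E \cong M_d(E)$, because $E$ splits $D$; but this splitting does \emph{not} persist at the integral level. One can see this in two ways. First, directly: the image of $\overline{\varphi}(\mathcal{O}_D)$ in $M_d(\mathcal{O}_E)$ has all strictly upper-triangular entries (in the sense of Definition~\ref{Def:embedding D in Mn(E)}) divisible by $\pi_K$, and this remains true after tensoring with $\mathcal{O}_E$ over $\mathcal{O}_K$, so the map is not surjective. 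Second, intrinsically: reducing modulo $\mathfrak{m}_E$, the right side gives the simple algebra $M_d(\overline{E})$, while the left side gives $(\mathcal{O}_D/\mathfrak{m}_K\mathcal{O}_D) \otimes_{\overline{K}} \overline{E} = (\mathcal{O}_D/\mathfrak{m}_D^{d}) \otimes_{\overline{K}} \overline{E}$, which for $d > 1$ has a nonzero nilpotent two-sided ideal $(\mathfrak{m}_D/\mathfrak{m}_D^{d})\otimes_{\overline{K}}\overline{E}$ and is therefore not semisimple. In other words, $\mathcal{O}_D$ is not an Azumaya $\mathcal{O}_K$-algebra (precisely because $D$ is a nonsplit division algebra, so that $\mathcal{O}_D$ is ``ramified'' over $\mathcal{O}_K$ with ramification index $d$), and consequently it cannot become a matrix algebra after any \'etale or faithfully flat base change.

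The obstruction is not cosmetic: it means $(\underline{\SL}_{2,D})_{\mathcal{O}_E}$ is genuinely not $\SL_{2d}$, and in particular has a non-reductive special fibre, so your reduction to the smoothness of $\SL_{2d}$ cannot work. Contrast this with the unramified $\SU_3$ case, Theorem~\ref{Thm:smoothness of SU3 unramified}, where the argument you propose \emph{does} succeed: there $\mathcal{O}_L \otimes_{\mathcal{O}_K} \mathcal{O}_L \cong \mathcal{O}_L \times \mathcal{O}_L$ because $L/K$ is unramified, which is exactly the ``splitting at the integral level'' that fails for $\mathcal{O}_D$. The paper instead invokes \cite{BrTi84}*{5.5}, identifying $\underline{\SL}_{2,D}$ with the Bruhat--Tits parahoric group scheme $\textswab{G}_{1,\varphi}$, whose smoothness is a substantive result proved by analysing the (non-reductive) special fibre directly. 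If you want a self-contained proof in the spirit of your plan, the viable route is the one used in Theorem~\ref{Thm:smoothness of SU3 ramified} and Theorem~\ref{Thm:smoothness of integral model rameven}: establish flatness via Lemma~\ref{Lem:Proving flatness of schemes over local rings}, then show the special fibre is smooth by comparing its dimension with that of its Lie algebra, rather than trying to identify it with a split group.
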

\begin{proof}
This is one of the main results in \cite{BrTi84}. Let us explain how to extract it from there. Let $ \varphi $ be the valuation of $ GL_2(D) $ defined in \cite{BrTi84}*{2.2, display (4)}. The valuation $ \varphi $ is thus a point of the enlarged apartment $ A_{1} $. The associated norm is defined as $ \alpha_{\varphi}(e_1x_1+e_2x_2)=\inf \lbrace \omega (x_1),\omega (x_2)\rbrace $ (following the definition in \cite{BrTi84}*{2.8, display (9)}). The corresponding order $ \mathscr{M}_{\alpha_{\varphi}} $ of $ M_2(D) $ defined in \cite{BrTi84}*{1.17} is $ \lbrace\begin{psmallmatrix} g_{11} & g_{12}\\g_{21} & g_{22}\end{psmallmatrix}\in M_2(D)~\vert~\omega (g_{ij})\geq 0 \rbrace $ (this is easily computed using the description of $ \End \alpha (u) $ in \cite{BrTi84}*{1.11, display (17)}). Note that $ \mathscr{M}_{\alpha_{\varphi}} $ is isomorphic to the affine space $ \mathbf{A}_{\mathcal{O}_{K}}^{(2d)^2} $ (being a free $ \mathcal{O}_{K} $-module). Finally, following \cite{BrTi84}*{3.6}, let $ \textswab{G}_{\varphi} $ be the (principal) open subscheme of the affine space  $ \mathscr{M}_{\alpha_{\varphi}} $ defined by the non-vanishing of the reduced norm (see also \cite{BrTi84}*{3.2}).

$ \textswab{G}_{\varphi} $ is actually an integral model for $ GL_2(D) $, and the $ \SL_2(D) $ case is then treated in \cite{BrTi84}*{§5}. Let $ \textswab{G}_{1,\varphi} $ be the schematic adherence of $ \SL_2(D) $ in $ \textswab{G}_{\varphi} $ (following the definition in \cite{BrTi84}*{5.3}). It is mentioned in \cite{BrTi84}*{5.5} that the group $ \textswab{G}_{1,\varphi} $ is the closed subgroup of $ \textswab{G}_{\varphi} $ defined by the equation $ \Nrd = 1 $, and hence it coincides with our group $ \underline{SL}_{2,D} $. But by \cite{BrTi84}*{5.5}, $ \textswab{G}_{1,\varphi} $ is smooth over $ \mathcal{O}_K $, concluding the proof. Note that to apply \cite{BrTi84}*{5.5}, we should check that a finite unramified extension of a local field is étale in the sense of \cite{BrTi84}. But this is clear in view of \cite{BrTi2}*{1.6.1 (f) and Definition~1.6.2}. 
\end{proof}

We conclude our study of the $ \SL_2(D) $ case by identifying the rational points of $ \underline{\SL}_{2,D} $.
\begin{lemma}\label{Lem:rational points for SL_2(D)}
Let $ D $ be a central division algebra over $ K $ of degree $ d $, and let $ r\in \mathbf{N}\cup \lbrace \infty\rbrace $. Then $ \underline{\SL}_{2,D}(\mathcal{O}_{K}/\mathfrak{m}_{K}^{r})\cong \SL_{2}(\mathcal{O}_{D}/\mathfrak{m}_{D}^{rd}) $ $ ( $where by convention, $ \mathfrak{m}^{\infty} = (0))$.
\end{lemma}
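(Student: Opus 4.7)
The plan is to unfold the definitions on both sides and check they match, the only non-trivial ingredient being the relation $\pi_K \mathcal{O}_D = \mathfrak{m}_D^d$ coming from the ramification of $D/K$.

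First I would compute the left hand side. By Definition~\ref{Def:integral model for SL_2(D)}, $\underline{\SL}_{2,D}$ is the closed subscheme of $M_2(\mathcal{O}_D)\cong \mathbf{A}_{\mathcal{O}_K}^{4d^2}$ cut out by the single polynomial equation $\Nrd - 1$, where $\Nrd$ is defined through the embedding $\varphi \colon M_2(\mathcal{O}_D)\hookrightarrow M_{2d}(\mathcal{O}_E)$ followed by $\det$. Hence, for any $\mathcal{O}_K$-algebra $R$,
\begin{equation*}
\underline{\SL}_{2,D}(R) = \{g\in M_2(\mathcal{O}_D)\otimes_{\mathcal{O}_K} R \mid \det(\varphi(g)) = 1\},
\end{equation*}
where $\varphi$ is extended $R$-linearly. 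Applying this to $R = \mathcal{O}_K/\mathfrak{m}_K^r$ gives an explicit description of $\underline{\SL}_{2,D}(\mathcal{O}_K/\mathfrak{m}_K^r)$.

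Next I would rewrite the coefficient ring. Since $\mathcal{O}_D$ is a free $\mathcal{O}_K$-module, $M_2(\mathcal{O}_D)$ is too, so
\begin{equation*}
M_2(\mathcal{O}_D)\otimes_{\mathcal{O}_K}\mathcal{O}_K/\mathfrak{m}_K^r \cong M_2(\mathcal{O}_D/\pi_K^r\mathcal{O}_D).
\end{equation*}
The key identification is $\pi_K \mathcal{O}_D = \mathfrak{m}_D^d$ (a standard fact: the ramification index of a central division algebra of degree $d$ over a local field equals $d$), which yields $\pi_K^r\mathcal{O}_D = \mathfrak{m}_D^{rd}$, and hence
\begin{equation*}
M_2(\mathcal{O}_D)\otimes_{\mathcal{O}_K}\mathcal{O}_K/\mathfrak{m}_K^r \cong M_2(\mathcal{O}_D/\mathfrak{m}_D^{rd}).
\end{equation*}
Under this identification, the extended map $\varphi$ becomes the map $\overline{\varphi}$ of Lemma~\ref{Lem:analysing O_D}, thanks to the commutative square recorded there; concretely, because the embedding $\mathcal{O}_D\hookrightarrow M_d(\mathcal{O}_E)$ is $\mathcal{O}_K$-linear, tensoring with $\mathcal{O}_K/\mathfrak{m}_K^r$ produces exactly the map $\mathcal{O}_D/\mathfrak{m}_D^{rd}\hookrightarrow M_d(\mathcal{O}_E/\mathfrak{m}_E^r)$ of Lemma~\ref{Lem:analysing O_D}.

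Combining these two steps, I would conclude
\begin{equation*}
\underline{\SL}_{2,D}(\mathcal{O}_K/\mathfrak{m}_K^r) = \left\{\begin{psmallmatrix} g_{11} & g_{12}\\ g_{21} & g_{22}\end{psmallmatrix} \;\middle|\; g_{ij}\in \mathcal{O}_D/\mathfrak{m}_D^{rd},\ \det\begin{psmallmatrix}\overline{\varphi}(g_{11})&\overline{\varphi}(g_{12})\\ \overline{\varphi}(g_{21})&\overline{\varphi}(g_{22})\end{psmallmatrix} = 1\right\},
\end{equation*}
which is exactly $\SL_2(\mathcal{O}_D/\mathfrak{m}_D^{rd})$ by Definition~\ref{Def:SL_2(O_D/m^r)}. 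The case $r=\infty$ is the same argument with the convention $\mathfrak{m}^\infty = (0)$, and gives $\underline{\SL}_{2,D}(\mathcal{O}_K)\cong \SL_2(\mathcal{O}_D)$. There is no real obstacle here: the only substantive input is the ramification identity $\pi_K\mathcal{O}_D=\mathfrak{m}_D^d$ together with the compatibility of $\varphi$ with reduction modulo $\mathfrak{m}_K^r$, both already laid out in Appendix~\ref{App:division algebra theory}.
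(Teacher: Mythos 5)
Your argument is correct and follows the same route as the paper's proof, which is essentially just an appeal to the commutative diagram of Lemma~\ref{Lem:analysing O_D}. Where the paper is terse and leaves the base-change computation implicit, you spell it out: you tensor the closed embedding $\underline{\SL}_{2,D}\hookrightarrow\mathbf{A}_{\mathcal{O}_K}^{4d^2}$ with $\mathcal{O}_K/\mathfrak{m}_K^r$, use freeness of $M_2(\mathcal{O}_D)$ over $\mathcal{O}_K$, and make explicit the ramification identity $\pi_K\mathcal{O}_D=\mathfrak{m}_D^d$ that converts $\mathfrak{m}_K^r\mathcal{O}_D$ into $\mathfrak{m}_D^{rd}$. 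This is exactly the content compressed into the paper's single line invoking Lemma~\ref{Lem:analysing O_D}, and nothing is missing.
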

\begin{proof}
Because the diagram appearing in Lemma~\ref{Lem:analysing O_D} is commutative, we have
\begin{align*}
\underline{\SL}_{2,D}(\mathcal{O}_{K}/\mathfrak{m}_{K}^{r}) \cong & \lbrace \begin{psmallmatrix} g_{11} & g_{12}\\g_{21} & g_{22}\end{psmallmatrix}\in M_2(\mathcal{O}_{D}/\mathfrak{m}_{D}^{rd})~\vert~ \Nrd (g) = 1\rbrace \\
= & \lbrace\begin{psmallmatrix} g_{11} & g_{12}\\g_{21} & g_{22}\end{psmallmatrix}\in M_2(\mathcal{O}_{D}/\mathfrak{m}_{D}^{rd})~\vert~ \det (\begin{psmallmatrix} \overline{\varphi}(g_{11}) & \overline{\varphi}(g_{12})\\\overline{\varphi}(g_{21}) & \overline{\varphi}(g_{22})\end{psmallmatrix})=1\rbrace
\end{align*}
as wanted.
\end{proof}

\begin{bibdiv}
\begin{biblist}
\bib{AN02}{article}{
   author={Abramenko, Peter},
   author={Nebe, Gabriele},
   title={Lattice chain models for affine buildings of classical type},
   journal={Math. Ann.},
   volume={322},
   date={2002},
   number={3},
   pages={537--562},
   issn={0025-5831},
   review={\MR{1895706}},
}

\bib{BoTi73}{article}{
   author={Borel, Armand},
   author={Tits, Jacques},
   title={Homomorphismes ``abstraits'' de groupes alg\'ebriques simples},
   language={French},
   journal={Ann. of Math. (2)},
   volume={97},
   date={1973},
   pages={499--571},
   issn={0003-486X},
   review={\MR{0316587}},
}

\bib{BLR90}{book}{
   author = {Bosch, Siegfried},
   author = {L{\"u}tkebohmert, Werner},
   author={Raynaud, Michel},
   title = {N\'eron models},
   series = {Ergeb. Math. Grenzgeb. (3)},
   volume = {21},
   publisher = {Springer-Verlag, Berlin},
   year = {1990},
}

\bib{BrTi1}{article}{
   author={Bruhat, François},
   author={Tits, Jacques},
   title={Groupes r\'eductifs sur un corps local},
   language={French},
   journal={Inst. Hautes \'Etudes Sci. Publ. Math.},
   number={41},
   date={1972},
   pages={5--251},
   issn={0073-8301},
   review={\MR{0327923}},
}

\bib{BrTi2}{article}{
   author={Bruhat, François},
   author={Tits, Jacques},
   title={Groupes r\'eductifs sur un corps local. II. Sch\'emas en groupes.
   Existence d'une donn\'ee radicielle valu\'ee},
   language={French},
   journal={Inst. Hautes \'Etudes Sci. Publ. Math.},
   number={60},
   date={1984},
   pages={197--376},
   issn={0073-8301},
   review={\MR{756316}},
}

\bib{BrTi84}{article}{
   author={Bruhat, François},
   author={Tits, Jacques},
   title={Sch\'emas en groupes et immeubles des groupes classiques sur un
   corps local},
   language={French},
   journal={Bull. Soc. Math. France},
   volume={112},
   date={1984},
   number={2},
   pages={259--301},
   issn={0037-9484},
   review={\MR{788969}},
}

\bib{BrTi3}{article}{
   author={Bruhat, François},
   author={Tits, Jacques},
   title={Sch\'emas en groupes et immeubles des groupes classiques sur un
   corps local. II. Groupes unitaires},
   language={French, with English summary},
   journal={Bull. Soc. Math. France},
   volume={115},
   date={1987},
   number={2},
   pages={141--195},
   issn={0037-9484},
   review={\MR{919421}},
}

\bib{BM96}{article}{
	Author = {Burger, Marc},
	Author = {Mozes, Shahar},
	Doi = {10.1090/S0894-0347-96-00196-8},
	Issn = {0894-0347},
	Journal = {J. Amer. Math. Soc.},
	Number = {1},
	Pages = {57--93},
	Title = {{${\rm CAT}$}(-{$1$})-spaces, divergence groups and their commensurators},
	Url = {http://dx.doi.org/10.1090/S0894-0347-96-00196-8},
	Volume = {9},
	Year = {1996}
}





 
\bib{CR16}{unpublished}{
	author={Caprace, Pierre-Emmanuel},
	author = {Radu, Nicolas},    
	title = {Chabauty limits of simple groups acting on trees},
	note = {Preprint: \url{http://arxiv.org/abs/1608.00461}},
	year = {2016}
}
 
 \bib{CS15}{article}{
   author={Caprace, Pierre-Emmanuel},
   author={Stulemeijer, Thierry},
   title={Totally disconnected locally compact groups with a linear open
   subgroup},
   journal={Int. Math. Res. Not. IMRN},
   date={2015},
   number={24},
   pages={13800--13829},
   issn={1073-7928},
   review={\MR{3436164}},
   doi={10.1093/imrn/rnv086},
}

\bib{CGP15}{book}{
   author={Conrad, Brian},
   author={Gabber, Ofer},
   author={Prasad, Gopal},
   title={Pseudo-reductive groups},
   series={New Mathematical Monographs},
   volume={26},
   edition={2},
   publisher={Cambridge University Press, Cambridge},
   date={2015},
   pages={xxiv+665},
   isbn={978-1-107-08723-1},
   review={\MR{3362817}},
}

\bib{D84}{article}{
   author={Deligne, Pierre},
   title={Les corps locaux de caract\'eristique $p$, limites de corps locaux
   de caract\'eristique $0$},
   language={French},
   conference={
      title={Representations of reductive groups over a local field},
   },
   book={
      series={Travaux en Cours},
      publisher={Hermann, Paris},
   },
   date={1984},
   pages={119--157},
   review={\MR{771673}},
}

\bib{DG70}{book}{
   author={Demazure, Michel},
   author={Gabriel, Pierre},
   title={Groupes alg\'ebriques. Tome I: G\'eom\'etrie alg\'ebrique,
   g\'en\'eralit\'es, groupes commutatifs},
   language={French},
   note={Avec un appendice {\it Corps de classes local}\ par Michiel
   Hazewinkel},
   publisher={Masson \& Cie, \'Editeur, Paris; North-Holland Publishing Co.,
   Amsterdam},
   date={1970},
   pages={xxvi+700},
   review={\MR{0302656}},
}

\bib{El13}{misc}{    
    title={surjective map of rings with same dimension},    
    author={Georges Elencwajg},    
    note={URL: \url{http://math.stackexchange.com/q/604091} (version: 2013-12-12)},    
    organization={Mathematics Stack Exchange}  
}

\bib{Fes02}{book}{
   author={Fesenko, Ivan B.},
   author={Vostokov, Sergei V.},
   title={Local fields and their extensions},
   series={Translations of Mathematical Monographs},
   volume={121},
   edition={2},
   note={With a foreword by Igor R. Shafarevich},
   publisher={American Mathematical Society, Providence, RI},
   date={2002},
   pages={xii+345},
   isbn={0-8218-3259-X},
   review={\MR{1915966}},
}


\bib{EGA4}{article}{
   author={Grothendieck, Alexandre},
   title={\'El\'ements de g\'eom\'etrie alg\'ebrique. IV. \'Etude locale des
   sch\'emas et des morphismes de sch\'emas IV},
   language={French},
   journal={Inst. Hautes \'Etudes Sci. Publ. Math.},
   number={32},
   date={1967},
   pages={361},
   issn={0073-8301},
   review={\MR{0238860}},
}

\bib{Ka86}{article}{
   author={Kazhdan, David},
   title={Representations of groups over close local fields},
   journal={J. Analyse Math.},
   volume={47},
   date={1986},
   pages={175--179},
   issn={0021-7670},
   review={\MR{874049}},
   doi={10.1007/BF02792537},
}


\bib{Lang94}{book}{
   author={Lang, Serge},
   title={Algebraic number theory},
   series={Graduate Texts in Mathematics},
   volume={110},
   edition={2},
   publisher={Springer-Verlag, New York},
   date={1994},
   pages={xiv+357},
   isbn={0-387-94225-4},
   review={\MR{1282723}},
   doi={10.1007/978-1-4612-0853-2},
}

\bib{KMRT98}{book}{
   author={Knus, Max-Albert},
   author={Merkurjev, Alexander},
   author={Rost, Markus},
   author={Tignol, Jean-Pierre},
   title={The book of involutions},
   series={American Mathematical Society Colloquium Publications},
   volume={44},
   note={With a preface in French by J.\ Tits},
   publisher={American Mathematical Society, Providence, RI},
   date={1998},
   pages={xxii+593},
   isbn={0-8218-0904-0},
   review={\MR{1632779}},
   doi={10.1090/coll/044},
}

\bib{MS20}{article}{
author = {Mazurkiewicz, Stefan},
author = {Sierpiński, Wacław},
journal = {Fundamenta Mathematicae},
language = {fre},
number = {1},
pages = {17-27},
title = {Contribution à la topologie des ensembles dénombrables},
url = {http://eudml.org/doc/212609},
volume = {1},
year = {1920},
}


\bib{Pie82}{book}{
   author={Pierce, Richard S.},
   title={Associative algebras},
   series={Graduate Texts in Mathematics},
   volume={88},
   note={Studies in the History of Modern Science, 9},
   publisher={Springer-Verlag, New York-Berlin},
   date={1982},
   pages={xii+436},
   isbn={0-387-90693-2},
   review={\MR{674652}},
}

\bib{Pink98}{article}{
   author={Pink, Richard},
   title={Compact subgroups of linear algebraic groups},
   journal={J. Algebra},
   volume={206},
   date={1998},
   number={2},
   pages={438--504},
   issn={0021-8693},
   review={\MR{1637068}},
   doi={10.1006/jabr.1998.7439},
}

\bib{Rad15}{unpublished}{
author = {Radu, Nicolas},
title = {A classification theorem for boundary 2-transitive automorphism groups of trees},
note = {Preprint (to appear in Invent. Math.):
\url{http://arxiv.org/abs/1509.04913}},
year = {2015}
}




\bib{S79}{book}{
   author={Serre, Jean-Pierre},
   title={Local fields},
   series={Graduate Texts in Mathematics},
   volume={67},
   note={Translated from the French by Marvin Jay Greenberg},
   publisher={Springer-Verlag, New York-Berlin},
   date={1979},
   pages={viii+241},
   review={\MR{554237}},
}

\bib{dlST15}{article}{
   author = {de la Salle, Mikael},
   author = {Tessera, Romain},
    title =  {Local-to-global rigidity of Bruhat--Tits buildings},
    note = {Preprint: \url{http://arxiv.org/abs/1512.02775}},
     year = {2015}
}

\bib{stacks-project}{misc}{
  author       = {The {Stacks Project Authors}},
  title        = {\itshape Stacks Project},
  note         = {URL: \url{http://stacks.math.columbia.edu}},
  year         = {2016}
}

\bib{St16}{misc} {
    title	 = {Reference for Hensel's Lemma in Algebraic Geometry},
    author = {Stulemeijer, Thierry},
    note = {URL: \url{http://mathoverflow.net/q/234709} (version: 2016-03-28)},
}

\bib{Tits66}{article}{
   author={Tits, Jacques},
   title={Classification of algebraic semisimple groups},
   conference={
      title={Algebraic Groups and Discontinuous Subgroups},
      address={Proc. Sympos. Pure Math., Boulder, Colo.},
      date={1965},
   },
   book={
      publisher={Amer. Math. Soc., Providence, R.I., 1966},
   },
   date={1966},
   pages={33--62},
   review={\MR{0224710}},
}

\bib{Tits74}{book}{
   author={Tits, Jacques},
   title={Buildings of spherical type and finite BN-pairs},
   series={Lecture Notes in Mathematics, Vol. 386},
   publisher={Springer-Verlag, Berlin-New York},
   date={1974},
   pages={x+299},
   review={\MR{0470099}},
}

\bib{Tits77}{article}{
   author={Tits, Jacques},
   title={Reductive groups over local fields},
   conference={
      title={Automorphic forms, representations and $L$-functions},
      address={Proc. Sympos. Pure Math., Oregon State Univ., Corvallis,
      Ore.},
      date={1977},
   },
   book={
      series={Proc. Sympos. Pure Math., XXXIII},
      publisher={Amer. Math. Soc., Providence, R.I.},
   },
   date={1979},
   pages={29--69},
   review={\MR{546588}},
}

\bib{Tits84}{article}{
   author={Tits, Jacques},
   title={Immeubles de type affine},
   language={French},
   conference={
      title={Buildings and the geometry of diagrams},
      address={Como},
      date={1984},
   },
   book={
      series={Lecture Notes in Math.},
      volume={1181},
      publisher={Springer, Berlin},
   },
   date={1986},
   pages={159--190},
   review={\MR{843391}},
   doi={10.1007/BFb0075514},
}

\bib{W09}{book}{
   author={Weiss, Richard M.},
   title={The structure of affine buildings},
   series={Annals of Mathematics Studies},
   volume={168},
   publisher={Princeton University Press, Princeton, NJ},
   date={2009},
   pages={xii+368},
   isbn={978-0-691-13881-7},
   review={\MR{2468338}},
}

\end{biblist}
\end{bibdiv}
 
\end{document}